\documentclass[11pt,a4paper,reqno]{amsart}

\usepackage{amsmath,amssymb,mathtools}
\usepackage{enumitem}
\usepackage{needspace}
\usepackage{microtype}
\usepackage[hidelinks,pdfusetitle]{hyperref}

\numberwithin{equation}{section}

\newtheorem{theorem}{Theorem}[section]
\newtheorem{proposition}[theorem]{Proposition}
\newtheorem{lemma}[theorem]{Lemma}
\newtheorem{corollary}[theorem]{Corollary}
\theoremstyle{definition}

\newtheorem{example}[theorem]{Example}
\theoremstyle{remark}
\newtheorem{remark}[theorem]{Remark}

\newcommand{\E}{\mathbb E}
\newcommand{\Pp}{\mathbb P}
\newcommand{\R}{\mathbb R}

\newcommand{\cP}{\mathcal P}
\newcommand{\K}{\mathcal K}
\newcommand{\RV}{\mathrm{RV}}
\newcommand{\PD}{\mathrm{PD}}
\newcommand{\Law}{\mathcal L}
\newcommand{\1}{\mathbf 1}
\newcommand{\dd}{\,\mathrm d}

\DeclareMathOperator{\Var}{Var}

\title[BREIMAN'S CONJECTURE]
{Breiman's conjecture and normalized jumps of subordinators}
\author{Jacopo Lenzi}
\thanks{ORCID: 0000-0003-2882-4223.}
\address{Department of Biomedical and Neuromotor Sciences\\
University of Bologna, Via San Giacomo 12, 40126 Bologna, Italy}
\email{jacopo.lenzi2@unibo.it}
\date{September 10, 2026}

\subjclass[2020]{Primary 60F05, 60G51; Secondary 60G55, 26A12,
30D45, 40E05}
\keywords{randomly weighted mean, Breiman's conjecture, self-normalized sum,
regular variation, Poisson--Dirichlet distribution, normalized random measure}
\hypersetup{
  pdftitle={Breiman's conjecture and normalized jumps of subordinators},
  pdfauthor={Jacopo Lenzi},
  pdfkeywords={randomly weighted mean, Breiman's conjecture, self-normalized sum,
  regular variation, Poisson--Dirichlet distribution, normalized random measure}
}

\begin{document}
\raggedbottom

\begin{abstract}
We prove Breiman's conjecture under the first-moment assumption.
Let \(Y_1,Y_2,\ldots\) be iid nonnegative random variables with
\(\mathbb P\{Y_1>0\}>0\), normalized by their sum.  If the resulting
randomly weighted sum converges to a nondegenerate law for one fixed
integrable, nonconstant mark distribution, then the tail of \(Y_1\) is
regularly varying.  More generally, any full-sequence limit for one such
mark, including a constant limit, determines the asymptotic regime of the
ranked weights: one big jump, a Poisson--Dirichlet partition, or dust.  It
consequently determines the limit for every integrable mark, with
convergence in the \(1\)-Wasserstein metric, and the limits of independently
marked empirical measures.  The inverse step is based on a countable
power-sum theorem: signed Fourier--Mellin identities extract a positive
limiting expected power sum from one nondegenerate marked limit, without a
moment of order greater than one.  A ratio--Tauberian argument then recovers
the tail index.  The same method classifies ratios formed from the marked jumps
of a nonzero, unkilled, driftless subordinator at zero and at infinity,
assuming infinite activity at zero.  A nondegenerate limit is equivalent to regular
variation of the L\'evy tail with index in \((-1,0]\).  A constant limit is
equivalent to disappearance of the largest normalized jump, or,
analytically, to slow variation of the integrated L\'evy tail.  The latter
condition need not imply regular variation of the L\'evy tail with index
\(-1\).  A Cauchy-mark example shows that the conclusion can fail without
integrability of the mark.
\end{abstract}

\maketitle

\section{Introduction}
\label{sec:introduction}

Let \(Y_1,Y_2,\ldots\) be iid nonnegative random variables, let
\(S_n=\sum_{i=1}^nY_i\), and attach iid real marks
\(X_1,X_2,\ldots\), independently of the \(Y_i\)'s.  This paper studies
the inverse problem associated with the self-normalized sum, defined on
\(\{S_n>0\}\) by
\begin{equation}
 \frac{\sum_{i=1}^nY_iX_i}{S_n}
 =\sum_{i=1}^n\frac{Y_i}{S_n}X_i.
 \label{eq:intro-ratio}
\end{equation}
The law of \eqref{eq:intro-ratio} is only a one-dimensional observation of
the normalized weights.  Nevertheless, convergence for one fixed
integrable, nonconstant mark distribution determines the asymptotic
behavior of the whole ranked weight sequence.

This question originates in Breiman's work on self-normalized sums
\cite{Breiman}.  He proved the direct limit theorem and a converse when
convergence is assumed for every centered integrable mark, and asked
whether one fixed mark suffices.  Mason and Zinn obtained the converse
under a moment assumption of order greater than two \cite{MZ}.  Kevei and
Mason analyzed subsequential limits \cite{KM12} and proved the converse for
a broad class of marks whose characteristic functions have prescribed
regular variation at the origin \cite{KM16}.  Our first theorem gives the
nondegenerate converse for one fixed mark under the first moment alone.
For completeness, it also includes the constant-limit criterion already
established by Breiman \cite[Theorems~1--2]{Breiman}.

Mason also developed a limit theory for self-normalized triangular arrays
\cite{MasonTriangular,MasonTriangularErratum}.
That theory concerns the asymptotic distributions generated by a triangular
array; the inverse problem here asks what convergence for one fixed mark
forces on the common law of the iid nonnegative variables being normalized.

\subsection{Self-normalized iid weights}

Let \(G\) be the law of \(Y\), assume \(\Pp\{Y>0\}>0\), and write
\[
 \bar G(x)=\Pp\{Y>x\},\qquad
 A_G(x)=\E\!\left[Y\1_{\{Y\leq x\}}\right],\qquad
 I_G(x)=\E(Y\wedge x).
\]
On \(\{S_n>0\}\), put \(W_{n,i}=Y_i/S_n\).  On \(\{S_n=0\}\), put
\(W_{n,i}=1/n\); this changes asymptotic statements only on an event of
probability \(\Pp\{Y=0\}^n\).  Let \(W_n^\downarrow\) be the decreasing
rearrangement of the weights.

We view ranked weights as elements of the compact Kingman
simplex~\cite{Kingman1975}
\[
 \K
 =\left\{q_1\geq q_2\geq\cdots\geq0:
          \sum_{j\geq1}q_j\leq1\right\},
\]
equipped with coordinatewise convergence.  Its mass-one part is
\[
 \K_1
 =\left\{q\in\K:\sum_jq_j=1\right\},
\]
on which we also use the $\ell^1$ topology.  The missing mass
$1-\sum_jq_j$ is the dust mass; the zero partition represents pure dust.

Write \(\mathcal P_1(\R)\) for the probability laws with finite first
absolute moment, \(W_1\) for the $1$-Wasserstein distance, and
\(\RV_\infty(\rho)\) for regular variation at infinity with index \(\rho\).

For \(F\in\mathcal P_1(\R)\), define
\[
 \mathcal B_n^G(F)
 =\Law\!\left(\sum_{i=1}^nW_{n,i}X_i\right),
 \qquad X_i\stackrel{\mathrm{iid}}{\sim}F.
\]
For \(0<\beta<1\), let
\(Q^{(\beta)}\sim\PD(\beta,0)\), independently of the marks, and set
\[
 \mathcal B_\beta(F)
 =\Law\!\left(\sum_{j\geq1}Q_j^{(\beta)}X_j\right),\qquad
 \mathcal B_0(F)=F,\qquad
 \mathcal B_1(F)=\delta_{\int xF(\dd x)}.
\]
The value \(\beta=1\) is a label for the dust endpoint; it is not a tail
index.

\begin{theorem}[A single fixed mark determines the iid regime]
\label{thm:intro-iid-regime}
Fix a non-Dirac law \(F_*\in\mathcal P_1(\R)\).  If
\(\mathcal B_n^G(F_*)\) converges weakly as \(n\to\infty\), then there is a
unique \(\beta\in[0,1]\), and exactly one of the following alternatives
holds.
\begin{enumerate}[label=\textup{(\roman*)}]
\item \(\beta=0\) and \(\bar G\in\RV_\infty(0)\).  In this case
\[
 W_n^\downarrow\longrightarrow e_1=(1,0,\ldots)
 \quad\text{in probability in }\ell^1.
\]
\item \(0<\beta<1\) and \(\bar G\in\RV_\infty(-\beta)\).  In this case
\[
 W_n^\downarrow\Rightarrow Q^{(\beta)}
 \quad\text{in }\ell^1.
\]
\item \(\beta=1\) and
\[
 \frac{\max_{i\leq n}Y_i}{S_n}\longrightarrow0
 \quad\text{in probability}.
\]
The ratio is set to zero on \(\{S_n=0\}\).
In this case \(W_n^\downarrow\to0\) in probability in the product topology
of the Kingman simplex, but not in \(\ell^1\).  The maximal-weight
condition is equivalent to each of the following conditions:
\[
 \begin{gathered}
 \dfrac{S_n}{a_n}\longrightarrow1
 \quad\text{in probability for some }a_n>0,\\
 \dfrac{x\bar G(x)}{A_G(x)}\longrightarrow0,\qquad
 \dfrac{x\bar G(x)}{I_G(x)}\longrightarrow0,\qquad
 A_G,I_G\in\RV_\infty(0).
 \end{gathered}
\]
\end{enumerate}
For every \(F\in\mathcal P_1(\R)\),
\begin{equation}
 W_1\bigl(\mathcal B_n^G(F),\mathcal B_\beta(F)\bigr)
 \longrightarrow0.
 \label{eq:intro-all-marks-W1}
\end{equation}
For every compact \(K\subset(\mathcal P_1(\R),W_1)\), the convergence is
uniform:
\[
 \sup_{F\in K}
 W_1\bigl(\mathcal B_n^G(F),\mathcal B_\beta(F)\bigr)
 \longrightarrow0.
\]
\end{theorem}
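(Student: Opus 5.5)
The plan is to pass through subsequential limits of the ranked weights. Since \(\K\) is compact, along any subsequence \(W_n^\downarrow\) has a further subsequence converging in law in the product topology to some random \(Q\in\K\). Conditionally on the weights, the marks are iid and independent of the ranking. We therefore first show that along such a subsequence
\[
 \mathcal B_n^G(F)\Rightarrow \Law\Bigl(\sum_j Q_jX_j+\Bigl(1-\sum_jQ_j\Bigr)\textstyle\int xF(\dd x)\Bigr).
\]
The large weights contribute \(\sum_jQ_jX_j\). The small weights sum to the dust mass, and their contribution concentrates at the mean by a conditional weak law of large numbers for triangular arrays with vanishing maximal weight. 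This law of large numbers needs only \(F\in\mathcal P_1\), via truncation. The same computation, with truncation controlling \(\E|X|\1_{\{|X|>M\}}\) uniformly, gives uniform integrability, and hence \(W_1\) convergence. Uniformity over compact \(K\) follows because compact sets in \((\mathcal P_1,W_1)\) are uniformly integrable. The map \(F\mapsto\mathcal B(F)\) is \(1\)-Lipschitz in \(W_1\) for a fixed weight law, since we can couple marks coordinatewise and use \(\sum W_{n,i}=1\).

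The key step is identification. A single non-Dirac \(F_*\) must determine the law of \(Q\), or at least enough of it. Here I would invoke the countable power-sum theorem mentioned in the abstract. Using the characteristic function of \(\sum_jQ_jX_j+D\,m\) together with the signed Fourier--Mellin identities, the limit law determines the expected power sums \(\E\sum_jQ_j^{s}\) on some range of \(s\). This is the main obstacle: extracting \(\E\sum_jQ_j^s\) without higher moments of \(F_*\), and keeping the argument robust to the dust term. Then, by the standard exchangeable-weights computation,
\[
 \E\sum_i W_{n,i}^s = n\,\E\bigl[(Y_1/S_n)^s\bigr],
\]
and uniform integrability for \(s\) near \(1\) shows that the limit of this expected power sum exists along the full sequence. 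Uniqueness of the limit law of \(\mathcal B_n^G(F_*)\) forces the limiting power-sum value to be independent of the subsequence.

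Next, a ratio–Tauberian argument converts the existence of \(\lim n\E[(Y_1/S_n)^s]\) into regular variation. I would write
\[
 n\E(Y_1/S_n)^s=\frac{n}{\Gamma(s)}\int_0^\infty t^{s-1}\E\bigl[Y_1^se^{-tY_1}\bigr]\,\E\bigl[e^{-tY}\bigr]^{n-1}\dd t
\]
and analyze the Laplace transform ratio. The value \(\lim\E\sum_jQ_j^2\in(0,1)\) corresponds to \(1-\beta\). For \(\PD(\beta,0)\) one has \(\E\sum_jQ_j^2=1-\beta\), which identifies \(\beta\). Karamata-type arguments then yield \(\bar G\in\RV_\infty(-\beta)\). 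The cases where the limit equals \(1\) or \(0\) correspond to \(\beta=0\) and \(\beta=1\) respectively.

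With \(\beta\) identified, the convergence of \(W_n^\downarrow\) follows from the classical direct theorems. For \(\beta\in(0,1)\), regular variation gives PD\((\beta,0)\) limits in \(\ell^1\) (Kingman, Breiman). For \(\beta=0\), slow variation gives that one jump dominates. For \(\beta=1\), the equivalences are the relative stability criteria: \(\max/S_n\to0\iff S_n/a_n\to1\) (Kesten–Maller), and these are equivalent to \(x\bar G/A_G\to0\), to \(x\bar G/I_G\to0\), and to \(A_G,I_G\in\RV(0)\) by standard truncated-moment calculus. Breiman's Theorems 1–2 cover the constant case. Feeding the identified \(Q\) back into the first paragraph gives the \(W_1\) statement \eqref{eq:intro-all-marks-W1} and its uniform version. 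Uniqueness of \(\beta\) follows because the three regimes yield distinct values of \(\E\sum Q_j^2\).
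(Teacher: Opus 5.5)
Your overall plan is the paper's: power sums, then a ratio--Tauberian step, then the direct partition limits, then independent marking with uniform integrability and the $W_1$ contraction. The transfer part is fine.

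\textbf{The gap.} It lies in the identification step, exactly where you name ``the main obstacle'' and then simply invoke the countable power-sum theorem. Under a first moment alone (moment abscissa $a_X=1$), that theorem has a hypothesis you never verify: $\limsup_t m_t(r)<\infty$ for some $r\in(0,1)$, where $m_t(r)=\E\sum_jP_j(t)^r$. This hypothesis is essential.
\begin{itemize}
\item When $a_X=1$, the Mellin transforms of the mark exist only on $0<\Re s<1$. So the decomposition $L_{t,\varepsilon}=m_t\kappa_\varepsilon+J_{t,\varepsilon}$ and the two signed identities can only be formed on a strip $r<\Re s<1$ where the expected power sums are finite and uniformly bounded.
\item The same bound is what makes $(m_t)$ a normal family there, and what controls the remainder $N^\varepsilon$ for large $u$.
\item Passing to subsequential limits $Q$ does not remove the issue. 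For $r<1$, the map $q\mapsto\sum_jq_j^r$ is only lower semicontinuous on $\K$. The natural way to bound $\E\sum_jQ_j^r$ is Fatou from a uniform bound on the prelimit weights, and that bound is exactly what is missing.
\item Your appeal to ``uniform integrability for $s$ near $1$'' is the same unproved point.
\item The bound is not automatic. Remark~\ref{rem:countable-above-one-sharpness} shows that, for general weights, convergent marked means are compatible with exploding sub-unit power sums.
\end{itemize}

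\textbf{The missing idea.} It is the paper's Laplace-exponent argument.
\begin{itemize}
\item After Poissonization, combine the exact quadratic identity \eqref{eq:breiman-exact-quadratic} with the differential inequality of Lemma~\ref{lem:poisson-slope-dynamics}. Together they show that if $\limsup_{q\downarrow0}q\Phi'(q)/\Phi(q)=1$, there are times $t_k\to\infty$, dictated by $\Phi$, with $m_{t_k}(2)\to0$.
\item The weighted weak law then makes the marked mean degenerate along $t_k$. This contradicts the nondegenerate full-sequence limit (Proposition~\ref{prop:breiman-slope-gap}); here full-sequence convergence, not just subsequential convergence, is used.
\item The resulting strict gap is converted into $\sup_tm_t(r)<\infty$ through the fractional representation \eqref{eq:poisson-fractional-representation} (Proposition~\ref{prop:breiman-subunit}).
\end{itemize}
Only then does Theorem~\ref{thm:countable-main} apply.

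\textbf{A smaller point.} Even with the bound, the power-sum principle gives a common limiting value only at one exponent $p\in(1,2]$, not necessarily at $p=2$. For instance, when $1<a_X<2$ and $\E|X|^{a_X}=\infty$, it gives $p=a_X$. You should therefore run the ratio--Tauberian step at that $p$, where $c=\Gamma(p-\beta)/(\Gamma(p)\Gamma(1-\beta))$. The identity $\E\sum_jQ_j^2=1-\beta$ is available only afterwards.
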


The first two alternatives are the one-big-jump and stable regimes.
In these two regimes, the marked sums with mark law $F_*$ have nondegenerate limits,
settling Breiman's question.  The third
alternative is relative stability: each ranked coordinate disappears,
although the weights have total mass one.  It includes finite-mean laws but
is not confined to them.

Theorem~\ref{thm:intro-iid-regime} combines the classification in
Theorem~\ref{thm:iid-complete-classification} with the transfer results in
Theorem~\ref{thm:iid-transfer}.
The proof first recovers the weight regime from convergence for $F_*$.  Once
that regime is known, independent marking gives the weak limit for every
integrable mark.  Conditional Jensen's inequality gives uniform
integrability, which upgrades this convergence to $W_1$ convergence.
The elementary contraction
\[
 W_1\bigl(\mathcal B_n^G(F),\mathcal B_n^G(F')\bigr)
 \leq W_1(F,F')
\]
and the corresponding bound for $\mathcal B_\beta$ then yield uniform
convergence on compact subsets of $(\mathcal P_1(\R),W_1)$.  The same
partition limits also give convergence of independently marked empirical
measures.  If iid locations \(Z_i\) have law \(H\) on a Polish
space $E$, then
\[
 \sum_iW_{n,i}\delta_{Z_i}
\]
converges in distribution, for the weak topology on $\mathcal P(E)$,
to a single atom, a normalized stable random measure, or \(H\),
according to the three alternatives.  Joint convergence holds for every
fixed finite family of \(L^1(H)\) test functions.

This classification uses the iid structure of the variables being
normalized.  For general random mass partitions,
a fixed two-point mark need not determine the partition law; this limitation
already appears in the discussion of Rademacher marks in
\cite[p.~72]{MZ}.  By contrast, for proper mass partitions the collection
of all finitely supported marks determines the partition structure
\cite[Corollary~9]{PitmanWeighted}.

For a fixed non-Dirac \(F\in\mathcal P_1(\R)\), the map
\(\beta\mapsto\mathcal B_\beta(F)\) is continuous and injective on
\([0,1]\).  In fact,
\[
 h_F(\beta)=\int_{\R}\sqrt{1+x^2}\,\mathcal B_\beta(F)(\dd x)
\]
is continuous and strictly decreasing.  Poisson--Dirichlet fragmentation
and strict Jensen's inequality give this conclusion under the first
moment alone; see Theorem~\ref{thm:iid-PD-identification}.
If \(F\) has finite, positive variance, the variance identity
\[
 \Var\bigl(\mathcal B_\beta(F)\bigr)
 =(1-\beta)\Var_F(X)
\]
also identifies the parameter.

Integrability cannot be removed from a theorem valid for every fixed mark.
We construct a positive lacunary law \(G\) for which
\(W_n^\downarrow\) has different subsequential limits, while every
self-normalized sum with standard symmetric Cauchy marks has exactly the
same Cauchy law.

\subsection{The inverse argument}

The main analytic step is formulated for general random subprobability
weights \(P(t)=(P_j(t))_{j\geq1}\).  Let \((X_j)\) be iid copies of a
centered, integrable, nonconstant mark, independent of the weights, and write
\[
 R_t=\sum_{j\geq1}P_j(t)X_j,\qquad
 m_t(s)=\E\sum_{j\geq1}P_j(t)^s.
\]
If \(R_t\) has a nondegenerate limit and \(m_t(r)\) is bounded for some
\(r\in(0,1)\), then \(m_t(p)\) converges to a strictly positive limit for
some
\(1<p\leq2\).  When the mark has a moment of order greater than one, no
power-sum bound at an exponent below one is needed.

Write \(a_X=\sup\{q>0:\E|X|^q<\infty\}\) for the moment abscissa
of the mark.  The proof compares holomorphic subsequential limits of the power sums.
The conditional characteristic product
\(\prod_j\varphi_X(uP_j(t))\), evaluated at positive and negative Fourier
arguments, gives two Mellin identities that separate the moment transforms
of the positive and negative parts of the mark.  If two power-sum limits
disagreed at a moment abscissa below two, these identities would continue
one of the moment transforms through its convergence boundary, which is
a singular point by Landau's theorem
(Lemma~\ref{lem:countable-landau}).  At the
first-moment boundary, integrability also makes any meromorphic singularity
of the resulting quotient removable, forcing the whole subsequential
power-sum transforms to agree.  A uniform remainder estimate handles
exponent two.

If the moment at the finite abscissa $a_X$ is finite, the same
removability argument determines the full expected power-sum profile.
Theorem~\ref{thm:countable-above-one} proves this for $1<a_X<4$, without
a power-sum bound at an exponent below one.  For $2\leq a_X<4$, finite variance improves
the nonlinear remainder from $o(u^2)$ to $O(u^4)$ and gives the larger
Mellin strip needed to reach the moment abscissa.

For the iid weights, write \(\Phi(q)=\E(1-e^{-qY})\).  Convergence to a
nondegenerate marked limit forces \(q\Phi'(q)/\Phi(q)\) to stay uniformly
below one as \(q\downarrow0\).  This supplies the required power-sum bound.
A Poissonized ratio--Tauberian theorem then turns the positive limit of
\(m_t(p)\) into regular variation of \(\bar G\).  The constant limit is
handled separately: for mass-one weights and a nonconstant integrable
mark, convergence to a constant is equivalent to disappearance of the
largest weight.

\subsection{Normalized jumps of a subordinator}

The same argument applies to a nonzero, unkilled, driftless subordinator
\(V\) with L\'evy measure \(\nu\).  Attach iid copies of an integrable
nonconstant mark \(X\) to its jumps, independently of \(V\), and put
\[
 V_t=\sum_{0<s\leq t}\Delta V_s,\qquad
 U_t=\sum_{0<s\leq t}X_s\Delta V_s,\qquad
 R_t=\frac{U_t}{V_t}
\]
on \(\{V_t>0\}\), with any fixed convention otherwise.  At \(t\downarrow
0\) we assume infinite activity.  Write
\[
 \bar\nu(x)=\nu((x,\infty)),\qquad
 A(x)=\int_{(0,x]}y\,\nu(\dd y),\qquad
 I(x)=\int_0^x\bar\nu(y)\,\dd y.
\]
The notation \(t\to\star\) and \(x\to\star\) refers to the corresponding
time and jump-size endpoints, either zero or infinity.

\begin{theorem}[Marked subordinator ratios at zero and infinity]
\label{thm:intro-subordinator}
Fix either \(t\downarrow0\), under infinite activity, or \(t\to\infty\).
Then \(R_t\) converges in distribution if and only if exactly one of the
following alternatives holds.
\begin{enumerate}[label=\textup{(\roman*)}]
\item For a unique \(\alpha\in[0,1)\),
\(\bar\nu\in\RV_\star(-\alpha)\).  The limit is \(X\) when
\(\alpha=0\), and has law
\[
 \Law\!\left(\sum_{j\geq1}Q_j^{(\alpha)}X_j\right),
 \qquad Q^{(\alpha)}\sim\PD(\alpha,0),
\]
when \(0<\alpha<1\).  The ranked normalized jumps converge in \(\ell^1\):
in probability to \(e_1\) when \(\alpha=0\), and in distribution to
\(Q^{(\alpha)}\) when \(0<\alpha<1\).
\item
\[
 \frac{x\bar\nu(x)}{A(x)}\longrightarrow0.
\]
Equivalently, the largest normalized jump tends to zero in probability, the expected power
sum of some (and hence every) order greater than one tends to zero, and
\(I\in\RV_\star(0)\).  In this case \(R_t\to\E X\) in probability, and
the ranked normalized jumps converge to zero in probability in the product
topology, but not in \(\ell^1\).
\end{enumerate}
\end{theorem}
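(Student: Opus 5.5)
The plan is to run the same two-stage scheme as for Theorem~\ref{thm:intro-iid-regime}: first recover the regime of the ranked normalized jumps \(P(t)=(\Delta V)^\downarrow_t/V_t\) from convergence of \(R_t\), then transfer to the marked limit.

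For the direct implications, I would use the Poisson representation of the jumps on \([0,t]\), a Poisson point process with intensity \(t\nu\). If \(\bar\nu\in\RV_\star(-\alpha)\) with \(0<\alpha<1\), choose \(a_t\) with \(t\bar\nu(a_t)\to1\). The rescaled point process \(\{\Delta V_s/a_t\}\) then converges to the Poisson process with intensity \(\alpha x^{-\alpha-1}\dd x\). Smallness of the small-jump contribution comes from Karamata, since \(tA(\varepsilon a_t)/a_t\to c\,\varepsilon^{1-\alpha}\). Together these give \(\ell^1\) convergence of the ranked normalized jumps to \(\PD(\alpha,0)\). For \(\alpha=0\), slow variation makes the top jump dominate, so \(P(t)\to e_1\) in \(\ell^1\). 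In alternative (ii), the condition \(x\bar\nu(x)/A(x)\to0\) is the Lévy analogue of relative stability. I would prove it equivalent to \(V_t/a_t\to1\) with \(a_t=tA(a_t)\), to \(\max_s\Delta V_s/V_t\to0\), and to \(I\in\RV_\star(0)\) (note \(I=A+x\bar\nu\)). The proofs are the standard Laplace-exponent arguments with \(\Phi(q)=\int(1-e^{-qy})\nu(\dd y)\), carried out at both endpoints. Under infinite activity at zero this matches the iid case via \(t\Phi(q)\). Equivalence with vanishing of the expected power sums \(\E\sum_jP_j(t)^p\) for \(p>1\) follows from \(\sum P_j^p\le(\max P_j)^{p-1}\) and boundedness in the other direction. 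Once the regime is known, the marked limits follow by conditioning on the weights, exactly as in the transfer theorem. The constant limit \(\E X\) follows from the mass-one dust criterion: the conditional variance, or the truncated conditional first moment in the \(L^1\) case, is controlled by \(\max_jP_j(t)\).

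For the converse, suppose \(R_t\) converges. If the limit is constant, the mass-one criterion (a nonconstant integrable mark with constant limit forces the largest weight to vanish) gives (ii). If the limit is nondegenerate, first center the mark: since \(\sum_jP_j(t)=1\), \(R_t-\E X\) is the ratio for the centered mark. Next I would verify the hypothesis of the countable power-sum theorem, namely boundedness of \(m_t(r)\) for some \(r<1\). This parallels the iid argument: a nondegenerate limit forces \(q\Phi'(q)/\Phi(q)\) to stay uniformly below one as \(q\to\star\) in the relevant direction. The expected power sum is expressed through the Poisson formula
\[
m_t(s)=\frac{t}{\Gamma(s)}\int_0^\infty u^{s-1}e^{-t\Phi(u)}\int y^se^{-uy}\nu(\dd y)\dd u,
\]
and bounding it reduces to controlling \(u\Phi'(u)/\Phi(u)\). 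The power-sum theorem then yields \(m_t(p)\to c>0\) for some \(1<p\le2\). The ratio--Tauberian theorem, applied in its Poissonized form directly to \(t\nu\) and without de-Poissonization, converts this into \(\bar\nu\in\RV_\star(-\alpha)\) with \(\alpha\in[0,1)\). Here the identification \(\lim m_t(p)=\Gamma(p-\alpha)/(\Gamma(p)\Gamma(1-\alpha))\) fixes \(\alpha\).

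Uniqueness and the "exactly one" claim need three facts. The two alternatives are disjoint, because in (i) the top weight does not vanish. The index \(\alpha\) is unique because regular variation has a unique index. The limit laws are distinct, by the injectivity of \(\beta\mapsto\mathcal B_\beta(F)\) in Theorem~\ref{thm:iid-PD-identification}. The main obstacle I expect is the uniform bound on \(q\Phi'(q)/\Phi(q)\) from a nondegenerate limit at the endpoint \(t\to\infty\), and at \(t\downarrow0\) where \(q\to\infty\). My first attempt would be a contrapositive argument: if the ratio approaches one along a sequence, then \(\Phi\) is locally almost linear, which puts the normalized jumps in the dust regime along a subsequence and makes \(R_t\) asymptotically constant there, contradicting the nondegenerate limit.
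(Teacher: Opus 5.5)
Your small-time branch and the direct implications follow the paper's structure. The gap is in the nondegenerate converse at $t\to\infty$, where you propose to run the slope-gap, fractional-bound, power-sum scheme ``directly to $t\nu$''.

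The slope gap $\limsup_{q\downarrow0}q\Phi'(q)/\Phi(q)<1$ does follow from a nondegenerate limit there, since the quadratic identity and Lemma~\ref{lem:poisson-slope-dynamics} do not need $\lambda<\infty$. However, it cannot supply the hypothesis \eqref{eq:countable-r-bound} of Theorem~\ref{thm:countable-main}. In the fractional representation, the range $v\in(v_0,\lambda)$ corresponds to large $q$, that is, to the small jumps. Proposition~\ref{prop:breiman-subunit} disposes of this range only through the factor $(\lambda-v_0)$, which is infinite under infinite activity. Nothing in a large-time hypothesis controls $q\Phi'(q)/\Phi(q)$ as $q\to\infty$.

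In fact the bound can fail identically. The Campbell--Mecke formula gives $m_t(r)\geq t\,\E[(1+V_t')^{-r}]\int_{(0,1]}x^r\,\nu(\dd x)$. Let $\nu$ have density $x^{-2}\log^{-2}(1/x)$ on $(0,1/2)$ and $\tfrac12x^{-3/2}$ on $(1,\infty)$. This is a L\'evy measure with infinite activity and $\bar\nu(x)=x^{-1/2}$ for $x\geq1$, so the statement asserts a nondegenerate $\PD(1/2,0)$ limit. Yet $m_t(r)=\infty$ for every $t>0$ and every $r<1$. If the mark has $a_X=1$ (say symmetric with $\Pp\{|X|>x\}=1/(x\log^2x)$ for large $x$), Theorem~\ref{thm:countable-above-one} is unavailable too. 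So the power-sum principle cannot be applied to the full normalized jump sequence at all.

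The missing idea is to discard the small jumps before invoking the power-sum machinery; this is what the paper does.
\begin{enumerate}
\item Split at jump size one. If $\int_1^\infty x\,\nu(\dd x)<\infty$, the strong law gives $R_t\to\E X$ almost surely (Lemma~\ref{lem:largetime-finite-mean}). Hence a nondegenerate limit forces infinite mean.
\item Under infinite mean, $V_t^{\leq1}$ and $U_t^{\leq1}$ are $O_{\Pp}(t)$ while $V_t^{>1}/t\to\infty$ almost surely. Therefore $R_t-U_t^{>1}/V_t^{>1}\to0$ in probability (Lemma~\ref{lem:largetime-bounded-removal}).
\item The truncated ratio is a compound-Poisson ratio with finite L\'evy measure $\nu|_{(1,\infty)}$, i.e.\ a Poissonized iid ratio. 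There the finite-$\lambda$ fractional bound and the Poissonized ratio--Tauberian theorem apply. The paper depoissonizes and quotes Theorem~\ref{thm:breiman}; staying in Poissonized form, as you suggest, is fine at this stage.
\item Since $\bar\nu$ is proportional to the jump-size tail beyond one, this yields $\bar\nu\in\RV_\infty(-\alpha)$.
\end{enumerate}
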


Let \(\Phi\) now denote the Laplace exponent of \(V\).  For a nondegenerate
small-time limit, the same reasoning forces \(q\Phi'(q)/\Phi(q)\) to stay
uniformly below one as \(q\to\infty\), supplying a power-sum bound for an exponent in
\((0,1)\); the tail is then recovered by a fractional
ratio--Tauberian theorem.  At large time, bounded jumps are negligible in
the infinite-mean case, and the remaining compound-Poisson ratio reduces
to the iid theorem by depoissonization.  The finite-mean case belongs to
the constant branch.  The ranked-jump limits imply, by independent
marking, the corresponding limits for normalized L\'evy random measures
and their integrable mean functionals.

Theorem~\ref{thm:intro-subordinator} also resolves the first-moment question
raised by Kevei and Mason \cite[Remark~1]{KM13}.  For nondegenerate limits,
regular variation of the L\'evy tail follows from convergence for one
nonconstant integrable mark.  At the constant boundary, the exact
condition is disappearance of the largest normalized jump, as characterized
in \cite[Theorem~1.1(iii), \(k=0\)]{KM14}.  Equivalently, the integrated
tail \(I\) is slowly varying.  Regular variation of \(\bar\nu\) with index
\(-1\) implies this condition, whereas the converse need not hold.

The distinction is an index-zero phenomenon.  Slow variation of a
primitive does not in general imply regular variation with index \(-1\) of
its monotone density; related boundary behavior for truncated moments is
discussed in \cite[Theorem~1.1 and Remark~1.4]{Kevei2021}.  We give two direct
examples.  At infinity, the gamma subordinator has a constant marked-ratio
limit although its L\'evy tail is exponentially decreasing.  At zero, a
bounded-support monotone L\'evy tail with a logarithmically oscillating
factor has slowly varying integrated tail but is not regularly varying.

The forward theory of normalized jumps and Poisson--Dirichlet partitions
is developed in
\cite{PermanPitmanYor,Perman,PitmanYor,PitmanPK}.  Related
self-normalization results for L\'evy processes include
\cite{MallerMason,MallerMason2010,KM13,KM14}.  For maximal-jump dominance,
stable limits of trimmed L\'evy processes and small-time convergence of
subordinators under regularly or slowly varying L\'evy tails, see
\cite{BuchmannFanMaller2016,IpsenKeveiMaller2018,MallerSchindler2019}.
The Tauberian arguments used
below draw on the Drasin--Shea theory \cite{DS,BGT}, in the fractional
formulations of \cite{BinghamInoue,Rehak}.  The random-measure consequences
belong to the theory of completely random and normalized random measures
\cite{Kingman1967,RegazziniLijoiPrunster,JamesLijoiPrunster,
LijoiPrunsterSurvey}.

\subsection{Organization}

Section~\ref{sec:countable} proves the countable series and power-sum
theorems.  Sections~\ref{sec:poisson-calculus} and
\ref{sec:poisson-limits} develop the Poisson power-sum identities, ranked
normalization, and independent marking.  Section~\ref{sec:breiman} proves
the converse for one fixed mark in iid self-normalized sums.
Sections~\ref{sec:iid-classification} and \ref{sec:iid-operators} give the
complete iid classification and its consequences for other marks and
marked measures; Section~\ref{sec:iid-cauchy-sharpness} gives the Cauchy
counterexample.  Sections~\ref{sec:smalltime}--\ref{sec:classification}
develop the small-time and ratio--Tauberian arguments and complete the
subordinator classification at both endpoints.  Section~\ref{sec:measure}
records the normalized-random-measure consequences.

\section{A countable power-sum principle}
\label{sec:countable}

Throughout this section, $t$ ranges over $(0,\infty)$ and $t\to\star$
means either $t\downarrow0$ or $t\to\infty$.  All bounds in $t$ are local
at the specified endpoint.  Let
\[
 P(t)=(P_j(t))_{j\geq1},\qquad P_j(t)\geq0,\qquad
 \sum_{j\geq1}P_j(t)\leq1
\]
be a random subprobability sequence.  Let $X,X_1,X_2,\ldots$ be iid real
random variables, independent of the weights.  We write
\[
 R_t=\sum_{j\geq1}P_j(t)X_j,
 \qquad
 m_t(s)=\E\sum_{j\geq1}P_j(t)^s.
\]
The first result extracts a positive limiting expected power sum from a
nondegenerate limit of $R_t$.  The ratio--Tauberian arguments in
Sections~\ref{sec:breiman-tauberian} and~\ref{sec:tauberian} then use this
limit to recover the tail index for iid weights and L\'evy jumps,
respectively.  The first result does not require the expected total mass
$m_t(1)$ to converge.

\Needspace{12\baselineskip}
\begin{theorem}[Countable power-sum convergence principle]
\label{thm:countable-main}
Assume that
\[
 \E X=0,\qquad 0<\E|X|<\infty,
\]
and that, for some $r\in(0,1)$,
\begin{equation}
 \limsup_{t\to\star}m_t(r)<\infty.
 \label{eq:countable-r-bound}
\end{equation}
If $R_t\Rightarrow R$ as $t\to\star$, where $R$ is nondegenerate, then
there are $p\in(1,2]$ and $c\in(0,1]$ such that
\begin{equation}
 m_t(p)\longrightarrow c.
 \label{eq:countable-positive-power}
\end{equation}

More precisely, let $X_+=\max(X,0)$, $X_-=\max(-X,0)$, and put
\begin{align*}
 a_+&=\sup\{q>0:\E X_+^q<\infty\},\\
 a_-&=\sup\{q>0:\E X_-^q<\infty\},
 \qquad a_X=a_+\wedge a_-.
\end{align*}
If $a_X=1$, then for every fixed $p\in(1,2)$ there is a possibly
$p$-dependent $c_p\in(0,1]$ such that $m_t(p)\to c_p$.  If
$1<a_X<2$, \eqref{eq:countable-positive-power} holds with $p=a_X$, and if
$a_X\geq2$, it holds with $p=2$.
\end{theorem}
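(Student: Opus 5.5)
We will work with the conditional characteristic function. Conditionally on the weights, $\E[e^{iuR_t}\mid P(t)]=\prod_j\varphi_X(uP_j(t))$, and $R_t\Rightarrow R$ gives convergence of $\E\prod_j\varphi_X(uP_j(t))$ for every real $u$. Since $\E X=0$ and $\E|X|<\infty$, we have $\varphi_X(v)=1-\psi(v)$ with $\psi(v)=o(|v|)$. We linearize the logarithm of the product:
\[
 \E\prod_j\varphi_X(uP_j(t))=1-\E\sum_j\psi(uP_j(t))+\text{(remainder)},
\]
and control the remainder by $\E\bigl(\sum_j|\psi(uP_j)|\bigr)^2$ together with the bound \eqref{eq:countable-r-bound}.

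First, compactness. The bound at $r<1$ and $\sum_jP_j\le1$ give $m_t(s)\le m_t(r)$ for $s\ge r$. Hence the functions $s\mapsto m_t(s)$ are holomorphic on $\{\Re s>r\}$ and locally bounded there. Along any sequence $t_k\to\star$, Montel's theorem yields a subsequence with $m_{t_k}(s)\to M(s)$ locally uniformly on that half-plane. Next, take the Mellin transform in $u$ of $1-\Re$ and $\Im$ of the conditional characteristic product, evaluated at $u>0$ and $u<0$. For $\Re s$ in a suitable strip, this produces identities of the form
\[
 \int_0^\infty u^{-s-1}\bigl(1-\E\textstyle\prod_j\varphi_X(\pm uP_j)\bigr)\dd u
 =\Gamma\text{-factor}(s)\,\E X_\pm^{s}\cdot m_t(s)+\text{error},
\]
because $\int_0^\infty u^{-s-1}\E\sum_j\psi(uP_j)\dd u=m_t(s)\int_0^\infty v^{-s-1}\psi(v)\dd v$. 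The last integral splits into Gamma factors times $\E X_+^s$ and $\E X_-^s$, with the cosine and sine parts used separately. The left side converges to a limit determined by $R$ alone, independent of the subsequence. Therefore two subsequential limits $M_1,M_2$ satisfy
\[
 (M_1-M_2)(s)\cdot\bigl(c_1(s)\E X_+^s\pm c_2(s)\E X_-^s\bigr)=0
\]
on a strip near $1<\Re s<a_X\wedge2$, up to errors that vanish in the limit.

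Uniqueness. Suppose $M_1\ne M_2$ on the strip. By analyticity they differ off a discrete set, so the combination of moment transforms vanishes identically. The two identities (positive and negative $u$) then express $\E X_+^s$ in terms of $\E X_-^s$ via a meromorphic quotient of Gamma factors. This continues one moment transform (a Dirichlet/Mellin integral of a positive measure) past its abscissa $a_\pm$, contradicting Landau's theorem (Lemma~\ref{lem:countable-landau}). When $a_X=1$ there is no strip above the abscissa, so the argument runs at the boundary. Integrability at $s=1$ makes any pole of the quotient removable, which again forces $M_1=M_2$ on $(1,2)$. Hence every $p\in(1,2)$ gives $m_t(p)\to c_p$. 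For $1<a_X<2$ we take $p=a_X$: the limit is pinned down at the boundary point by the same continuation argument, using monotone convergence of $\E X_\pm^s$ as $s\uparrow a_X$. For $a_X\ge2$ we take $p=2$: the finite second-moment part, or a uniform $o(u^2)$ remainder estimate, gives $1-\E\prod\varphi_X(uP_j)\sim\tfrac{u^2}{2}\E X^2\,m_t(2)$ up to a uniform error. Small-$u$ asymptotics of the limit then determine $\lim m_t(2)$ directly.

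Positivity and the bound. We have $c\le1$ because $\sum_jP_j^p\le\sum_jP_j\le1$. Now suppose $c=0$. Then $m_t(p)\to0$, so $\max_jP_j\to0$ in probability. The linearized identity then makes $\E\sum_j\psi(uP_j)$ small relative to the corresponding Mellin data, and a Lindeberg-type argument forces $R$ to be degenerate (a constant, here $0$), a contradiction. The main obstacle will be the remainder control in the Mellin identities: proving that the nonlinear term $\E\bigl[\prod_j\varphi_X-1+\sum_j\psi\bigr]$ has Mellin transform holomorphic on a strip reaching down to $\Re s=1$ (respectively up to $2$), uniformly in $t$. For this we would bound it by $\E\bigl(\sum_j\min(1,|uP_j|)\bigr)^2$, using \eqref{eq:countable-r-bound} to secure integrability near $u=\infty$, and handle exponent two by the separate uniform estimate.
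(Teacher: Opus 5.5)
Your uniqueness step rests on an identity that does not hold. Subtracting the Mellin identities along two subsequences does not give $(M_1-M_2)(s)\cdot(\ldots)=0$ up to errors that vanish. The Mellin transforms in $u$ of the nonlinear remainder $\E\bigl[\prod_j\varphi_X(\pm uP_j(t))-1+\sum_j\psi(\pm uP_j(t))\bigr]$ form a normal family, and its subsequential limits need not agree. What you actually obtain is $d(s)\kappa_\pm(s)=Q_\pm(s)$, where $d=M_1-M_2$, $\kappa_\pm(s)=\Gamma(-s)\{e^{\mp i\pi s/2}\E X_+^s+e^{\pm i\pi s/2}\E X_-^s\}$, and $Q_\pm$ is the difference of the two remainder limits.

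Taken literally, your version proves too much. For real $s$ the cosine part of $\kappa_\pm$ is $\Gamma(-s)\cos(\pi s/2)\E|X|^s\neq0$, so it would force $M_1\equiv M_2$ for every integrable mark, without any Landau argument. That is false. Take $\E e^{iuX}=e^{-|u|^\alpha}$ with $1<\alpha\le2$, and deterministic weights alternating between $(1/2,1/2,0,\ldots)$ and $(2^{1/\alpha-1},0,\ldots)$. Every $R_t$ has the same law and the $r$-bound is trivial, yet $d(s)=2^{1-s}-2^{(1/\alpha-1)s}$ has $s=\alpha$ as its only real zero.

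The working mechanism is a comparison of domains of holomorphy. The remainder limits, hence $Q_\pm$, are holomorphic on the whole strip $r<\Re s<2$. Solving the two signed equations therefore gives $d(s)\E X_\pm^s=F_\pm(s)$ with $F_\pm$ holomorphic up to $\Re s=2$. It is the quotient $F_\pm/d$, not a Gamma-factor relation between $\E X_+^s$ and $\E X_-^s$, that continues the moment transform with abscissa $a_X$ through $a_X$. This works directly when $d(a_X)\neq0$. For $a_X=1$ with $d(1)=0$, it works after a removability argument based on the boundedness of $\E X_\pm^s$ as $s\uparrow1$.

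Two further steps need repair.

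First, for $a_X=1$ your strip $1<\Re s<a_X\wedge2$ is empty, and ``the argument runs at the boundary'' does not say where the identities live. They must be derived on $r<\Re s<1$ with the kernel $e^{iux}-1$, and this is exactly where $\limsup_t m_t(r)<\infty$ is used. It justifies Fubini and the substitution $v=uP_j(t)$ in the linear term, which needs $m_t(\sigma)<\infty$ for $\sigma<1$. It also makes the remainder transform converge at $u=\infty$. For the latter, use the linear bound $|N|\le2+\sum_j|\varphi_X(uP_j)-1|\le2+C_ru^r\sum_jP_j^r$. The expectation of your quadratic majorant $(\sum_j\min(1,uP_j))^2$ is not controlled by a bound on $\E\sum_jP_j^r$.

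Second, your small-$u$ argument at $p=2$ is correct when $\E X^2<\infty$; it is in fact an elementary alternative there. But the theorem also covers $a_X=2$ with $\E X^2=\infty$, where $\tfrac{u^2}{2}\E X^2m_t(2)$ is meaningless. In that case you need the centered remainder bound $|N|\le u^2\delta(u)$ with $\delta(u)\to0$, which gives $(2-s)Q_\pm(s)\to0$ as $s\uparrow2$. Adding the signed identities gives $2d(s)\Gamma(-s)\cos(\pi s/2)\E|X|^s=Q_+(s)+Q_-(s)$. Since $2(2-s)\Gamma(-s)\cos(\pi s/2)\to-1$ and $\E|X|^s$ is bounded below, this forces $d(2)=0$.
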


We shall also use the immediate sequential specialization of this theorem:
take \(t=n\), let \(n\to\infty\), and allow the law of \(P(n)\) to depend
on \(n\).  All hypotheses and conclusions are then read along that
sequence.

The proof at the $L^1$ boundary uses a power-sum bound at an exponent
below one.  If the mark has a moment of some order greater than one,
the Mellin transforms can instead be considered on a strip contained in
$\{\Re s>1\}$, and that bound is unnecessary.

\Needspace{8\baselineskip}
\begin{theorem}[Power-sum convergence with a moment of order greater than one]
\label{thm:countable-above-one}
Assume that
\[
 \E X=0,\qquad a_X>1,
\]
where $a_X$ is the moment abscissa defined in
Theorem~\ref{thm:countable-main}.  No convergence of $m_t(1)$ and no bound
on $m_t(r)$ for $r<1$ are imposed.  If $R_t\Rightarrow R$ as
$t\to\star$, where $R$ is nondegenerate, then
\[
 \begin{cases}
  m_t(a_X)\longrightarrow c\in(0,1],&1<a_X<2,\\
  m_t(2)\longrightarrow c\in(0,1],&a_X\geq2.
 \end{cases}
\]
If $1<a_X<4$ and $\E|X|^{a_X}<\infty$, then $m_t$ converges locally
uniformly on $\{\Re s>1\}$ to a holomorphic function $m$.  In that case
$m(q)>0$ for every real $q>1$.
\end{theorem}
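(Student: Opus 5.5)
We will argue through the conditional characteristic function, following the outline given in the introduction.
Write $\varphi=\varphi_X$ and $\psi(u)=1-\varphi(u)$. Conditionally on the weights, $\E[e^{iuR_t}\mid P(t)]=\prod_j\varphi(uP_j(t))$.

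\emph{Step 1: reduction to Mellin transforms of the power sums.}
Since $\sum_jP_j(t)\le1$ and $\E X=0$, each factor is close to one uniformly in $j$ for small $u$. Taking logarithms, we write
\[
 -\log\E e^{iuR_t}\approx \E\sum_j\psi(uP_j(t))+\text{(nonlinear remainder)}.
\]
The remainder is bounded by $\E\sum_j|\psi(uP_j)|^2$. When $a_X>1$, we have $|\psi(v)|=O(|v|^{\gamma})$ for every $\gamma<a_X\wedge2$. Hence the remainder is controlled by $m_t(2\gamma)$, which is bounded because $2\gamma>1$ and $P_j\le1$.

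We then apply the Mellin transform in $u$ along positive and negative $u$ separately, as the introduction describes. Formally,
\[
 \int_0^\infty u^{-s-1}\,\E\sum_j\psi(\pm uP_j)\,du=m_t(s)\,M_\pm(s),
\]
where $M_\pm(s)=\int_0^\infty v^{-s-1}\psi(\pm v)\,dv$ is a meromorphic combination of $\Gamma(-s)$ and the moment transforms $\E X_\pm^s$. This holds on the strip $1<\Re s<a_X\wedge 2$, and the strip is nonempty with no bound on $m_t(r)$ for $r<1$ because $a_X>1$.

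Weak convergence $R_t\Rightarrow R$ gives locally uniform convergence of $\E e^{iuR_t}$. Since $m_t(s)$ is bounded on the strip, Montel's theorem gives holomorphic subsequential limits $m^{(1)},m^{(2)}$. Any two such limits satisfy
\[
 m^{(1)}(s)M_\pm(s)=m^{(2)}(s)M_\pm(s)+\text{(limits of remainder transforms)}.
\]

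\emph{Step 2: identification of the limit.}
This is the step we expect to be the main obstacle.
We first attempt to show that $m^{(1)}=m^{(2)}$ near the abscissa.
Suppose they differ. The two signed identities, one for $+$ and one for $-$, can be solved to express $\E X_+^s$, or $\E X_-^s$, as a quotient of holomorphic functions that extends across $\Re s=a_\pm$. This is possible when $a_X<2$, where the remainder is of higher order and so has a larger strip. Such an extension contradicts Landau's theorem for the positive-measure transform $\E X_\pm^s$, as recorded in Lemma~\ref{lem:countable-landau}.

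Hence the limits agree at $p=a_X$ when $1<a_X<2$. Rather than working with the full profile, we extract $m_t(a_X)$ from the singular coefficient in the small-$u$ expansion $\psi(u)\sim c|u|^{a_X}$-type behavior. For $a_X\ge2$, the expansion is $\psi(u)=\tfrac12\sigma^2u^2+o(u^2)$ with the remainder $o(u^2)$ uniform. This gives
\[
 -\log\E e^{iuR_t}=\tfrac12\sigma^2u^2m_t(2)+o(u^2)\,m_t(2),
\]
and dividing by $u^2$ along subsequences pins down $\lim m_t(2)$.

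Positivity of the limit: if $m_t(p)\to0$, then $\max_jP_j\to0$ in probability. The Lindeberg-type bound above then forces $R_t\to0$, which contradicts nondegeneracy of $R$. The upper bound $c\le1$ is immediate from $\sum_jP_j^p\le\sum_jP_j\le1$.

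\emph{Step 3: the finite-moment case $1<a_X<4$ with $\E|X|^{a_X}<\infty$.}
At the boundary $\Re s=a_X$, finiteness of the moment makes $\E X_\pm^s$ continuous up to the boundary. Any pole of the quotient obtained in Step 2 is then removable. The agreement of $m^{(1)}$ and $m^{(2)}$ therefore propagates from a neighbourhood of the abscissa to the whole strip by the identity theorem, and then to all of $\{\Re s>1\}$, since $m_t$ is holomorphic and bounded by $m_t(\Re s)$ there.

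For $2\le a_X<4$, finite variance improves the remainder to $O(u^4)$, which widens the strip enough to reach $a_X$.
Finally, for real $q>1$ we have $m(q)\ge m(2)^{\theta}$-type bounds via Hölder interpolation between $q$ and $p$. Together with the positivity at $p$ from Step 2, this gives $m(q)>0$.
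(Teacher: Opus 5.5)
Your architecture (signed Mellin identities for $X_+$ and $X_-$, Landau's theorem at the abscissa, removability under $\E|X|^{a_X}<\infty$, and the $O(u^4)$ remainder for $2\le a_X<4$) is the paper's. The case $a_X\ge2$, however, has a genuine gap. You identify $\lim m_t(2)$ from $\psi(u)=\tfrac12\sigma^2u^2+o(u^2)$, which needs $\E X^2<\infty$, and $a_X\ge2$ does not give this. Take $a_X=2$ with $\E X^2=\infty$, e.g.\ $X$ symmetric with $\Pp\{|X|>x\}\sim x^{-2}$. Then $\Re\psi(u)/u^2\to\infty$ by Fatou, in general with no regularly varying profile, so there is no quadratic coefficient to divide out. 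This case is covered by the statement.

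The paper closes it with the Mellin identities themselves. Adding the two signed identities gives $2d(s)\Gamma(-s)\cos(\pi s/2)\E|X|^s=Q_+(s)+Q_-(s)$ for $1<s<2$. Since $2(2-s)\Gamma(-s)\cos(\pi s/2)\to-1$, $\E|X|^s$ is bounded below, and $(2-s)Q_\varepsilon(s)\to0$ as $s\uparrow2$, one gets $d(s)\to0$. Continuity then gives $d(2)=0$ (Lemma~\ref{lem:countable-second-moment-boundary}), whether or not the variance is finite. Your own Step~1 estimate would simplify this: for $1<\gamma<2$, the bound $|\psi(v)|\le C_\gamma|v|^\gamma$ gives a product remainder $O(u^{2\gamma})$ uniformly in $t$, so $Q_\pm$ are holomorphic at $2$. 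In the finite-variance subcase, your direct quadratic argument is correct and more elementary than the Mellin route.

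Several further repairs are needed.
\begin{itemize}
\item Do not take logarithms. $\E e^{iuR_t}$ may vanish for large $u$, so $-\log\E e^{iuR_t}$ has no Mellin transform over $(0,\infty)$. Transform $\E e^{iuR_t}-1=-\E\sum_j\psi(uP_j(t))+\E N_{P(t)}(u)$ instead. For small $u$ the product remainder is bounded by $e^{\Sigma}-1-\Sigma$ with $\Sigma=\sum_j|\psi(uP_j)|$, so by $(\sum_j|\psi|)^2$, not $\sum_j|\psi|^2$. For large $u$ it is bounded by $2+u\E|X|$, which is what makes its transform converge at infinity when $\Re s>1$.
\item Passing to the limit in the transform of $\E e^{iuR_t}-1$ requires a domination $Cu^q$ near $u=0$, uniform in $t$, with $q>\Re s$. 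Your $\gamma$-bound supplies it, as does conditional Jensen ($\E|R_t|^q\le\E|X|^q$, as in the paper).
\item Drop the remark about extracting $m_t(a_X)$ from a singular coefficient of $\psi(u)\sim c|u|^{a_X}$. No such expansion holds for a general mark, and Landau already gives $d(a_X)=0$.
\item For $2\le a_X<4$, the linear Mellin integral does not converge beyond $\Re s=2$. Continue the summed identity in the form $d(s)\E|X|^s=H(s)$, with $H$ holomorphic on $1<\Re s<4$. Then conclude $d\equiv0$ by contradiction: if $d\not\equiv0$, $H/d$ would be removable at $a_X$ and would continue $\E|X|^s$ past its Landau singularity.
\end{itemize}
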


Vanishing expected power sums of order greater than one have an exact
probabilistic description when the weights have total mass one.  We record
it at the outset because it will later separate the dust regime from the
nondegenerate one.

\begin{theorem}[Constant limits and vanishing maximal weight]
\label{thm:countable-dust}
Suppose that $X$ is integrable and nonconstant, and that
\[
 P_j(t)\geq0,\qquad \sum_{j\geq1}P_j(t)=1
 \quad\text{almost surely}.
\]
Set
\[
 S_t=\sum_{j\geq1}P_j(t)X_j,\qquad
 P_*(t)=\sup_{j\geq1}P_j(t),\qquad \mu=\E X.
\]
The following are equivalent as $t\to\star$:
\begin{enumerate}
\item $S_t$ converges in distribution to a constant;
\item $P_*(t)\to0$ in probability;
\item for one, and hence for every, $p>1$,
\[
 \E\sum_{j\geq1}P_j(t)^p\longrightarrow0.
\]
\end{enumerate}
Whenever these conditions hold, the constant is $\mu$ and
$S_t\to\mu$ in probability.
\end{theorem}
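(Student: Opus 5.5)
We will prove the cycle (2)$\Leftrightarrow$(3), then (2)$\Rightarrow$(1) together with the identification of the constant, and finally (1)$\Rightarrow$(2), which carries the real content.

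For (2)$\Leftrightarrow$(3), fix $p>1$. Since $\sum_jP_j(t)=1$, we have $\sum_jP_j(t)^p\le P_*(t)^{p-1}$ and $\sum_jP_j(t)^p\ge P_*(t)^p$, and all these quantities lie in $[0,1]$. Convergence of $P_*(t)$ to zero in probability is therefore equivalent, by bounded convergence, to $\E\sum_jP_j(t)^p\to0$, and this holds for any one $p>1$ and then for all.

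For (2)$\Rightarrow$(1) we use a truncated weak law, conditionally on the weights. Fix $M>0$, split $X_j=X_j\1_{\{|X_j|\le M\}}+X_j\1_{\{|X_j|>M\}}$, and set $\mu_M=\E X\1_{\{|X|\le M\}}$. The tail part has
\[
 \E\Bigl|\sum_jP_jX_j\1_{\{|X_j|>M\}}\Bigr|\le\E|X|\1_{\{|X|>M\}},
\]
which is small uniformly in $t$. For the bounded part, the conditional variance given $P$ is at most $M^2\sum_jP_j^2\le M^2P_*$, and its conditional mean is $\mu_M$ because $\sum_jP_j=1$. Conditional Chebyshev combined with bounded convergence then gives $S_t\to\mu$ in probability once $M\to\infty$. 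This also shows that whenever the conditions hold, the constant is $\mu$.

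The converse (1)$\Rightarrow$(2) is the main obstacle, because a nonconstant mark with heavy tails could in principle conspire with the weights. Suppose $S_t\Rightarrow c$ but $P_*(t)\not\to0$ in probability. Along a subsequence we then have $\Pp\{P_*(t)>\varepsilon\}\ge\delta$. Let $J$ denote the (measurably chosen) index of the largest weight. Conditionally on $P$, the variables $X_J$ and $T=\sum_{j\ne J}P_jX_j$ are independent, since the marks are independent of the weights. Write $S_t=P_JX_J+T$.

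The plan is to use a symmetrization or concentration-function argument. Take an independent copy $S_t'$ built with the same weights and fresh marks. Then $S_t-S'_t\to0$ in probability, because each of $S_t$ and $S_t'$ converges to the constant $c$. On the other hand, given $P$,
\[
 S_t-S_t'=P_J(X_J-X_J')+(T-T'),
\]
where $X_J-X_J'$ is independent of $T-T'$ and both are symmetric. For independent symmetric summands, Lévy's inequality gives $\Pp\{|A+B|\ge x\}\ge\tfrac12\Pp\{|A|\ge x\}$. Applying this conditionally with $A=P_J(X_J-X_J')$ yields
\[
 \Pp\{|S_t-S_t'|\ge\varepsilon\eta\}\ge\tfrac12\,\delta\,\Pp\{|X-X'|\ge\eta\}.
\]
The right-hand side is strictly positive for small $\eta$ because $X$ is nonconstant. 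This contradicts $S_t-S_t'\to0$ in probability and proves (2).

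The steps that need care are measurability of $J$ (ties are broken by the smallest index) and the fact that $X_J$, given $P$, has law $X$. Integrability is used only to identify the constant as $\mu$.
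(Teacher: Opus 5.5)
Your proof is correct, and its skeleton matches the paper's in two of the three steps. For (2)$\Leftrightarrow$(3) you use the same two-sided bound $P_*^p\le\sum_jP_j^p\le P_*^{p-1}$. For (2)$\Rightarrow$(1) you use the same truncated conditional-Chebyshev weak law, which is the paper's Lemma~\ref{lem:countable-wlln}. For (1)$\Rightarrow$(2) you also symmetrize, with an independent copy of the marks sharing the same weights.

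The routes differ only in how $S_t-S_t'\to0$ is exploited.

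\emph{The paper} passes to characteristic functions, $\E\prod_j|\chi(uP_j(t))|^2\to1$. This needs the countable product formula of Lemma~\ref{lem:countable-series-products} and the auxiliary fact that $|\chi(v)|<1$ for $0<|v|\le a$. A single factor with weight at least $\varepsilon$ then gives $1-\E\prod_j|\chi(uP_j(t))|^2\ge(1-\rho_\varepsilon)\Pp\{P_*(t)\ge\varepsilon\}$.

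\emph{You} instead isolate the largest-weight term and use $\Pp\{|A+B|\ge x\}\ge\frac12\Pp\{|A|\ge x\}$ for $B$ symmetric and independent of $A$. This inequality is valid, since $\max(|A+B|,|A-B|)\ge|A|$ and $A+B\stackrel{d}{=}A-B$. It yields the explicit bound $\Pp\{|S_t-S_t'|\ge\varepsilon\eta\}\ge\frac12\Pp\{P_*(t)>\varepsilon\}\Pp\{|X-X'|\ge\eta\}$.

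Your version is more elementary and needs only that $X-X'$ is not almost surely zero. Its cost is a measurable choice of $J$ and the conditional independence of $X_J-X_J'$ from $T-T'$ given the weights. The choice of $J$ is legitimate: $\sum_jP_j=1$ forces $P_j\to0$, so the supremum is attained. The paper's Fourier version avoids index selection and reuses the product machinery it needs anyway for the Mellin identities.

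One small correction to your closing remark: integrability is not used ``only to identify the constant.'' It is essential for (2)$\Rightarrow$(1) itself. With equal weights $1/n$ and standard Cauchy marks, $P_*=1/n\to0$ while the weighted mean stays standard Cauchy for every $n$. Integrability also guarantees that the series defining $S_t$ and $T$ converge.
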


The proofs of the power-sum principles compare holomorphic subsequential
limits of $m_t$.  Equality of
the limiting marked laws gives two signed identities involving the
difference of these power-sum limits and the moment transforms of the
positive and negative parts of $X$.
Landau's theorem at the moment abscissa selects the exponent where these
limits agree; a separate boundary estimate handles exponent two.  We first
justify the countable products and uniform remainders used in those
identities.

\subsection{Countable series, products, and Dirichlet series}
\label{sec:countable-series}

Zero weights are omitted when complex powers are formed, and
$x^s=\exp(s\log x)$ for $x>0$.

\begin{lemma}[Weighted series and conditional products]
\label{lem:countable-series-products}
Let $(p_j)_{j\geq1}$ be nonnegative with $\sum_jp_j\leq1$, and let
$X,X_1,X_2,\ldots$ be iid with $\E|X|<\infty$.  Then
$\sum_jp_jX_j$ converges absolutely almost surely.  If
$\chi(u)=\E e^{iuX}$, then, for every $u\in\R$,
\[
 \prod_{j\geq1}\chi(up_j)
\]
is an order-independent convergent product and is the characteristic
function of $\sum_jp_jX_j$.
\end{lemma}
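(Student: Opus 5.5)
The plan is to prove the three claims in order: almost sure absolute convergence, convergence and order-independence of the product, and identification of the product with the characteristic function.

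\emph{Absolute convergence.} By monotone convergence,
\[
 \E\sum_jp_j|X_j|=\E|X|\sum_jp_j\leq\E|X|<\infty .
\]
Hence $\sum_jp_j|X_j|<\infty$ almost surely, so $T=\sum_jp_jX_j$ converges absolutely almost surely and its value does not depend on the order of summation. Write $T_N=\sum_{j\leq N}p_jX_j$. Dominated convergence, with dominating variable $\sum_jp_j|X_j|$, gives $T_N\to T$ almost surely and in $L^1$.

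\emph{The product.} Fix $u$. The key estimate is the first-order bound
\[
 |\chi(v)-1|\leq\E|e^{ivX}-1|\leq|v|\,\E|X|,
\]
which uses only integrability. It gives
\[
 \sum_j|\chi(up_j)-1|\leq|u|\,\E|X|\sum_jp_j<\infty .
\]
Two cases arise.

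\begin{itemize}
\item If no factor $\chi(up_j)$ vanishes, the standard theory of infinite products applies: $\sum_j|a_j-1|<\infty$ with all $a_j\neq0$ implies that $\prod_ja_j$ converges, is order-independent, and is nonzero. For complex factors one argues via $\sum_j|\log a_j|<\infty$ for all but finitely many $j$, since $a_j\to1$.
\item If some factor vanishes, only finitely many factors lie outside the disc $|a-1|<1/2$, so only finitely many can vanish. The product is then the finite product of those factors times a convergent tail product, and it equals $0$ in every order.
\end{itemize}

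\emph{Identification.} Independence gives $\E e^{iuT_N}=\prod_{j\leq N}\chi(up_j)$. The left side tends to $\E e^{iuT}$ by dominated convergence, since $T_N\to T$ almost surely and $|e^{iuT_N}|=1$. The partial products converge to the infinite product by the previous step. Therefore $\prod_j\chi(up_j)=\E e^{iuT}$ for every $u\in\R$. Order-independence of $T$ makes this consistent with any enumeration of the weights.

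\emph{Zero weights.} Any $p_j=0$ contributes the factor $\chi(0)=1$ and the term $0$, so these weights are harmless.

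The main obstacle is only the handling of vanishing factors in the order-independence claim. The first-order bound above removes any need for a second moment.
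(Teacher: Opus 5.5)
Your proposal is correct and follows the paper's proof step by step: Tonelli for almost sure absolute convergence, the first-order bound $\sum_j|\chi(up_j)-1|\leq|u|\E|X|\sum_jp_j$ for convergence of the product, and bounded convergence of the characteristic functions of the partial sums. The only difference is that you spell out the infinite-product details, including vanishing factors, which the paper covers by ``the usual interpretation.''
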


\begin{proof}
Tonelli's theorem gives
\[
 \E\sum_{j\geq1}p_j|X_j|
 =\E|X|\sum_{j\geq1}p_j\leq\E|X|,
\]
so the nonnegative random sum on the left is finite almost surely.  Moreover,
\[
 \sum_{j\geq1}|1-\chi(up_j)|
 \leq |u|\E|X|\sum_{j\geq1}p_j<\infty.
\]
The infinite product therefore converges independently of the ordering,
with the usual interpretation if one factor vanishes.  The characteristic
functions of the finite partial sums converge to this product, while the
partial sums converge almost surely.  Bounded convergence completes the
proof.
\end{proof}

\begin{lemma}[Dirichlet series of the weights]
\label{lem:countable-dirichlet}
Fix $r>0$.  If $\sum_jP_j(t)^r<\infty$ almost surely, then
\[
 D_t(s)=\sum_{j\geq1}P_j(t)^s
\]
is almost surely holomorphic on $\Re s>r$.  If $m_t(r)<\infty$, then
$m_t(s)=\E D_t(s)$ is holomorphic on the same half-plane.  Under a
uniform bound on $m_t(r)$, the family $(m_t)$ is locally bounded, and hence
normal, there.
\end{lemma}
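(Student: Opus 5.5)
The plan is to reduce everything to the basic fact that a Dirichlet series with nonnegative coefficients, and nonnegative bases below one, converges locally uniformly on half-planes to the right of an abscissa where it converges. Since $0\le P_j(t)\le1$, for $\Re s\ge\sigma>r$ we have
\[
 \bigl|P_j(t)^s\bigr|=P_j(t)^{\Re s}\le P_j(t)^{\sigma}\le P_j(t)^r ,
\]
with zero weights omitted, as agreed above. On the almost sure event $\{\sum_jP_j(t)^r<\infty\}$ the Weierstrass M-test therefore shows that $\sum_jP_j(t)^s$ converges uniformly on the closed half-plane $\{\Re s\ge r\}$. Each term $s\mapsto\exp(s\log P_j(t))$ is entire, so the sum $D_t$ is holomorphic on the open half-plane $\{\Re s>r\}$ by Weierstrass's theorem on uniform limits of holomorphic functions. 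In fact it is continuous up to $\Re s=r$, but only the open half-plane is needed.

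For the expectation, the same domination gives $|D_t(s)|\le D_t(r)$ on $\{\Re s\ge r\}$, and $\E D_t(r)=m_t(r)<\infty$. So $m_t(s)=\E D_t(s)$ is well defined, and Fubini allows the exchange
\[
 m_t(s)=\sum_{j\geq1}\E P_j(t)^s .
\]
To prove holomorphy, I would use either of two arguments:
\begin{itemize}
\item Continuity of $m_t$ follows from dominated convergence with dominant $D_t(r)$. For any closed triangle in $\{\Re s>r\}$, Fubini together with the domination $|D_t|\le D_t(r)$ interchanges the contour integral and the expectation. The contour integral of $D_t$ vanishes almost surely by Cauchy's theorem, so Morera's theorem gives holomorphy of $m_t$.
\item Alternatively, each term $\E P_j(t)^s$ is holomorphic by the same Morera argument applied to a single summand. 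The series then converges uniformly on $\{\Re s\ge r\}$ by the M-test with majorant $\E P_j(t)^r$, whose sum is $m_t(r)$.
\end{itemize}

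For the last assertion, suppose $\limsup_{t\to\star}m_t(r)\le M<\infty$. Then, for $t$ near the endpoint (the bounds being local in $t$, as stipulated), we have $|m_t(s)|\le m_t(r)\le M+1$ uniformly for $\Re s\ge r$. This makes the family $(m_t)$ uniformly bounded, and in particular locally bounded, on $\{\Re s>r\}$. Montel's theorem then gives normality.

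I do not expect a genuine obstacle. The only care points are these:
\begin{itemize}
\item The convention for zero weights must be respected, since $0^s$ is not defined for complex $s$.
\item The pointwise modulus bound $P^{\Re s}\le P^r$ requires $P\le1$, which holds because $\sum_jP_j(t)\le1$.
\item The Fubini justification for interchanging the expectation with the contour integral needs the domination by the integrable $D_t(r)$.
\end{itemize}
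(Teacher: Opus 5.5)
Your proposal is correct and follows essentially the paper's route. You use the domination $|P_j(t)^s|\le P_j(t)^r$ (weights in $[0,1]$), Weierstrass's theorem for pathwise holomorphy, Morera with Fubini/Tonelli for $m_t$, and Montel from the uniform bound on $m_t(r)$. The only difference is that you state uniform convergence on the whole closed half-plane $\{\Re s\ge r\}$ rather than on compact subsets, which is harmless and slightly stronger.
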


\begin{proof}
For a compact set $K\subset\{\Re s>r\}$, let
$\sigma_K=\inf_{s\in K}\Re s$.  Since all weights lie in $[0,1]$,
\[
 \sup_{s\in K}\sum_{j\geq1}|P_j(t)^s|
 \leq\sum_{j\geq1}P_j(t)^{\sigma_K}
 \leq\sum_{j\geq1}P_j(t)^r.
\]
The Weierstrass theorem gives pathwise holomorphy.  Taking expectations and
using the same bound gives holomorphy by Morera's theorem and Tonelli's
theorem.  If $\sup_t m_t(r)<\infty$, the displayed estimate also gives
local boundedness uniformly in $t$, so Montel's theorem applies.
\end{proof}

We use the following countable version of the weighted weak law.

\begin{lemma}[Weighted weak law under a first moment]
\label{lem:countable-wlln}
Let $(P_j(t))_{j\geq1}$ be random subprobability weights independent of
iid centered marks with finite first absolute moment.  If
$P_*(t)=\sup_jP_j(t)\to0$ in probability, then
\[
 \sum_{j\geq1}P_j(t)X_j\longrightarrow0
 \quad\text{in probability}.
\]
\end{lemma}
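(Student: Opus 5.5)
We will argue through characteristic functions, conditioning on the weights and using the product representation of Lemma~\ref{lem:countable-series-products}. Write $\chi(u)=\E e^{iuX}$. Since $\E X=0$ and $\E|X|<\infty$, we have $\chi(v)=1+\epsilon(v)$ with $|\epsilon(v)|\leq |v|\,\eta(|v|)$, where
\[
 \eta(\delta)=\E\Bigl[|X|\bigl(1\wedge(\delta|X|)\bigr)\Bigr]\longrightarrow0
\]
as $\delta\downarrow0$ by dominated convergence. Indeed $|e^{ix}-1-ix|\leq 2|x|\wedge x^2$, and the linear term vanishes because $\E X=0$.

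It suffices to show that, for each fixed $u$,
\[
 \E\Bigl|\prod_j\chi(uP_j(t))-1\Bigr|\to0,
\]
since the left side dominates $|\E e^{iuR_t}-1|$, and this gives convergence of $R_t$ to $0$ in distribution and hence in probability. Fix $\varepsilon>0$ and $\delta>0$ and split on the event $E_t=\{P_*(t)\leq\delta\}$, whose probability tends to one. All factors $\chi(uP_j)$ have modulus at most one. The elementary inequality $|\prod a_j-\prod b_j|\leq\sum|a_j-b_j|$ for complex numbers of modulus at most one, applied to finite partial products and passed to the limit, gives on $E_t$
\[
 \Bigl|\prod_j\chi(uP_j(t))-1\Bigr|
 \leq\sum_j|\epsilon(uP_j(t))|
 \leq |u|\,\eta(|u|\delta)\sum_jP_j(t)
 \leq |u|\,\eta(|u|\delta).
\]
Off $E_t$ the difference is at most $2$. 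Therefore
\[
 \E\Bigl|\prod_j\chi(uP_j(t))-1\Bigr|\leq |u|\,\eta(|u|\delta)+2\Pp(E_t^c).
\]
First choose $\delta$ so that the first term is below $\varepsilon$, then let $t\to\star$ so that the second term vanishes.

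The only point needing care, and the main obstacle, is that only the first moment is available: a second-order Taylor bound is not possible. The truncated quantity $\eta$, which mixes the linear and quadratic bounds, replaces the variance. The subprobability constraint $\sum_jP_j\leq1$ is what makes the sum of the remainders bounded by the uniform factor $\eta(|u|\delta)$.
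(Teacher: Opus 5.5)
Your argument is correct, and it takes a different route from the paper.

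\paragraph{The paper's route.} The paper works directly in probability. It truncates the mark at level $K$ into a bounded centered part $Z_K$ and a centered remainder $W_K$ with $\E|W_K|\to0$. It bounds the conditional variance of $\sum_jP_j(t)Z_{K,j}$ by $\E Z_K^2\sum_jP_j(t)^2\leq\E Z_K^2\,P_*(t)$ and applies Chebyshev. It controls $\sum_jP_j(t)W_{K,j}$ in $L^1$ by $\E|W_K|$, uniformly in $t$, and finally lets $K\to\infty$.

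\paragraph{Your route.} You pass through the conditional characteristic product of Lemma~\ref{lem:countable-series-products} and the first-order estimate $|\chi(v)-1|=o(|v|)$. This is the same mechanism as the function $\omega$ in Lemma~\ref{lem:countable-remainder}. In both proofs the constraint $\sum_jP_j(t)\leq1$ controls the linear order, and $P_*(t)\to0$ kills what lies beyond it. The paper does this via $\sum_jP_j^2\leq P_*$; you do it via the monotonicity of $\eta$.

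\paragraph{Comparison.} Your version yields the explicit estimate
\[
 |\E e^{iuR_t}-1|\leq 2|u|\eta(|u|\delta)+2\Pp\{P_*(t)>\delta\}.
\]
It also stays within the Fourier machinery used throughout Section~\ref{sec:countable}. The cost is that you need L\'evy's continuity theorem and the fact that weak convergence to a constant implies convergence in probability. The paper's truncation argument is purely $L^2/L^1$ and gives convergence in probability directly.

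\paragraph{A small slip.} From $|e^{ix}-1-ix|\leq 2|x|\wedge x^2$ you get
\[
 |\chi(v)-1|\leq|v|\,\E\bigl[|X|(2\wedge|v||X|)\bigr]\leq 2|v|\eta(|v|),
\]
not $|v|\eta(|v|)$. The extra factor $2$ is harmless.
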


\begin{proof}
For $K>0$, write
\[
 Z_K=X\1_{\{|X|\leq K\}}
      -\E[X\1_{\{|X|\leq K\}}],
 \qquad W_K=X-Z_K.
\]
Then $\E Z_K=\E W_K=0$, $\E Z_K^2<\infty$, and
\[
 \E|W_K|
 \leq2\E\bigl(|X|\1_{\{|X|>K\}}\bigr)\longrightarrow0.
\]
Since $\sum_jP_j(t)^2\leq P_*(t)\sum_jP_j(t)<\infty$, the truncated
series converges in conditional $L^2$ and its conditional variances pass to
the limit.  Thus, conditionally on the weights,
\[
 \Var\!\left(\sum_jP_j(t)Z_{K,j}
       \,\middle|\,P(t)\right)
 =\E Z_K^2\sum_jP_j(t)^2
 \leq\E Z_K^2 P_*(t).
\]
Chebyshev's inequality controls the truncated sum after $t\to\star$.
The remainder is uniform in $t$, since
\[
 \E\left|\sum_jP_j(t)W_{K,j}\right|\leq\E|W_K|.
\]
Letting $K\to\infty$ proves the assertion.
\end{proof}

\begin{proof}[Proof of Theorem~\ref{thm:countable-dust}]
If $P_*(t)\to0$, Lemma~\ref{lem:countable-wlln}, applied to
$X_j-\mu$, gives $S_t\to\mu$ in probability.

Conversely, suppose that $S_t\Rightarrow c$.  Enlarge the probability
space by an independent iid copy $(X_j')$ of the marks, using the same
weights, and set $S_t'=\sum_jP_j(t)X_j'$.  Both $S_t$ and $S_t'$ converge
in probability to $c$, and hence $S_t-S_t'\to0$ in probability.  By
Lemma~\ref{lem:countable-series-products}, conditioning on the weights gives
\begin{equation}
 \E e^{iu(S_t-S_t')}
 =\E\prod_{j\geq1}|\chi(uP_j(t))|^2\longrightarrow1.
 \label{eq:countable-symmetrized-product}
\end{equation}
Since $X$ is nonconstant, there is $a>0$ such that
\begin{equation}
 |\chi(v)|<1\qquad(0<|v|\leq a).
 \label{eq:countable-strict-cf}
\end{equation}
Indeed, otherwise there would be nonzero $v_n\to0$ such that
$|\chi(v_n)|=1$.  For an independent copy $X'$, this would imply
$e^{iv_n(X-X')}=1$ almost surely for every $n$, and hence $X=X'$ almost
surely, contradicting nonconstancy.

Fix $\varepsilon\in(0,1]$ and $0<u\leq a$.  By compactness and
\eqref{eq:countable-strict-cf},
\[
 \rho_\varepsilon
 =\sup_{v\in[u\varepsilon/2,u]}|\chi(v)|^2<1.
\]
On $\{P_*(t)\geq\varepsilon\}$, some weight is larger than
$\varepsilon/2$, so the product in
\eqref{eq:countable-symmetrized-product} is at most $\rho_\varepsilon$.
It follows that
\[
 \E\prod_j|\chi(uP_j(t))|^2
 \leq1-(1-\rho_\varepsilon)
       \Pp\{P_*(t)\geq\varepsilon\}.
\]
Thus $P_*(t)\to0$ in probability, and the first part of the proof gives
$c=\mu$.

Finally, for every $p>1$,
\begin{equation}
 P_*(t)^p\leq\sum_jP_j(t)^p\leq P_*(t)^{p-1}.
 \label{eq:countable-max-power}
\end{equation}
If $P_*(t)\to0$ in probability, then the middle term tends to zero in
probability and is bounded by one, so it converges to zero in $L^1$ and
hence in expectation.  Conversely, convergence of that
expectation and the left inequality imply $P_*(t)\to0$ by Markov's
inequality.
\end{proof}

\subsection{The nonlinear remainder}
\label{sec:countable-remainder}

We now return to the hypotheses of Theorem~\ref{thm:countable-main},
including $R_t\Rightarrow R$, and fix $r$ as in
\eqref{eq:countable-r-bound}.
For a deterministic countable subprobability sequence $p=(p_j)$ and
$\varepsilon\in\{-1,1\}$, define
\begin{align*}
 A_p^\varepsilon(u)
   &=\sum_{j\geq1}\{\chi(\varepsilon up_j)-1\},\\
 N_p^\varepsilon(u)
   &=\prod_{j\geq1}\chi(\varepsilon up_j)-1-A_p^\varepsilon(u).
\end{align*}
Both quantities are well defined by
Lemma~\ref{lem:countable-series-products}.

\begin{lemma}[Uniform remainder estimates]
\label{lem:countable-remainder}
There are a constant $C_r<\infty$ and a deterministic function $\delta$ with
$\delta(u)\to0$ as $u\downarrow0$ such that, for either sign and every
countable subprobability sequence $p$,
\begin{align}
 |N_p^\varepsilon(u)|
   &\leq u^2\delta(u), &&0<u\leq1,
 \label{eq:countable-small-remainder}\\
 |N_p^\varepsilon(u)|
   &\leq2+C_ru^r\sum_{j\geq1}p_j^r, &&u\geq1.
 \label{eq:countable-large-remainder}
\end{align}
\end{lemma}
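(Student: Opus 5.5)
The plan is to reduce both estimates to two elementary bounds on $z_j=\chi(\varepsilon up_j)-1$ and one combinatorial bound for products. Only $\E X=0$ and $\E|X|<\infty$ are used; these hold under the hypotheses of Theorem~\ref{thm:countable-main}. Since $\chi(-v)=\overline{\chi(v)}$, the two signs behave identically, so I fix $\varepsilon$.

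\emph{Single-factor bounds.} For real $v$ and $x$,
\[
 |e^{ivx}-1-ivx|\leq\min\bigl(2|vx|,\tfrac12v^2x^2\bigr).
\]
Taking expectations and using $\E X=0$ gives $|\chi(v)-1|\leq|v|\eta(|v|)$, where
\[
 \eta(w)=\E\min\bigl(2|X|,\tfrac12 w X^2\bigr).
\]
The function $\eta$ is nondecreasing, satisfies $\eta\leq2\E|X|$, and $\eta(w)\to0$ as $w\downarrow0$ by dominated convergence. Separately, $|\chi(v)-1|\leq\min(2,|v|\E|X|)$. Combining this with $\min(a,b)\leq a^{1-r}b^r$ gives
\[
 |\chi(v)-1|\leq2^{1-r}(\E|X|)^r|v|^r.
\]

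\emph{Product expansion.} For finitely many complex $z_1,\dots,z_n$, expand $\prod_{j\leq n}(1+z_j)$ in elementary symmetric polynomials. Every term of degree $k\geq2$ is bounded by $e_k(|z_1|,\dots,|z_n|)\leq a^k/k!$, where $a=\sum_{j\leq n}|z_j|$. Hence
\[
 \Bigl|\prod_{j\leq n}(1+z_j)-1-\sum_{j\leq n}z_j\Bigr|
 \leq e^{a}-1-a\leq\tfrac12a^2e^{a}.
\]
Next let $n\to\infty$. By Lemma~\ref{lem:countable-series-products} the product converges and $\sum_j|z_j|<\infty$, so the same bound holds for the countable product with $a=\sum_j|z_j|$. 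This limiting step is the only point needing care, and it is where the absolute summability from Lemma~\ref{lem:countable-series-products} is used.

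\emph{Small $u$.} Let $0<u\leq1$. Since $p_j\leq1$ and $\eta$ is monotone,
\[
 a=\sum_j|z_j|\leq\sum_j up_j\,\eta(up_j)\leq u\,\eta(u)\sum_jp_j\leq u\,\eta(u).
\]
Using $\eta\leq2\E|X|$ and $u\leq1$, we get $e^{a}\leq e^{2\E|X|}$. Therefore
\[
 |N_p^\varepsilon(u)|\leq\tfrac12u^2\eta(u)^2e^{2\E|X|}.
\]
This is \eqref{eq:countable-small-remainder} with $\delta(u)=\tfrac12\eta(u)^2e^{2\E|X|}$, and $\delta(u)\to0$ as $u\downarrow0$. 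The function $\delta$ is deterministic and independent of $p$ and of the sign.

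\emph{Large $u$.} Let $u\geq1$. Each factor satisfies $|\chi|\leq1$, so the product has modulus at most $1$ and $|\prod_j\chi(\varepsilon up_j)-1|\leq2$. Hence
\[
 |N_p^\varepsilon(u)|\leq2+|A_p^\varepsilon(u)|
 \leq2+\sum_j|z_j|
 \leq2+2^{1-r}(\E|X|)^r\,u^r\sum_jp_j^r.
\]
This is \eqref{eq:countable-large-remainder} with $C_r=2^{1-r}(\E|X|)^r$. The bound holds trivially if $\sum_jp_j^r=\infty$.
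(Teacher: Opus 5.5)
Your proposal is correct and follows essentially the same route as the paper: expand the finite products, bound the remainder by $e^{a}-1-a$ with $a=\sum_j|z_j|\leq u\,\omega(u)$, use $|\prod_j\chi-1|\leq2$ plus an $r$th-power bound on the linear term for $u\geq1$, and let $n\to\infty$. The differences are cosmetic. You use the explicit monotone majorant $\eta(w)=\E\min(2|X|,\tfrac12wX^2)$ and constant $C_r=2^{1-r}(\E|X|)^r$. The paper instead uses the modulus $\omega(u)=\sup_{0<|v|\leq u}|\chi(v)-1|/|v|$, the bound $\delta(u)=\tfrac12e^{u\omega(u)}\omega(u)^2$, and the constant $C_r=2^{1-r}\E|X|^r$.
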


\begin{proof}
We first truncate the sequence after $n$ terms.  Since $X$ is centered and
integrable,
\[
 \omega(u)=\sup_{0<|v|\leq u}
 \frac{|\chi(v)-1|}{|v|}\longrightarrow0
 \qquad(u\downarrow0).
\]
For $z_j=\chi(\varepsilon up_j)-1$ and $0<u\leq1$, set
\[
 \delta(u)=\frac12e^{u\omega(u)}\omega(u)^2.
\]
Then $\delta(u)\to0$ as $u\downarrow0$, and
$\sum_{j\leq n}|z_j|\leq u\omega(u)$.  Expanding the finite product,
\[
 \left|\prod_{j\leq n}(1+z_j)-1-\sum_{j\leq n}z_j\right|
 \leq e^{\sum_{j\leq n}|z_j|}-1-\sum_{j\leq n}|z_j|
 \leq u^2\delta(u).
\]
This proves \eqref{eq:countable-small-remainder}.  For $u\geq1$, the
inequality $|e^{iv}-1|\leq2^{1-r}|v|^r$ and the finiteness of
$\E|X|^r$ give
\[
 |A_{p,n}^\varepsilon(u)|
 \leq C_ru^r\sum_{j\leq n}p_j^r,
 \qquad
 \left|\prod_{j\leq n}\chi(\varepsilon up_j)-1\right|\leq2.
\]
Letting $n\to\infty$ proves both bounds, since the sums and products
converge absolutely and the constants do not depend on $n$.
\end{proof}

After restricting the parameter set to a terminal neighborhood of
$\star$, we may replace \eqref{eq:countable-r-bound} by
\begin{equation}
 \sup_t m_t(r)\leq M_r<\infty.
 \label{eq:countable-uniform-r}
\end{equation}
For $r<\Re s<2$, set
\[
 J_{t,\varepsilon}(s)
 =\int_0^\infty u^{-s-1}
   \E N_{P(t)}^\varepsilon(u)\dd u.
\]
Lemma~\ref{lem:countable-remainder} and
\eqref{eq:countable-uniform-r} show that $J_{t,\varepsilon}$ is
holomorphic on the strip
\[
 \mathcal S_r=\{s\in\mathbb C:r<\Re s<2\},
\]
and that the family $(J_{t,\varepsilon})_t$ is normal there.  The same
majorants give the boundary estimate
\begin{equation}
 \lim_{s\uparrow2}(2-s)
 \sup_t|J_{t,\varepsilon}(s)|=0.
 \label{eq:countable-abelian-boundary}
\end{equation}
To see this, split the defining integral at $1$.  The part over
$[1,\infty)$ vanishes after multiplication by $2-s$, since its bound stays
bounded as $s\uparrow2$.  Given $\eta>0$, choose $a\in(0,1)$ such that
$\delta(u)\leq\eta$ on $(0,a)$.  Then
\[
 (2-s)\int_0^a u^{1-s}\delta(u)\dd u
 \leq\eta a^{2-s},
\]
whereas the same expression integrated over $(a,1)$ tends to zero because
$\delta$ is bounded there.  This proves
\eqref{eq:countable-abelian-boundary}.

\subsection{Signed Mellin identities}
\label{sec:countable-mellin}

Put
\[
 K_{t,\varepsilon}(u)=\E e^{i\varepsilon uR_t}
\]
and, for $0<\Re s<1$,
\[
 L_{t,\varepsilon}(s)
 =\int_0^\infty u^{-s-1}
   \{K_{t,\varepsilon}(u)-1\}\dd u.
\]
The estimate
\[
 |K_{t,\varepsilon}(u)-1|
 \leq\min\{2,u\E|X|\}
\]
is uniform in $t$.  If $\varphi_R$ is the characteristic function of $R$,
define
\[
 L_\varepsilon(s)=\int_0^\infty u^{-s-1}
       \{\varphi_R(\varepsilon u)-1\}\dd u,
 \qquad 0<\Re s<1.
\]
For a compact set $K\subset\{0<\Re s<1\}$, put
$\sigma_0=\inf_{s\in K}\Re s$ and
$\sigma_1=\sup_{s\in K}\Re s$.  Up to a constant depending only on
$K$ and $\E|X|$, the integrands are bounded by
\[
 u^{-\sigma_1}\1_{(0,1]}(u)
 +u^{-\sigma_0-1}\1_{(1,\infty)}(u).
\]
Weak convergence gives uniform convergence of the characteristic
functions on compact subsets of $\R$.  Dominated convergence therefore
shows that $L_{t,\varepsilon}\to L_\varepsilon$ locally uniformly on
$0<\Re s<1$.

For $0<\Re s<1$, define
\[
 \kappa_\varepsilon(s)
 =\int_0^\infty u^{-s-1}\{\chi(\varepsilon u)-1\}\dd u.
\]
If $r<\sigma=\Re s<1$, Fubini's theorem and the substitution
$v=uP_j(t)$ are justified by
\begin{align*}
 &\int_0^\infty u^{-\sigma-1}
 \E\sum_j
 |\chi(\varepsilon uP_j(t))-1|\dd u\\
 &\hspace{35mm}
 =m_t(\sigma)
   \int_0^\infty v^{-\sigma-1}|\chi(\varepsilon v)-1|\dd v<\infty.
\end{align*}
Consequently,
\begin{equation}
 L_{t,\varepsilon}(s)
 =m_t(s)\kappa_\varepsilon(s)+J_{t,\varepsilon}(s),
 \qquad r<\Re s<1.
 \label{eq:countable-mellin-decomposition}
\end{equation}

Let $m$ be an arbitrary locally uniform subsequential limit of $(m_t)$,
obtained along a sequence $t_n\to\star$ on
which
\[
 m_{t_n}\longrightarrow m
 \quad\text{locally uniformly on }\{\Re s>r\}.
\]
Normality of the two families $(J_{t,+})$ and $(J_{t,-})$ permits a
further diagonal extraction such that
\[
 J_{t_n,\varepsilon}\longrightarrow J_\varepsilon
 \quad\text{locally uniformly on }\mathcal S_r,
 \qquad \varepsilon\in\{-1,1\}.
\]
The further extraction does not change $m$.  Hence every subsequential limit
of $(m_t)$ can be completed to a triple $(m,J_+,J_-)$.  Complete two
arbitrary subsequential limits in this way and write
\[
 d=m^{(1)}-m^{(2)},
 \qquad
 Q_\varepsilon=J_\varepsilon^{(2)}-J_\varepsilon^{(1)}.
\]
The common limit of $L_{t,\varepsilon}$ and
\eqref{eq:countable-mellin-decomposition} give
\begin{equation}
 d(s)\kappa_\varepsilon(s)=Q_\varepsilon(s),
 \qquad r<\Re s<1.
 \label{eq:countable-cluster-identity}
\end{equation}
For every real $s\in(r,2)$,
\[
 |Q_\varepsilon(s)|
 \leq |J_\varepsilon^{(1)}(s)|+|J_\varepsilon^{(2)}(s)|
 \leq2\sup_t|J_{t,\varepsilon}(s)|.
\]
Thus \eqref{eq:countable-abelian-boundary} yields
\begin{equation}
 (2-s)Q_\varepsilon(s)\longrightarrow0
 \qquad(s\uparrow2).
 \label{eq:countable-Q-boundary}
\end{equation}

The scalar integral underlying the two signed identities is recorded next,
including the branch and integrability conventions.

\begin{lemma}[Regularized Fourier--Mellin integrals]
\label{lem:countable-scalar-mellin}
Let $0<\Re s<1$, $\varepsilon\in\{-1,1\}$, and $x\in\R$.  With
$0^s=0$ and, for $x\ne0$,
\[
 (-i\varepsilon x)^s
 =\exp\left\{s\left(\log|x|
       -\frac{i\pi}{2}\varepsilon\operatorname{sgn}x\right)\right\},
\]
one has
\begin{equation}
 \int_0^\infty u^{-s-1}(e^{i\varepsilon ux}-1)\dd u
 =\Gamma(-s)(-i\varepsilon x)^s.
 \label{eq:countable-scalar-mellin}
\end{equation}
If $\sigma=\Re s\in(0,1)$, then
\begin{equation}
 \int_0^\infty u^{-\sigma-1}
       |e^{i\varepsilon ux}-1|\dd u
 =C_\sigma|x|^\sigma,
 \qquad
 C_\sigma=\int_0^\infty v^{-\sigma-1}|e^{iv}-1|\dd v<\infty.
 \label{eq:countable-scalar-domination}
\end{equation}
For $1<\Re s<2$, the corresponding first-order regularization satisfies
\begin{equation}
 \int_0^\infty u^{-s-1}
       (e^{i\varepsilon ux}-1-i\varepsilon ux)\dd u
 =\Gamma(-s)(-i\varepsilon x)^s,
 \label{eq:countable-scalar-mellin-above-one}
\end{equation}
with the same branch convention.  If $1<\sigma<2$, then
\begin{equation}
 \int_0^\infty u^{-\sigma-1}
       |e^{i\varepsilon ux}-1-i\varepsilon ux|\dd u
 =C_\sigma^{(2)}|x|^\sigma<\infty,
 \label{eq:countable-scalar-domination-above-one}
\end{equation}
where
$C_\sigma^{(2)}=\int_0^\infty
 v^{-\sigma-1}|e^{iv}-1-iv|\dd v$.
\end{lemma}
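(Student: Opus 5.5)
Both identities reduce, by scaling, to the case $x=\pm1$, and the trivial case $x=0$ holds by the convention $0^s=0$. For $x\ne0$ we substitute $v=u|x|$, which gives $\int_0^\infty u^{-s-1}(e^{i\varepsilon ux}-1)\dd u=|x|^s\int_0^\infty v^{-s-1}(e^{i\theta v}-1)\dd v$ with $\theta=\varepsilon\operatorname{sgn}x\in\{-1,1\}$. The same substitution turns the absolute integrals into \eqref{eq:countable-scalar-domination} and \eqref{eq:countable-scalar-domination-above-one}, because $|e^{i\theta v}-1|$ and $|e^{i\theta v}-1-i\theta v|$ do not depend on the sign $\theta$. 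Finiteness of $C_\sigma$ follows from $|e^{iv}-1|\le\min\{2,v\}$, which is integrable against $v^{-\sigma-1}$ near $0$ when $\sigma<1$ and near $\infty$ when $\sigma>0$. Likewise $C^{(2)}_\sigma<\infty$ because $|e^{iv}-1-iv|\le\min\{v^2/2,\,2+v\}$, which is integrable against $v^{-\sigma-1}$ near $0$ when $\sigma<2$ and near $\infty$ when $\sigma>1$. These bounds also show that both complex integrals converge absolutely and define holomorphic functions of $s$ on their strips, by Morera's theorem and locally uniform domination.

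It remains to show $\int_0^\infty v^{-s-1}(e^{i\theta v}-1)\dd v=\Gamma(-s)e^{-i\pi\theta s/2}$, since $(-i\theta)^s=e^{-i\pi\theta s/2}$ matches the stated branch. I would first treat real $s\in(0,1)$, insert a damping factor $e^{-\lambda v}$ with $\lambda>0$, and use the standard formula $\int_0^\infty v^{-s-1}(e^{-zv}-1)\dd v=\Gamma(-s)z^s$ for $\Re z>0$ and $0<s<1$. This formula itself comes from one integration by parts, which converts the integral into $-\frac{z}{s}\int_0^\infty v^{-s}e^{-zv}\dd v=-\frac{z}{s}\Gamma(1-s)z^{s-1}$; that equals $\Gamma(-s)z^s$ by analytic continuation in $z$ from the positive reals. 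We take $z=\lambda-i\theta$ and let $\lambda\downarrow0$. To justify this limit, note that $e^{-zv}-1$ is bounded by $\min\{|z|v,2\}$ uniformly for $\lambda\in(0,1]$, so dominated convergence applies. Since $z^s\to(-i\theta)^s$ on the principal branch, the identity follows for real $s\in(0,1)$. Both sides are holomorphic on $0<\Re s<1$, so the identity theorem extends it to the whole strip.

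For $1<\Re s<2$, I would integrate by parts once: $\int_0^\infty v^{-s-1}(e^{i\theta v}-1-i\theta v)\dd v=\frac{i\theta}{s}\int_0^\infty v^{-s}(e^{i\theta v}-1)\dd v$. The boundary terms vanish because $v^{-s}(e^{i\theta v}-1-i\theta v)=O(v^{2-s})$ at $0$ and $O(v^{1-s})$ at $\infty$. The remaining integral falls under the first case with exponent $s-1\in(0,1)$ and equals $\Gamma(1-s)(-i\theta)^{s-1}$. The product is $\frac{i\theta}{s}\Gamma(1-s)(-i\theta)^{s-1}=-\frac{\Gamma(1-s)}{s}(-i\theta)^s=\Gamma(-s)(-i\theta)^s$, using $i\theta=-(-i\theta)$ and $\Gamma(1-s)=-s\Gamma(-s)$. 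Multiplying back by $|x|^s$ gives \eqref{eq:countable-scalar-mellin-above-one}.

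The only delicate point is the branch bookkeeping. We must check that the principal-branch limit of $(\lambda-i\theta)^s$ as $\lambda\downarrow0$ equals the stated $\exp\{-i\pi\theta s/2\}$, and that the exponent identity $(-i\theta)^{s-1}(-i\theta)=(-i\theta)^s$ is consistent on this branch. Both hold because $\arg(-i\theta)=-\pi\theta/2$ lies strictly inside $(-\pi,\pi)$.
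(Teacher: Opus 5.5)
Your proposal is correct and follows essentially the paper's route. Both arguments use the damped formula $\int_0^\infty u^{-s-1}(e^{-zu}-1)\dd u=\Gamma(-s)z^s$ for $\Re z>0$, obtained by integration by parts, then pass to the imaginary axis by dominated convergence, and handle the strip $1<\Re s<2$ by one integration by parts that reduces it to the first identity at parameter $s-1$. Your variations (scaling to $x=\pm1$ first, proving the case of real $s$ and extending by the identity theorem, and integrating by parts on the undamped rather than the damped integral) are only cosmetic.
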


\begin{proof}
For $\Re z>0$, integration by parts, with the principal branch of $z^s$,
gives
\[
 \int_0^\infty u^{-s-1}(e^{-zu}-1)\dd u=\Gamma(-s)z^s.
\]
Let $z$ tend to $-i\varepsilon x$ through the right half-plane.  Dominated
convergence is justified near zero by a constant multiple of
$u^{-\Re s}$ and at infinity by a constant multiple of
$u^{-\Re s-1}$.  This proves \eqref{eq:countable-scalar-mellin} and its
stated branch.  The substitution $v=u|x|$ proves
\eqref{eq:countable-scalar-domination}; the constant is finite because
$|e^{iv}-1|\leq\min\{2,v\}$.

For the second strip, let $1<\Re s<2$ and $\Re z>0$.  Two integrations by
parts, or one integration by parts followed by the first identity with
parameter $s-1$, give
\[
 \int_0^\infty u^{-s-1}(e^{-zu}-1+zu)\dd u
 =\Gamma(-s)z^s.
\]
Letting $z$ tend to $-i\varepsilon x$ through the right half-plane proves
\eqref{eq:countable-scalar-mellin-above-one}.  Uniform domination follows
from a constant multiple of $u^{1-\Re s}$ near zero and of
$u^{-\Re s}$ at infinity.  Finally, scaling by $|x|$ proves
\eqref{eq:countable-scalar-domination-above-one}; its constant is finite
because $|e^{iv}-1-iv|=O(v^2)$ at zero and is $O(v)$ at infinity.
\end{proof}

Let
\[
 A(s)=\E X_+^s,\qquad B(s)=\E X_-^s.
\]
Here and below, $0^s=0$ in the moment transforms.  For
$0<\sigma<1$, the first moment gives $\E|X|^\sigma<\infty$, and
Lemma~\ref{lem:countable-scalar-mellin} yields
\[
 \int_0^\infty u^{-\sigma-1}
 \E|e^{i\varepsilon uX}-1|\dd u
 =C_\sigma\E|X|^\sigma<\infty.
\]
Fubini's theorem is therefore available before the positive and negative
parts of $X$ are separated.  Initially on $0<\Re s<1$, we obtain
\begin{align*}
 \kappa_+(s)
 &=\Gamma(-s)\{e^{-i\pi s/2}A(s)+e^{i\pi s/2}B(s)\},\\
 \kappa_-(s)
 &=\Gamma(-s)\{e^{i\pi s/2}A(s)+e^{-i\pi s/2}B(s)\}.
\end{align*}
Solving the two signed identities removes the apparent singularity at
$s=1$:
\begin{align}
 d(s)A(s)
 &=F_A(s):=\frac{\Gamma(1+s)}{2\pi i}
 \{e^{-i\pi s/2}Q_+(s)-e^{i\pi s/2}Q_-(s)\},
 \label{eq:countable-FA}\\
 d(s)B(s)
 &=F_B(s):=\frac{\Gamma(1+s)}{2\pi i}
 \{-e^{i\pi s/2}Q_+(s)+e^{-i\pi s/2}Q_-(s)\}.
 \label{eq:countable-FB}
\end{align}
Both right-hand sides are holomorphic throughout $\mathcal S_r$.

For reference, the domains used in the continuation argument are
\[
\begin{array}{c|c}
\text{function}&\text{domain of holomorphy}\\ \hline
m^{(i)},\ d&\Re s>r\\
J_\varepsilon^{(i)},\ Q_\varepsilon,\ F_A,\ F_B
  &r<\Re s<2\\
A&0<\Re s<a_+\\
B&0<\Re s<a_-
\end{array}
\]
The identities obtained on $r<\Re s<1$ extend by the identity theorem to
\begin{align*}
 d(s)A(s)&=F_A(s),
   &&r<\Re s<\min\{a_+,2\},\\
 d(s)B(s)&=F_B(s),
   &&r<\Re s<\min\{a_-,2\}.
\end{align*}
If $a_X<\infty$, the transform $A$ has abscissa $a_X$ when
$a_+=a_X$, and $B$ has abscissa $a_X$ when $a_-=a_X$.  At least one
of these alternatives holds, and the corresponding transform is singular
there by Lemma~\ref{lem:countable-landau} below.

\subsection{Selection of the exponent}
\label{sec:countable-selection}

We first record the precise form of Landau's theorem used below.

\begin{lemma}[Landau at the abscissa of convergence]
\label{lem:countable-landau}
Let $Z\geq0$ and
\[
 0<a_Z=\sup\{q>0:\E Z^q<\infty\}<\infty.
\]
Then $s=a_Z$ is a singular point of the moment transform
$s\mapsto\E Z^s$.  This remains true when
$\E Z^{a_Z}<\infty$.
\end{lemma}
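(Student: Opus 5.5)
The plan is to reduce to Landau's classical theorem for Dirichlet/Laplace integrals with nonnegative measures, using the substitution $Z=e^{W}$. With this substitution the moment transform becomes
\[
 M(s)=\E Z^s=\E\bigl[e^{sW}\1_{\{Z>0\}}\bigr]=\int_{\R} e^{sw}\,\mu(\dd w),
\]
where $\mu$ is the (possibly defective) law of $\log Z$ on $\{Z>0\}$. This is a bilateral Laplace transform of a nonnegative measure. It converges absolutely on the strip $0<\Re s<a_Z$. The left edge is irrelevant here, since $\E Z^0\le 1$ and all $q\in(0,a_Z)$ give finite moments.

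First, split off the part on $\{Z\le 1\}$. There $Z^s$ is bounded by $1$ for $\Re s\ge 0$. Hence $s\mapsto \E[Z^s\1_{\{0<Z\le1\}}]$ is holomorphic on $\Re s>0$ and does not affect singularity at $a_Z$. It remains to show that $a_Z$ is singular for
\[
 M_1(s)=\E\bigl[Z^s\1_{\{Z>1\}}\bigr]=\int_{(0,\infty)}e^{sw}\,\mu(\dd w).
\]
Its abscissa is exactly $a_Z$, because the $\{Z\le1\}$ part is finite for every $q>0$.

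Suppose, for contradiction, that $M_1$ extends holomorphically to a disc around $a_Z$. Pick a real $s_0\in(0,a_Z)$ close enough to $a_Z$ that the disc of radius $a_Z-s_0+\eta$ about $s_0$ lies in the holomorphy domain, for some $\eta>0$. Since this is a union of the half-plane and the small disc, I would take $s_0$ near $a_Z$ and $\eta$ small so the disc fits. The Taylor coefficients at $s_0$ are
\[
 M_1^{(k)}(s_0)=\int w^k e^{s_0w}\,\mu(\dd w)\ge 0,
\]
by differentiation under the integral, which is justified on the convergence strip. The series therefore converges at $s_0+h$ with $h=a_Z-s_0+\eta/2$. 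All terms are nonnegative, so Tonelli lets us interchange sum and integral:
\[
 \sum_k\frac{h^k}{k!}\int w^ke^{s_0w}\,\mu(\dd w)=\int e^{(s_0+h)w}\,\mu(\dd w).
\]
Thus $\E Z^{a_Z+\eta/2}<\infty$, contradicting the definition of $a_Z$.

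This argument never uses whether $\E Z^{a_Z}$ is finite or infinite. So the second sentence follows at once, and I will remark that singularity here means the absence of holomorphic extension to any neighbourhood, not unboundedness. The only delicate step is the Pringsheim-type positivity/Tonelli interchange. It is also where the case $\E Z^{a_Z}<\infty$ might look problematic, but it needs nothing beyond nonnegativity of the integrand for $w>0$, which the restriction to $\{Z>1\}$ guarantees.
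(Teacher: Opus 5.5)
Your proof is correct and is essentially the paper's argument: split off $\{Z\le1\}$, write the $\{Z>1\}$ part as a Laplace transform of a finite measure on $(0,\infty)$, expand at a real point just below $a_Z$ where all derivatives are nonnegative, and use Tonelli to obtain a finite moment beyond $a_Z$, without using whether $\E Z^{a_Z}$ is finite. The differences are cosmetic. You choose $s_0$ and $\eta$ qualitatively, whereas the paper takes $a=a_Z-\epsilon/4$ and $h=\epsilon/3$ explicitly. The paper also spells out the gluing of the continuation disc with the half-plane via the identity theorem on their connected overlap, which you use implicitly.
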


\begin{proof}
The contribution from $0<Z\leq1$ is holomorphic in a neighborhood of every
positive real point.  For the contribution from $Z>1$, define the finite
measure $\nu$ on $(0,\infty)$ by
\[
 \nu(B)=\Pp\{Z>1,\ \log Z\in B\}.
\]
The corresponding moment contribution is the Laplace transform
\[
 G(s)=\int_{(0,\infty)}e^{sx}\,\nu(\dd x),
 \qquad \Re s<a_Z,
\]
of a positive measure.  Suppose that there were a holomorphic function
$\widetilde G$ on $D(a_Z,\epsilon)$ which agrees with $G$ on the nonempty,
connected overlap $D(a_Z,\epsilon)\cap\{\Re s<a_Z\}$.  The identity theorem
makes this agreement unique, so $G$ and $\widetilde G$ glue to one
holomorphic function on the union of the original half-plane and the
continuation disk.  After reducing $\epsilon$ so that
$\epsilon<2a_Z$, take
$a=a_Z-\epsilon/4$ and $h=\epsilon/3$.  Then
\[
 D(a,\epsilon/2)\subset D(a_Z,3\epsilon/4)
 \subset D(a_Z,\epsilon),
\]
and $a+h>a_Z$.  The Taylor series of the glued function
about $a$ therefore
converges at $a+h$.  For every $n\geq0$,
differentiation below the abscissa is justified by choosing
$a<b<a_Z$ and using $x^ne^{ax}\leq C_{n,a,b}e^{bx}$.  It gives
\[
 G^{(n)}(a)=\int_0^\infty x^ne^{ax}\,\nu(\dd x)\geq0.
\]
Tonelli's theorem and convergence of the Taylor series then yield
\[
 \int_0^\infty e^{(a+h)x}\,\nu(\dd x)
 =\sum_{n=0}^\infty\frac{h^n}{n!}G^{(n)}(a)<\infty,
\]
contradicting $a+h>a_Z$.  Thus the Laplace transform, and hence the moment
transform, is singular at its abscissa.
\end{proof}

We shall also use the following removability criterion.

\begin{lemma}[Removability at a real boundary point]
\label{lem:countable-removability}
Let $f$ and $g$ be holomorphic in a neighborhood of a real point $a$, with
$g\not\equiv0$.  Suppose that $h$ is holomorphic on the portion of that
neighborhood lying to the left of $a$, that $f=gh$ there, and that
$h(s)$ remains bounded for real $s\uparrow a$.  Then $f/g$ has a removable
singularity at $a$.
\end{lemma}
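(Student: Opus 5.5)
We prove Lemma~\ref{lem:countable-removability} by reducing it to Riemann's removable singularity theorem. The problem is that $h$ is known only on one side of $a$, so $f/g$ is a priori defined only there. We therefore first make $f/g$ meromorphic near $a$ and then rule out a pole using the boundedness of $h$ along the real axis.

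Fix a disk $D=D(a,\rho)$ inside the common neighborhood on which $f$ and $g$ are holomorphic. Since $g\not\equiv0$, the identity theorem shows that the zeros of $g$ are isolated. Shrinking $\rho$, we may assume that $g$ has no zero in $D$ except possibly at $a$. Then $f/g$ is holomorphic on $D\setminus\{a\}$. On the left part $D\cap\{\Re s<a\}$ it coincides with $h$, because $f=gh$ there and $g\neq0$. At $a$ itself, $f/g$ has at worst a pole, since $f$ and $g$ are holomorphic and $g\not\equiv0$. Write $g(s)=(s-a)^k g_1(s)$ with $g_1(a)\neq0$. Then
\[
 \frac fg(s)=\frac{f(s)}{g_1(s)}(s-a)^{-k}.
\]
The first factor is holomorphic at $a$, so $f/g$ is meromorphic at $a$ with pole order at most $k$.

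Now suppose $f/g$ had a genuine pole of order $\ell\geq1$ at $a$. Then
\[
 \left|\frac fg(s)\right|\sim c\,|s-a|^{-\ell},\qquad c>0,
\]
as $s\to a$ from any direction, in particular along real $s\uparrow a$. For real $s<a$ close to $a$, the point $s$ lies in the left part of $D$, where $f/g=h$. This contradicts the assumed boundedness of $h(s)$ for real $s\uparrow a$. Hence $\ell=0$: the Laurent expansion of $f/g$ at $a$ has no negative-order terms, and the singularity is removable. Equivalently, $f/g$ extends holomorphically to $D$ and agrees with $h$ on the left half-disk.

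The only delicate point is the first step. One must confirm that $f/g$ is genuinely meromorphic at $a$, so that it cannot have an essential singularity, and that a single real approach direction suffices to exclude a pole. Both follow because a meromorphic pole forces $|f/g|\to\infty$ uniformly as $s\to a$ from every direction. No information about $h$ to the right of $a$ is needed, which is exactly how the lemma will be used at the moment abscissa.
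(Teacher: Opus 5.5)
Your proof is correct and follows essentially the same route as the paper: factoring out the order of vanishing of $g$ at $a$ shows that $f/g$ has at worst a pole there. The asymptotic $|f(s)/g(s)|\sim c\,|s-a|^{-\ell}$ along real $s\uparrow a$, where $f/g=h$, then contradicts the boundedness of $h$. The paper phrases the same argument by comparing the zero orders of $f$ and $g$ at $a$ and concluding that the order for $f$ is at least that for $g$.
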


\begin{proof}
If $g(a)\neq0$, there is nothing to prove.  Otherwise, let $k$ and $\ell$
be the orders of the zeros of $g$ and $f$ at $a$; the case $f\equiv0$ is
immediate.  Along the real axis,
$|f(s)/g(s)|$ is asymptotic to a nonzero constant times
$|s-a|^{\ell-k}$.  Its boundedness forces $\ell\geq k$, which is exactly
the removability assertion.
\end{proof}

The following boundary estimate applies whether or not the second moment
is finite.

\begin{lemma}[Second-moment boundary]
\label{lem:countable-second-moment-boundary}
Assume that $\E|X|^s<\infty$ for every $1<s<2$, that $X$ is not almost surely
zero, and that $d$ is continuous at $2$.  Suppose, for $1<s<2$, that
\[
 2d(s)\Gamma(-s)\cos(\pi s/2)\E|X|^s=Q_+(s)+Q_-(s)
\]
and
\[
 (2-s)Q_\varepsilon(s)\longrightarrow0,
 \qquad \varepsilon\in\{-1,1\}.
\]
Then $d(2)=0$.
\end{lemma}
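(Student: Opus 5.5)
The plan is to multiply the given identity by $(2-s)$, let $s\uparrow2$ along the reals, and compare the two sides. The pole of $\Gamma(-s)$ at $s=2$ produces a nonvanishing factor on the left. The hypothesis $(2-s)Q_\varepsilon(s)\to0$ kills the right-hand side. Then $d(2)=0$ is forced, provided $\E|X|^s$ stays bounded away from zero near $2$.

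First, I record the local behavior of the Gamma factor. Put $z=-s$. Near $z=-2$ the Gamma function has a simple pole with residue $1/2$, so $\Gamma(-s)\sim\frac{1}{2(2-s)}$ as $s\uparrow2$. Since $\cos(\pi s/2)\to\cos\pi=-1$, we obtain
\[
 (2-s)\,2\Gamma(-s)\cos(\pi s/2)\longrightarrow-1\qquad(s\uparrow2).
\]
Second, I bound the moment factor from below. Since $X$ is not almost surely zero, $|X|^s\to|X|^2$ pointwise as $s\uparrow2$, and Fatou's lemma gives
\[
 \liminf_{s\uparrow2}\E|X|^s\geq\E X^2\in(0,\infty].
\]
Hence there are $\eta>0$ and $s_0<2$ with $\E|X|^s\geq\eta$ for $s_0<s<2$. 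These moments are finite by hypothesis, so the identity is meaningful there.

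Now multiply the identity by $(2-s)$ and take absolute values. For real $s\in(s_0,2)$ close to $2$,
\[
 |d(s)|\cdot\bigl|(2-s)2\Gamma(-s)\cos(\pi s/2)\bigr|\cdot\E|X|^s
 \leq|(2-s)Q_+(s)|+|(2-s)Q_-(s)|\longrightarrow0.
\]
Suppose $d(2)\neq0$. Continuity of $d$ at $2$ gives $|d(s)|\geq|d(2)|/2$ near $2$. The Gamma--cosine factor exceeds $1/2$ in modulus near $2$, and the moment factor is at least $\eta$. The left side would then be bounded below by $|d(2)|\eta/4>0$, which is a contradiction. Hence $d(2)=0$.

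The step needing the most care is the lower bound on $\E|X|^s$, since the second moment may be infinite. Fatou's lemma handles this directly: when $\E X^2=\infty$ the left side only becomes larger, so it does no harm. No complex-analytic continuation is needed, because only the real approach $s\uparrow2$ is used.
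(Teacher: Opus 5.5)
Your proof is correct and follows the paper's argument: multiply the identity by $2-s$, use $2(2-s)\Gamma(-s)\cos(\pi s/2)\to-1$, keep $\E|X|^s$ bounded away from zero, and conclude from continuity of $d$ at $2$. The only difference is in the lower bound: you get it near $2$ from Fatou's lemma, whereas the paper uses the explicit bound $\E|X|^s\geq\min\{a,a^2\}\Pp\{a\leq|X|\leq b\}$, which holds uniformly on all of $(1,2)$.
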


\begin{proof}
Put $c_s=2(2-s)\Gamma(-s)\cos(\pi s/2)$, so that $c_s\to-1$ as
$s\uparrow2$.  Since $X$ is not almost surely zero, there are
$0<a<b<\infty$ with $\Pp\{a\leq|X|\leq b\}>0$.  For $1<s<2$,
\[
 \E|X|^s\geq\min\{a,a^2\}\Pp\{a\leq|X|\leq b\}=:c>0.
\]
Multiplying the assumed identity by $2-s$ therefore gives, for $s$
sufficiently close to $2$,
\[
 |d(s)|\leq
 \frac{|(2-s)(Q_+(s)+Q_-(s))|}{c|c_s|}\longrightarrow0.
\]
Continuity of $d$ at $2$ proves the assertion.
\end{proof}

We now show that any two subsequential limits agree at the exponent stated in
Theorem~\ref{thm:countable-main}.

Suppose first that $a_X=1$, and suppose for contradiction that
$d\not\equiv0$.  If $d(1)\neq0$, the quotients $F_A/d$ and $F_B/d$ continue
$A$ and $B$ through $1$.  If $d(1)=0$, then, for real $s\uparrow1$,
\[
 A(s)\leq1+\E X_+,
 \qquad B(s)\leq1+\E X_-,
\]
and Lemma~\ref{lem:countable-removability} again makes both quotients
removable at $1$.  At least one of $A$ and $B$ has abscissa $1$, so either
alternative continues that transform through its Landau singularity.  Hence
\begin{equation}
 d\equiv0\qquad(a_X=1).
 \label{eq:countable-case-one}
\end{equation}

Suppose next that $1<a_X<2$.  The identities
\eqref{eq:countable-FA}--\eqref{eq:countable-FB} extend by the identity
theorem as far as the corresponding moment transforms are holomorphic.  If
$d(a_X)\neq0$, the appropriate quotient continues the transform whose
abscissa is $a_X$ through that point, in conflict with
Lemma~\ref{lem:countable-landau}.  Therefore
\begin{equation}
 d(a_X)=0\qquad(1<a_X<2).
 \label{eq:countable-case-interior}
\end{equation}

Finally, suppose that $a_X\geq2$.  For real $1<s<2$, adding the two
continued signed identities gives
\begin{equation}
 2d(s)\Gamma(-s)\cos(\pi s/2)\E|X|^s
 =Q_+(s)+Q_-(s).
 \label{eq:countable-boundary-two}
\end{equation}
Since
\[
 2(2-s)\Gamma(-s)\cos(\pi s/2)\longrightarrow-1,
\]
equations \eqref{eq:countable-Q-boundary} and
\eqref{eq:countable-boundary-two}, together with
Lemma~\ref{lem:countable-second-moment-boundary}, imply $d(2)=0$.

\begin{proof}[Proof of Theorem~\ref{thm:countable-main}]
The preceding argument shows that all subsequential limits have the same
value at the exponent $p$ specified in the theorem; when $a_X=1$, it shows
this separately for every fixed $p\in(1,2)$.  Denote the common value at
the selected exponent by $c$ (or by $c_p$ in the latter case).

Every sequence $t_n\to\star$ has a subsequence along which the normal
families above converge locally uniformly.  Along that subsequence,
$m_{t_n}(p)\to c$.  The usual subsequence criterion therefore gives
$m_t(p)\to c$ as $t\to\star$.  Since the weights are
subprobabilities, $0\leq c\leq1$.

If $c=0$, then
\[
 \E P_*(t)^p
 \leq\E\sum_jP_j(t)^p=m_t(p)\longrightarrow0.
\]
Thus $P_*(t)\to0$ in probability, and
Lemma~\ref{lem:countable-wlln} would give $R_t\to0$ in probability,
contrary to the nondegeneracy of $R$.  Hence $c>0$.
\end{proof}

\begin{proof}[Proof of Theorem~\ref{thm:countable-above-one}]
For $\sigma>1$, the subprobability property gives
\begin{equation}
 |m_t(s)|\leq m_t(\sigma)\leq m_t(1)\leq1,
 \qquad \Re s=\sigma.
 \label{eq:above-one-normality}
\end{equation}
Lemma~\ref{lem:countable-dirichlet} therefore shows that $(m_t)$ is a
normal family on $\{\Re s>1\}$.

Fix a compact set $K\subset\{1<\Re s<\min(a_X,2)\}$.  Choose
\[
 \sup_{s\in K}\Re s<q<\min(a_X,2).
\]
Conditionally on the weights, add an atom of mass
$1-\sum_jP_j(t)$ carrying the value zero.  Jensen's inequality then gives
\begin{equation}
 \E\bigl(|R_t|^q\mid P(t)\bigr)
 \leq \E|X|^q\sum_jP_j(t)
 \leq \E|X|^q.
 \label{eq:above-one-jensen}
\end{equation}
For $\varepsilon\in\{-1,1\}$ put
\[
 K_{t,\varepsilon}(u)=\E e^{i\varepsilon uR_t},\qquad
 \widehat L_{t,\varepsilon}(s)
 =\int_0^\infty u^{-s-1}\{K_{t,\varepsilon}(u)-1\}\dd u.
\]
Because $\E R_t=0$, the inequality
$|e^{ix}-1-ix|\leq C_q|x|^q$, valid for $1<q\leq2$, and
\eqref{eq:above-one-jensen} control the integral at zero; the bound by two
controls it at infinity.  More explicitly,
\[
 |K_{t,\varepsilon}(u)-1|
 \leq C_qu^q\E|R_t|^q
 \leq C_qu^q\E|X|^q.
\]
Weak convergence of $R_t$, together with these majorants, shows that
$\widehat L_{t,\varepsilon}$ has a unique local limit on
$1<\Re s<\min(a_X,2)$.  The compact set was arbitrary.

Reuse $N_{P(t)}^\varepsilon$ from Section~\ref{sec:countable-remainder} and
set
\[
 \widehat J_{t,\varepsilon}(s)
 =\int_0^\infty u^{-s-1}
   \E N_{P(t)}^\varepsilon(u)\dd u.
\]
The small-$u$ part of Lemma~\ref{lem:countable-remainder} requires no
power-sum bound at an exponent below one and gives
\[
 |\E N_{P(t)}^\varepsilon(u)|\leq u^2\delta(u).
\]
At infinity,
\[
 |N_{P(t)}^\varepsilon(u)|
 \leq2+\sum_j|\chi(\varepsilon uP_j(t))-1|
 \leq2+u\E|X|.
\]
Consequently $(\widehat J_{t,\varepsilon})_t$ is normal on
$1<\Re s<2$, and the proof of
\eqref{eq:countable-abelian-boundary} applies verbatim to give
\begin{equation}
 (2-s)\sup_t|\widehat J_{t,\varepsilon}(s)|\longrightarrow0
 \qquad(s\uparrow2).
 \label{eq:above-one-abelian-boundary}
\end{equation}

When $\E X^2<\infty$, the same remainder has a stronger bound.  Put
$C=\E X^2/2$.  Centering gives
\[
 |\chi(v)-1|
 =\bigl|\E(e^{ivX}-1-ivX)\bigr|\leq Cv^2.
\]
For a deterministic subprobability sequence $p$, set
$z_j=\chi(\varepsilon up_j)-1$.  Then
$\sum_j|z_j|\leq Cu^2\sum_jp_j^2\leq Cu^2$.  Expanding finite products
as in Lemma~\ref{lem:countable-remainder}, and then passing to the
countable product, yields
\begin{equation}
 |N_p^\varepsilon(u)|
 \leq e^{Cu^2}-1-Cu^2
 \leq \tfrac12 C^2e^C u^4,
 \qquad 0<u\leq1.
 \label{eq:above-one-fourth-order-remainder}
\end{equation}
Together with the bound $2+u\E|X|$ at infinity, this shows that
$(\widehat J_{t,\varepsilon})_t$ is normal on $1<\Re s<4$ whenever
the variance is finite.

For $1<\Re s<\min(a_X,2)$, absolute Fubini and the substitution
$v=uP_j(t)$ yield
\begin{equation}
 \widehat L_{t,\varepsilon}(s)
 =m_t(s)\widehat\kappa_\varepsilon(s)
  +\widehat J_{t,\varepsilon}(s),
 \label{eq:above-one-decomposition}
\end{equation}
where
\[
 \widehat\kappa_\varepsilon(s)
 =\int_0^\infty u^{-s-1}
   \E(e^{i\varepsilon uX}-1-i\varepsilon uX)\dd u.
\]
Indeed, for every real $\sigma$ in this strip,
Lemma~\ref{lem:countable-scalar-mellin} gives
\[
 \int_0^\infty u^{-\sigma-1}
 \E|e^{i\varepsilon uX}-1-i\varepsilon uX|\dd u
 =C_\sigma^{(2)}\E|X|^\sigma<\infty.
\]
After the substitution $v=uP_j(t)$, summing the same bound over $j$ costs
only the factor $m_t(\sigma)\leq1$.  This justifies all interchanges above,
and the second identity in the lemma gives
\begin{align*}
 \widehat\kappa_+(s)
 &=\Gamma(-s)\{e^{-i\pi s/2}A(s)+e^{i\pi s/2}B(s)\},\\
 \widehat\kappa_-(s)
 &=\Gamma(-s)\{e^{i\pi s/2}A(s)+e^{-i\pi s/2}B(s)\}.
\end{align*}

Compare two arbitrary subsequential limits $m^{(1)}$ and $m^{(2)}$ of
$(m_t)$, and after further extraction complete each with subsequential limits
of the two normal families $\widehat J_{t,\varepsilon}$; when the variance
is finite, take these limits on $1<\Re s<4$.  The unique limit
of $\widehat L_{t,\varepsilon}$ and
\eqref{eq:above-one-decomposition} imply, with
\[
 d=m^{(1)}-m^{(2)},\qquad
 Q_\varepsilon=\widehat J_\varepsilon^{(2)}
               -\widehat J_\varepsilon^{(1)},
\]
that $d(s)\widehat\kappa_\varepsilon(s)=Q_\varepsilon(s)$.  Inverting the
two signed equations exactly as in
\eqref{eq:countable-FA}--\eqref{eq:countable-FB} gives
\begin{align}
 d(s)A(s)
 &=\frac{\Gamma(1+s)}{2\pi i}
   \{e^{-i\pi s/2}Q_+(s)-e^{i\pi s/2}Q_-(s)\},
 \label{eq:above-one-positive}\\
 d(s)B(s)
 &=\frac{\Gamma(1+s)}{2\pi i}
   \{-e^{i\pi s/2}Q_+(s)+e^{-i\pi s/2}Q_-(s)\}.
 \label{eq:above-one-negative}
\end{align}
The right-hand sides are holomorphic on $1<\Re s<2$, and on
$1<\Re s<4$ when the variance is finite.

If $1<a_X<2$ and $d(a_X)\neq0$, the quotient corresponding to a
one-sided moment transform whose abscissa is $a_X$ continues that transform
through its Landau singularity.  Hence $d(a_X)=0$.  If in addition
$\E|X|^{a_X}<\infty$ and $d\not\equiv0$, that transform stays bounded on
the real axis as $s\uparrow a_X$; Lemma~\ref{lem:countable-removability}
makes the relevant quotient in \eqref{eq:above-one-positive} or
\eqref{eq:above-one-negative} removable, again contradicting
Lemma~\ref{lem:countable-landau}.  Thus in the boundary-moment case every
two subsequential limits coincide on $\{\Re s>1\}$.

If $a_X\geq2$, adding the two signed equations gives, for $1<s<2$,
\[
 2d(s)\Gamma(-s)\cos(\pi s/2)\E|X|^s
 =Q_+(s)+Q_-(s).
\]
Since
$2(2-s)\Gamma(-s)\cos(\pi s/2)\to-1$,
\eqref{eq:above-one-abelian-boundary} implies $d(2)=0$, whether
$\E X^2$ is finite or infinite, by
Lemma~\ref{lem:countable-second-moment-boundary}.

It remains to prove convergence of the full profile when
$2\leq a_X<4$ and $\E|X|^{a_X}<\infty$.  The variance is then finite,
so $Q_+$ and $Q_-$ are holomorphic on $1<\Re s<4$.  Adding the signed
identities and using the reflection formula gives, initially for
$1<\Re s<2$,
\begin{equation}
 d(s)\E|X|^s=H(s),\qquad
 H(s)=-\frac{\Gamma(1+s)\sin(\pi s/2)}{\pi}
       \{Q_+(s)+Q_-(s)\}.
 \label{eq:above-one-absolute-moment}
\end{equation}
The function $H$ is holomorphic on $1<\Re s<4$, including $s=2$.
Since $s\mapsto\E|X|^s$ is holomorphic on $1<\Re s<a_X$, the
identity theorem extends \eqref{eq:above-one-absolute-moment} throughout
that strip.  If $d\not\equiv0$, the quotient $H/d$ is meromorphic in
a neighborhood of $a_X$ and agrees there, to the left of $a_X$, with
$\E|X|^s$.  This moment transform stays bounded on the real axis as
$s\uparrow a_X$, by the finite critical moment assumption.
Lemma~\ref{lem:countable-removability} therefore makes $H/d$ holomorphic
at $a_X$, contradicting Lemma~\ref{lem:countable-landau} applied to
$|X|$.  Thus $d\equiv0$.  The argument includes $a_X=2$: it continues
the identity \eqref{eq:above-one-absolute-moment}, without extending
the defining integral for $\widehat\kappa_\varepsilon$ beyond its
strip of absolute convergence.

Normality and the subsequence criterion now give the asserted endpoint
convergence.  The limits lie in $[0,1]$ by
\eqref{eq:above-one-normality}.  A zero limit would force
$P_*(t)\to0$ in probability and then $R_t\to0$ in probability by
Lemma~\ref{lem:countable-wlln}; hence each asserted limit is positive.
When $1<a_X<4$ and $\E|X|^{a_X}<\infty$, the unique subsequential
limit is the local uniform limit $m$, and the same argument shows
$m(q)>0$ for every real $q>1$.
\end{proof}

The restriction $a_X<4$ comes from the fourth-order bound on the nonlinear
remainder used above.  We do not claim that $4$ is an optimal threshold
for convergence of the full power-sum profile.

\begin{remark}[The power-sum bound for exponents below one can fail]
\label{rem:countable-above-one-sharpness}
For $n\geq1$, take the deterministic probability vector
\[
 P^{(n)}=\left(\frac12,
   \underbrace{\frac1{2n},\ldots,\frac1{2n}}_{n\ \mathrm{times}},
   0,\ldots\right).
\]
If $X$ is centered, nonconstant, and $a_X>1$, then the weighted weak law
gives
\[
 \sum_jP_j^{(n)}X_j\Rightarrow\frac X2,
\]
whereas, for every $r<1$,
\[
 \sum_j(P_j^{(n)})^r
 =2^{-r}\{1+n^{1-r}\}\longrightarrow\infty.
\]
Thus Theorem~\ref{thm:countable-above-one} covers nondegenerate convergence
without the bound \eqref{eq:countable-r-bound} required by
Theorem~\ref{thm:countable-main}.
\end{remark}

\begin{remark}[The full family when $a_X=1$]
\label{rem:countable-full-family-at-one}
When $a_X=1$, the proof of Theorem~\ref{thm:countable-main} yields more
than its real-power conclusion.  With $r$ as in
\eqref{eq:countable-r-bound}, any two subsequential limits of $(m_t)$
agree identically on $\{\Re s>r\}$.  Normality and the subsequence criterion therefore imply
that $m_t$ converges locally uniformly on this half-plane to a holomorphic
limit $m$.  For every real $q>1$, one has $m(q)>0$: otherwise
$\E P_*(t)^q\leq m_t(q)\to0$, and the weighted weak law would make the
limiting marked mean degenerate.  Below we need only one exponent $p>1$ with $m(p)>0$.
\end{remark}

\begin{remark}[Expected power sums do not determine a partition law]
\label{rem:countable-not-law-determining}
Even the full expected power-sum profile in
Theorem~\ref{thm:countable-above-one} does not identify the law of a
random mass partition.  Indeed, let
\begin{align*}
 a&=(1/2,1/4,1/4,0,\ldots),\\
 b&=(1/2,1/2,0,\ldots),\\
 c&=(1/4,1/4,1/4,1/4,0,\ldots).
\end{align*}
For every $q>1$,
\[
 \sum_i a_i^q
 =\frac12\sum_i b_i^q+\frac12\sum_i c_i^q,
\]
although $\delta_a$ and $\frac12\delta_b+\frac12\delta_c$ are different
laws.  In the L\'evy setting below, the expected power sum identifies a
regular-variation regime through a Tauberian theorem.  The limiting law of
the normalized jumps is then obtained separately by point-process
convergence and control of the small jumps.
\end{remark}

\begin{remark}[Convergence of the total mass is not implicit]
\label{rem:countable-mass-sharpness}
Let $X$ be symmetric strictly $\alpha$-stable, $1<\alpha\leq2$, with
$\E e^{iuX}=e^{-|u|^\alpha}$.  For $t\geq1$, let
$P(t)=(1/2,1/2,0,\ldots)$ when $\lfloor t\rfloor$ is even and
$P(t)=(2^{1/\alpha-1},0,\ldots)$ otherwise.
The weighted sums have the same nondegenerate law for every $t\geq1$, and the
$r$th power sums are uniformly bounded for every $r\in(0,1)$.  However,
the total masses alternate between $1$ and $2^{1/\alpha-1}$.  Thus
convergence of the marked mean and the bound
\eqref{eq:countable-r-bound} do not force $m_t(1)$ to converge.
\end{remark}

\section{Poissonian power-sum calculus}
\label{sec:poisson-calculus}

We now introduce the Poissonian identities shared by iid weights and L\'evy
jumps.  Let $\nu$ be a nonzero measure on $(0,\infty)$,
possibly finite, satisfying
$\int_0^\infty(1\wedge x)\,\nu(\dd x)<\infty$, and put
\[
 \lambda=\nu(0,\infty)\in(0,\infty],
 \qquad
 \Phi(q)=\int_0^\infty(1-e^{-qx})\,\nu(\dd x).
\]
Let $(J_i(t))$ enumerate the atoms of a Poisson random measure with
intensity $t\nu$, set $V_t=\sum_iJ_i(t)$, and define
\[
 Q_i(t)=\frac{J_i(t)}{V_t}\1_{\{V_t>0\}},
 \qquad
 m_t(p)=\E\sum_iQ_i(t)^p.
\]
The vector $Q(t)=(Q_i(t))_{i\geq1}$ is the Poisson-jump instance of the
abstract weights $P(t)$ in Section~\ref{sec:countable}; $m_t$ denotes the
same expected power sum.  Thus $Q(t)=0$ when there is no atom.  If
$\lambda<\infty$, this event has
probability $e^{-t\lambda}$; if $\lambda=\infty$, it is null.  Scalar
Poissonized ratios below use the same zero convention.  When a probability
mass partition is required for the constant-limit theorem, the empty
configuration is replaced by $e_1=(1,0,\ldots)$ and a fresh independent mark; when a
normalized random measure is required, it is replaced by the base measure
specified in that construction.  These modifications are absent under
infinite activity and have
vanishing probability in every finite-activity large-time application.

Write
\[
 A_p(q)=\int_0^\infty x^pe^{-qx}\,\nu(\dd x),
 \qquad
 e(q)=\frac{q\Phi'(q)}{\Phi(q)},
 \qquad q>0.
\]
Thus $e(q)$ is the derivative of $\log\Phi(q)$ with respect to $\log q$.
On $(0,\lambda)$, $\Phi^{-1}$ denotes the inverse of $\Phi$.  We use the
conventions $e^{-\infty}=0$, $\infty e^{-\infty}=0$, and
$\int_0^{t\lambda}=\int_0^\infty$ when $\lambda=\infty$.
Values assigned to $e(\Phi^{-1}(v/t))$ at the integration endpoints are
immaterial and may be chosen arbitrarily.

\begin{lemma}[Campbell--Mecke power-sum identity]
\label{lem:poisson-campbell}
For $p,t>0$,
\begin{equation}
 m_t(p)=\frac{t}{\Gamma(p)}
 \int_0^\infty q^{p-1}A_p(q)e^{-t\Phi(q)}\dd q.
 \label{eq:poisson-campbell}
\end{equation}
For $0<p<1$, either side may be infinite.  Moreover,
\[
 m_t(1)=1-e^{-t\lambda}.
\]
\end{lemma}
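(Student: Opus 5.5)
The plan is to linearize the normalization with the Gamma-integral trick and then apply the Mecke (Campbell) formula to the Poisson random measure. On $\{V_t>0\}$, for $p>0$ and $J>0$, I would write
\[
 \frac{J^p}{V_t^p}=\frac{J^p}{\Gamma(p)}\int_0^\infty q^{p-1}e^{-qV_t}\dd q .
\]
Then I sum over the atoms $J_i(t)$ and take expectations. All terms are nonnegative, so Tonelli allows every interchange, and both sides of \eqref{eq:poisson-campbell} may be $+\infty$ without further justification. This gives
\[
 m_t(p)=\frac1{\Gamma(p)}\int_0^\infty q^{p-1}\,
 \E\sum_iJ_i(t)^pe^{-qV_t}\dd q.
\]
On the event of no atoms the inner sum is empty, which matches the convention $Q(t)=0$. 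So nothing extra is needed, even though $e^{-qV_t}=1$ there.

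For the inner expectation I would use the Mecke equation for a Poisson random measure $N$ with intensity $t\nu$:
\[
 \E\int f(x,N)\,N(\dd x)=t\int\E f(x,N+\delta_x)\,\nu(\dd x).
\]
I apply it with $f(x,N)=x^pe^{-q\int y\,N(\dd y)}$. Adding the atom $\delta_x$ replaces $V_t$ by $V_t+x$, so
\[
 \E\sum_iJ_i(t)^pe^{-qV_t}
 =t\int_0^\infty x^pe^{-qx}\,\nu(\dd x)\,\E e^{-qV_t}
 =t\,A_p(q)\,e^{-t\Phi(q)}.
\]
The last equality is the Laplace functional, $\E e^{-qV_t}=\exp\{-t\int(1-e^{-qx})\nu(\dd x)\}$. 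It is valid because $\int(1\wedge x)\,\nu(\dd x)<\infty$, which also gives $V_t<\infty$ almost surely. Substituting proves \eqref{eq:poisson-campbell}.

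For $m_t(1)$ I would avoid evaluating the integral directly. The deterministic identity $\sum_iQ_i(t)=\1_{\{V_t>0\}}$ gives $m_t(1)=\Pp\{V_t>0\}$. If $\lambda<\infty$, then $V_t=0$ exactly when there are no atoms, which has probability $e^{-t\lambda}$. If $\lambda=\infty$, there are infinitely many atoms almost surely, so $m_t(1)=1=1-e^{-t\lambda}$.

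As a consistency check, note that $tA_1(q)=t\Phi'(q)$. The $p=1$ integral is then $\int_0^\infty t\Phi'(q)e^{-t\Phi(q)}\dd q=1-e^{-t\Phi(\infty)}$, and $\Phi(\infty)=\lambda$ by monotone convergence.

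The only delicate point is applying the Mecke formula to a functional of the whole configuration rather than of a single point. I would handle it by stating the Mecke equation for nonnegative measurable $f$, which is standard and requires no integrability, and by noting that $V_t$ is a measurable functional of $N$ that is almost surely finite.
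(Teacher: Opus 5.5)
Your proposal is correct and is essentially the paper's argument. The paper applies Campbell--Mecke first, obtaining $m_t(p)=t\int_0^\infty x^p\,\E[(x+V_t')^{-p}]\,\nu(\dd x)$, and then inserts the Gamma integral for $y^{-p}$ with Tonelli; you do the same two steps in the opposite order, and you derive $m_t(1)$ from $\sum_iQ_i(t)=\1_{\{V_t>0\}}$ exactly as the paper does.
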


\begin{proof}
Campbell--Mecke gives
\[
 m_t(p)=t\int_0^\infty x^p
  \E\left[(x+V_t')^{-p}\right]\nu(\dd x),
\]
where $V_t'$ is an independent copy of $V_t$.  Insert
\[
 y^{-p}=\frac1{\Gamma(p)}\int_0^\infty q^{p-1}e^{-qy}\dd q,
 \qquad y>0,
\]
and use $\E e^{-qV_t'}=e^{-t\Phi(q)}$.  Tonelli's theorem proves
\eqref{eq:poisson-campbell}.  The formula at $p=1$ also follows directly
from $\sum_iQ_i(t)=\1_{\{V_t>0\}}$.
\end{proof}

The case $p=2$ expresses the power sum directly in terms of $e$.

\begin{lemma}[Exact quadratic power sum]
\label{lem:poisson-quadratic}
For every $t>0$,
\begin{equation}
 m_t(2)=t\lambda e^{-t\lambda}
 +\int_0^{t\lambda}ve^{-v}
 \left[1-e\!\left(\Phi^{-1}(v/t)\right)\right]\dd v.
 \label{eq:poisson-quadratic}
\end{equation}
\end{lemma}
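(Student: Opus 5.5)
The plan is to specialize the Campbell--Mecke identity of Lemma~\ref{lem:poisson-campbell} to $p=2$. Then we integrate by parts in $q$ and change variables $v=t\Phi(q)$.

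\emph{Expressing the power sum through $\Phi$.} For $p=2$, \eqref{eq:poisson-campbell} reads
\[
 m_t(2)=t\int_0^\infty qA_2(q)e^{-t\Phi(q)}\dd q .
\]
Differentiation under the integral sign is justified on $q\geq q_0>0$ by $x^ke^{-qx}\leq C_{k,q_0}(1\wedge x)$. It gives $\Phi'(q)=A_1(q)$ and $\Phi''(q)=-A_2(q)$. Since $\frac{\dd}{\dd q}\{q\Phi'(q)\}=\Phi'(q)-qA_2(q)$, we get $qA_2=\Phi'-(q\Phi')'$, and therefore
\[
 m_t(2)=t\int_0^\infty\Phi'(q)e^{-t\Phi(q)}\dd q
 -t\int_0^\infty\bigl(q\Phi'(q)\bigr)'e^{-t\Phi(q)}\dd q .
\]
The function $\Phi$ is continuous and strictly increasing from $\Phi(0+)=0$ to $\Phi(\infty)=\lambda$. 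Hence the first term equals $1-e^{-t\lambda}$.

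\emph{Integration by parts in the second term.} For the second term we integrate by parts:
\[
 \int_a^b(q\Phi')'e^{-t\Phi}\dd q
 =\Bigl[q\Phi'(q)e^{-t\Phi(q)}\Bigr]_a^b
 +t\int_a^b q\Phi'(q)^2e^{-t\Phi(q)}\dd q .
\]
We then let $a\downarrow0$ and $b\to\infty$. This boundary step is the only real point of care. The key inequality is $qxe^{-qx}\leq1-e^{-qx}$, which gives $0\leq q\Phi'(q)\leq\Phi(q)$.

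At $q=0$ we have $q\Phi'(q)\leq\Phi(q)\to0$. At $q=\infty$ there are two cases.
\begin{itemize}[label=--]
\item If $\lambda=\infty$, then $q\Phi'e^{-t\Phi}\leq\Phi e^{-t\Phi}\to0$.
\item If $\lambda<\infty$, then $q\Phi'(q)=\int qxe^{-qx}\,\nu(\dd x)\to0$ by dominated convergence with majorant $e^{-1}$ against the finite measure $\nu$.
\end{itemize}
The remaining integrand is nonnegative, so monotone convergence applies. Moreover, $m_t(2)\leq1$ shows that every quantity involved is finite.

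\emph{Change of variables.} We obtain
\[
 m_t(2)=1-e^{-t\lambda}-t^2\int_0^\infty q\Phi'(q)^2e^{-t\Phi(q)}\dd q .
\]
Substitute $v=t\Phi(q)$, so that $\dd v=t\Phi'(q)\dd q$ and $q=\Phi^{-1}(v/t)$ for $v\in(0,t\lambda)$. Then
\[
 t^2q\Phi'(q)^2\dd q=tq\Phi'(q)\dd v=v\,e\bigl(\Phi^{-1}(v/t)\bigr)\dd v .
\]
Hence the last integral becomes $\int_0^{t\lambda}ve^{-v}e(\Phi^{-1}(v/t))\dd v$, and the endpoints have measure zero.

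Finally, the elementary identity $1-e^{-L}=Le^{-L}+\int_0^Lve^{-v}\dd v$ with $L=t\lambda$ gives \eqref{eq:poisson-quadratic}. For $\lambda=\infty$ the boundary term $t\lambda e^{-t\lambda}$ vanishes by the stated conventions.

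I expect the main obstacle to be the justification of the boundary terms and of the interchanges near $q=0$, where $\Phi'$ may blow up. The inequality $q\Phi'\leq\Phi$ handles both, together with nonnegativity of all integrands.
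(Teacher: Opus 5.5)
Your proposal is correct and follows the paper's proof essentially step by step: Campbell's formula at $p=2$, the identity $qA_2=\Phi'-(q\Phi')'$ with integration by parts on a compact interval, boundary terms controlled by $0\le q\Phi'(q)\le\Phi(q)$ (plus dominated convergence when $\lambda<\infty$), the substitution $v=t\Phi(q)$, and the identity $1-e^{-a}=ae^{-a}+\int_0^a ve^{-v}\dd v$. Your extra remarks on differentiating under the integral sign and on splitting the integral (justified since $m_t(2)\le1$) fill in routine details that the paper leaves implicit.
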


\begin{proof}
Put $D(q)=q\Phi'(q)$.  Since $A_2(q)=-\Phi''(q)$,
Lemma~\ref{lem:poisson-campbell} gives
\[
 m_t(2)=-t\int_0^\infty q\Phi''(q)e^{-t\Phi(q)}\dd q.
\]
Integrate first over $[\varepsilon,M]$.  Since
$D'=\Phi'+q\Phi''$, integration by parts on that interval yields
\begin{align*}
 -t\int_\varepsilon^M q\Phi''(q)e^{-t\Phi(q)}\dd q
 &=t\int_\varepsilon^M\Phi'(q)e^{-t\Phi(q)}\dd q
   -t[D(q)e^{-t\Phi(q)}]_{\varepsilon}^{M}\\
 &\quad
   -t^2\int_\varepsilon^M q\Phi'(q)^2e^{-t\Phi(q)}\dd q.
\end{align*}
The boundary term vanishes as $\varepsilon\downarrow0$ because
$0<D(q)<\Phi(q)\to0$.  If $\lambda=\infty$, the same inequality gives
$D(M)e^{-t\Phi(M)}\to0$.  If $\lambda<\infty$, then $\nu$ is finite,
$Mx e^{-Mx}\leq e^{-1}$, and dominated convergence gives $D(M)\to0$.
Letting $\varepsilon\downarrow0$ and $M\to\infty$ therefore gives
\[
 m_t(2)=1-e^{-t\lambda}
 -t^2\int_0^\infty q\Phi'(q)^2e^{-t\Phi(q)}\dd q.
\]
Under $v=t\Phi(q)$, the last integral becomes
\[
 \int_0^{t\lambda}ve^{-v}e\!\left(\Phi^{-1}(v/t)\right)\dd v.
\]
Finally,
$1-e^{-a}=ae^{-a}+\int_0^a ve^{-v}\dd v$, also for $a=\infty$ under
the stated conventions, which proves \eqref{eq:poisson-quadratic}.
\end{proof}

The broader comparison of maxima and sums goes back at least to
Darling~\cite{Darling}.  Cohn and Hall studied the asymptotic equivalence
of deterministic weighted and unweighted sums; their
Theorems~1--2 characterize relative stability and its subsequential form
through the corresponding quadratic ratios \cite{CohnHall}.  For the
expected ratio of the sum of squares to the square of the sum, see
McLeish and O'Brien \cite{MO} and the exact formula of Fuchs and Joffe
\cite{FJ}.  Fuchs, Joffe, and Teugels subsequently gave an exact Laplace
formula and direct and converse asymptotic characterizations
\cite[equation~(2.1) and Theorems~5.3--5.5]{FJT}.
Mason and Zinn \cite{MZPriority} acknowledge that their Proposition~3 for
$0<\alpha<1$ had already been proved in \cite[Theorem~5.3]{FJT}.

For the converse arguments, the following differential inequality controls
where $e$ can remain close to one at either endpoint.

\begin{lemma}[A differential inequality for the Laplace exponent]
\label{lem:poisson-slope-dynamics}
For $x<\log\lambda$ when $\lambda<\infty$, and for $x\in\R$ when
$\lambda=\infty$, define
\[
 g(x)=e\!\left(\Phi^{-1}(e^x)\right).
\]
If $q=\Phi^{-1}(e^x)$, then
\begin{equation}
 g'(x)=1-g(x)+\frac{q\Phi''(q)}{\Phi'(q)}\leq1-g(x).
 \label{eq:poisson-slope-dynamics}
\end{equation}
Consequently, with $u=1-g$, the function $x\mapsto e^xu(x)$ is
nondecreasing and
\begin{equation}
 u(y)\leq e^{x-y}u(x),\qquad y\leq x.
 \label{eq:poisson-left-window}
\end{equation}
\end{lemma}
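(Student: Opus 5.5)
We differentiate \(g\) directly by the chain rule and then integrate the resulting inequality. Throughout, \(\Phi\) is smooth and strictly increasing on \((0,\infty)\), with \(\Phi'>0\) and \(\Phi''<0\), because \(\nu\) is a nonzero measure and
\[
 \Phi'(q)=\int_0^\infty xe^{-qx}\,\nu(\dd x),\qquad
 \Phi''(q)=-\int_0^\infty x^2e^{-qx}\,\nu(\dd x).
\]
Differentiation under the integral is justified on compact subsets of \((0,\infty)\) by \(x^ke^{-qx}\leq C_{k,q_0}(1\wedge x)\) for \(q\geq q_0>0\). Moreover \(\Phi(0+)=0\) and \(\Phi(\infty)=\lambda\), so \(\Phi^{-1}\) is a smooth bijection from \((0,\lambda)\) onto \((0,\infty)\). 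Hence \(g\) is well defined and \(C^1\) on the stated range of \(x\): all \(x\in\R\) when \(\lambda=\infty\), and \(x<\log\lambda\) otherwise.

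Set \(q(x)=\Phi^{-1}(e^x)\). Differentiating \(\Phi(q(x))=e^x\) gives \(q'(x)=e^x/\Phi'(q)=\Phi(q)/\Phi'(q)\). Write \(e(q)=q\Phi'(q)/\Phi(q)\). Then
\[
 \frac{\dd}{\dd q}\log e(q)=\frac1q+\frac{\Phi''(q)}{\Phi'(q)}-\frac{\Phi'(q)}{\Phi(q)}.
\]
Multiplying by \(q'(x)=\Phi(q)/\Phi'(q)\) gives
\[
 \frac{g'(x)}{g(x)}=\frac{\Phi(q)}{q\Phi'(q)}\Bigl(1+\frac{q\Phi''(q)}{\Phi'(q)}\Bigr)-1
 =\frac1{g}\Bigl(1+\frac{q\Phi''(q)}{\Phi'(q)}\Bigr)-1.
\]
Multiplying through by \(g\) yields \(g'=1-g+q\Phi''(q)/\Phi'(q)\), which is the identity in \eqref{eq:poisson-slope-dynamics}. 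The inequality there follows because \(\Phi''<0<\Phi'\).

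With \(u=1-g\), the inequality reads \(u'(x)\geq-u(x)\), so
\[
 \frac{\dd}{\dd x}\bigl(e^xu(x)\bigr)=e^x\bigl(u'(x)+u(x)\bigr)\geq0 .
\]
Thus \(e^xu\) is nondecreasing, and for \(y\leq x\) we get \(e^yu(y)\leq e^xu(x)\), which is \eqref{eq:poisson-left-window}.

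The only delicate point is the regularity justification: smoothness of \(\Phi\) on the open half-line, and the ranges of \(\Phi\) and \(\Phi^{-1}\) in the finite-activity case. The calculation itself is mechanical. The sign of \(u\) is not needed; \(0<e<1\) holds anyway, by strict concavity and \(\Phi(0)=0\).
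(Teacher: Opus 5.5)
Your proposal is correct and follows the paper's proof: differentiate \(e(q)\) along \(q=\Phi^{-1}(e^x)\) using \(\dd q/\dd x=\Phi(q)/\Phi'(q)\), drop the nonpositive concavity term \(q\Phi''(q)/\Phi'(q)\), and integrate \(u'+u\geq0\) to get the monotonicity of \(e^xu(x)\). Your added regularity checks (smoothness of \(\Phi\), the range \((0,\lambda)\) of \(\Phi\), and the domain of \(g\)) are sound and only make explicit what the paper leaves implicit.
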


\begin{proof}
Differentiate $e(q)=q\Phi'(q)/\Phi(q)$ and use
$\dd x/\dd q=\Phi'(q)/\Phi(q)$.  The last term in
\eqref{eq:poisson-slope-dynamics} is nonpositive by concavity.  Thus
$u'+u\geq0$, which proves the final assertion.
\end{proof}

The corresponding fractional identity generates bounds on expected power sums
for exponents below one.

\begin{lemma}[Fractional power-sum representation]
\label{lem:poisson-fractional}
Let $0<r<1$ and $D(q)=q\Phi'(q)$.  For every $q>0$,
\begin{equation}
 q^rA_r(q)=\frac1{\Gamma(1-r)}
 \int_0^\infty(e^h-1)^{-r}D(qe^h)\dd h.
 \label{eq:poisson-fractional}
\end{equation}
Both sides may be infinite.  Moreover,
\begin{equation}
 m_t(r)=\frac{t}{\Gamma(r)\Gamma(1-r)}
 \int_0^\infty(e^h-1)^{-r}I_t(h)\dd h,
 \label{eq:poisson-fractional-representation}
\end{equation}
where
\begin{equation}
 I_t(h)=\int_0^\lambda
 \exp\{-t\Phi(e^{-h}\Phi^{-1}(v))\}\dd v.
 \label{eq:poisson-I-def}
\end{equation}
\end{lemma}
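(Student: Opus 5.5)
The plan is to prove both identities by Tonelli's theorem. All integrands are nonnegative, so every interchange is legitimate whether the integrals are finite or infinite. This is why the statement allows both sides to be infinite.

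For \eqref{eq:poisson-fractional}, first note that $D(qe^h)=qe^h\int_0^\infty xe^{-qe^hx}\,\nu(\dd x)$. Inserting this into the right-hand side and exchanging the $h$- and $x$-integrals reduces the claim to a scalar identity for each fixed $x>0$:
\[
 \int_0^\infty(e^h-1)^{-r}\,qxe^h e^{-qxe^h}\dd h
 =\Gamma(1-r)(qx)^re^{-qx}.
\]
To check it, substitute $w=e^h-1$, so that $\dd w=e^h\dd h$ and $e^h=1+w$. The left side becomes
\[
 qxe^{-qx}\int_0^\infty w^{-r}e^{-qxw}\dd w
 =qxe^{-qx}\,\Gamma(1-r)(qx)^{r-1},
\]
which equals $\Gamma(1-r)(qx)^re^{-qx}$ as required. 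The Gamma integral is finite because $0<r<1$. Integrating this identity against $\nu(\dd x)$ gives $\Gamma(1-r)\,q^rA_r(q)$, which proves \eqref{eq:poisson-fractional}.

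For \eqref{eq:poisson-fractional-representation}, start from Lemma~\ref{lem:poisson-campbell} with $p=r$, namely
\[
 m_t(r)=\frac{t}{\Gamma(r)}\int_0^\infty q^{r-1}A_r(q)e^{-t\Phi(q)}\dd q.
\]
Replace $q^rA_r(q)$ by \eqref{eq:poisson-fractional} and exchange the $q$- and $h$-integrals, again by Tonelli. It then suffices to show that, for each fixed $h>0$,
\[
 \int_0^\infty q^{-1}D(qe^h)e^{-t\Phi(q)}\dd q=I_t(h).
\]
The integrand equals $e^h\Phi'(qe^h)e^{-t\Phi(q)}$. The substitution $y=qe^h$ turns the integral into $\int_0^\infty\Phi'(y)e^{-t\Phi(e^{-h}y)}\dd y$.

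The final substitution is $v=\Phi(y)$. Since $\nu\neq0$, $\Phi$ is continuous and strictly increasing, with $\Phi(0+)=0$ and $\Phi(\infty)=\lambda$ by monotone convergence. So $v$ ranges over $(0,\lambda)$, $\dd v=\Phi'(y)\dd y$, and $y=\Phi^{-1}(v)$. This yields exactly \eqref{eq:poisson-I-def}.

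The only point needing care is this last change of variables. It requires $\Phi$ to be a $C^1$ bijection from $(0,\infty)$ onto $(0,\lambda)$, including when $\lambda<\infty$, and one should confirm that the convention $\int_0^{t\lambda}=\int_0^\infty$ plays no role here. Everything else is bookkeeping with nonnegative integrands. I therefore expect no real obstacle beyond stating the monotonicity and differentiability of $\Phi$ explicitly.
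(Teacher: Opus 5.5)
Your proposal is correct and follows essentially the same route as the paper. Both use Tonelli together with the Gamma integral under the substitution $e^h-1$; the paper writes this as $z=q(e^h-1)$ in $x^re^{-qx}=\frac{x}{\Gamma(1-r)}\int_0^\infty z^{-r}e^{-(q+z)x}\dd z$, and both then insert the result into the Campbell formula with the substitutions $s=qe^h$ and $v=\Phi(s)$. The only difference is cosmetic: you check the scalar identity for each fixed $x$ before integrating against $\nu$, whereas the paper integrates against $\nu$ first and then changes variables.
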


\begin{proof}
For $x,q>0$,
\[
 x^re^{-qx}=\frac{x}{\Gamma(1-r)}
 \int_0^\infty z^{-r}e^{-(q+z)x}\dd z.
\]
Integrate with respect to $\nu(\dd x)$, multiply by $q^r$, and set
$z=q(e^h-1)$.  Tonelli's theorem proves
\eqref{eq:poisson-fractional}.  Insert that identity in
\eqref{eq:poisson-campbell}, apply Tonelli again, and set $s=qe^h$.
The remaining inner integral is
\[
 \int_0^\infty\Phi'(s)e^{-t\Phi(e^{-h}s)}\dd s.
\]
The substitution $v=\Phi(s)$ gives \eqref{eq:poisson-I-def} and completes
the proof.
\end{proof}

The following form of the Drasin--Shea theorem will turn the power-sum
limits into tail asymptotics in both ratio--Tauberian arguments.  For
measurable functions on \((0,\infty)\), write
\[
 (h*_{M}K)(x)=\int_0^\infty h(x/u)K(u)\,\frac{\dd u}{u},
 \qquad
 \mathcal M K(z)=\int_0^\infty u^{-z}K(u)\,\frac{\dd u}{u}.
\]

\begin{proposition}[Drasin--Shea ratio theorem]
\label{prop:poisson-mellin-ratio}
Let \(K\) be a nonnegative, measurable, nonzero kernel whose Mellin
transform has maximal interval of convergence
\[
 c<z<d,\qquad -\infty\leq c<0<d\leq\infty.
\]
Assume that \(\mathcal M K(z)\to\infty\) as \(z\downarrow c\) when
\(c>-\infty\), and as \(z\uparrow d\) when \(d<\infty\).  Let \(h\) be
nonnegative and locally bounded on \([0,\infty)\).  Suppose that, for some
\(\zeta>0\) and \(x_0\geq0\), either
\[
 h(\lambda x)\geq\zeta h(x)
 \quad\text{for all }x>x_0,\ 1\leq\lambda\leq2,
\]
or the same condition holds with the reverse inequality.  If the
convolution is finite and
\[
 \frac{(h*_{M}K)(x)}{h(x)}\longrightarrow\gamma\in(0,\infty),
 \qquad x\to\infty,
\]
then, for some \(\rho\in(c,d)\),
\[
 h\in\RV_\infty(\rho),
 \qquad
 \gamma=\mathcal M K(\rho).
\]
\end{proposition}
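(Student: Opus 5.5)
This is the classical Drasin--Shea theorem, and I would prove it by reduction to the additive (Fourier) form on $\R$, following \cite[Theorem~5.2.1]{BGT}. Put $x=e^y$, $u=e^v$, and set
\[
 f(y)=h(e^y),\qquad k(v)=K(e^v).
\]
Then
\[
 (h*_MK)(e^y)=\int_\R f(y-v)k(v)\dd v,
 \qquad \mathcal MK(z)=\int_\R e^{-zv}k(v)\dd v .
\]
The hypothesis reads $(f*k)(y)/f(y)\to\gamma$. The one-sided condition becomes a one-sided bound $f(y+\tau)\geq\zeta f(y)$ (or $\leq\zeta^{-1}$ in the reverse case) for $0\leq\tau\leq\log2$ and $y$ large. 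Iterating it gives an exponential bound $f(y+\tau)\geq C e^{-a\tau}f(y)$ for all $\tau\geq0$, with $a=\log(1/\zeta)/\log2$, or the corresponding upper bound.

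First I would establish the ratio compactness. Define the normalized translates $f_y(v)=f(y+v)/f(y)$. The one-sided bound, combined with the convolution equation, should give two-sided exponential bounds $f_y(v)\leq Ce^{b|v|}$ for $y$ large. The missing side comes from the equation itself: if $f(y+v)$ were too large relative to $f(y)$, the convolution at a suitable point would exceed $\gamma f$ there, because $k$ has positive mass on some interval and the one-sided regularity transfers the mass of $f$ from $y+v$ to nearby points. The growth rates that can occur must lie inside the Mellin strip $(c,d)$; this is exactly what $\mathcal MK\to\infty$ at finite endpoints is for, since a ratio of order $e^{\rho v}$ with $\rho$ near an endpoint makes $\int f_y(-v)k(v)\dd v$ blow up, contradicting the limit $\gamma$. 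With these bounds, local boundedness and the one-sided condition give a Helly-type selection on the monotone-like functions $e^{av}f_y(v)$. Along subsequences $y_n\to\infty$ one then gets limits $g$ satisfying $g(0)=1$, $g(v+w)\gtrsim$ one-sided, and, by dominated convergence, the limit equation
\[
 \int_\R g(w-v)k(v)\dd v=\gamma g(w)\qquad\text{for all }w .
\]

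Second, I would solve the limit equation. Here $g$ is a positive solution of the Choquet--Deny type equation $g*k=\gamma g$, with growth controlled strictly inside the strip. By the Choquet--Deny/Deny theorem for positive solutions of convolution equations (or a direct Widder-type argument), $g$ is a mixture of exponentials $e^{\rho w}$ over roots of $\mathcal MK(\rho)=\gamma$, taken with respect to $k(v)\dd v$ under the sign convention above. Strict convexity of $\rho\mapsto\log\mathcal MK(\rho)$ gives at most two real roots $\rho_1<\rho_2$ in $(c,d)$. The one-sided monotonicity condition on $g$ and the limits $\mathcal MK\to\infty$ at finite endpoints then exclude a genuine two-root mixture asymptotically. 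The final step, which is the standard Drasin--Shea argument, shows that the ratio $f(y+w)/f(y)$ cannot oscillate between $e^{\rho_1w}$ and $e^{\rho_2w}$ along different subsequences. The limit set is connected, because $f_y$ depends continuously on $y$ in the relevant local topology, and every limit has the form $\theta e^{\rho_1w}+(1-\theta)e^{\rho_2w}$. One then applies the convolution identity at two scales and uses the one-sided inequality to force $\theta\in\{0,1\}$ constantly. This yields $f(y+w)/f(y)\to e^{\rho w}$, i.e.\ $h\in\RV_\infty(\rho)$, and dominated convergence gives $\gamma=\mathcal MK(\rho)$.

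I expect the main obstacle to be the first step, the a priori two-sided exponential bounds on $f_y$. The bound that comes from the equation rather than from the hypothesis is where nonnegativity of $K$ and the endpoint blow-up of $\mathcal MK$ enter. Getting the exponent strictly inside $(c,d)$ is what justifies passing to the limit, so that is where I would concentrate. I would handle it by citing \cite{DS} and \cite[Theorem~5.2.1]{BGT} (or its fractional adaptations in \cite{BinghamInoue,Rehak}) for the detailed estimates.
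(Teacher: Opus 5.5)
The paper does not prove this proposition. Its proof is a single citation: the statement is \cite[Proposition~1, alternative~(2.4)]{Rehak}, a maximal-interval form of the Drasin--Shea theorem (see also \cite[Theorem~5.2.3]{BGT}). Your proposal also reduces to that classical theorem and leaves the decisive estimates to the same literature, so it stands on the same footing as the paper's proof.

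The reference does have to match the hypotheses stated here. The Tauberian condition is only a bounded one-sided bound $h(\lambda x)\geq\zeta h(x)$, with some $\zeta>0$ not necessarily close to $1$, or its reverse. It is combined with divergence of $\mathcal MK$ at the finite ends of its maximal interval. That is the form in Reh\'ak's alternative~(2.4). Check that \cite[Theorem~5.2.1]{BGT} covers this condition before resting the argument on it alone.

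If your outline is meant as more than a heuristic, two of its steps are misstated.

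First, Helly selection does not apply. For $\zeta<1$ the functions $e^{av}f_y(v)$ are only monotone up to a multiplicative constant, and Helly's selection theorem does not apply to such functions. Compactness of the normalized translates has to come from elsewhere. One option is to normalize by the smoothed function $h*_{M}K$, which is asymptotic to $\gamma h$ by hypothesis.

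Second, a two-root mixture $g(w)=\theta e^{\rho_1w}+(1-\theta)e^{\rho_2w}$ cannot be excluded by the one-sided inequality or by the endpoint divergence of $\mathcal MK$. It solves the limit equation, satisfies the one-sided bound, and has both exponents inside $(c,d)$. What rules out oscillation is shift invariance of the set of limits:
\begin{itemize}
\item If $g$ is the limit along $y_n$, then $g(s+\cdot)/g(s)$ is the limit along $y_n+s$.
\item Its weight on $e^{\rho_1w}$ is $\theta e^{\rho_1s}/(\theta e^{\rho_1s}+(1-\theta)e^{\rho_2s})$, which decreases strictly in $s$.
\item With the uniform approximation supplied by compactness, the weight attached to $f_y$ can only drift toward the larger root.
\item It therefore converges to $0$ or to $1$, giving $h\in\RV_\infty(\rho_2)$ or $h\in\RV_\infty(\rho_1)$.
\end{itemize}

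In both applications in the paper the kernel is supported on one side of $1$. So $\mathcal MK$ is strictly monotone on its interval and $\mathcal MK(\rho)=\gamma$ has a single root. There the mixture step is vacuous. The only real work is the a priori two-sided bound with exponents inside $(c,d)$, which you correctly identify as the main obstacle and which your outline does not supply.
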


\begin{proof}
This is \cite[Proposition~1, alternative~(2.4)]{Rehak}, a maximal-interval
form of the Drasin--Shea theorem; see also
\cite[Theorem~5.2.3 and p.~274]{BGT} and \cite{DS,BinghamInoue}.
\end{proof}

\section{Normalized Poisson jumps and independent marking}
\label{sec:poisson-limits}

We next isolate the Poissonian limit results used in both scalar
applications and the final random-measure theorem.  They do not depend on
either converse classification.

For a measure $M$ and a measurable function $h$ integrable with respect to
$M$, write $Mh=\int h\,\dd M$.

Let $V$ be a nonzero, unkilled, driftless subordinator with L\'evy measure
$\nu$.  Write
\[
 \bar\nu(x)=\nu((x,\infty)),
 \qquad
 A(x)=\int_{(0,x]}y\,\nu(\dd y),
 \qquad
 I(x)=\int_0^x\bar\nu(y)\dd y.
\]
For $t>0$, let
\[
 N_t=\sum_{0<s\leq t}\delta_{\Delta V_s},
 \qquad
 V_t=\int x\,N_t(\dd x),
\]
and list the atoms of $N_t$ in nonincreasing order as
$J_{(1)}(t),J_{(2)}(t),\ldots$, padding with zeros.  Ties are broken by
auxiliary iid continuous variables, independent of all marks used below.
On $\{V_t>0\}$, set
\[
 Q_i(t)=\frac{J_{(i)}(t)}{V_t},
\]
write $Q^\downarrow(t)=(Q_i(t))_{i\geq1}$, and put
$Q^\downarrow(t)=0$ on $\{V_t=0\}$.  At the zero endpoint, we impose
infinite activity; at infinity the zero-mass probability tends to zero.

The ranked weights take values in $\K$, with the
topologies and dust convention introduced in
Section~\ref{sec:introduction}.

We first record the deterministic integrated-tail fact used in the dust
branch.  Its endpoint assumptions are part of the statement.

\begin{lemma}[Integrated-tail criterion]
\label{lem:poisson-integrated-tail}
Fix either $x\downarrow0$ under infinite activity, or $x\to\infty$ for a
nonzero L\'evy measure.  Then
\begin{equation}
 \frac{x\bar\nu(x)}{A(x)}\longrightarrow0
 \quad\Longleftrightarrow\quad
 I\in\RV_\star(0),
 \label{eq:poisson-integrated-tail}
\end{equation}
where $\star=0$ or $\infty$ is the specified endpoint.
\end{lemma}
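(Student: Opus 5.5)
The key identity is $I(x)=A(x)+x\bar\nu(x)$, which follows from Fubini:
\[
 \int_0^x\bar\nu(y)\dd y=\int_{(0,\infty)}(y\wedge x)\,\nu(\dd y)=A(x)+x\bar\nu(x).
\]
Since $\bar\nu$ is the right derivative of $I$ and $I$ is concave and nondecreasing, we have $x I'(x)/I(x)=x\bar\nu(x)/I(x)$, and this ratio lies in $[0,1]$. Set $\theta(x)=x\bar\nu(x)/A(x)$. The identity gives $x\bar\nu(x)/I(x)=\theta(x)/(1+\theta(x))$, so $\theta\to0$ if and only if $x\bar\nu(x)/I(x)\to0$. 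It therefore suffices to prove
\[
 \frac{x\bar\nu(x)}{I(x)}\longrightarrow0\iff I\in\RV_\star(0).
\]
First I check that the quantities are well defined near the endpoint. At infinity, $\nu\neq0$ gives $I(x)>0$ and $A(x)>0$ for large $x$. At zero, infinite activity gives $\bar\nu(x)>0$ for all $x>0$, hence $I(x)>0$; also $A(x)>0$ for small $x$, since otherwise $\nu$ would vanish near zero, contradicting infinite activity.

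For the forward direction, write $\log I(\lambda x)-\log I(x)=\int_x^{\lambda x}\frac{y\bar\nu(y)}{I(y)}\frac{\dd y}{y}$. This is legitimate because $I$ is absolutely continuous with a.e.\ derivative $\bar\nu$. The integrand tends to zero uniformly on $[x,\lambda x]$ for each fixed $\lambda>0$, so the right side tends to $0$ and $I$ is slowly varying. This works equally at $0$ and at $\infty$.

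The converse is the main step, and I would use monotonicity of $\bar\nu$ in a monotone-density argument. Suppose $I\in\RV_\star(0)$ and fix $\lambda>1$. At infinity, since $\bar\nu$ is nonincreasing,
\[
 (\lambda-1)x\bar\nu(\lambda x)\le I(\lambda x)-I(x).
\]
Dividing by $I(\lambda x)$ gives $\limsup \frac{\lambda x\bar\nu(\lambda x)}{I(\lambda x)}\le\frac{\lambda}{\lambda-1}\limsup\bigl(1-I(x)/I(\lambda x)\bigr)=0$. The limit is zero because $I(x)/I(\lambda x)\to1$, and the substitution $y=\lambda x$ covers all $y$. At zero the same inequality applies with $x\downarrow0$, since the substitution $y=\lambda x$ still sweeps out a neighbourhood of $0$. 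Combining both directions with the identity $I=A+x\bar\nu$ proves \eqref{eq:poisson-integrated-tail}.

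The only delicate point is the endpoint bookkeeping: positivity of $A$ and $I$ near zero, where infinite activity is essential, and the fact that slow variation of a primitive controls its monotone density only in the ratio form $x\bar\nu/I$, not as regular variation of $\bar\nu$. The index-$(-1)$ regular variation of $\bar\nu$ is not claimed and should not be attempted.
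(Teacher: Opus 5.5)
Your proof is correct and follows the paper's route. The Tonelli identity $I(x)=A(x)+x\bar\nu(x)$ reduces the claim to $x\bar\nu(x)/I(x)\to0$. The forward direction controls the increments of $I$: you do it through $\log I$, the paper directly through $0\le I(\lambda x)-I(x)\le(\lambda-1)x\bar\nu(x)$. The converse uses the same monotone lower bound on $\int_x^{\lambda x}\bar\nu(y)\dd y$.

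One small slip: infinite activity gives $\bar\nu(x)>0$ only for small $x$, not for all $x>0$, since bounded support is allowed. This still yields $I(x)>0$ for every $x>0$, so nothing else changes.
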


\begin{proof}
Tonelli's theorem gives
\begin{equation}
 I(x)=A(x)+x\bar\nu(x).
 \label{eq:poisson-I-decomposition}
\end{equation}
Thus the ratio in \eqref{eq:poisson-integrated-tail} tends to zero if and
only if $x\bar\nu(x)/I(x)\to0$.  If the latter holds, then for
$\lambda>1$,
\[
 0\leq I(\lambda x)-I(x)
 \leq(\lambda-1)x\bar\nu(x)=o(I(x)).
\]
This gives $I(\lambda x)/I(x)\to1$; the case $0<\lambda<1$ follows by
applying the same comparison at $\lambda x$ with multiplier $1/\lambda$.
Conversely, for fixed $0<\lambda<1$, monotonicity of the tail gives
\[
 I(x)-I(\lambda x)
 \geq(1-\lambda)x\bar\nu(x).
\]
Slow variation and \eqref{eq:poisson-I-decomposition} now imply the ratio
limit.
\end{proof}

\subsection{Point processes and ranked normalization}

Write $\mathcal M_p((0,\infty])$ for the Radon point measures on
$(0,\infty]$, endowed with the vague topology; equivalently, such measures
have finitely many atoms in $(a,\infty]$ for every $a>0$.

\begin{lemma}[Ranked atoms under vague convergence]
\label{lem:measure:ranked-atoms}
Let $N_n,N$ be point measures on $(0,\infty]$ such that $N_n\to N$
vaguely.  Assume that $N$ is simple, has no atom at infinity, has finitely
many atoms in $(a,\infty]$ for every $a>0$, and has infinitely many atoms
in $(0,\infty)$.  If $x_i^{(n)}$ and $x_i$ are their atoms listed in
nonincreasing order, with zeros appended when necessary, then
$x_i^{(n)}\to x_i$ for every fixed $i$.
\end{lemma}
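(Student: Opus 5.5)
The plan is to prove the convergence $x_i^{(n)}\to x_i$ by induction on $i$, using only the behaviour of vague convergence on sets of the form $(a,\infty]$. The key fact is standard: if $a>0$ is not an atom of $N$, then $(a,\infty]$ is relatively compact in $(0,\infty]$ and its boundary is $N$-null. Hence $N_n(a,\infty]\to N(a,\infty]$. Moreover, for $n$ large, the atoms of $N_n$ in $(a,\infty]$ converge to those of $N$ there, counted with multiplicity. This follows by choosing small disjoint open intervals around each atom of $N$ in $(a,\infty]$ whose endpoints are not atoms. Vague convergence forces $N_n$ to put exactly one atom in each such interval, since $N$ is simple. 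It also forces $N_n$ to put no mass in the complementary compact part of $(a,\infty]$, since that part has $N$-measure zero and $N_n$ is integer valued.

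For fixed $i$, I use that $N$ has infinitely many atoms in $(0,\infty)$, none at infinity, and finitely many above every level. So the ordered atoms satisfy $\infty>x_1>x_2>\cdots>0$, all strictly positive and distinct by simplicity. Pick a level $a\in(0,x_{i+1})$ that is not an atom of $N$; this is possible because atoms are countable. Then $(a,\infty]$ contains exactly the atoms $x_1,\dots,x_k$ for some finite $k\geq i+1$. By the local-matching fact above, for $n$ large $N_n$ has exactly $k$ atoms in $(a,\infty]$, one in each small neighbourhood $U_j$ of $x_j$. The neighbourhoods are ordered, since the $x_j$ are distinct and the intervals are disjoint. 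Every other atom of $N_n$ lies in $(0,a]$ and is therefore smaller.

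It follows that the $j$-th largest atom of $N_n$ lies in $U_j$ for $j\leq k$, and in particular $x_i^{(n)}\in U_i$. Shrinking the radius of $U_i$ gives $x_i^{(n)}\to x_i$. The assumption of no atom at infinity ensures $x_1<\infty$, so the neighbourhood of $x_1$ can be a bounded interval. Without it, atoms of $N_n$ escaping to infinity would still converge in $(0,\infty]$, but with respect to a metric on $(0,\infty]$; with it, ordinary convergence in $\R$ results. The appended zeros never intervene, because $N$ has infinitely many atoms and $N_n$ eventually has at least $k$ atoms above $a$.

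I expect the only delicate step to be the exact counting in the matching argument. One must check that vague convergence on $(0,\infty]$ gives convergence of $N_n(B)$ for relatively compact $N$-continuity sets $B$. One must also check that integer-valuedness turns this into eventual equality. Both are standard portmanteau facts for Radon point measures, and I would cite them rather than reprove them.
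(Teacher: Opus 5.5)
Your proposal is correct and follows essentially the same route as the paper. You choose a level $a$ that is not an atom and lies below the relevant atoms of $N$, surround the finitely many atoms above $a$ by disjoint $N$-continuity intervals, and use vague convergence plus integer-valuedness to get exactly one atom of $N_n$ in each interval and none elsewhere in $(a,\infty]$; shrinking the intervals then gives the coordinatewise convergence. The announced induction on $i$ is never actually used, since your argument works directly for each fixed $i$.
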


\begin{proof}
Fix $k$.  Choose $a\in(x_{k+1},x_k)$ that is not an atom of $N$, and
choose disjoint relatively compact intervals $I_1,\ldots,I_k$ around
$x_1,\ldots,x_k$, all contained in $(a,\infty]$, with
$N(\partial I_i)=0$.  Vague convergence gives, for all sufficiently large
$n$,
\[
 N_n(I_i)=1\quad(1\leq i\leq k),
 \qquad N_n((a,\infty])=k.
\]
Shrinking the intervals proves the coordinatewise convergence.
\end{proof}

\begin{lemma}[Discrete Scheff\'e lemma]
\label{lem:measure:discrete-scheffe}
Let $a^{(n)},a\in\ell^1_+$.  If $a_i^{(n)}\to a_i$ for every $i$ and
$\sum_i a_i^{(n)}\to\sum_i a_i$, then
$\lVert a^{(n)}-a\rVert_1\to0$.
\end{lemma}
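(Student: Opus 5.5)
The plan is to split the $\ell^1$ norm into a head and a tail. The head is handled by coordinatewise convergence, and the tail is controlled using convergence of the total sums. All entries are nonnegative.

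Write $S_n=\sum_i a_i^{(n)}$ and $S=\sum_i a_i$. Fix $\varepsilon>0$ and choose $k$ so that the limit tail satisfies $\sum_{i>k}a_i<\varepsilon$; this is possible because $a\in\ell^1_+$. Then split
\[
 \lVert a^{(n)}-a\rVert_1
 \leq\sum_{i\leq k}|a_i^{(n)}-a_i|
 +\sum_{i>k}a_i^{(n)}+\sum_{i>k}a_i .
\]
The first sum is a finite sum of coordinatewise differences, so it tends to zero. The last sum is smaller than $\varepsilon$ by the choice of $k$.

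For the middle term, nonnegativity gives the exact identity
\[
 \sum_{i>k}a_i^{(n)}=S_n-\sum_{i\leq k}a_i^{(n)}.
\]
As $n\to\infty$, $S_n\to S$ by hypothesis and $\sum_{i\leq k}a_i^{(n)}\to\sum_{i\leq k}a_i$ by coordinatewise convergence. Hence the middle term converges to $\sum_{i>k}a_i$, which is less than $\varepsilon$.

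Putting the three pieces together gives $\limsup_n\lVert a^{(n)}-a\rVert_1\leq2\varepsilon$. Since $\varepsilon>0$ was arbitrary, $\lVert a^{(n)}-a\rVert_1\to0$.

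The only step that needs care is identifying the tail mass of $a^{(n)}$ with $S_n$ minus its head. This identity is where nonnegativity is used: without it the tail could hide cancelling mass. Everything else is elementary. Alternatively, one could apply dominated convergence to $(a_i-a_i^{(n)})^+\leq a_i$ and use
\[
 \lVert a^{(n)}-a\rVert_1=2\sum_i(a_i-a_i^{(n)})^++(S_n-S),
\]
which gives the same conclusion.
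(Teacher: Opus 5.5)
Your proof is correct and is essentially the paper's argument: fix a cutoff $k$, let the finite head converge coordinatewise, and use convergence of the total masses to get $\sum_{i>k}a_i^{(n)}\to\sum_{i>k}a_i$, which yields $\limsup_n\lVert a^{(n)}-a\rVert_1\leq2\sum_{i>k}a_i$. One small point of emphasis: the tail identity $\sum_{i>k}a_i^{(n)}=S_n-\sum_{i\leq k}a_i^{(n)}$ holds for any convergent series, so nonnegativity is really used one step earlier, where $\sum_{i>k}|a_i^{(n)}-a_i|$ is bounded by $\sum_{i>k}a_i^{(n)}+\sum_{i>k}a_i$ without absolute values.
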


\begin{proof}
For fixed $K$, the first $K$ coordinates converge in $\ell^1$, while
convergence of the total masses gives
\[
 \sum_{i>K}a_i^{(n)}\longrightarrow\sum_{i>K}a_i.
\]
Hence
\[
 \limsup_{n\to\infty}\lVert a^{(n)}-a\rVert_1
 \leq2\sum_{i>K}a_i,
\]
and the right-hand side tends to zero as $K\to\infty$.
\end{proof}

\begin{proposition}[Poisson point process and total mass]
\label{prop:poisson-point-total}
Fix an endpoint $\star\in\{0,\infty\}$, with infinite activity if
$\star=0$.  Suppose
\[
 \bar\nu\in\RV_\star(-\alpha),
 \qquad 0<\alpha<1.
\]
Let $b_t$ be an asymptotic generalized inverse of
$x\mapsto1/\bar\nu(x)$ at the relevant endpoint, so that
$t\bar\nu(b_t)\to1$ as $t\to\star$; its existence follows from asymptotic inversion for
monotone regularly varying functions~\cite{BGT}.  Then, on
$\mathcal M_p((0,\infty])\times\R_+$,
\[
 \left(\sum_{0<s\leq t}\delta_{\Delta V_s/b_t},\frac{V_t}{b_t}\right)
 \Longrightarrow(N_\alpha,S_\alpha)
 \qquad(t\to\star),
\]
where $N_\alpha$ is a Poisson random measure with intensity
$\nu_\alpha(\dd x)=\alpha x^{-\alpha-1}\dd x$ and
$S_\alpha=\int x\,N_\alpha(\dd x)\in(0,\infty)$ almost surely.
Necessarily, $b_t\downarrow0$ at the zero endpoint and $b_t\to\infty$ at
infinity.
\end{proposition}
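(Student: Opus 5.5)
The plan is the standard route: convergence of Poisson point processes via intensity convergence, then joint convergence with the total mass by truncating the small jumps.

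\emph{Step 1: scaling of the intensity.} Let $\nu_t(B)=t\,\nu(b_tB)$ be the intensity of $\sum_{0<s\le t}\delta_{\Delta V_s/b_t}$, which is a Poisson random measure. Regular variation gives $\bar\nu(b_tx)/\bar\nu(b_t)\to x^{-\alpha}$, locally uniformly in $x>0$ by monotonicity. Together with $t\bar\nu(b_t)\to1$ this yields $\nu_t((x,\infty])\to x^{-\alpha}$ for every $x>0$. The endpoint behaviour of $b_t$ follows at the same time. At zero, infinite activity means $\bar\nu(x)\to\infty$ as $x\downarrow0$, so $t\bar\nu(b_t)\to1$ with $t\downarrow0$ forces $b_t\downarrow0$. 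At infinity, $\bar\nu(x)\to0$ forces $b_t\to\infty$. Convergence of the tail functions implies vague convergence $\nu_t\to\nu_\alpha$ on $(0,\infty]$. Laplace functionals $\exp\{-\int(1-e^{-f})\,\dd\nu_t\}$ with $f\in C_c^+((0,\infty])$ then converge, so the point processes converge to $N_\alpha$.

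\emph{Step 2: truncation of the total mass.} For $\eta>0$, split $V_t/b_t=\int x\1_{\{x>\eta\}}N_t'(\dd x)+\int x\1_{\{x\le\eta\}}N_t'(\dd x)$, where $N_t'$ is the rescaled point process. The map $N\mapsto(N,\int x\1_{\{x>\eta\}}N(\dd x))$ is continuous at $N_\alpha$ whenever $\eta$ is not an atom, which holds almost surely for fixed $\eta$. Atoms of $N_\alpha$ at $\infty$ are excluded because $\nu_\alpha$ puts no mass there, and the limit has finitely many atoms above $\eta$. The continuous mapping theorem therefore gives joint convergence of the point process with the truncated sum.

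For the small jumps, bound the expectation
\[
 \E\int x\1_{\{x\le\eta\}}N_t'(\dd x)=\frac{t}{b_t}A(\eta b_t).
\]
Karamata's theorem applies because $0<\alpha<1$: at infinity, and also at zero (via integration by parts and the fact that $y\bar\nu(y)\to0$ as $y\downarrow0$, which follows from $\int(1\wedge x)\,\nu(\dd x)<\infty$), it gives $A(y)\sim\frac{\alpha}{1-\alpha}\,y\bar\nu(y)$. Hence the expectation tends to $\frac{\alpha}{1-\alpha}\eta^{1-\alpha}$, which vanishes as $\eta\downarrow0$. Symmetrically, $\int_0^\eta x\,\nu_\alpha(\dd x)<\infty$ ensures that the truncated limits converge to $S_\alpha$ almost surely, and $S_\alpha>0$ because $N_\alpha$ has infinitely many atoms. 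The usual approximation theorem for weak convergence (Billingsley, Theorem 3.2) then yields the joint statement.

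The main obstacle is the uniform small-jump bound at the zero endpoint. There, regular variation only holds as $y\downarrow0$, and the integral $A(\eta b_t)$ must be controlled with all of its mass inside the regularly varying range. I would handle this by writing $A(y)=\int_0^y\bar\nu(u)\,\dd u-y\bar\nu(y)$ and applying Karamata's theorem to the integral of an index $-\alpha>-1$ function near zero. This is legitimate because the integrability of $\bar\nu$ near zero is part of the Lévy condition.
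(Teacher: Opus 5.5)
Your proposal is correct and takes essentially the paper's route: intensity convergence $t\nu(b_t\,\cdot)\to\nu_\alpha$ gives point-process convergence via Laplace functionals, then continuous mapping for a truncated sum, Karamata's theorem for $(t/b_t)A(\eta b_t)\to\frac{\alpha}{1-\alpha}\eta^{1-\alpha}$, and the converging-together lemma. The differences are minor: you truncate only from below, using continuity of $N\mapsto\int_{(\eta,\infty]}x\,N(\dd x)$ at limits with no atom at $\eta$ or $\infty$, where the paper truncates to $(\varepsilon,M]$ and bounds the upper part by $\Pp\{\mathcal N_n((M,\infty])>0\}$; you also spell out the endpoint behaviour of $b_t$ and the zero-endpoint Karamata step via $A(y)=I(y)-y\bar\nu(y)$, which the paper leaves implicit.
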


\begin{proof}
The asserted finiteness and positivity follow directly from the stable
intensity:
\[
 \int_0^\infty(1\wedge x)\,\nu_\alpha(\dd x)
 =\frac{\alpha}{1-\alpha}+1<\infty,
 \qquad
 \nu_\alpha(0,\infty)=\infty.
\]
The Poisson-integral criterion therefore gives $S_\alpha<\infty$ almost
surely, while the point measure is nonempty almost surely; thus
$S_\alpha>0$ almost surely.

Fix an arbitrary sequence $t_n\to\star$, set $b_n=b_{t_n}$, and write
\[
 \mu_n(B)=t_n\nu(b_nB),
 \qquad
 \mathcal N_n=\sum_{0<s\leq t_n}\delta_{\Delta V_s/b_n}.
\]
For every $x>0$,
\[
 \mu_n((x,\infty])=t_n\bar\nu(b_nx)\longrightarrow x^{-\alpha}.
\]
Thus $\mu_n\to\nu_\alpha$ vaguely.  Equivalently, for every
$f\in C_c^+((0,\infty])$,
\[
 \E e^{-\mathcal N_nf}
 =\exp\left\{-\int(1-e^{-f})\dd\mu_n\right\}
 \longrightarrow
 \exp\left\{-\int(1-e^{-f})\dd\nu_\alpha\right\}.
\]
Hence $\mathcal N_n\Rightarrow N_\alpha$; see also
Resnick~\cite{Resnick1986} for this point-process form of regular
variation.

For $0<\varepsilon<M<\infty$, put
\[
 T_{\varepsilon,M}(N)=\int_{(\varepsilon,M]}x\,N(\dd x).
\]
At point measures with no atom at $\varepsilon$ or $M$, the map
$N\mapsto(N,T_{\varepsilon,M}(N))$ is continuous.  Therefore
\[
 (\mathcal N_n,T_{\varepsilon,M}(\mathcal N_n))
 \Longrightarrow
 (N_\alpha,T_{\varepsilon,M}(N_\alpha)).
\]
The lower truncation is controlled on both sides.  Karamata's theorem gives
\begin{align*}
 \E\int_{(0,\varepsilon]}x\,\mathcal N_n(\dd x)
 &=\frac{t_n}{b_n}A(\varepsilon b_n)
 \longrightarrow
 \frac{\alpha}{1-\alpha}\varepsilon^{1-\alpha},\\
 \E\int_{(0,\varepsilon]}x\,N_\alpha(\dd x)
 &=\frac{\alpha}{1-\alpha}\varepsilon^{1-\alpha}.
\end{align*}
Consequently, for every $\eta>0$,
\begin{align*}
 \limsup_{n\to\infty}
 \Pp\left\{\int_{(0,\varepsilon]}x\,\mathcal N_n(\dd x)>\eta\right\}
 &\leq\frac{\alpha}{(1-\alpha)\eta}\varepsilon^{1-\alpha},\\
 \Pp\left\{\int_{(0,\varepsilon]}x\,N_\alpha(\dd x)>\eta\right\}
 &\leq\frac{\alpha}{(1-\alpha)\eta}\varepsilon^{1-\alpha}.
\end{align*}
For the upper truncation, use the largest atom:
\[
 \Pp\{\mathcal N_n((M,\infty])>0\}
 =1-e^{-t_n\bar\nu(Mb_n)}
 \longrightarrow1-e^{-M^{-\alpha}},
\]
and the same formula holds for $N_\alpha$.  In particular, for every
$\eta>0$,
\[
 \begin{aligned}
 &\Pp\!\left\{
   \left|\int x\,\mathcal N_n(\dd x)
          -T_{\varepsilon,M}(\mathcal N_n)\right|>\eta
 \right\}\\
 &\quad\leq
 \Pp\!\left\{\int_{(0,\varepsilon]}x\,\mathcal N_n(\dd x)>\eta\right\}\\
 &\qquad+\Pp\{\mathcal N_n((M,\infty])>0\}.
 \end{aligned}
\]
The analogous inequality holds for $N_\alpha$.  Markov's inequality for
the lower truncation and the displayed upper error, followed in order by
$n\to\infty$, $\varepsilon\downarrow0$, and $M\to\infty$, prove
\[
 \left(\mathcal N_n,\int x\,\mathcal N_n(\dd x)\right)
 \Longrightarrow(N_\alpha,S_\alpha)
\]
by the converging-together lemma.  Since the endpoint sequence was
arbitrary and the state space is metrizable, the asserted endpoint
convergence follows.
\end{proof}

\begin{proposition}[Ranked normalized jumps]
\label{prop:measure:ranked-jumps}
If
\[
 \bar\nu\in\RV_\star(-\alpha),
 \qquad0\leq\alpha<1,
\]
with infinite activity when $\star=0$, then
\[
 Q^\downarrow(t)\Longrightarrow Q^{(\alpha)}\quad\text{in }\ell^1
 \qquad(t\to\star),
\]
where $Q^{(\alpha)}\sim\PD(\alpha,0)$ for $0<\alpha<1$, and
$Q^{(0)}=e_1=(1,0,\ldots)$.
\end{proposition}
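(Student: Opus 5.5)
The plan splits into two cases, $0<\alpha<1$ and $\alpha=0$, and in both reduces to Lemmas~\ref{lem:measure:ranked-atoms} and~\ref{lem:measure:discrete-scheffe}.

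\emph{Case $0<\alpha<1$.} Start from Proposition~\ref{prop:poisson-point-total}, which gives the joint convergence
\[
 \Bigl(\sum_{0<s\leq t}\delta_{\Delta V_s/b_t},\,V_t/b_t\Bigr)\Rightarrow(N_\alpha,S_\alpha).
\]
By Skorokhod representation along an arbitrary sequence $t_n\to\star$, we may assume this convergence is almost sure. Almost surely $N_\alpha$ is simple, has no atom at infinity, has finitely many atoms above each $a>0$, and has infinitely many atoms. Lemma~\ref{lem:measure:ranked-atoms} then gives coordinatewise convergence $J_{(i)}(t_n)/b_{t_n}\to x_i$, the ranked atoms of $N_\alpha$. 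Since $S_\alpha\in(0,\infty)$, dividing by $V_{t_n}/b_{t_n}\to S_\alpha$ yields
\[
 Q_i(t_n)\to x_i/S_\alpha\quad\text{for every fixed } i.
\]
The total masses satisfy $\sum_iQ_i(t_n)=1$ on $\{V_t>0\}$, an event of probability tending to one, and $\sum_ix_i/S_\alpha=1$ because $S_\alpha=\int x\,N_\alpha(\dd x)$. Lemma~\ref{lem:measure:discrete-scheffe} therefore upgrades the coordinatewise convergence to $\ell^1$ convergence. Finally, the ranked normalized atoms of a Poisson process with intensity $\alpha x^{-\alpha-1}\dd x$ have the $\PD(\alpha,0)$ law; this is the Perman--Pitman--Yor description, which I would cite \cite{PermanPitmanYor,PitmanYor}. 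Since the sequence $t_n$ was arbitrary, the endpoint convergence follows.

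\emph{Case $\alpha=0$.} Here the target is deterministic, $e_1$, so it suffices to show $Q_1(t)\to1$ in probability. Indeed, $\|Q^\downarrow(t)-e_1\|_1=2(1-Q_1(t))$ on $\{V_t>0\}$. I would prove that the sum of the non-maximal jumps is negligible relative to the largest one. Take $b_t$ with $t\bar\nu(b_t)\to1$, which exists because the tail is monotone and $\bar\nu\to\infty$ at zero (infinite activity), respectively $\bar\nu\to0$ at infinity. Slow variation gives $t\bar\nu(b_tx)\to1$ for every $x>0$. Consequently, for each $\varepsilon>0$, the number of jumps in $(\varepsilon b_t,\varepsilon^{-1}b_t]$ tends to zero in probability, and $J_{(1)}(t)/b_t$ is tight with no mass at zero or infinity in the sense of the Fr\'echet-type law.

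That, however, is not yet enough: I must show that $J_{(1)}(t)$ dominates the whole remainder, i.e.\ that $V_t-J_{(1)}(t)=o_p(J_{(1)}(t))$, not merely that few jumps are comparable to $b_t$. A fixed-$b_t$ truncation fails here because $t\bar\nu$ is slowly varying.

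I would therefore argue \emph{conditionally on the largest jump}. Given $J_{(1)}(t)=y$, the remaining atoms form a Poisson process with intensity $t\nu$ restricted to $(0,y]$, so
\[
 \E\bigl[V_t-J_{(1)}\mid J_{(1)}=y\bigr]=tA(y).
\]
The key input is Karamata's theorem in the index-zero form, $A(y)=o(y\bar\nu(y))$, which is the direct half for slowly varying tails \cite{BGT}. Since $t\bar\nu(J_{(1)})$ is $O_p(1)$ (it has an exponential distribution exactly), this gives
\[
 tA(J_{(1)})=o\bigl(J_{(1)}\,t\bar\nu(J_{(1)})\bigr)=o_p(J_{(1)}),
\]
which controls the remainder in probability. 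One boundary issue needs care: at infinity under finite activity, $J_{(1)}$ may fail to be large, which happens on an event of vanishing probability once $t\bar\nu(y)\to\infty$ for fixed $y$; I would handle it by restricting to $\{J_{(1)}(t)>y_0\}$. With that, $Q_1(t)\to1$ in probability.

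I expect this $\alpha=0$ domination step — the uniform application of Karamata's theorem at the random level $J_{(1)}$ — to be the main obstacle.
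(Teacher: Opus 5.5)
Your case $0<\alpha<1$ is the paper's argument: Skorokhod coupling of Proposition~\ref{prop:poisson-point-total}, Lemma~\ref{lem:measure:ranked-atoms}, division by $S_\alpha$, Lemma~\ref{lem:measure:discrete-scheffe}, and the Pitman--Yor identification of the $\PD(\alpha,0)$ law.

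For $\alpha=0$ your route is genuinely different. The paper does not prove that the largest jump dominates. It cites the Kevei--Mason maximal-jump theorem \cite[Theorem~1.1(ii), $k=0$]{KM14} for $J_{(1)}(t)/V_t\to1$ in probability, and then uses $\lVert Q^\downarrow(t)-e_1\rVert_1=2(1-J_{(1)}(t)/V_t)$.

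Your conditional argument is a valid self-contained proof of the direction needed:
\begin{itemize}
\item conditionally on the top jump, the other jumps have mean sum at most $tA(J_{(1)}(t))$;
\item Karamata at index $0$, with $I(x)=A(x)+x\bar\nu(x)\sim x\bar\nu(x)$, gives $A(y)=o(y\bar\nu(y))$ at either endpoint;
\item $J_{(1)}(t)\to\star$ in probability;
\item $t\bar\nu(J_{(1)}(t))$ is tight.
\end{itemize}
Conditional Markov and bounded convergence then give $(V_t-J_{(1)}(t))/J_{(1)}(t)\to0$ in probability. Your version buys independence from an external theorem, using only Poisson conditioning and Karamata, uniformly at both endpoints. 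The citation buys brevity and the full equivalence with slow variation, although only the sufficiency half is needed here.

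Three small corrections.
\begin{itemize}
\item If $\nu$ has an atom at $y$, the conditional identity is only an inequality, $\E[V_t-J_{(1)}(t)\mid J_{(1)}(t)=y]\leq tA(y)$. Extra atoms at level $y$ contribute at most $ty\nu(\{y\})$ in mean.
\item Write $F_t(y)=e^{-t\bar\nu(y)}$ for the distribution function of $J_{(1)}(t)$. Then $t\bar\nu(J_{(1)}(t))=-\log F_t(J_{(1)}(t))$ is exactly $\mathrm{Exp}(1)$ only when $\bar\nu$ is continuous. In general it is stochastically dominated by $\mathrm{Exp}(1)$, because $\Pp\{F_t(J_{(1)}(t))\leq u\}\leq u$, and that is all you need.
\item Your aside that $J_{(1)}(t)/b_t$ is tight with no mass at zero or infinity is false. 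Slow variation gives $\Pp\{J_{(1)}(t)\leq xb_t\}\to e^{-1}$ for every $x>0$, so mass escapes to both ends. This is consistent with your own observation that few jumps lie in $(\varepsilon b_t,\varepsilon^{-1}b_t]$. You never use the aside, so simply delete it.
\end{itemize}
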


\begin{proof}
For $0<\alpha<1$, fix an arbitrary endpoint sequence and apply
Proposition~\ref{prop:poisson-point-total}.  On a Skorokhod
representation, the point processes and their total masses converge almost
surely.  The limit is simple almost surely, so
Lemma~\ref{lem:measure:ranked-atoms} gives coordinatewise convergence of
the ranked atoms.  Since $S_\alpha>0$, division by the total mass gives
coordinatewise convergence of the normalized sequences.  Their total
masses converge to one; Lemma~\ref{lem:measure:discrete-scheffe} therefore
upgrades the convergence to $\ell^1$.  The normalized jumps of a stable
subordinator have law $\PD(\alpha,0)$~\cite{PitmanYor}.

For $\alpha=0$, the maximal-jump theorem
\cite[Theorem~1.1(ii), $k=0$]{KM14} gives
\[
 \frac{J_{(1)}(t)}{V_t}\longrightarrow1
 \quad\text{in probability}.
\]
On $\{V_t>0\}$,
\[
 \lVert Q^\downarrow(t)-e_1\rVert_1
 =2\left(1-\frac{J_{(1)}(t)}{V_t}\right).
\]
The zero-mass event is absent at zero and has probability tending to zero
at infinity.  This proves the claim.
\end{proof}

\subsection{Independent marking and dust}

Let $E$ be a Polish space and $H\in\cP(E)$.  For
$q\in\K$ and $z\in E^{\mathbb N}$, define
\begin{equation}
 \mathcal M_H(q,z)
 =\sum_{i\geq1}q_i\delta_{z_i}
  +\left(1-\sum_{i\geq1}q_i\right)H.
 \label{eq:measure:marking-map}
\end{equation}
The second term is part of the map: it records coordinate mass lost in the
product topology.  When $H$ has atoms, independently sampled locations can
coincide, and the corresponding weights then add in the resulting measure.

Choose a bounded compatible metric on $E$, a countable
convergence-determining family $(h_k)_{k\geq1}$ in the unit ball of the
bounded-Lipschitz functions, and set
\begin{equation}
 d_w(\mu_1,\mu_2)=\sum_{k\geq1}2^{-k}
   (|\mu_1 h_k-\mu_2 h_k|\wedge1).
 \label{eq:measure:weak-metric}
\end{equation}
Then $d_w$ metrizes weak convergence on $\cP(E)$.

\begin{lemma}[Measurability of independent marking]
\label{lem:measure:marking-borel}
The map $(q,z)\mapsto\mathcal M_H(q,z)$ from
$\K\times E^{\mathbb N}$ to $\cP(E)$ is Borel
measurable.
\end{lemma}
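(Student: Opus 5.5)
To prove that $\mathcal M_H$ is Borel, I reduce to the scalar maps $(q,z)\mapsto\mathcal M_H(q,z)h_k$. The metric $d_w$ in \eqref{eq:measure:weak-metric} metrizes the weak topology on $\cP(E)$, which is Polish since $E$ is. By the standard fact, the Borel $\sigma$-field of $\cP(E)$ is generated by the evaluations $\mu\mapsto\mu h$ for bounded continuous $h$. The countable convergence-determining family $(h_k)$ already suffices: the weak topology is the initial topology of the maps $\mu\mapsto\mu h_k$ under $d_w$, and the space is separable and metrizable. So it is enough to show that each map
\[
 (q,z)\longmapsto\mathcal M_H(q,z)h_k
 =\sum_{i\geq1}q_ih_k(z_i)+\Bigl(1-\sum_{i\geq1}q_i\Bigr)Hh_k
\]
is Borel on $\K\times E^{\mathbb N}$. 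Equivalently, I can work directly with $d_w$: for fixed $\mu_0$, the function $(q,z)\mapsto d_w(\mathcal M_H(q,z),\mu_0)$ is then a countable sum of Borel functions. Hence preimages of open balls are Borel, and separability gives Borel preimages of all open sets.

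For the scalar map, I would argue term by term. The coordinate projections $q\mapsto q_i$ on $\K$ (product topology) and $z\mapsto z_i$ on $E^{\mathbb N}$ are continuous. Since $h_k$ is continuous, each product $q_ih_k(z_i)$ is continuous on the product space. The partial sums $\sum_{i\leq n}q_ih_k(z_i)$ are continuous, and they converge pointwise because $|h_k|\leq1$ and $\sum_iq_i\leq1$. Similarly, $q\mapsto\sum_iq_i$ is a pointwise limit of continuous partial sums; it is lower semicontinuous but not continuous on $\K$, which is exactly why only Borel measurability is claimed. Pointwise limits of Borel functions are Borel, and $Hh_k$ is a constant, so the scalar map is Borel.

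The remaining step is to make sure that $\mathcal M_H(q,z)$ really lies in $\cP(E)$: it is a nonnegative combination of Dirac masses and $H$ with total mass $\sum_iq_i+(1-\sum_iq_i)=1$. I would also note that coinciding locations cause no trouble, since integrals against $h_k$ are additive regardless of whether atoms merge. The only point needing care is the generation of the Borel $\sigma$-field by the countable family of evaluations. I would settle it by the ball argument above: open sets in the separable metric space $(\cP(E),d_w)$ are countable unions of open balls, and each ball preimage is Borel.
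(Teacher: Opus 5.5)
Your proof is correct and takes the same route as the paper: you show that each coordinate $(q,z)\mapsto\mathcal M_H(q,z)h_k$ is Borel as a pointwise limit of continuous partial sums, and then use that these coordinates generate the Borel $\sigma$-field of $\cP(E)$. Your open-ball argument via $d_w$ and separability spells out that last step, which the paper states in one line.
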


\begin{proof}
For each $k$,
\[
 \mathcal M_H(q,z)h_k
 =Hh_k+\sum_{i\geq1}q_i\{h_k(z_i)-Hh_k\}.
\]
The series is absolutely convergent, and its partial sums are Borel
functions of $(q,z)$.  Hence every coordinate in
\eqref{eq:measure:weak-metric} is Borel.  These coordinates generate the
Borel $\sigma$-field of $\cP(E)$.
\end{proof}

\begin{proposition}[Independent marking with dust]
\label{prop:measure:marking-dust}
Let $Q^{(n)}\Rightarrow Q$ in the product topology of
$\K$.  Independently of all weights, let
$Z_1,Z_2,\ldots$ be iid with law $H$.  Then
\begin{equation}
 \bigl(Q^{(n)},\mathcal M_H(Q^{(n)},Z)\bigr)
 \Longrightarrow
 \bigl(Q,\mathcal M_H(Q,Z)\bigr)
 \label{eq:measure:marking-convergence}
\end{equation}
in the product topology times the weak topology.  The convergence holds
jointly after adjoining
\[
 \mathcal M_H(Q^{(n)},Z)g_j,
 \qquad1\leq j\leq m,
\]
for every finite family $g_1,\ldots,g_m\in L^1(H)$.  If, in addition,
$Q^{(n)}\Rightarrow Q$ in the $\ell^1$ topology, then
\eqref{eq:measure:marking-convergence} holds with the $\ell^1$ topology on
its first coordinate.

Under the product-topology hypothesis, if $Q=0$ almost surely, then
\begin{equation}
 \sum_{j=1}^m
 \E\left|\mathcal M_H(Q^{(n)},Z)g_j-Hg_j\right|\longrightarrow0.
 \label{eq:measure:marking-dust-L1}
\end{equation}
\end{proposition}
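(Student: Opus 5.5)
The proof couples the weights and the locations, first for bounded Lipschitz test functions and then for $L^1(H)$ functions by truncation. Since $\K$ is compact metrizable and $E^{\mathbb N}$ is Polish, I will use a Skorokhod representation on which $Q^{(n)}\to Q$ coordinatewise almost surely, with one fixed iid sequence $Z$ independent of the weights. Independence is preserved, so the joint laws are unchanged. It then suffices to show, for each fixed $g$ that is bounded and continuous (in particular each $h_k$), that
\[
 \mathcal M_H(Q^{(n)},Z)g\to\mathcal M_H(Q,Z)g
\]
in probability. Weak convergence of the pair then follows from the metric \eqref{eq:measure:weak-metric} and dominated convergence in the sum over $k$.

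For bounded $g$, I write
\[
 \mathcal M_H(q,z)g=Hg+\sum_iq_i\{g(z_i)-Hg\}
\]
and split the sum at a level $K$. The first $K$ terms converge almost surely by coordinatewise convergence. For the tail, conditionally on the weights the centered variables $g(Z_i)-Hg$ are iid with variance at most $\|g\|_\infty^2$, so the conditional second moment of the tail is bounded by
\[
 \|g\|_\infty^2\sum_{i>K}q_i^2\le\|g\|_\infty^2\,q_K\sum_iq_i\le\|g\|_\infty^2\,q_K,
\]
using monotonicity of ranked sequences, since $q_i\le q_K$ for $i>K$. This bound is the key step: it is uniform in $n$ for the mass-one-or-less ranked sequences. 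It tends to zero as $K\to\infty$ after $n\to\infty$, because $Q^{(n)}_K\to Q_K$ and $Q_K\le 1/K$. The same estimate applies to $Q$ itself. The converging-together lemma then gives the joint convergence, and the first coordinate is carried along trivially. If $Q^{(n)}\to Q$ in $\ell^1$ in law, the Skorokhod representation can be taken in $\ell^1$, and the first coordinate converges in that topology.

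For $g\in L^1(H)$ I truncate: write $g=g_L+(g-g_L)$ with $g_L$ bounded, continuous if needed, and $\|g-g_L\|_{L^1(H)}$ small. Continuity is not in fact needed, since the argument above used only boundedness of $g$. The error satisfies
\[
 \E\Bigl|\mathcal M_H(q,Z)(g-g_L)\Bigr|\le\sum_iq_i\,\E|g-g_L|(Z_1)+\Bigl(1-\sum_iq_i\Bigr)H|g-g_L|\le\|g-g_L\|_{L^1(H)},
\]
uniformly in $q$. A second converging-together argument then yields joint convergence with finitely many $g_j$ adjoined. For \eqref{eq:measure:marking-dust-L1}, with $Q=0$: the bounded part gives $\E|\mathcal M_H(Q^{(n)},Z)g_L-Hg_L|^2\le\|g_L\|_\infty^2\,\E Q^{(n)}_1\to0$, since the sum of squares is at most the maximum, which tends to zero. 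The $L^1$ truncation error is uniformly small, and $|Hg-Hg_L|$ is also small.

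The main obstacle I anticipate is measurability and the passage from product-topology convergence, where mass may escape to dust, to convergence of the marked measure. The dust term $(1-\sum q_i)H$ in \eqref{eq:measure:marking-map} is exactly what the centered representation absorbs. The monotone bound $\sum_{i>K}q_i^2\le q_K$ is what makes the tail control uniform, and Lemma~\ref{lem:measure:marking-borel} supplies the measurability needed to speak of the laws.
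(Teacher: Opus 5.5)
Your proposal is correct and follows the paper's proof essentially step for step. Both arguments use a Skorokhod coupling with one shared iid location sequence independent of the weights. Both split the marked integral into a head and a tail, and control the tail by the conditional variance bound $\sum_{i>K}q_i^2\le q_{K+1}\le 1/(K+1)$ for ranked subprobability sequences. Both pass to $L^1(H)$ by truncation, using the uniform error bound $H|g-g_L|$. In the dust case both use $\E\sum_i(Q_i^{(n)})^2\le\E Q_1^{(n)}\to0$.

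The only cosmetic difference is in the $\ell^1$ case. The paper adds the explicit estimate $2\lVert h\rVert_\infty\lVert Q^{(n)}-Q\rVert_1$. You instead rerun the coordinatewise argument on an $\ell^1$ Skorokhod coupling, which works equally well.
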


\begin{proof}
Use Skorokhod's theorem to couple the weights so that
$Q^{(n)}\to Q$ coordinatewise almost surely, and attach an iid sequence
$Z$ with law $H$, independent of all coupled weights.  For bounded $h$
and $K\geq1$, write
\[
 \mathcal M_H(Q^{(n)},Z)h
 =Hh+\sum_{i\leq K}Q_i^{(n)}\{h(Z_i)-Hh\}+R_{n,K},
\]
where
\[
 R_{n,K}=\sum_{i>K}Q_i^{(n)}\{h(Z_i)-Hh\}.
\]
Conditionally on the weights,
\[
 \E(R_{n,K}^2\mid Q^{(n)})
 =\Var_H(h)\sum_{i>K}(Q_i^{(n)})^2
 \leq\Var_H(h)Q_{K+1}^{(n)}.
\]
The head converges almost surely for fixed $K$.  Bounded convergence and
$Q_{K+1}\leq1/(K+1)$ give
\[
 \limsup_{n\to\infty}\E R_{n,K}^2
 \leq\frac{\Var_H(h)}{K+1};
\]
the same bound holds for the limiting tail.  Thus the integrals converge
in probability for every $h_k$.  For fixed $r$, the first $r$ summands
in $d_w$ converge in probability, while the remaining summands contribute
at most $\sum_{k>r}2^{-k}$.  Therefore
\[
 d_w\bigl(\mathcal M_H(Q^{(n)},Z),\mathcal M_H(Q,Z)\bigr)
 \longrightarrow0
 \quad\text{in probability}.
\]
Together with the coupled convergence of the weights, this proves the
joint random-measure assertion.

The same head--tail proof works for bounded Borel functions because the
marks are shared under the coupling.  For $g\in L^1(H)$, the marked
series is absolutely convergent almost surely, since, conditionally on
$Q=q$,
\[
 \E\sum_iq_i|g(Z_i)|=\left(\sum_iq_i\right)H|g|<\infty.
\]
If $g_L=(-L)\vee(g\wedge L)$, then, uniformly in $n$,
\[
 \E\left|\mathcal M_H(Q^{(n)},Z)(g-g_L)\right|
 \leq H|g-g_L|\longrightarrow0,
\]
and the same estimate holds for the limit.  Applying the
converging-together lemma to a finite family proves the assertion for
$L^1(H)$ test functions.  Under $\ell^1$ convergence, use an $\ell^1$ Skorokhod
coupling and the bound
\[
 \left|\sum_i(Q_i^{(n)}-Q_i)h(Z_i)\right|
 +\left|\sum_i(Q_i^{(n)}-Q_i)\right||Hh|
 \leq2\lVert h\rVert_\infty\lVert Q^{(n)}-Q\rVert_1.
\]

If $Q=0$ almost surely, then $Q_1^{(n)}\to0$ in probability and
$\E Q_1^{(n)}\to0$, since $0\leq Q_1^{(n)}\leq1$.  For bounded $h$,
conditional centering and independence give
\[
 \E\left|\mathcal M_H(Q^{(n)},Z)h-Hh\right|^2
 =\Var_H(h)\E\sum_i(Q_i^{(n)})^2
 \leq\Var_H(h)\E Q_1^{(n)}\longrightarrow0.
\]
For $g\in L^1(H)$ and $g_L=(-L)\vee(g\wedge L)$, the preceding truncation
bound yields
\[
 \limsup_{n\to\infty}
 \E\left|\mathcal M_H(Q^{(n)},Z)g-Hg\right|
 \leq2H|g-g_L|.
\]
Letting $L\to\infty$ and summing over the finite family proves
\eqref{eq:measure:marking-dust-L1}.
\end{proof}

\begin{lemma}[Nondegeneracy of an independently marked partition]
\label{lem:measure:marked-nondegenerate}
Let $Q\in\K$ be independent of iid $H$-marks
$(Z_i)$, and suppose $Q_1>0$ almost surely.  If $f\in L^1(H)$ is
not constant $H$-almost surely, then $\mathcal M_H(Q,Z)f$ is
nondegenerate.
\end{lemma}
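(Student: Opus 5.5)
We want to show that $Y:=\mathcal M_H(Q,Z)f=\sum_iQ_if(Z_i)+(1-\sum_iQ_i)Hf$ is not almost surely constant. The plan is to reduce to the case where $Q$ is a fixed deterministic element of $\K$, and then isolate the first coordinate using an independent copy of the marks.

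\textbf{Reduction to fixed weights.} Suppose, for contradiction, that $Y=c$ almost surely. Since $Q$ is independent of $Z$, Fubini gives that for $\Law(Q)$-almost every $q\in\K$ we have $\mathcal M_H(q,Z)f=c$ almost surely in $Z$. Because $Q_1>0$ almost surely, we may fix such a $q$ with $q_1>0$.

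\textbf{Centering.} Put $g=f-Hf$, which is in $L^1(H)$, has mean zero, and is not constant $H$-almost surely. Then
\[
 \mathcal M_H(q,Z)f=Hf+\sum_iq_ig(Z_i),
\]
and the series converges absolutely almost surely by Lemma~\ref{lem:countable-series-products}. Taking expectations, which is justified by absolute convergence in $L^1$, forces $c=Hf$. Hence
\[
 T:=\sum_iq_ig(Z_i)=0\qquad\text{almost surely.}
\]

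\textbf{Isolating $Z_1$.} Write $T=q_1g(Z_1)+T'$, where $T'=\sum_{i\ge2}q_ig(Z_i)$ is independent of $Z_1$. Since $T=0$, we have $q_1g(Z_1)=-T'$ almost surely. A random variable that equals, almost surely, another random variable independent of it must be almost surely constant. Indeed, for every $u$, $\E e^{iu g(Z_1)}$ equals the characteristic function of $(X,X)$ evaluated at $(u,-u)$, which factorizes and yields $|\chi(u)|^2=1$ for all $u$; hence the variable is degenerate. An even more elementary route is that $g(Z_1)$ is measurable with respect to $\sigma(Z_1)$ and also with respect to the independent $\sigma$-field $\sigma(Z_i:i\ge2)$, so it is independent of itself and therefore constant. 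Since $q_1>0$, this makes $g(Z_1)$ almost surely constant, which contradicts the assumption that $f$ is not constant $H$-almost surely.

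\textbf{Expected obstacle.} The only delicate step is the first: passing from the annealed statement that $Y$ is constant to the quenched statement for a fixed $q$. This is handled by Fubini through the measurability in Lemma~\ref{lem:measure:marking-borel}. One also needs $c$ to be the same constant for almost every $q$, which holds because the event $\{Y=c\}$ has full product measure. The dust term causes no difficulty, since it is deterministic given $q$.
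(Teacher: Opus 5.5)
Your proof is correct and follows the paper's route: disintegrate over $Q$ to fix a $q$ with $q_1>0$, split off the independent term $q_1f(Z_1)$, and use that a variable almost surely equal to an independent one must be constant, which is exactly the paper's $\sigma$-field argument. Two small remarks. The centering that identifies $c=Hf$ is harmless but unnecessary, since the independent-summands fact applies directly for any constant, with the deterministic dust term absorbed into the remainder. Your characteristic-function sentence is misstated as written: the identity should read $1=\E e^{iu(A-B)}=|\varphi_A(u)|^2$ for $A=B$ almost surely with $A,B$ independent, not an identity for $\E e^{iug(Z_1)}$; the $\sigma$-field version you also give is the clean one.
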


\begin{proof}
If independent real random variables $Y$ and $R$ have an almost surely
constant sum, then both are almost surely constant: from $Y+R=c$ one has
$\sigma(Y)\subset\sigma(R)$ modulo null sets, while the two $\sigma$-fields
are independent.

If $\mathcal M_H(Q,Z)f$ were constant, its conditional law given $Q=q$
would be the same point mass for $\Law(Q)$-almost every $q$.  For such a
$q$, split the absolutely convergent sum into the independent terms
\[
 q_1f(Z_1)
 \quad\text{and}\quad
 \sum_{i\geq2}q_if(Z_i)+\left(1-\sum_iq_i\right)Hf.
\]
Since $q_1>0$, the preceding fact would make $f(Z_1)$ constant, a
contradiction.
\end{proof}

\section{Poissonized iid weights and Breiman's conjecture}
\label{sec:breiman}

We first prove the nondegenerate part of
Theorem~\ref{thm:intro-iid-regime}, using the power-sum principle and the
Poissonian limits established above.

Let $(X_j)_{j\geq1}$ and $(Y_j)_{j\geq1}$ be independent iid
sequences, with generic elements $X$ and $Y$.  Throughout this section,
\begin{equation}
 Y\geq0,\qquad \Pp\{Y>0\}>0,\qquad \E|X|<\infty,
 \label{eq:breiman-standing}
\end{equation}
and
\[
 S_n=\sum_{j=1}^nY_j,
 \qquad
 T_n=\frac{\sum_{j=1}^nX_jY_j}{S_n}\1_{\{S_n>0\}}.
\]
Set $S_0=0$; thus $T_n=0$ when $S_n=0$.  Write
\[
 \bar G(x)=\Pp\{Y>x\},\qquad x\geq0.
\]

The constant-limit case will be included in
Section~\ref{sec:iid-classification}.

\begin{theorem}[Breiman's conjecture]
\label{thm:breiman}
Assume \eqref{eq:breiman-standing}.  Then $T_n$ converges in distribution,
along the full sequence of positive integers, to a nondegenerate random
variable if and only if $X$ is nonconstant and
\begin{equation}
 \bar G\in\RV_\infty(-\beta)
 \quad\text{for some }\beta\in[0,1).
 \label{eq:breiman-tail-rv}
\end{equation}
The index $\beta$ is unique.
In that case,
\begin{equation}
 T_n\Rightarrow W_\beta\overset{d}{=}
 \begin{cases}
  \displaystyle\sum_{i\geq1}Q_iX_i,
    &(Q_i)^\downarrow\sim\PD(\beta,0),\quad0<\beta<1,\\[2mm]
  X,&\beta=0,
 \end{cases}
 \label{eq:breiman-limit-law}
\end{equation}
where the weights and marks on the right are independent.
\end{theorem}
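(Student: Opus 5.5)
The plan is to Poissonize, then split into the direct and converse halves.

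\emph{Direct half.} Assume $X$ is nonconstant and $\bar G\in\RV_\infty(-\beta)$. Replace $n$ by an independent Poisson$(t)$ number $N_t$ of summands. The atoms $Y_1,\dots,Y_{N_t}$ then form a Poisson random measure with intensity $tG$ restricted to $(0,\infty)$, which is the setting of Section~\ref{sec:poisson-calculus} with $\nu=G|_{(0,\infty)}$, a finite measure, at $t\to\infty$. Proposition~\ref{prop:measure:ranked-jumps} gives the ranked normalized weights: they converge in $\ell^1$ to $e_1$ when $\beta=0$ and to $\PD(\beta,0)$ when $0<\beta<1$. Proposition~\ref{prop:measure:marking-dust}, applied with $E=\R$, $H=\Law(X)$ and $g(x)=x$, transfers this to the marked sums. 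To return to fixed $n$, depoissonize: $N_t/t\to1$, and the ranked weights of the sample of size $n$ are squeezed between those at $t=n(1\pm\epsilon)$, differing only by a relatively negligible batch of summands. The simplest route is to run the point-process argument of Proposition~\ref{prop:poisson-point-total} directly along $n$, using binomial instead of Poisson counts; the same Karamata truncation works. Nondegeneracy of the limit follows from Lemma~\ref{lem:measure:marked-nondegenerate}, since $Q_1>0$. Uniqueness of $\beta$ follows from uniqueness of the index of regular variation.

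\emph{Converse half.} Suppose $T_n\Rightarrow W$ with $W$ nondegenerate. Then $X$ is nonconstant. Otherwise $T_n=X\1_{\{S_n>0\}}$, which converges to a constant. Subtracting $\mu=\E X$ changes the limit only by a shift, because the weights have mass one off an exponentially small event. So we may take $X$ centered. Next Poissonize, obtaining $R_t=\sum_iQ_i(t)X_i$, whose law is a Poisson mixture of $\Law(T_n)$; since $T_n$ converges, $R_t\Rightarrow W$ as well. We now want to apply Theorem~\ref{thm:countable-main} and need the bound \eqref{eq:countable-r-bound} for some $r<1$. Lemma~\ref{lem:poisson-fractional} expresses $m_t(r)$ through $I_t(h)$. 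A bound for all $t$ follows if $\limsup_{q\downarrow0}e(q)<1$, that is, if the function $u=1-g$ of Lemma~\ref{lem:poisson-slope-dynamics} stays bounded below at $x\to-\infty$. I would argue by contradiction. Lemma~\ref{lem:poisson-quadratic} writes $m_t(2)$ as an average of $1-e(\Phi^{-1}(v/t))$ over $v$ of order one. Inequality \eqref{eq:poisson-left-window} makes smallness of $u$ at one point propagate to a window $[x-C,x]$. Along times $t$ adapted to points where $u$ is small, this forces $m_t(2)\to0$ along a subsequence, hence $P_*\to0$ there and $R_t\to0$ by Lemma~\ref{lem:countable-wlln}, contradicting nondegeneracy. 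Controlling the leftward propagation and the tail of the $v$-integral is the main obstacle. I would first try to show that $u(x)\ge\eta$ on a set of bounded gaps implies $u\ge\eta e^{-C}$ everywhere, and then feed this uniform slope bound into \eqref{eq:poisson-fractional-representation}, bounding $I_t(h)$ by $\lambda$ times a factor like $e^{-c h}$, uniformly in $t$.

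With the bound in hand, Theorem~\ref{thm:countable-main} gives $m_t(p)\to c\in(0,1]$ for some $p\in(1,2]$. Through Lemma~\ref{lem:poisson-campbell} and the substitution $v=t\Phi(q)$, $m_t(p)$ becomes a Mellin convolution in $1/t$ of the function $h(q)=q^pA_p(q)/\Phi(q)$ against a kernel built from $v^{\cdot}e^{-v}$. The uniform slope bound on $e$ supplies the one-sided comparison $h(\lambda x)\ge\zeta h(x)$ needed in Proposition~\ref{prop:poisson-mellin-ratio}. That proposition yields regular variation of $\Phi$ at $0$ with some index $\beta\in[0,1)$, the value $\beta=1$ being excluded by the strict bound $e<1$. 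Karamata's Tauberian theorem together with the monotone density theorem then gives $\bar G\in\RV_\infty(-\beta)$. The limit law is then identified by the direct half.
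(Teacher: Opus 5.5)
The gap is in the Tauberian step. The rest of your plan essentially follows the paper: the Poissonization, the slope gap via Lemma~\ref{lem:poisson-quadratic} and \eqref{eq:poisson-left-window}, Theorem~\ref{thm:countable-main}, and the direct half via ranked jumps, independent marking and an $L^1$ depoissonization.

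After $v=t\Phi(q)$, Campbell's formula reads $\Gamma(p)m_t(p)=\int_0^{t\lambda}\{q^{p-1}A_p(q)/\Phi'(q)\}e^{-v}\dd v$ with $q=\Phi^{-1}(v/t)$. This is a Laplace transform, in the $\Phi$-scale, of the density of $U_p=H_p\circ\Phi^{-1}$, where $H_p(q)=\int_0^qs^{p-1}A_p(s)\dd s$. It is not a Mellin convolution of $q^pA_p(q)/\Phi(q)$ in $q$. More importantly, its limit is a constant. Proposition~\ref{prop:poisson-mellin-ratio} needs a ratio $(h*_MK)/h\to\gamma$, with the same $h$ convolved in the numerator and standing alone in the denominator. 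A convolution tending to a constant is Karamata territory: Karamata's Laplace--Stieltjes theorem yields only $H_p(q)\sim c\Gamma(p)\Phi(q)$ as $q\downarrow0$. That is a single asymptotic relation between two different transforms of $G$, and by itself it does not give regular variation of $\Phi$. The slope bound $\limsup e<1$ neither supplies the missing structure nor is it the source of a one-sided condition.

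The missing idea is an exact second link between $H_p$ and $\Phi$ through the tail. Put $F_s(q)=\int_0^\infty\bar G(y)y^{s-1}e^{-qy}\dd y$. Then $H_p(q)=q^pF_p(q)$ and $\Phi(q)=qF_1(q)$, and the Weyl identity gives $F_1(q)=\Gamma(p-1)^{-1}\int_0^\infty v^{p-2}F_p(q+v)\dd v$. Substituting $v=q(u-1)$ turns $H_p/\Phi\to c\Gamma(p)$ into $\int_1^\infty(u-1)^{p-2}F_p(qu)/F_p(q)\dd u\to1/(c(p-1))$. This is a genuine Drasin--Shea ratio for $h(x)=x^{-p}F_p(1/x)$ and $k_p(u)=(u-1)^{p-2}u^{1-p}\1_{\{u>1\}}$. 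The kernel has $\mathcal M k_p(z)=\mathrm B(p-1,1+z)$ on $(-1,\infty)$, divergent at $-1$. The one-sided bound $h(\theta x)\geq\theta^{-p}h(x)$ comes from monotonicity of $q^pF_p(q)$. Drasin--Shea then gives $h\in\RV_\infty(\rho)$, and $c\leq1$ forces $-1<\rho\leq0$. Karamata plus monotone density applied to $\int_0^xy^{p-1}\bar G(y)\dd y$ give $\bar G\in\RV_\infty(\rho)$; this is Proposition~\ref{prop:breiman-tauberian}. Without this identity, or a citation of the Kevei--Mason power-sum criterion, your converse stops at $m_t(p)\to c$.

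A smaller slip: $I_t(h)$ increases to $\lambda$ as $h\to\infty$, so it cannot be bounded by $\lambda e^{-ch}$. Any $t$-free bound gives only $m_t(r)=O(t)$, because of the prefactor $t$ in \eqref{eq:poisson-fractional-representation}. What you need is $I_t(h)\leq Ce^{ah}/t$. This follows from $e\leq a<r$ near zero through $\Phi(e^{-h}s)\geq e^{-ah}\Phi(s)$ for $s\leq q_0$, with monotonicity covering $s>q_0$.
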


Kevei and Mason's converse assumes that
$-\log\operatorname{Re}\varphi_{X-\E X}(u)$ is regularly varying at zero
with index in $(1,2]$ \cite[Theorem~1 and Corollary~1]{KM16}.
Theorem~\ref{thm:breiman} requires only integrability and nonconstancy of
the mark.

Centering entails no loss of generality.  If $\mu=\E X$ and
$\widehat X_j=X_j-\mu$, then the corresponding ratio satisfies
\begin{equation}
 \widehat T_n=T_n-\mu\1_{\{S_n>0\}}.
 \label{eq:breiman-centering}
\end{equation}
If $\lambda=\Pp\{Y>0\}$, then
$\Pp\{S_n=0\}=(1-\lambda)^n$.  Hence weak convergence and
nondegeneracy are preserved by \eqref{eq:breiman-centering}.  A
nondegenerate limit therefore forces $X$ to be nonconstant: if $X$ were
constant, the ratio would converge to that constant because
$\Pp\{S_n=0\}\to0$.  Accordingly, after centering we assume until the end
of the necessity proof that
\begin{equation}
 \E X=0,\qquad 0<\E|X|<\infty.
 \label{eq:breiman-centered}
\end{equation}

\subsection{Poissonization and a bound on the Laplace exponent}

We first record a transfer estimate that requires no moment assumption on
$Y$.  Besides justifying depoissonization, it will also be useful in the
large-time argument for subordinators.

\begin{lemma}[Poissonization and depoissonization]
\label{lem:breiman-depoissonization}
Couple all the ratios $T_m$ through the same iid sequences.  For integers
$m,n\geq0$, with $T_0=0$,
\begin{equation}
 \E|T_m-T_n|
 \leq
 2\E|X|\frac{|m-n|}{m\vee n},
 \label{eq:breiman-depoisson-bound}
\end{equation}
where the quotient is interpreted as zero when $m=n=0$.  Consequently, if
$N_t$ is Poisson with mean $t$ and independent of the sequences, then
\begin{equation}
 T_{N_t}-T_{\lfloor t\rfloor}\longrightarrow0
 \quad\text{in }L^1\qquad(t\to\infty).
 \label{eq:breiman-depoisson-limit}
\end{equation}
In particular, $(T_n)$ converges weakly along the full sequence if and only
if its Poissonized version does, and the limits agree.
\end{lemma}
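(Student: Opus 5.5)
The plan is to reduce to the case $m<n$ (the bound is symmetric and trivial for $m=n$) and to compare $T_m$ and $T_n$ pathwise. Write $U_k=\sum_{j\leq k}X_jY_j$. On $\{S_n>0\}$,
\[
 T_n-T_m=\sum_{j=1}^{n}\Bigl(\frac{Y_j}{S_n}-\frac{Y_j}{S_m}\1_{\{j\leq m,\,S_m>0\}}\Bigr)X_j .
\]
Since the weights are independent of the marks, conditioning on $(Y_j)$ gives
\[
 \E\bigl(|T_n-T_m|\,\big|\,Y\bigr)\leq\E|X|\sum_{j\leq n}\Bigl|\frac{Y_j}{S_n}-\frac{Y_j}{S_m}\1_{\{j\leq m,\,S_m>0\}}\Bigr| .
\]
I would evaluate this total-variation distance between the two weight vectors explicitly. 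When $S_m>0$, the first $m$ coordinates contribute $S_m(1/S_m-1/S_n)=1-S_m/S_n$, and the last $n-m$ contribute $(S_n-S_m)/S_n$, so the sum equals $2(S_n-S_m)/S_n$. When $S_m=0<S_n$, it equals $1\leq 2(S_n-S_m)/S_n$. When $S_n=0$, both ratios vanish. Hence
\[
 \E|T_n-T_m|\leq2\E|X|\,\E\Bigl[\frac{S_n-S_m}{S_n}\1_{\{S_n>0\}}\Bigr].
\]

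The key step is the exchangeability identity. Since $(Y_1,\dots,Y_n)$ is exchangeable, each $\E[Y_j/S_n;S_n>0]$ equals $\Pp\{S_n>0\}/n$. Summing over $j=m+1,\dots,n$ gives $\frac{n-m}{n}\Pp\{S_n>0\}\leq\frac{n-m}{n}$, which is \eqref{eq:breiman-depoisson-bound} with $m\vee n=n$. The case $m=0$ is the same computation, and this step requires no moment of $Y$.

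For \eqref{eq:breiman-depoisson-limit}, I would condition on $N_t$, which is independent of the sequences, and apply the bound with $n=\lfloor t\rfloor$. This gives
\[
 \E|T_{N_t}-T_{\lfloor t\rfloor}|\leq2\E|X|\,\E\frac{|N_t-\lfloor t\rfloor|}{N_t\vee\lfloor t\rfloor}\leq 2\E|X|\,\frac{\E|N_t-\lfloor t\rfloor|}{\lfloor t\rfloor}\leq2\E|X|\frac{\sqrt t+1}{\lfloor t\rfloor}\to0,
\]
using $\E|N_t-t|\leq\sqrt t$. The final equivalence then follows from Slutsky's lemma. If $T_n\Rightarrow W$, then $T_{\lfloor t\rfloor}\Rightarrow W$ as $t\to\infty$, so $T_{N_t}\Rightarrow W$. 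Conversely, convergence of the Poissonized version along $t\to\infty$ transfers to $T_{\lfloor t\rfloor}$ and hence to integer $t=n$. The limits agree because the $L^1$ difference tends to zero.

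The only delicate point is the bookkeeping of the zero-sum conventions in the total-variation computation; everything else is routine.
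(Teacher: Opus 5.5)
Your proof is correct and follows essentially the same route as the paper. You bound the $\ell^1$ distance between the two weight vectors by $2(1-S_{m\wedge n}/S_{m\vee n})$ on the positive-sum event, compute its expectation by exchangeability, condition on the weights to pass to the marked ratios, and then average over the independent Poisson index; your explicit treatment of the zero-sum cases and of the final Slutsky step only spells out details the paper leaves implicit.
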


\begin{proof}
If $m=n=0$, the bound is immediate.  Otherwise, by symmetry assume
$m\geq n$, so that $m\geq1$.
Let $W^{(m)}$ be the sequence of normalized weights
\[
 W_j^{(m)}=\frac{Y_j}{S_m}\1_{\{j\leq m,\,S_m>0\}}.
\]
Then
\[
 \|W^{(m)}-W^{(n)}\|_1
 \leq
 2\left(1-\frac{S_n}{S_m}\right)\1_{\{S_m>0\}}.
\]
Exchangeability gives
\[
 \E\left[\frac{S_n}{S_m}\1_{\{S_m>0\}}\right]
 =\frac nm\Pp\{S_m>0\}.
\]
Thus
\[
 \E\|W^{(m)}-W^{(n)}\|_1\leq2\frac{m-n}{m}.
\]
Conditioning on the weights and using the triangle inequality proves
\eqref{eq:breiman-depoisson-bound}; the case $n\geq m$ is symmetric.
Finally, for $t\geq1$,
\[
 \E|T_{N_t}-T_{\lfloor t\rfloor}|
 \leq
 \frac{2\E|X|}{\lfloor t\rfloor}
 \E|N_t-\lfloor t\rfloor|
 =O(t^{-1/2}),
\]
which proves \eqref{eq:breiman-depoisson-limit}.
\end{proof}

Let $N_t$ be Poisson with mean $t$, independently of all other variables,
and put
\[
 S_{N_t}=\sum_{j=1}^{N_t}Y_j,
 \qquad
 P_j(t)=\frac{Y_j}{S_{N_t}}\1_{\{j\leq N_t,\,S_{N_t}>0\}},
 \qquad
 R_t=\sum_{j\geq1}P_j(t)X_j=T_{N_t}.
\]
If $T_n\Rightarrow T$, then Lemma~\ref{lem:breiman-depoissonization}
gives
\begin{equation}
 R_t\Rightarrow T\qquad(t\to\infty).
 \label{eq:breiman-poissonized-limit}
\end{equation}
The number of positive observations is Poisson with mean $t\lambda$, so
\begin{equation}
 \Pp\{S_{N_t}=0\}=e^{-t\lambda}.
 \label{eq:breiman-zero-mass}
\end{equation}

Write
\begin{equation}
 \Phi(q)=\E(1-e^{-qY}),
 \qquad
 e(q)=\frac{q\Phi'(q)}{\Phi(q)},
 \qquad q>0.
 \label{eq:breiman-laplace-slope}
\end{equation}
Then $\Phi$ is a strictly increasing bijection from $(0,\infty)$ onto
$(0,\lambda)$,
\[
 \Phi'(q)=\E(Ye^{-qY})>0,
 \qquad
 \Phi''(q)=-\E(Y^2e^{-qY})<0,
\]
and
\begin{equation}
 0<e(q)<1.
 \label{eq:breiman-slope-range}
\end{equation}
The last inequality follows from $xe^{-x}<1-e^{-x}$ for $x>0$.

For $p>0$, set
\[
 m_t(p)=\E\sum_{j\geq1}P_j(t)^p.
\]
The positive observations form a Poisson random measure with finite L\'evy
measure
\[
 \nu_Y(B)=\Pp\{Y\in B\},\qquad B\subset(0,\infty),
\]
whose total mass is $\lambda$.  After zero observations are omitted,
$P(t)$ is the normalized atom sequence $Q(t)$ of
Section~\ref{sec:poisson-calculus} for $\nu=\nu_Y$, so their power sums
coincide.  Lemma~\ref{lem:poisson-campbell} gives,
for every $p,t>0$,
\begin{equation}
 m_t(p)=\frac{t}{\Gamma(p)}
 \int_0^\infty q^{p-1}\E(Y^pe^{-qY})e^{-t\Phi(q)}\dd q.
 \label{eq:breiman-ratio-moment}
\end{equation}
It also gives $m_t(1)=1-e^{-t\lambda}$.  The common quadratic identity,
Lemma~\ref{lem:poisson-quadratic}, specializes to
\begin{equation}
 m_t(2)=t\lambda e^{-t\lambda}
 +\int_0^{t\lambda}ve^{-v}
 \left[1-e\!\left(\Phi^{-1}(v/t)\right)\right]\dd v.
 \label{eq:breiman-exact-quadratic}
\end{equation}

\begin{proposition}[A strict bound for $q\Phi'(q)/\Phi(q)$ at zero]
\label{prop:breiman-slope-gap}
If the limit in \eqref{eq:breiman-poissonized-limit} is nondegenerate, then
\begin{equation}
 \limsup_{q\downarrow0}\frac{q\Phi'(q)}{\Phi(q)}<1.
 \label{eq:breiman-slope-gap}
\end{equation}
\end{proposition}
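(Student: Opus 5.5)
The plan is to argue by contradiction, combining three earlier results: the exact quadratic identity \eqref{eq:breiman-exact-quadratic}, the left-window inequality \eqref{eq:poisson-left-window} of Lemma~\ref{lem:poisson-slope-dynamics}, and the weighted weak law, Lemma~\ref{lem:countable-wlln}. The heavy power-sum theorems are not needed, and Theorem~\ref{thm:countable-main} could not be used here anyway, because its hypothesis \eqref{eq:countable-r-bound} is exactly what this proposition is meant to provide later.

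\emph{Setup.} Suppose \eqref{eq:breiman-slope-gap} fails. Then there are $q_k\downarrow0$ with $e(q_k)\to1$. Put $x_k=\log\Phi(q_k)$, so that $x_k\to-\infty$ because $\Phi(q)\to0$ as $q\downarrow0$. With $g$ and $u=1-g$ as in Lemma~\ref{lem:poisson-slope-dynamics}, we have $u(x_k)\to0$. By \eqref{eq:breiman-slope-range}, $0<u\leq1$ everywhere. The left-window inequality \eqref{eq:poisson-left-window} then gives
\[
 u(y)\leq e^{x_k-y}u(x_k)\qquad(y\leq x_k).
\]
Hence, for each fixed $L>0$, $u$ tends to zero uniformly on the window $[x_k-L,x_k]$.

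\emph{Choice of times and the quadratic identity.} Fix $M>0$ and set $t_k=Me^{-x_k}\to\infty$. In \eqref{eq:breiman-exact-quadratic}, write $1-e(\Phi^{-1}(v/t_k))=u(y)$ with $y=\log v-\log M+x_k$. I would split the $v$-integral into three ranges.
\begin{enumerate}[label=\textup{(\alph*)}]
\item For $v\in[Me^{-L},M]$, we have $y\in[x_k-L,x_k]$, so this part tends to zero as $k\to\infty$.
\item For $v<Me^{-L}$, the bound $u\leq1$ gives at most $\int_0^{Me^{-L}}v\,\dd v$.
\item For $v>M$, the bound $u\leq1$ gives at most $\int_M^\infty ve^{-v}\dd v$.
\end{enumerate}
The atom term $t_k\lambda e^{-t_k\lambda}$ tends to zero. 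Letting $k\to\infty$, then $L\to\infty$, and then $M\to\infty$ along a diagonal choice yields a sequence $t_k'\to\infty$ with $m_{t_k'}(2)\to0$.

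\emph{Conclusion.} From \eqref{eq:countable-max-power}, $\E P_*(t_k')^2\leq m_{t_k'}(2)\to0$, so $P_*(t_k')\to0$ in probability. Lemma~\ref{lem:countable-wlln}, applied with the centered marks \eqref{eq:breiman-centered}, then gives $R_{t_k'}\to0$ in probability. But by \eqref{eq:breiman-poissonized-limit}, $R_{t_k'}$ converges to the nondegenerate limit $T$, which is a contradiction.

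The main obstacle is the bookkeeping of the window. The time scale $t_k$ must be chosen so that the mass of $ve^{-v}\,\dd v$ falls where $u$ is controlled. This works because the monotonicity of $e^xu(x)$ controls $u$ only to the \emph{left} of $x_k$, and taking $M$ large pushes essentially all of the mass there. The diagonal extraction over $L$ and $M$ is routine.
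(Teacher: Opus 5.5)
Your proof is correct and follows the paper's argument. Both argue by contradiction: the left-window inequality of Lemma~\ref{lem:poisson-slope-dynamics} is fed into the exact quadratic identity at times $t_k$ proportional to $1/\Phi(q_k)$, forcing $m_{t_k}(2)\to0$, and the weighted weak law then contradicts nondegeneracy. The only difference is bookkeeping: the paper takes the $k$-dependent multiplier $e^{L_k}=\delta_k^{-1/2}$, which gives $1-e(\Phi^{-1}(v/t_k))\le\sqrt{\delta_k}/v$ on all of $v\le e^{L_k}$, so a single sequence $t_k$ already has $m_{t_k}(2)\to0$ and your diagonal extraction over fixed $M$ and $L$ is avoided.
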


\begin{proof}
Suppose instead that the limsup equals one.  Choose $q_k\downarrow0$ such
that
\[
 \delta_k=1-e(q_k)\downarrow0,
 \qquad x_k=\log\Phi(q_k)\downarrow-\infty,
\]
and set
\[
 L_k=\frac12\log\frac1{\delta_k},
 \qquad
 t_k=\frac{e^{L_k}}{\Phi(q_k)}.
\]
Then $t_k\to\infty$.  In logarithmic coordinates, put
\[
 g(x)=e\!\left(\Phi^{-1}(e^x)\right),
 \qquad x<\log\lambda,
 \qquad u(x)=1-g(x).
\]
Lemma~\ref{lem:poisson-slope-dynamics} shows that, for $y\leq x_k$,
\[
 u(y)\leq e^{x_k-y}\delta_k.
\]
If $v\leq e^{L_k}$, then
$y=\log(v/t_k)=x_k-L_k+\log v\leq x_k$, and consequently
\begin{equation}
 1-e\!\left(\Phi^{-1}(v/t_k)\right)
 \leq\frac{\sqrt{\delta_k}}v.
 \label{eq:breiman-slope-window}
\end{equation}
Fix $0<\varepsilon<M<\infty$.  For large $k$, both
$M<t_k\lambda$ and $M<e^{L_k}$.  By
\eqref{eq:breiman-exact-quadratic}, \eqref{eq:breiman-slope-range}, and
\eqref{eq:breiman-slope-window},
\[
 m_{t_k}(2)
 \leq t_k\lambda e^{-t_k\lambda}
 +\int_0^\varepsilon v\dd v
 +\frac{\sqrt{\delta_k}}\varepsilon\int_\varepsilon^Mve^{-v}\dd v
 +\int_M^\infty ve^{-v}\dd v.
\]
Let first $k\to\infty$, then $\varepsilon\downarrow0$, and finally
$M\to\infty$.  It follows that $m_{t_k}(2)\to0$.  Since
\[
 \max_jP_j(t_k)^2\leq\sum_jP_j(t_k)^2,
\]
the largest weight tends to zero in probability.
Lemma~\ref{lem:countable-wlln} then gives $R_{t_k}\to0$ in probability,
contradicting
\eqref{eq:breiman-poissonized-limit}.
\end{proof}

\subsection{A fractional power-sum bound}

The strict bound in Proposition~\ref{prop:breiman-slope-gap} gives a bound
on $m_t(r)$ for some $r\in(0,1)$, as required by
Theorem~\ref{thm:countable-main}.

\begin{proposition}[Fractional power-sum bound]
\label{prop:breiman-subunit}
Suppose that
\begin{equation}
 \limsup_{q\downarrow0}\frac{q\Phi'(q)}{\Phi(q)}<1.
 \label{eq:breiman-subunit-slope-hypothesis}
\end{equation}
Then there is an $r\in(0,1)$ such that
\begin{equation}
 \sup_{t>0}m_t(r)<\infty.
 \label{eq:breiman-subunit-bound}
\end{equation}
\end{proposition}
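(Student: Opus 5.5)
The plan is to feed the slope hypothesis into the fractional representation of Lemma~\ref{lem:poisson-fractional}, applied with the finite L\'evy measure $\nu=\nu_Y$ of total mass $\lambda$. First I fix $\theta<1$ and $q_0>0$ with $e(q)\leq\theta$ for $0<q\leq q_0$, which is possible by \eqref{eq:breiman-subunit-slope-hypothesis}, and put $v_0=\Phi(q_0)\in(0,\lambda)$. Then I choose $r\in(\theta,1)$. The choice $r>\theta$ is exactly what will make every resulting integral finite and uniformly bounded in $t$.

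\textbf{Step 1: a lower bound on $\Phi$ along dilations.} Since $e$ is the logarithmic derivative $\dd\log\Phi/\dd\log q$, integrating $e\leq\theta$ gives
\[
 \Phi(e^{-h}q)\geq e^{-\theta h}\Phi(q),\qquad 0<q\leq q_0,\ h\geq0.
\]
Take $q=\Phi^{-1}(v)$.
\begin{itemize}
\item For $v\leq v_0$ this yields $\Phi(e^{-h}\Phi^{-1}(v))\geq e^{-\theta h}v$.
\item For $v_0<v<\lambda$, monotonicity gives $\Phi(e^{-h}\Phi^{-1}(v))\geq\Phi(e^{-h}q_0)\geq e^{-\theta h}v_0$.
\end{itemize}
Inserting both bounds into \eqref{eq:poisson-I-def} gives
\[
 I_t(h)\leq\int_0^{v_0}e^{-te^{-\theta h}v}\dd v+\lambda e^{-tv_0e^{-\theta h}}
 \leq\frac{e^{\theta h}}{t}+\lambda e^{-tv_0e^{-\theta h}}.
\]
Trivially, we also have $I_t(h)\leq\lambda$.

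\textbf{Step 2: the first term.} By \eqref{eq:poisson-fractional-representation}, the first term contributes at most a constant times
\[
 \int_0^\infty(e^h-1)^{-r}e^{\theta h}\dd h.
\]
This integral is finite for two reasons. Near $h=0$ the integrand is of order $h^{-r}$ with $r<1$. At infinity it is of order $e^{(\theta-r)h}$ with $r>\theta$. The factor $t$ in front of the integral cancels the $1/t$, so this bound does not depend on $t$.

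\textbf{Step 3: the second term.} This is the main point, because it must be controlled uniformly for large $t$. I split the $h$-integral at $h=1$.
\begin{itemize}
\item On $[0,1]$, the integrand is at most $(e^h-1)^{-r}\lambda e^{-tv_0e^{-\theta}}$. Multiplied by $t$, this is bounded uniformly in $t$.
\item On $[1,\infty)$, use $(e^h-1)^{-r}\leq Ce^{-rh}$ and substitute $w=tv_0e^{-\theta h}$. The integral becomes
\[
 \frac{(tv_0)^{-r/\theta}}{\theta}\int_0^{tv_0}w^{r/\theta-1}e^{-w}\dd w\leq C'(tv_0)^{-r/\theta}\Gamma(r/\theta).
\]
Multiplied by $t$, this is $O(t^{1-r/\theta})$, which is bounded for $t\geq1$ because $r>\theta$.
\end{itemize}
For $0<t\leq1$ I instead use $I_t\leq\lambda$ and $\int_0^\infty(e^h-1)^{-r}\dd h<\infty$; then $t$ times this integral is bounded. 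Combining Steps 2 and 3 gives $\sup_{t>0}m_t(r)<\infty$, which is \eqref{eq:breiman-subunit-bound}.

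The only delicate point is balancing the exponents: $r$ must lie above the slope bound $\theta$ for both the large-$h$ integrability in Step 2 and the $t^{1-r/\theta}$ decay in Step 3, and below one for integrability at $h=0$. The hypothesis $\theta<1$ is precisely what leaves room for such an $r$.
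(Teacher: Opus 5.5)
Your proof is correct and follows the paper's argument: the same fractional representation, the same log-slope integration giving $\Phi(e^{-h}\Phi^{-1}(v))\geq e^{-\theta h}(v\wedge v_0)$, and the same Beta integral. The only difference is Step~3, which the paper avoids with the elementary bound $e^{-x}\leq 1/(ex)$: this gives $\lambda e^{-tv_0e^{-\theta h}}\leq \frac{\lambda}{ev_0}\frac{e^{\theta h}}{t}$, so both terms reduce to the Step~2 integral uniformly in all $t>0$, with no split at $h=1$ and no separate treatment of $t\leq1$.
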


\begin{proof}
Choose
\[
 \limsup_{q\downarrow0}e(q)<a<r<1
\]
and $q_0>0$ so that $e(q)\leq a$ for $0<q\leq q_0$.  Put
$v_0=\Phi(q_0)$.  Lemma~\ref{lem:poisson-fractional}, applied to
$\nu_Y$, gives
\begin{equation}
 m_t(r)=\frac{t}{\Gamma(r)\Gamma(1-r)}
 \int_0^\infty(e^h-1)^{-r}I_t(h)\dd h,
 \label{eq:breiman-subunit-representation}
\end{equation}
where
\[
 I_t(h)=\int_0^\lambda
 \exp\{-t\Phi(e^{-h}\Phi^{-1}(v))\}\dd v.
\]
For $0<v\leq v_0$, integrating $e(q)\leq a$ on a logarithmic scale gives
\[
 \log\frac{\Phi(\Phi^{-1}(v))}
 {\Phi(e^{-h}\Phi^{-1}(v))}
 \leq ah,
\]
and hence
\[
 \Phi(e^{-h}\Phi^{-1}(v))\geq e^{-ah}v.
\]
For $v_0<v<\lambda$, monotonicity gives the same lower bound with $v_0$
in place of $v$.  It follows that
\[
 I_t(h)
 \leq\int_0^{v_0}e^{-te^{-ah}v}\dd v
 +(\lambda-v_0)e^{-te^{-ah}v_0}
 \leq\left(1+\frac{\lambda}{ev_0}\right)\frac{e^{ah}}t.
\]
Finally,
\[
 \int_0^\infty(e^h-1)^{-r}e^{ah}\dd h
 =\mathrm B(r-a,1-r)<\infty.
\]
Combining the last two displays with
\eqref{eq:breiman-subunit-representation} proves
\eqref{eq:breiman-subunit-bound}.
\end{proof}

No smoothness or nonlattice hypothesis on the law of $Y$ has entered the
argument.  In particular, the law may be atomic, may have lacunary support,
and may have an atom at zero.

\subsection{A Poissonized ratio--Tauberian theorem}
\label{sec:breiman-tauberian}

We now show that a positive limit of an expected power sum determines the
regular-variation index of $\bar G$.  The corresponding deterministic-sample
criterion is stated for $1<p\leq2$ in \cite[Proposition~2]{KM16}, whose
proof also applies to every $p>1$; the quadratic case goes back to the
earlier ratio results in \cite{MO,FJT}.  We give the Poissonized form using
Proposition~\ref{prop:poisson-mellin-ratio}.  This is the large-time
counterpart of the small-time theorem in Section~\ref{sec:tauberian}.

\begin{proposition}[Poissonized ratio--Tauberian theorem]
\label{prop:breiman-tauberian}
Fix $p>1$.  The following are equivalent:
\begin{enumerate}
\item $m_t(p)\to c$ as $t\to\infty$ for some $c>0$;
\item $\bar G\in\RV_\infty(-\beta)$ for some
      $\beta\in[0,1)$.
\end{enumerate}
When these conditions hold, $\beta$ is unique and
\begin{equation}
 c=\frac{\Gamma(p-\beta)}{\Gamma(p)\Gamma(1-\beta)}.
 \label{eq:breiman-power-limit}
\end{equation}
\end{proposition}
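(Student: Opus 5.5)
The plan is to rewrite $m_t(p)$ as a Mellin convolution of a tail function at the jump-size scale $1/\Phi^{-1}(1/t)$. The direct implication then follows from Karamata and Abelian arguments, and the converse from the Drasin--Shea theorem, Proposition~\ref{prop:poisson-mellin-ratio}.

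\emph{Reduction.} Start from \eqref{eq:breiman-ratio-moment} and write $A_p(q)=\E(Y^pe^{-qY})$ for $\nu=\nu_Y$. Substitute $v=t\Phi(q)$, i.e.\ $q=\Phi^{-1}(v/t)$. This gives
\[
 m_t(p)=\frac1{\Gamma(p)}\int_0^{t\lambda}\frac{q^{p}A_p(q)}{q\Phi'(q)}\,e^{-v}\dd v ,
 \qquad q=\Phi^{-1}(v/t).
\]
Set
\[
 k(q)=\frac{q^pA_p(q)}{\Gamma(p)\,q\Phi'(q)} .
\]
Since $q\Phi'(q)=qA_1(q)$, the function $k$ is a ratio of two Laplace-type transforms of $\nu_Y$. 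Both depend only on $\bar G$, because $q^pA_p(q)=\int(qx)^pe^{-qx}\,\nu_Y(\dd x)$.

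Integrating by parts against $\bar G$ represents both transforms as Mellin convolutions. For $x=1/q$,
\[
 q^pA_p(q)=\int_0^\infty \bar G(xu)\,K_p(u)\,\frac{\dd u}{u}
 \qquad\text{with } K_p(u)=u\,\frac{\dd}{\dd u}\bigl(-u^pe^{-u}\bigr)\text{ up to sign},
\]
and the same holds with $p=1$. The kernel $K_p(u)=u^p e^{-u}(u-p)$ changes sign, so I would instead use the positive form coming from Lemma~\ref{lem:poisson-fractional}, or simply compare with $\Phi(q)=\int_0^\infty \bar G(xu)e^{-u}\dd u$, which has a positive kernel.

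\emph{(2)$\Rightarrow$(1).} If $\bar G\in\RV_\infty(-\beta)$ with $\beta<1$, Karamata's Abelian theorem gives, as $q\downarrow0$,
\[
 q^pA_p(q)\sim\Gamma(p-\beta+1)\,\bar G(1/q)-\Gamma(p-\beta)\,\beta\,\bar G(1/q)\cdot(\ldots).
\]
More cleanly,
\[
 q^pA_p(q)\sim \beta\,\Gamma(p-\beta)\,\bar G(1/q)
 \quad\text{for }\beta>0 ,
\]
and the analogous formula holds with $A_1$. So $k(q)\to\Gamma(p-\beta)/(\Gamma(p)\Gamma(1-\beta))$ as $q\downarrow0$. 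For $\beta=0$ the truncated-moment form gives $k\to1$ directly. Then $k$ is bounded, since $q^pA_p\le C_p\,qA_1$ because $x^{p-1}q^{p-1}e^{-qx}$ is bounded. Also $\Phi^{-1}(v/t)\to0$ for fixed $v$ as $t\to\infty$. Dominated convergence in the displayed formula therefore yields \eqref{eq:breiman-power-limit}. Uniqueness of $\beta$ follows because $\beta\mapsto\Gamma(p-\beta)/\Gamma(1-\beta)$ is strictly monotone on $[0,1)$ for $p>1$: its log-derivative is $\psi(1-\beta)-\psi(p-\beta)<0$.

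\emph{(1)$\Rightarrow$(2).} This is the main obstacle. We know only that the $e^{-v}$-average of $k(\Phi^{-1}(v/t))$ converges; we want $k(q)\to c$ as $q\downarrow0$. I would proceed in two stages.

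First, a Tauberian step. In logarithmic coordinates $x=\log\Phi(q)$, the quantity $m_t(p)$ is a convolution of $k\circ\Phi^{-1}\circ\exp$ with the kernel $e^{-e^{y}}e^{y}$, which is a positive kernel with nonvanishing Fourier transform (a Gamma function). I would obtain slow oscillation of $k$ from Lemma~\ref{lem:poisson-slope-dynamics} and the analogous differential inequality for $q^pA_p$. With Wiener/Pitt that gives $k(q)\to c$.

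Second, from $q^pA_p(q)/(qA_1(q))\to c>0$, take $h=\bar G$, or better $h(x)=x^{-1}\int_0^x y\,\nu_Y(\dd y)$, which has the required monotonicity up to a bounded factor for the doubling hypothesis. Express numerator and denominator as $h*_MK$ with positive kernels, for instance after rewriting via the integrated tail $I_G$. Then apply Proposition~\ref{prop:poisson-mellin-ratio} to conclude $h\in\RV_\infty(\rho)$. Karamata transfers this to $\bar G\in\RV_\infty(-\beta)$. The condition $\beta<1$ follows because $c>0$: the case $\beta=1$ gives $c=0$.

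If the Wiener step proves awkward, I would first fall back on the route of \cite[Proposition~2]{KM16}. That route proves the deterministic-$n$ version for every $p>1$ and transfers it via Lemma~\ref{lem:breiman-depoissonization}, applied with $X$ replaced by power sums, which gives $|m_t(p)-\E\sum_j W_{\lfloor t\rfloor,j}^p|\to0$ by the same $\ell^1$ weight bound.
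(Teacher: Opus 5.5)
Your main argument has a genuine gap: both directions rest on pointwise convergence of $k(q)=q^{p-1}\E(Y^pe^{-qY})/(\Gamma(p)\Phi'(q))$, which fails in general.

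Both directions run through this ratio, and your Tauberian stage explicitly aims at $k(q)\to c$ as $q\downarrow0$. Here is a counterexample. Let $\bar G=1$ on $[0,a_1)$ and $\bar G=1/(j+1)$ on $[a_j,a_{j+1})$, with $a_j=\exp(e^j)$. Then $\bar G$ is slowly varying, so (2) holds with $\beta=0$. Because the spacing is doubly exponential, the atom at $a_j$ dominates both transforms at $q=\theta/a_j$. Hence $k(\theta/a_j)\to\theta^{p-1}/\Gamma(p)$, a limit that depends on $\theta$. The $e^{-v}$-average in your formula still converges, because $\Phi$ is slowly varying and $\Phi^{-1}(v/t)$ sweeps an enormous $q$-range; $k$ itself does not converge. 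This has four consequences:
\begin{itemize}
\item The claim that for $\beta=0$ ``the truncated-moment form gives $k\to1$'' is wrong, so your dominated-convergence proof of (2)$\Rightarrow$(1) fails at $\beta=0$.
\item The bound $q^pA_p\le C_p\,qA_1$ is false, since $(qx)^{p-1}$ is unbounded; the same example makes $k$ unbounded near $0$.
\item Your first Tauberian stage targets a false statement whenever $c=1$.
\item Lemma~\ref{lem:poisson-slope-dynamics} cannot supply slow oscillation. It constrains $e(q)$, not $k$, and separate one-sided growth bounds on numerator and denominator do not control their ratio.
\end{itemize}
For $\beta\in(0,1)$ your asymptotic $q^pA_p(q)\sim\beta\Gamma(p-\beta)\bar G(1/q)$ is correct, but it only serves the Abelian direction.

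The missing idea is to integrate rather than differentiate. $\Gamma(p)k(\Phi^{-1}(v))$ is the density of the nondecreasing function $U_p(v)=H_p(\Phi^{-1}(v))$, where $H_p(q)=\int_0^qs^{p-1}\E(Y^pe^{-sY})\dd s$, and $\Gamma(p)m_t(p)=t\int e^{-tv}\dd U_p(v)$.
\begin{itemize}
\item Karamata's Tauberian theorem needs only monotonicity of $U_p$. It gives $m_t(p)\to c$ if and only if $H_p(q)/\Phi(q)\to c\Gamma(p)$, and this cumulative statement holds for every $\beta\in[0,1)$.
\item Tail integration gives $H_p=q^pF_p$ and $\Phi=qF_1$, where $F_s(q)=\int_0^\infty\bar G(y)y^{s-1}e^{-qy}\dd y$.
\item The Weyl identity $F_1(q)=\Gamma(p-1)^{-1}\int_0^\infty v^{p-2}F_p(q+v)\dd v$ turns the ratio of two different transforms into the single-function form $(h*_Mk_p)(x)/h(x)\to1/(c(p-1))$ that Proposition~\ref{prop:poisson-mellin-ratio} requires. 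Here $h(x)=x^{-p}F_p(1/x)$ satisfies $\theta^{-p}\le h(\theta x)/h(x)\le1$, and $k_p(u)=(u-1)^{p-2}u^{1-p}\1_{\{u>1\}}$.
\end{itemize}
Even granting your first stage, your second stage never identifies such a relation.

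Your fallback is a sound reduction. The inequality $|a^p-b^p|\le p|a-b|$ on $[0,1]$, together with the $\ell^1$ coupling bound in Lemma~\ref{lem:breiman-depoissonization}, does give $m_t(p)-\E\sum_jW_{\lfloor t\rfloor,j}^p\to0$. However, it replaces the proof by a citation of Kevei--Mason's Proposition~2, which is stated only for $1<p\le2$.
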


\begin{proof}
Define
\begin{equation}
 H_p(q)=\int_0^q s^{p-1}\E(Y^pe^{-sY})\dd s.
 \label{eq:breiman-Hp}
\end{equation}
Push the measure $\dd H_p$ forward under $q\mapsto\Phi(q)$ and denote its
cumulative function by $U_p$.  Thus
\[
 U_p(v)=H_p(\Phi^{-1}(v)),\qquad0<v<\lambda.
\]
The measure is finite, since $H_p(\infty)=\lambda\Gamma(p)$, and has no
atom at zero.  Formula \eqref{eq:breiman-ratio-moment} becomes
\begin{equation}
 \Gamma(p)m_t(p)=t\int_{[0,\lambda)}e^{-tv}\dd U_p(v).
 \label{eq:breiman-pushforward}
\end{equation}
Karamata's Laplace--Stieltjes theorem at the origin therefore gives
\begin{equation}
 m_t(p)\longrightarrow c>0
 \quad\Longleftrightarrow\quad
 \frac{H_p(q)}{\Phi(q)}\longrightarrow c\Gamma(p)
 \qquad(q\downarrow0).
 \label{eq:breiman-pushforward-equivalence}
\end{equation}

Put
\[
 F_p(q)=\int_0^\infty\bar G(y)y^{p-1}e^{-qy}\dd y,
 \qquad
 F_1(q)=\int_0^\infty\bar G(y)e^{-qy}\dd y.
\]
Tail integration gives the exact identities
\begin{equation}
 H_p(q)=q^pF_p(q),
 \qquad
 \Phi(q)=qF_1(q).
 \label{eq:breiman-tail-identities}
\end{equation}
Indeed, $\Phi(q)=qF_1(q)$ follows directly from Tonelli's theorem.  Moreover,
integration by parts against the tail gives
\[
 \frac{\dd}{\dd q}\{q^pF_p(q)\}
 =q^{p-1}\E(Y^pe^{-qY}),
\]
while $q^pF_p(q)\to0$ as $q\downarrow0$ by dominated convergence after
the substitution $v=qy$.  Integration in $q$ proves the first identity.
Consequently, \eqref{eq:breiman-pushforward-equivalence} is equivalent to
\begin{equation}
 q^{p-1}\frac{F_p(q)}{F_1(q)}\longrightarrow c\Gamma(p).
 \label{eq:breiman-tail-ratio}
\end{equation}
The Weyl fractional-integral identity
\begin{equation}
 F_1(q)=\frac1{\Gamma(p-1)}
 \int_0^\infty v^{p-2}F_p(q+v)\dd v
 \label{eq:breiman-weyl}
\end{equation}
follows from Tonelli's theorem.  Substituting $v=q(u-1)$ in
\eqref{eq:breiman-weyl} and using
\eqref{eq:breiman-tail-ratio} yields
\begin{equation}
 \int_1^\infty(u-1)^{p-2}\frac{F_p(qu)}{F_p(q)}\dd u
 \longrightarrow\frac1{c(p-1)}.
 \label{eq:breiman-mercer-ratio}
\end{equation}

Set
\[
 h(x)=x^{-p}F_p(1/x),
 \qquad
 k_p(u)=(u-1)^{p-2}u^{-p+1}\1_{\{u>1\}}.
\]
Then \eqref{eq:breiman-mercer-ratio} reads
\[
 \frac{(h*_Mk_p)(x)}{h(x)}\longrightarrow\frac1{c(p-1)},
 \qquad x\to\infty,
\]
with the Mellin-convolution convention of
Section~\ref{sec:poisson-calculus}.
The Mellin transform of the kernel is
\begin{equation}
 \widehat k_p(z)
 =\int_0^\infty u^{-z}k_p(u)\,\frac{\dd u}{u}
 =\mathrm B(p-1,1+z),
 \qquad\Re z>-1.
 \label{eq:breiman-kernel-transform}
\end{equation}
Its maximal Mellin strip is
\[
 -1<\Re z<\infty.
\]
The only finite endpoint is $-1$, and
$\widehat k_p(z)\to\infty$ as $z\downarrow-1$ along the real axis.
Moreover, $h$ is positive and nonincreasing, and, for $\theta\geq1$,
\begin{equation}
 \theta^{-p}\leq\frac{h(\theta x)}{h(x)}\leq1.
 \label{eq:breiman-bounded-decrease}
\end{equation}
In particular, for $1\leq\theta\leq2$,
\[
 h(\theta x)\geq2^{-p}h(x),
\]
which is the one-sided condition in
Proposition~\ref{prop:poisson-mellin-ratio}, with \(\zeta=2^{-p}\).
Indeed,
$q^pF_p(q)=\int_0^\infty\bar G(v/q)v^{p-1}e^{-v}\dd v$ is
nondecreasing in $q$.  Monotone convergence also gives
$h(x)\to\lambda\Gamma(p)$ as $x\downarrow0$, so $h$ has a positive,
locally bounded extension to the origin; the convolution is finite by
\eqref{eq:breiman-weyl}.  The kernel is nonnegative and nonzero, its
maximal Mellin interval is \((-1,\infty)\), and its only finite boundary
is the divergent boundary at \(-1\).  Hence
Proposition~\ref{prop:poisson-mellin-ratio} gives
\begin{equation}
 h\in\RV_\infty(\rho),
 \qquad
 \mathrm B(p-1,1+\rho)=\frac1{c(p-1)}
 \label{eq:breiman-drasin-shea}
\end{equation}
for some $\rho>-1$.  Since $m_t(p)\leq1$, one has $0<c\leq1$.  The beta
function in \eqref{eq:breiman-drasin-shea} is strictly decreasing in
$\rho$, so $-1<\rho\leq0$.  Put $\beta=-\rho$.  Then
$\beta\in[0,1)$, and
\eqref{eq:breiman-drasin-shea} is equivalent to
\eqref{eq:breiman-power-limit}.

It remains to recover the tail.  From
$h\in\RV_\infty(\rho)$,
\[
 F_p\in\RV_0(-p-\rho).
\]
Let
\[
 V(x)=\int_0^x y^{p-1}\bar G(y)\dd y.
\]
Karamata's Laplace--Stieltjes theorem gives
\[
 V(x)\sim\frac{F_p(1/x)}{\Gamma(p+\rho+1)},
 \qquad
 V\in\RV_\infty(p+\rho).
\]
Now
\[
 W(z)=pV(z^{1/p})=\int_0^z\bar G(w^{1/p})\dd w
\]
is regularly varying with positive index $(p+\rho)/p$.  Its density is
nonincreasing.  The monotone density theorem therefore yields
\[
 \bar G(w^{1/p})
 \sim\frac{p+\rho}{p}\frac{W(w)}w
 \in\RV_\infty(\rho/p).
\]
Replacing $w$ by $x^p$ proves
$\bar G\in\RV_\infty(\rho)
=\RV_\infty(-\beta)$.

Conversely, if
$\bar G(x)=x^{-\beta}L(x)$ with $0\leq\beta<1$, Karamata's theorem
gives
\[
 F_1(q)\sim\Gamma(1-\beta)q^{\beta-1}L(1/q),
 \qquad
 F_p(q)\sim\Gamma(p-\beta)q^{\beta-p}L(1/q).
\]
Equations \eqref{eq:breiman-tail-identities} and
\eqref{eq:breiman-pushforward-equivalence} then give
$m_t(p)\to c$, with $c$ as in
\eqref{eq:breiman-power-limit}.
\end{proof}

\begin{proof}[Proof of Theorem~\ref{thm:breiman}]
Assume first that $T_n$ has a nondegenerate weak limit.  Center the marks as
in \eqref{eq:breiman-centering}.  By
\eqref{eq:breiman-poissonized-limit},
Proposition~\ref{prop:breiman-slope-gap},
Proposition~\ref{prop:breiman-subunit}, and
Theorem~\ref{thm:countable-main}, there are $p\in(1,2]$ and $c>0$ such that
\[
 m_t(p)\longrightarrow c.
\]
Proposition~\ref{prop:breiman-tauberian} gives a unique
$\beta\in[0,1)$ for which
$\bar G\in\RV_\infty(-\beta)$.

Conversely, suppose that $X$ is nonconstant and
\eqref{eq:breiman-tail-rv} holds.  Define the finite L\'evy measure on
$(0,\infty)$ by
\[
 \nu_Y(B)=\Pp\{Y\in B\},
 \qquad B\subset(0,\infty).
\]
Its tail is $\bar G$.  The normalized jumps of the corresponding
compound-Poisson subordinator at time $t$ are exactly the weights in
$T_{N_t}$.  Arrange the positive observations in decreasing order of their
$Y$-values,
breaking ties with auxiliary iid continuous variables independent of the
marks.  The resulting permutation is measurable with respect to the
$\sigma$-field generated by $N_t$, the $Y$-values and the auxiliary tie-breakers,
which is independent of the marks.  Hence the permuted marks remain iid with
law $\Law(X)$ and independent of the ranked weights.
Proposition~\ref{prop:measure:ranked-jumps}, applied at
infinity, gives their $\ell^1$ limit: $\PD(\beta,0)$ for $0<\beta<1$ and
$e_1$ for $\beta=0$.  Proposition~\ref{prop:measure:marking-dust}, with
base law $\Law(X)$ and the identity function, then yields
$T_{N_t}\Rightarrow W_\beta$ with $W_\beta$ as in
\eqref{eq:breiman-limit-law}.  On the zero-jump event the convention in that
proposition returns $\E X$, whereas $T_{N_t}=0$; this event has
probability $e^{-t\Pp\{Y>0\}}$ and is asymptotically irrelevant.
Lemma~\ref{lem:measure:marked-nondegenerate} shows that $W_\beta$ is
nondegenerate.  Finally,
Lemma~\ref{lem:breiman-depoissonization} transfers this convergence to
$T_n$.
\end{proof}

\section{Complete classification for iid self-normalized weights}
\label{sec:iid-classification}

Let $G$ be a probability law on $[0,\infty)$ such that
$\Pp\{Y>0\}>0$ for $Y\sim G$, and let $Y_1,Y_2,\ldots$ be iid with
law $G$.  Put
\[
 S_n=\sum_{i=1}^nY_i,
 \qquad
 W_{n,i}=\begin{cases}
 Y_i/S_n,&S_n>0,\\[1mm]
 1/n,&S_n=0,
 \end{cases}
 \qquad 1\leq i\leq n,
\]
and append zero coordinates.  This convention makes the weights a mass
partition for every outcome.  If $\rho=\Pp\{Y=0\}$, it differs from any
other convention only on an event of probability $\rho^n$.  Write
$W_n^\downarrow$ for the decreasing rearrangement and
\begin{equation}
 \begin{aligned}
 \bar G(x)&=\Pp\{Y>x\},
 &A_G(x)&=\E\bigl[Y\1_{\{Y\leq x\}}\bigr],\\
 I_G(x)&=\E(Y\wedge x)=A_G(x)+x\bar G(x).
 \end{aligned}
 \label{eq:iid-truncated-means}
\end{equation}
For $F\in\cP_1(\R)$, take iid $F$-distributed variables
$X_1,X_2,\ldots$, independent of the weights, and write
\begin{equation}
 \mathcal B_n^G(F)
 =\Law\left(\sum_{i=1}^nW_{n,i}X_i\right).
 \label{eq:iid-output-operator}
\end{equation}

The constant branch of the classification is most conveniently expressed
through relative stability.

\begin{lemma}[Relative stability and the truncated mean]
\label{lem:iid-relative-stability}
The following conditions are equivalent:
\begin{align}
 &\frac{Y_{n:n}}{S_n}\longrightarrow0
       \quad\text{in probability},
 \label{eq:iid-dust-max}\\
 &\text{there are }a_n>0\text{ such that }
       \frac{S_n}{a_n}\longrightarrow1
       \quad\text{in probability},
 \label{eq:iid-relative-stability}\\
 &\frac{x\bar G(x)}{A_G(x)}\longrightarrow0,
 \label{eq:iid-dust-ratio-A}\\
 &\frac{x\bar G(x)}{I_G(x)}\longrightarrow0,
 \label{eq:iid-dust-ratio-I}\\
 &A_G\in\RV_\infty(0),
 \label{eq:iid-AG-slow}\\
 &I_G\in\RV_\infty(0).
 \label{eq:iid-IG-slow}
\end{align}
Here $Y_{n:n}=\max_{i\leq n}Y_i$; on the all-zero event the ratio in
\eqref{eq:iid-dust-max} is set equal to zero.
\end{lemma}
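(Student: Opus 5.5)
The analytic equivalences \eqref{eq:iid-dust-ratio-A}--\eqref{eq:iid-IG-slow} and the probabilistic ones \eqref{eq:iid-dust-max}--\eqref{eq:iid-relative-stability} will be handled separately, with the Poissonized power-sum calculus of Sections~\ref{sec:poisson-calculus}--\ref{sec:breiman} as the bridge between them.

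\emph{Analytic block.} The identity $I_G=A_G+x\bar G$ is the finite-measure case of \eqref{eq:poisson-I-decomposition}, so \eqref{eq:iid-dust-ratio-A}$\Leftrightarrow$\eqref{eq:iid-dust-ratio-I} is immediate. Lemma~\ref{lem:poisson-integrated-tail} at infinity, applied to the finite L\'evy measure $\nu_Y$ (whose $A,I$ are $A_G,I_G$), gives \eqref{eq:iid-dust-ratio-A}$\Leftrightarrow$\eqref{eq:iid-IG-slow}. For \eqref{eq:iid-AG-slow}: if $x\bar G(x)=o(A_G(x))$ then $I_G\sim A_G$ and slow variation transfers. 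Conversely, if $A_G$ is slowly varying, I would write
\[
 A_G(x)-A_G(\lambda^{-1}x)\geq\lambda^{-1}x\bigl(\bar G(\lambda^{-1}x)-\bar G(x)\bigr),
\]
and sum a geometric dyadic decomposition to get $x\bar G(x)=\sum_{k\ge0}x[\bar G(\lambda^kx)-\bar G(\lambda^{k+1}x)]\le\sum_k\lambda^{-k}(A_G(\lambda^{k+1}x)-A_G(\lambda^kx))$. By Potter bounds each increment is $o(A_G(x))\lambda^{k\eta}$ uniformly, so for $\eta$ small the series is $o(A_G(x))$. This requires some care, but it is a standard Karamata argument.

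\emph{Probabilistic block.} For \eqref{eq:iid-dust-max}$\Leftrightarrow$\eqref{eq:iid-dust-ratio-A}, I Poissonize. Lemma~\ref{lem:breiman-depoissonization}'s $\ell^1$ weight coupling gives $\E\|W^{(N_t)}-W^{(\lfloor t\rfloor)}\|_1\to0$, so the maximal weight tends to $0$ in probability along $n$ iff it does for the Poissonized weights $P(t)$. By Theorem~\ref{thm:countable-dust} (mass-one convention), this is equivalent to $m_t(2)\to0$. By the exact formula \eqref{eq:breiman-exact-quadratic}, $m_t(2)\to0$ iff $e(q)\to1$ as $q\downarrow0$: sufficiency is dominated convergence, and necessity uses Lemma~\ref{lem:poisson-slope-dynamics} (if $u=1-e\circ\Phi^{-1}\circ\exp$ is not small at some $x$, then it is bounded below on a window to the right of $x$ because $e^xu(x)$ is nondecreasing, which contributes a positive amount to $m_t(2)$ for $t=e^{-x}$). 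Finally, $e(q)\to1$ at $0$ means $\Phi\in\RV_0(1)$, i.e.\ $F_1(q)=\Phi(q)/q$ is slowly varying at $0$; by Karamata's Tauberian theorem this is equivalent to $I_G\in\RV_\infty(0)$, since $F_1$ is the Laplace--Stieltjes transform of $I_G$.

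For \eqref{eq:iid-relative-stability}, the implication \eqref{eq:iid-dust-max}$\Rightarrow$\eqref{eq:iid-relative-stability} is the classical truncation argument with $a_n=nA_G(b_n)$ and $n\bar G(b_n)\to0$, using Chebyshev: $\Var\le nb_nA_G(b_n)=o(a_n^2)$ under \eqref{eq:iid-dust-ratio-A}. Conversely, relative stability forces $Y_{n:n}/S_n\to0$ because $S_{n}/a_n\to1$ and $S_{2n}/a_{2n}\to1$ force $a_{2n}/a_n\to2$, and a single summand of size $\varepsilon a_n$ with nonvanishing probability contradicts the concentration of $S_n-Y_{n:n}$ obtained by exchangeability. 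The main obstacle is this last converse together with the necessity direction of $m_t(2)\to0\Rightarrow e\to1$; for the former I would alternatively cite the classical relative-stability criterion (Feller/Maller), which states \eqref{eq:iid-relative-stability}$\Leftrightarrow$\eqref{eq:iid-dust-ratio-I} directly.
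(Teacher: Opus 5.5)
Your analytic block is the paper's own argument. It uses the identity $I_G=A_G+x\bar G(x)$, the increment comparisons for $I_G$ (the content of Lemma~\ref{lem:poisson-integrated-tail} for $\nu_Y$), and the geometric decomposition with Potter bounds giving $x\bar G(x)=o(A_G(x))$ from $A_G\in\RV_\infty(0)$.

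You diverge in the probabilistic block. The paper proves nothing there. It cites Breiman's Proposition~1 for \eqref{eq:iid-dust-max}$\Leftrightarrow$\eqref{eq:iid-relative-stability}, and the classical relative-stability criterion (Maller, Feller) for \eqref{eq:iid-relative-stability}$\Leftrightarrow$\eqref{eq:iid-dust-ratio-A}. You instead derive \eqref{eq:iid-dust-max}$\Leftrightarrow$\eqref{eq:iid-IG-slow} from the paper's Poissonian calculus, and that chain is sound:
\begin{itemize}
\item The maximal weight is $1$-Lipschitz in $\ell^1$, so depoissonization transfers it, and \eqref{eq:countable-max-power} turns it into $m_t(2)\to0$.
\item In \eqref{eq:breiman-exact-quadratic}, dominated convergence gives sufficiency of $e(q)\to1$.
\item For necessity, monotonicity of $e^xu(x)$ (Lemma~\ref{lem:poisson-slope-dynamics}) gives $u\geq u(x_k)/e$ on $[x_k,x_k+1]$. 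Hence $m_{t_k}(2)\geq e^{-1}u(x_k)\int_1^e ve^{-v}\dd v$ at $t_k=e^{-x_k}$. This is the right-window mirror of the left-window argument in Proposition~\ref{prop:breiman-slope-gap}.
\item Finally, $e\to1$ iff $\Phi\in\RV_0(1)$; the reverse implication needs the monotone density theorem for the concave $\Phi$. This holds iff $\Phi(q)/q$ is slowly varying at $0$, iff $I_G\in\RV_\infty(0)$ by Karamata's theorem at index zero.
\end{itemize}
This buys a self-contained proof of the maximal-weight criterion. It also gives the extra characterization $q\Phi'(q)/\Phi(q)\to1$ as $q\downarrow0$, the large-time analogue of \eqref{eq:tauberian-dust-phi}. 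It does not remove the dependence on classical relative-stability theory: your converse \eqref{eq:iid-relative-stability}$\Rightarrow$\eqref{eq:iid-dust-max} is only gestured at, and you fall back on the same citation the paper uses.

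Two details need repair if you write it out.

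\emph{Forward truncation.} The bound $\Var\leq nb_nA_G(b_n)=o(a_n^2)$ does not follow from \eqref{eq:iid-dust-ratio-A} and $n\bar G(b_n)\to0$ for arbitrary $b_n$. It fails for large $b_n$, and even for the natural choice below, where it is only $O(a_n^2)$. Take $b_n=nI_G(b_n)$, solvable for large $n$ because $x/I_G(x)$ is continuous and nondecreasing. Then use $\E[Y^2\1_{\{Y\leq b\}}]\leq2\int_0^by\bar G(y)\dd y=o(bI_G(b))$. This gives $a_n=nA_G(b_n)\sim b_n$, $n\bar G(b_n)\to0$, and truncated variance $o(b_n^2)$.

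\emph{Converse.} ``Concentration of $S_n-Y_{n:n}$ by exchangeability'' does not yet yield a contradiction, but a block argument does. Since $S_n\to\infty$ almost surely, $a_n\to\infty$. Hence $a_{n+1}/a_n\to1$ and $a_{mn}/a_n\to m$, so $a_{\lceil n/k\rceil}/a_n\to1/k$. Split $\{1,\ldots,n\}$ into $k$ blocks of size at most $\lceil n/k\rceil$. Since $Y_{n:n}$ is bounded by the sum over its own block,
\[
\Pp\{Y_{n:n}>\varepsilon a_n\}\leq k\,\Pp\{S_{\lceil n/k\rceil}>\varepsilon a_n\}\to0
\quad\text{whenever } k>1/\varepsilon.
\]
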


\begin{proof}
Breiman~\cite[Proposition~1]{Breiman} proves the equivalence of
\eqref{eq:iid-dust-max} and \eqref{eq:iid-relative-stability}, without a
continuity assumption on the law of $Y$.  For nonnegative $Y$, the truncated
first moment $v$ in the relative-stability criterion is $A_G$; the criterion
therefore gives \eqref{eq:iid-dust-ratio-A}; see
\cite[pp.~499 and 502]{Maller1979}.  If $Y$ is bounded, this equivalence
also follows directly from the weak law of large numbers.  An atom at zero
causes no change, since $\Pp\{S_n=0\}=\Pp\{Y=0\}^n\to0$.
We verify the remaining equivalences, including both conventions for the
truncated mean.

The identity in \eqref{eq:iid-truncated-means} shows directly that
\eqref{eq:iid-dust-ratio-A} and \eqref{eq:iid-dust-ratio-I} are
equivalent.  If \eqref{eq:iid-dust-ratio-I} holds, then, for $\lambda>1$,
monotonicity of $\bar G$ gives
\[
 0\leq I_G(\lambda x)-I_G(x)
 \leq(\lambda-1)x\bar G(x)=o(I_G(x)).
\]
The case $0<\lambda<1$ follows by applying the same estimate at $\lambda x$.
Thus $I_G$ is slowly varying.  Conversely, for $0<\lambda<1$,
\[
 I_G(x)-I_G(\lambda x)
 =\int_{\lambda x}^x\bar G(y)\,\dd y
 \geq(1-\lambda)x\bar G(x).
\]
Slow variation of $I_G$ therefore implies
\eqref{eq:iid-dust-ratio-I}.  Under either ratio condition,
$A_G(x)\sim I_G(x)$, so \eqref{eq:iid-AG-slow} follows from
\eqref{eq:iid-IG-slow}.

It remains to start from \eqref{eq:iid-AG-slow}.  Fix $\lambda>1$.
Partitioning $(x,\infty)$ into the intervals
$(\lambda^kx,\lambda^{k+1}x]$ yields
\begin{equation}
 x\bar G(x)
 \leq\sum_{k\geq0}\lambda^{-k}
 \{A_G(\lambda^{k+1}x)-A_G(\lambda^kx)\}.
 \label{eq:iid-potter-sum}
\end{equation}
After division by $A_G(x)$, every fixed finite initial part tends to zero.
For $0<\varepsilon<1$, Potter's bound gives, for all sufficiently large
$x$ and every $k\geq0$,
\[
 \frac{A_G(\lambda^{k+1}x)}{A_G(x)}
 \leq C\lambda^{\varepsilon(k+1)}.
\]
The tail of \eqref{eq:iid-potter-sum}, starting at $k=K$, is consequently
bounded by a constant times
$\sum_{k\geq K}\lambda^{-(1-\varepsilon)k}$, uniformly for large $x$.
Letting first $x\to\infty$ and then $K\to\infty$ proves
$x\bar G(x)=o(A_G(x))$.
\end{proof}

We next use the uniform convention on $\{S_n=0\}$ to transfer the whole
mass partition.  These weights have total mass one on every outcome and
give the exact identity below; the zero convention in
Lemma~\ref{lem:breiman-depoissonization} gives an upper bound instead.
For $n=0$, let $W^{(0)}$ be any fixed mass-one
sequence.  For $n\geq1$, write $W^{(n)}=(W_{n,i})_{i\geq1}$ in the original
sample order.

\begin{lemma}[Sample-size coupling]
\label{lem:iid-partition-depoissonization}
Couple all sample sizes through the same sequence $(Y_i)$.  For
$m,n\geq1$,
\begin{equation}
 \E\lVert W^{(m)}-W^{(n)}\rVert_1
 =2\frac{|m-n|}{m\vee n}.
 \label{eq:iid-exact-size-coupling}
\end{equation}
The same left-hand side with decreasing rearrangements is no larger.  If
$N_n$ is Poisson with mean $n$, independent of the sequence, then
\begin{equation}
 \E\lVert W^{(N_n)}-W^{(n)}\rVert_1
 \leq\frac{2\E|N_n-n|}{n}\leq\frac2{\sqrt n}.
 \label{eq:iid-partition-poisson-bound}
\end{equation}

Let $E$ be Polish, let $H\in\cP(E)$, and attach one iid $H$-sequence
$(Z_i)$, independent of the weights.  If
$M^{(n)}=\sum_iW_i^{(n)}\delta_{Z_i}$, then
\begin{align}
 d_{\mathrm{BL}}(M^{(m)},M^{(n)})
 &\leq\lVert W^{(m)}-W^{(n)}\rVert_1,
 \label{eq:iid-measure-size-coupling}\\
 \E\left|M^{(m)}h-M^{(n)}h\right|
 &\leq H|h|\,
       \E\lVert W^{(m)}-W^{(n)}\rVert_1,
 \qquad h\in L^1(H).
 \label{eq:iid-test-size-coupling}
\end{align}
Here $d_{\mathrm{BL}}$ is any bounded-Lipschitz metric formed from test
functions bounded by one.
\end{lemma}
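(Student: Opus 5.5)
For $m\ge n\ge1$ I will compute $\E\lVert W^{(m)}-W^{(n)}\rVert_1$ coordinate by coordinate on two events, $\{S_n>0\}$ and $\{S_n=0\}$. The first task is to show that on every outcome
\[
 \lVert W^{(m)}-W^{(n)}\rVert_1=2\Bigl(1-\frac{S_n}{S_m}\Bigr)
\]
on $\{S_m>0\}$, and to handle the remaining events separately.

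On $\{S_n>0\}$ the coordinates $i\le n$ give $\sum_{i\le n}Y_i(1/S_n-1/S_m)=1-S_n/S_m$, and the coordinates $n<i\le m$ give $\sum Y_i/S_m=1-S_n/S_m$. This proves the identity there.

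On $\{S_n=0<S_m\}$ the coordinates $i\le n$ contribute $\sum_{i\le n}|1/n-0|=1$, since $Y_i=0$ for $i\le n$. The remaining coordinates contribute $\sum_{n<i\le m}Y_i/S_m=1$. The total is $2=2(1-S_n/S_m)$, because $S_n=0$, so the identity also holds there.

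On $\{S_m=0\}$ both vectors are uniform, on $m$ and on $n$ coordinates. The distance is $n(1/n-1/m)+(m-n)/m=2(m-n)/m$.

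To take expectations I use exchangeability: on $\{S_m>0\}$, $\E[Y_i/S_m\,\1_{\{S_m>0\}}]$ does not depend on $i\le m$, so $\E[(S_n/S_m)\1_{\{S_m>0\}}]=(n/m)\Pp\{S_m>0\}$. Combining the events gives
\[
 \E\lVert W^{(m)}-W^{(n)}\rVert_1
 =2\Bigl(1-\frac nm\Bigr)\Pp\{S_m>0\}+2\frac{m-n}{m}\Pp\{S_m=0\},
\]
which is exactly $2(m-n)/m$. The uniform convention on the null event is precisely what yields equality rather than the upper bound of Lemma~\ref{lem:breiman-depoissonization}; checking that this convention gives exactly the value $2(m-n)/m$ on $\{S_m=0\}$ is the one delicate point.

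For decreasing rearrangements I need the standard fact that ranking is a contraction in $\ell^1$ on nonnegative summable sequences. Truncating to finitely many coordinates, this is the finite rearrangement inequality $\sum|a_i^\downarrow-b_i^\downarrow|\le\sum|a_i-b_i|$, which follows for instance from $|x-y|=x+y-2\min(x,y)$ together with $\sum\min(a_i,b_i)\le\sum\min(a_i^\downarrow,b_i^\downarrow)$.

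For the Poisson bound, the case $N_n=0$ needs care because $W^{(0)}$ is an arbitrary mass-one sequence. Its distance from $W^{(n)}$ is at most $2=2|0-n|/n$, so the bound $2|N_n-n|/(N_n\vee n)\le2|N_n-n|/n$ holds in every case. Conditioning on $N_n$, which is independent of the sequence, and then applying Cauchy--Schwarz with $\E|N_n-n|\le\sqrt n$ gives \eqref{eq:iid-partition-poisson-bound}.

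For the measure bounds, fix $h$ with $\lVert h\rVert_\infty\le1$. Then $|M^{(m)}h-M^{(n)}h|=|\sum_i(W_i^{(m)}-W_i^{(n)})h(Z_i)|\le\lVert W^{(m)}-W^{(n)}\rVert_1$. Taking the supremum over such $h$ gives \eqref{eq:iid-measure-size-coupling}. For $h\in L^1(H)$ the same sum is bounded by $\sum_i|W_i^{(m)}-W_i^{(n)}|\,|h(Z_i)|$. Because the $Z_i$ are independent of the weights, conditioning on the weights and applying Tonelli gives expectation $H|h|\,\E\lVert W^{(m)}-W^{(n)}\rVert_1$, which is \eqref{eq:iid-test-size-coupling}. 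The same argument also shows that the series converge absolutely almost surely.
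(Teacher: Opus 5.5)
Your proposal is correct and follows the paper's proof essentially step for step: the pathwise identity $2(1-S_n/S_m)$ on $\{S_m>0\}$ (including $S_n=0$), exchangeability for its expectation, the direct value $2(m-n)/m$ on $\{S_m=0\}$, conditioning on $N_n$ with the trivial bound $2$ when $N_n=0$, and the pointwise bound by $\sum_i|W_i^{(m)}-W_i^{(n)}|\,|h(Z_i)|$ for the measure estimates. The only difference is that you prove the $\ell^1$-contraction of decreasing rearrangement via $\sum_i\min(a_i,b_i)\le\sum_i\min(a_i^\downarrow,b_i^\downarrow)$, whereas the paper simply cites the optimal matching inequality.
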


\begin{proof}
By symmetry, suppose $m\geq n\geq1$.  On $\{S_m>0\}$, including the case
$S_n=0$ under the uniform convention,
\[
 \lVert W^{(m)}-W^{(n)}\rVert_1
 =2\left(1-\frac{S_n}{S_m}\right).
\]
Exchangeability gives
\[
 \E\left[\frac{S_n}{S_m};S_m>0\right]
 =\frac nm\Pp\{S_m>0\}.
\]
On $\{S_m=0\}$ the two uniform vectors have distance $2(1-n/m)$.
Adding the two contributions proves
\eqref{eq:iid-exact-size-coupling}.  Decreasing rearrangement is
nonexpansive in $\ell^1$ for finite vectors, by the optimal matching
inequality, and the assertion with appended zeros follows.

Conditioning on $N_n$ and using \eqref{eq:iid-exact-size-coupling} when
$N_n\geq1$, and the elementary bound $\lVert p-p'\rVert_1\leq2$ when
$N_n=0$, gives
\[
 \E\lVert W^{(N_n)}-W^{(n)}\rVert_1
 \leq\frac2n\E|N_n-n|.
\]
The last inequality in \eqref{eq:iid-partition-poisson-bound} follows from
Cauchy--Schwarz and $\Var(N_n)=n$.

For a bounded-Lipschitz function $f$ with $\lVert f\rVert_\infty\leq1$,
\[
 |M^{(m)}f-M^{(n)}f|
 \leq\sum_i|W_i^{(m)}-W_i^{(n)}|,
\]
which proves \eqref{eq:iid-measure-size-coupling}.  For an integrable $h$,
the same inequality with $|h(Z_i)|$ on the right, followed by conditioning
on the weights, proves \eqref{eq:iid-test-size-coupling}.
\end{proof}

\begin{theorem}[Classification from a single fixed mark]
\label{thm:iid-complete-classification}
Fix a non-Dirac law $F_*\in\cP_1(\R)$.  If
$\mathcal B_n^G(F_*)$ converges weakly along the full sequence, then exactly
one of the following alternatives holds.
\begin{enumerate}[label=\textup{(\roman*)}]
\item \emph{One big jump.}  The tail $\bar G$ is slowly varying at
infinity.  In this case
\[
 W_n^\downarrow\longrightarrow e_1=(1,0,\ldots)
 \quad\text{in probability in }\ell^1.
\]
\item \emph{Stable partition.}  For a unique $\beta\in(0,1)$,
$\bar G\in\RV_\infty(-\beta)$.  In this case
\[
 W_n^\downarrow\Longrightarrow Q^{(\beta)}
 \quad\text{in }\ell^1,
 \qquad Q^{(\beta)}\sim\PD(\beta,0).
\]
\item \emph{Dust.}  The equivalent conditions of
Lemma~\ref{lem:iid-relative-stability} hold.  In this case
\[
 W_n^\downarrow\longrightarrow0
 \quad\text{in probability in the product topology of }
 \K.
\]
There is no convergence to zero in $\ell^1$.
\end{enumerate}
In cases \textup{(i)} and \textup{(ii)} the limiting law of the marked sum is
nondegenerate.  In case \textup{(iii)} it is the point mass at
$\int xF_*(\dd x)$.
\end{theorem}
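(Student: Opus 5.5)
Let $T$ be the weak limit of $\mathcal B_n^G(F_*)$. The proof splits according to whether $T$ is degenerate.

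\emph{Degenerate limit.} The weights $W_{n,i}$ have total mass one on every outcome, because of the uniform convention on $\{S_n=0\}$. The mark $F_*$ is integrable and nonconstant. Theorem~\ref{thm:countable-dust}, applied in its sequential form with $P(n)=W^{(n)}$, then gives $\max_iW_{n,i}\to0$ in probability, and the constant equals $\int xF_*(\dd x)$. On $\{S_n>0\}$ the maximal weight is $Y_{n:n}/S_n$, and the complementary event has probability $\Pp\{Y=0\}^n\to0$. Hence \eqref{eq:iid-dust-max} holds, and Lemma~\ref{lem:iid-relative-stability} supplies all the equivalent conditions. Since $W_n^\downarrow$ is decreasing, every coordinate is at most $W_{n,1}^\downarrow\to0$, which gives product-topology convergence to $0$. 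There is no $\ell^1$ convergence because $\lVert W_n^\downarrow\rVert_1=1$.

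\emph{Nondegenerate limit.} Theorem~\ref{thm:breiman} applies, with convergence and nondegeneracy unaffected by centering. It gives $\bar G\in\RV_\infty(-\beta)$ for a unique $\beta\in[0,1)$. For the ranked weights, I would pass to the Poissonized sample. The positive observations form a compound-Poisson subordinator with Lévy measure $\nu_Y$, whose tail is $\bar G$. At $t\to\infty$, Proposition~\ref{prop:measure:ranked-jumps} gives $\ell^1$ convergence of its ranked normalized jumps: to $e_1$ when $\beta=0$ and to $\PD(\beta,0)$ otherwise. Zero observations add nothing to the ranked vector. The only discrepancy from the uniform convention lies on an event of probability $e^{-t\lambda}$.

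Depoissonization uses Lemma~\ref{lem:iid-partition-depoissonization}. By \eqref{eq:iid-partition-poisson-bound} and the $\ell^1$-nonexpansiveness of decreasing rearrangement, $\E\lVert W^{(N_n)\downarrow}-W_n^\downarrow\rVert_1\leq2/\sqrt n$. Hence $W_n^\downarrow$ has the same $\ell^1$ limit as its Poissonized version evaluated at $t=n$. Because $e_1$ is deterministic, the case $\beta=0$ gives convergence in probability. Nondegeneracy of the limiting marked law in cases (i) and (ii) follows from Lemma~\ref{lem:measure:marked-nondegenerate} with $Q_1>0$, or directly from the hypothesis.

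\emph{Exactly one alternative.} The dust regime is incompatible with the other two. In the dust regime the marked limit is constant, whereas in (i) and (ii) it is nondegenerate. Equivalently, $W_{n,1}^\downarrow\to0$ in probability contradicts a limit with $Q_1>0$ almost surely. Uniqueness of $\beta$ comes from uniqueness of the regular-variation index of $\bar G$. Since $0$ and $\beta\in(0,1)$ are distinct indices, (i) and (ii) are also mutually exclusive.

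The main obstacle has already been absorbed into Theorem~\ref{thm:breiman}; what remains is bookkeeping. The point needing care is that Theorem~\ref{thm:countable-dust} needs mass-one weights almost surely, which the uniform convention provides. The depoissonization also has to be done at the level of ranked vectors, so the rearrangement-contraction step in Lemma~\ref{lem:iid-partition-depoissonization} is essential there.
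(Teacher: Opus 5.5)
Your proposal is correct and follows the paper's proof essentially step for step. The point-mass case goes through Theorem~\ref{thm:countable-dust} and Lemma~\ref{lem:iid-relative-stability}; the nondegenerate case goes through Theorem~\ref{thm:breiman}, Poissonization, Proposition~\ref{prop:measure:ranked-jumps} and Lemma~\ref{lem:iid-partition-depoissonization}; and disjointness follows from constant versus nondegenerate limits. One small bookkeeping point: before invoking Theorem~\ref{thm:breiman}, you should note that the uniform convention on $\{S_n=0\}$ differs from the convention $T_n=0$ used there only on an event of probability $\Pp\{Y=0\}^n\to0$, so the limit is unchanged (centering is already handled inside that theorem).
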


\begin{proof}
Let $L$ be the weak limit.  Replacing the uniform weights on $\{S_n=0\}$
by zero weights changes the law of the weighted mean by total variation at
most $\rho^n$.
Thus Theorem~\ref{thm:breiman} applies without changing $L$.

Suppose first that $L$ is nondegenerate.  Theorem~\ref{thm:breiman} gives
a unique $\beta\in[0,1)$ such that
$\bar G\in\RV_\infty(-\beta)$.  If $\beta=0$, poissonize the sample and
regard its positive observations as the jumps of the compound-Poisson
process with L\'evy measure
\[
 \nu_G(B)=\Pp\{Y\in B\},\qquad B\subset(0,\infty).
\]
Proposition~\ref{prop:measure:ranked-jumps}, at infinity, gives convergence
of the poissonized ranked weights to $e_1$ in $\ell^1$.  Its zero-jump
convention is immaterial because the empty probability is
$\exp\{-n\Pp(Y>0)\}$.  Lemma~\ref{lem:iid-partition-depoissonization}
then gives the deterministic-sample assertion.  The same argument for
$0<\beta<1$ gives the $\PD(\beta,0)$ limit.  Nondegeneracy of the resulting
weighted means follows from Lemma~\ref{lem:measure:marked-nondegenerate}.

Suppose instead that $L$ is a point mass.  The weights have total mass one,
so Theorem~\ref{thm:countable-dust}, applied to the nonconstant integrable
mark, yields
\[
 \max_{i\leq n}W_{n,i}\longrightarrow0
 \quad\text{in probability}
\]
and identifies the limiting constant as the mean of $F_*$.  The displayed
condition is \eqref{eq:iid-dust-max}, up to the exponentially negligible
all-zero event, and hence Lemma~\ref{lem:iid-relative-stability} gives all
the criteria in the dust alternative.  Since every fixed ranked coordinate
is bounded by the largest one, the product-topology convergence follows.
On the other hand, $\lVert W_n^\downarrow\rVert_1=1$ for every $n$, so
$\ell^1$ convergence to zero is impossible.

The alternatives are disjoint.  Indeed, the two regularly varying cases
have different indices and produce nondegenerate limits, whereas the dust
case produces a constant limit.  This also completes the proof that the
three cases exhaust every full-sequence limit.
\end{proof}

\begin{remark}[The endpoint label]
\label{rem:iid-beta-one-label}
We label the one-big-jump, stable, and dust limits by
$\beta=0$, $0<\beta<1$, and $\beta=1$, respectively.  The last value is a
label for the dust endpoint.  It does not assert that
$\bar G\in\RV_\infty(-1)$; finite-mean laws already belong to the dust
branch without satisfying such a tail condition.
\end{remark}

\section{Transfer to other marks and independently marked measures}
\label{sec:iid-operators}

The weight classification determines the limits for every integrable mark.
The consequences below follow from independent marking, uniform
integrability and contraction in $W_1$.  They give convergence in $W_1$,
uniformly over compact families of mark laws, together with the limits of
independently marked measures.

For $0<\beta<1$, let $Q^{(\beta)}\sim\PD(\beta,0)$ and take an iid
$F$-sequence $(X_j)$ independent of $Q^{(\beta)}$.  For
$F\in\cP_1(\R)$ define
\begin{equation}
 \mathcal B_\beta(F)
 =\Law\left(\sum_{j\geq1}Q_j^{(\beta)}X_j\right),
 \qquad
 \mathcal B_0(F)=F,
 \qquad
 \mathcal B_1(F)=\delta_{\int xF(\dd x)}.
 \label{eq:iid-limit-operators}
\end{equation}
The series in \eqref{eq:iid-limit-operators} is absolutely convergent almost
surely by Lemma~\ref{lem:countable-series-products}.

For $E$ Polish, $H\in\cP(E)$ and iid $H$-locations $Z=(Z_j)$, write
\begin{equation}
 M_{q,H}=\mathcal M_H(q,Z),\qquad q\in\K,
 \label{eq:iid-marked-limit-measure}
\end{equation}
using the marking map \eqref{eq:measure:marking-map}.
Thus $M_{e_1,H}=\delta_{Z_1}$ and $M_{0,H}=H$.

\begin{theorem}[Transfer from a single fixed mark to all integrable marks]
\label{thm:iid-transfer}
Assume the hypothesis of Theorem~\ref{thm:iid-complete-classification}, and
let $\beta\in[0,1]$ be the label of the resulting branch.  Then, for every
fixed $F\in\cP_1(\R)$,
\begin{equation}
 W_1\bigl(\mathcal B_n^G(F),\mathcal B_\beta(F)\bigr)
 \longrightarrow0.
 \label{eq:iid-pointwise-W1}
\end{equation}
For every compact set
$K\subset(\cP_1(\R),W_1)$, this convergence is uniform:
\begin{equation}
 \sup_{F\in K}
 W_1\bigl(\mathcal B_n^G(F),\mathcal B_\beta(F)\bigr)
 \longrightarrow0.
 \label{eq:iid-compact-uniform-W1}
\end{equation}

More generally, let $H\in\cP(E)$, take fresh iid $H$-locations $(Z_i)$
independent of the weights, and set
\[
 M_n=\sum_{i=1}^nW_{n,i}\delta_{Z_i}.
\]
For any fixed finite family $h_1,\ldots,h_m\in L^1(H)$, one has, in the
one-big-jump and stable branches,
\begin{equation}
 \left(W_n^\downarrow,M_n,(M_nh_k)_{k\leq m}\right)
 \Longrightarrow
 \left(Q^{(\beta)},M_{Q^{(\beta)},H},
       (M_{Q^{(\beta)},H}h_k)_{k\leq m}\right)
 \label{eq:iid-marking-regular}
\end{equation}
in $\ell^1\times\cP(E)\times\R^m$, where $Q^{(0)}=e_1$ and
$\cP(E)$ has its weak topology.  In the dust branch,
\begin{equation}
 \begin{gathered}
 W_n^\downarrow\longrightarrow0
 \quad\text{in probability in the product topology},\\
 M_n\longrightarrow H
 \quad\text{weakly in probability},
 \end{gathered}
 \label{eq:iid-marking-dust}
\end{equation}
and
\begin{equation}
 (M_nh_1,\ldots,M_nh_m)\longrightarrow
 (Hh_1,\ldots,Hh_m)
 \quad\text{in }L^1(\Omega;\R^m).
 \label{eq:iid-marking-dust-L1}
\end{equation}
Here and below, \(L^1(\Omega;\R^m)\) is formed using the
\(\ell^1\)-norm on \(\R^m\).
These statements are joint in the indicated product spaces.
\end{theorem}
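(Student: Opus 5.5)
The plan is to read off the weight regime from Theorem~\ref{thm:iid-complete-classification} and then transfer it to arbitrary marks by independent marking. The weak limits come first, then the upgrade to $W_1$, and finally uniformity on compacts.

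\emph{Step 1: weak limits.} Fix $F\in\cP_1(\R)$. In the one-big-jump and stable branches, Theorem~\ref{thm:iid-complete-classification} gives $W_n^\downarrow\Rightarrow Q^{(\beta)}$ in $\ell^1$. Ranking the weights is a permutation measurable with respect to the $Y$'s (ties broken by auxiliary variables). Hence the permuted marks remain iid with law $F$ and independent of $W_n^\downarrow$, and $\sum_iW_{n,i}X_i=\sum_i W^\downarrow_{n,i}X'_i$. Proposition~\ref{prop:measure:marking-dust}, applied with $E=\R$, $H=F$ and $g=\mathrm{id}\in L^1(F)$, gives weak convergence to $\mathcal B_\beta(F)$. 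The same proposition with general Polish $E$ and $h_1,\dots,h_m\in L^1(H)$ gives the joint statement \eqref{eq:iid-marking-regular}, using the $\ell^1$ clause for the first coordinate. Note that $M_n=\mathcal M_H(W_n,Z)$ exactly, because the weights have mass one. In the dust branch, $W_n^\downarrow\to0$ in the product topology. Then \eqref{eq:measure:marking-dust-L1} gives \eqref{eq:iid-marking-dust-L1}, and in particular $\sum W_{n,i}X_i\to\int x\,F(\dd x)$ in $L^1$. Weak convergence of $M_n$ to $H$ in probability follows from the $d_w$ argument in that proof, since its limit $\mathcal M_H(0,Z)=H$ is deterministic.

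\emph{Step 2: upgrade to $W_1$.} Weak convergence plus uniform integrability of $|\sum_iW_{n,i}X_i|$ gives $W_1$ convergence. For uniform integrability, take a convex increasing $\psi$ with $\psi(x)/x\to\infty$ and $\E\psi(|X|)<\infty$ (de la Vallée-Poussin). Conditional Jensen's inequality, using that the weights form a probability vector independent of the marks, yields $\E\psi(|\sum W_{n,i}X_i|)\le\E\psi(|X|)$ uniformly in $n$.

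\emph{Step 3: uniformity on compacts.} For $F,F'\in\cP_1$, take an optimal coupling $(X_i,X_i')$ iid, independent of the weights. Then
\[
\E\Bigl|\sum_iW_{n,i}(X_i-X_i')\Bigr|\le\E|X-X'|=W_1(F,F'),
\]
so $W_1(\mathcal B_n^G(F),\mathcal B_n^G(F'))\le W_1(F,F')$. The same computation with $Q^{(\beta)}$ bounds $\mathcal B_\beta$, for $\beta\in(0,1)$ and trivially for $\beta=0$. For $\beta=1$ the bound is $|\int x\,\dd F-\int x\,\dd F'|\le W_1(F,F')$. So the maps $F\mapsto\mathcal B_n^G(F)$ form a uniformly $1$-Lipschitz family, and pointwise convergence to a $1$-Lipschitz limit is uniform on compact sets: cover $K$ by a finite $\varepsilon$-net and apply the triangle inequality, which gives \eqref{eq:iid-compact-uniform-W1}.

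\emph{Main obstacle.} The delicate point is the independence after ranking together with measurability, so that Proposition~\ref{prop:measure:marking-dust} legitimately applies to $W_n^\downarrow$ with the same marked sum. The remaining points are minor: the $\{S_n=0\}$ convention, which is harmless because the uniform weights keep mass one, and the joint convergence in the dust branch.
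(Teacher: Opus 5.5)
Your proposal is correct and follows essentially the same route as the paper. You rank the weights with independent tie-breakers so that the permuted marks stay iid and independent of $W_n^\downarrow$, and then apply Proposition~\ref{prop:measure:marking-dust} for the weak and marked-measure limits. You use de la Vall\'ee--Poussin together with conditional Jensen for uniform integrability and the $W_1$ upgrade, and the $W_1$-contraction for $\mathcal B_n^G$ and $\mathcal B_\beta$ with a finite $\varepsilon$-net for uniformity on compacts.
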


\begin{proof}
Arrange the $Y_i$'s in decreasing order, breaking ties with auxiliary iid
continuous variables independent of the locations.  The correspondingly permuted
locations remain iid with law $H$ and are independent of
$W_n^\downarrow$.  Hence the joint law of $(W_n^\downarrow,M_n)$ is the
joint law of
\[
 \left(W_n^\downarrow,
       M_{W_n^\downarrow,H}\right)
\]
in the construction \eqref{eq:iid-marked-limit-measure}.  The partition
limits in Theorem~\ref{thm:iid-complete-classification}, followed by
Proposition~\ref{prop:measure:marking-dust}, prove
\eqref{eq:iid-marking-regular}, \eqref{eq:iid-marking-dust} and the
$L^1$ conclusion \eqref{eq:iid-marking-dust-L1}.  This use of independent marking is essential:
the deterministic map in \eqref{eq:iid-marked-limit-measure} need not be
continuous in the product topology when the locations are held fixed.

Taking $E=\R$, $H=F$, and $h(x)=x$ in the preceding marking limits gives
weak convergence to the laws in \eqref{eq:iid-limit-operators}.
We now upgrade this convergence to $W_1$.  By the de la Vall\'ee--Poussin
criterion, there is an
increasing convex function $\Psi$ with
$\Psi(x)/x\to\infty$ and $\int\Psi(|x|)F(\dd x)<\infty$.  Conditional
Jensen gives
\begin{equation}
 \sup_n\E\Psi\left(\left|\sum_iW_{n,i}X_i\right|\right)
 \leq\E\Psi(|X|).
 \label{eq:iid-output-UI}
\end{equation}
The same estimate holds for the random variable defining
$\mathcal B_\beta(F)$, including both endpoints.  Thus these weighted means,
together with their limit, form a uniformly integrable family.  Weak
convergence and uniform integrability give \eqref{eq:iid-pointwise-W1}.

It remains to prove uniformity.  For any coupling $(X,X')$ of $F,F'$, use
iid copies of the pair, independently of the weights.  Then
\[
 \E\left|\sum_iW_{n,i}X_i-\sum_iW_{n,i}X_i'\right|
 \leq\E|X-X'|.
\]
Taking the infimum over couplings proves
\begin{equation}
 W_1\bigl(\mathcal B_n^G(F),\mathcal B_n^G(F')\bigr)
 \leq W_1(F,F').
 \label{eq:iid-output-contraction}
\end{equation}
The identical argument gives this contraction for $\mathcal B_\beta$;
when $\beta=1$, use
$|\int xF(\dd x)-\int xF'(\dd x)|\leq W_1(F,F')$.
If $F_1,\ldots,F_N$ is a finite $\varepsilon$-net for $K$, the two
contractions imply
\[
 \sup_{F\in K}W_1(\mathcal B_n^G(F),\mathcal B_\beta(F))
 \leq2\varepsilon+
 \max_{j\leq N}W_1(\mathcal B_n^G(F_j),\mathcal B_\beta(F_j)).
\]
Letting $n\to\infty$ and then $\varepsilon\downarrow0$ proves
\eqref{eq:iid-compact-uniform-W1}.
\end{proof}

The branch parameter is determined by the expectation of a strictly convex
function of the marked mean, even when the mark has no second moment.

\begin{lemma}[Expected power sums of a stable partition]
\label{lem:iid-PD-power-sums}
For $0<\beta<1$ and $p>\beta$,
\begin{equation}
 \E\sum_{j\geq1}(Q_j^{(\beta)})^p
 =\frac{\Gamma(p-\beta)}{\Gamma(p)\Gamma(1-\beta)}.
 \label{eq:iid-PD-power-sum}
\end{equation}
\end{lemma}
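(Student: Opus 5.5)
The formula can be read off from results already proved, without a direct PD computation.

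\emph{Case $p>1$.} Take a stable subordinator, with L\'evy measure $\nu(\dd x)=\alpha x^{-\alpha-1}\dd x$ and $\alpha=\beta$. Its normalized ranked jumps at any fixed time have law $\PD(\beta,0)$ \cite{PitmanYor}. By the scaling of the stable subordinator, $m_t(p)$ does not depend on $t$. Hence $m_t(p)$ is constant and equal to $\E\sum_j(Q_j^{(\beta)})^p$. I will compute it from the Campbell--Mecke identity, Lemma~\ref{lem:poisson-campbell}. Here $\Phi(q)=\Gamma(1-\beta)q^\beta$ and
\[
 A_p(q)=\alpha\Gamma(p-\beta)q^{\beta-p}.
\]
Then
\[
 m_t(p)=\frac{t\beta\Gamma(p-\beta)}{\Gamma(p)}\int_0^\infty q^{\beta-1}e^{-t\Gamma(1-\beta)q^\beta}\dd q
 =\frac{\Gamma(p-\beta)}{\Gamma(p)\Gamma(1-\beta)},
\]
using $\int_0^\infty q^{\beta-1}e^{-cq^\beta}\dd q=1/(\beta c)$. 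This works equally for every $p>\beta$: Lemma~\ref{lem:poisson-campbell} is stated for all $p>0$, and $A_p(q)<\infty$ precisely when $p>\beta$. So the Campbell route covers the whole range at once.

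Alternatively, for $p>1$ one can use the converse half of Proposition~\ref{prop:breiman-tauberian}. The Tauberian direction gives \eqref{eq:breiman-power-limit}, and the $\ell^1$ convergence of Proposition~\ref{prop:measure:ranked-jumps} together with boundedness identifies the limit. The Campbell computation is cleaner and needs no limit passage.

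The only delicate point is the identification of the stable normalized jumps with $\PD(\beta,0)$ and the use of Tonelli in the regime $\beta<p\le1$. Both are covered: the identification is a cited result, and Lemma~\ref{lem:poisson-campbell} allows infinite values and holds by Tonelli. Finiteness for $p>\beta$ comes out of the explicit Gamma integrals, $\int x^pe^{-qx}x^{-\beta-1}\dd x<\infty$ iff $p>\beta$.
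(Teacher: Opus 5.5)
Your proof is correct, but it takes a different route from the paper. The paper's proof is a one-line citation. The identity is the case $\theta=0$, $f(u)=u^p$ of Pitman and Yor's formula for $\E\sum_jf(Q_j)$ under $\PD(\beta,\theta)$ \cite[equation~(6), p.~858]{PitmanYor}. That case gives $\frac{1}{\Gamma(\beta)\Gamma(1-\beta)}\int_0^1u^{p-\beta-1}(1-u)^{\beta-1}\dd u$, hence the stated Gamma ratio.

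You instead realize $\PD(\beta,0)$ as the ranked normalized jumps of the stable subordinator with $\nu(\dd x)=\beta x^{-\beta-1}\dd x$. This is the same cited identification the paper already uses in Proposition~\ref{prop:measure:ranked-jumps}. You then evaluate the Campbell--Mecke integral of Lemma~\ref{lem:poisson-campbell} exactly. Your values $\Phi(q)=\Gamma(1-\beta)q^\beta$ and $A_p(q)=\beta\Gamma(p-\beta)q^{\beta-p}$ are right, as is the final integral $\int_0^\infty q^{\beta-1}e^{-cq^\beta}\dd q=1/(\beta c)$. The computation is valid for every $p>\beta$, since that lemma rests only on Tonelli and allows infinite values.

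What your route buys:
\begin{itemize}
\item It is self-contained within the paper's own Poisson calculus.
\item It explains why the range is exactly $p>\beta$, through finiteness of $A_p$.
\item It shows directly that the constant coincides with those in \eqref{eq:breiman-power-limit} and \eqref{eq:tauberian-pd-constant}. In effect it is the exact, non-asymptotic instance of the Abelian direction of Theorem~\ref{thm:tauberian-main}.
\end{itemize}
The citation buys brevity and generality (arbitrary $f$ and $\theta$), which the paper does not need here.

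Cosmetic points: the heading ``Case $p>1$'' is misleading, since your computation covers all $p>\beta$ at once. The scaling remark is also unnecessary, because the explicit integral is already independent of $t$; you could simply take $t=1$.

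Your alternative via Proposition~\ref{prop:breiman-tauberian} and $\ell^1$ convergence is valid for $p\geq1$. There $q\mapsto\sum_jq_j^p$ is bounded and $\ell^1$-continuous on mass-one partitions. It does not extend to $\beta<p<1$, as you correctly indicate.
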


\begin{proof}
Formula \eqref{eq:iid-PD-power-sum} is the specialization
\(\theta=0\), \(f(u)=u^p\) of
\cite[equation~(6), p.~858]{PitmanYor}.
\end{proof}

For Dirichlet means, Letac and Piccioni
\cite[Theorem~1.2 and Corollary~5.5]{LetacPiccioni2018} establish
monotonicity in convex order and, for integrable non-Dirac base laws,
identification of the concentration parameter.  Their weights have law
$\PD(0,\theta)$, with $\theta>0$.  The following theorem concerns instead
the parameter $\beta$ in $\PD(\beta,0)$, together with its one-atom and
dust endpoints.

\Needspace{18\baselineskip}
\begin{theorem}[Identification of the Poisson--Dirichlet parameter]
\label{thm:iid-PD-identification}
For every non-Dirac $F\in\cP_1(\R)$, the map
\[
 [0,1]\ni\beta\longmapsto\mathcal B_\beta(F)
 \in(\cP_1(\R),W_1)
\]
is continuous and injective.  More precisely, the function
\begin{equation}
 h_F(\beta)=\int_{\R}\sqrt{1+x^2}\,\mathcal B_\beta(F)(\dd x)
 \label{eq:iid-PD-convex-moment}
\end{equation}
is continuous and strictly decreasing on $[0,1]$.

If $0<\Var_F(X)<\infty$, the parameter can be read explicitly from
\begin{equation}
 \Var(\mathcal B_\beta(F))
 =(1-\beta)\Var_F(X),
 \qquad 0\leq\beta\leq1.
 \label{eq:iid-PD-output-variance}
\end{equation}
\end{theorem}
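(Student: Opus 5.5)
The plan is to prove strict monotonicity of $h_F$ by a coupling in convex order between adjacent Poisson--Dirichlet parameters, built from Pitman's coagulation--fragmentation duality. Continuity will come from weak continuity of the partitions together with uniform integrability, and the variance identity from Lemma~\ref{lem:iid-PD-power-sums}. Injectivity then follows from strict monotonicity, since $\mathcal B_\beta(F)$ determines $h_F(\beta)$: note $\sqrt{1+x^2}\leq1+|x|$ is $W_1$-continuous on $\cP_1(\R)$.

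\emph{Strict decrease on the interior.} Fix $0<\beta_1<\beta_2<1$ and let $Q\sim\PD(\beta_1,0)$. The fragmentation form of Pitman's duality, specialized to $\theta=0$, states the following. Split each block $Q_j$ by an independent $R^{(j)}\sim\PD(\beta_2,-\beta_1)$; then the ranked pieces $(Q_jR^{(j)}_k)$ have law $\PD(\beta_2,0)$. Attach iid $F$-marks $X_{jk}$ to the pieces and set $Y_j=\sum_kR^{(j)}_kX_{jk}$. The variable $\sum_jQ_jY_j$ then has law $\mathcal B_{\beta_2}(F)$, because the marks are iid and independent of the ranked pieces. Next, conditionally on everything, choose independent indices $K_j$ with $\Pp\{K_j=k\}=R^{(j)}_k$. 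The variables $X_{j,K_j}$ are iid $F$ and independent of $Q$, so $Z=\sum_jQ_jX_{j,K_j}$ has law $\mathcal B_{\beta_1}(F)$. Moreover $\E(Z\mid Q,R,X)=\sum_jQ_jY_j$, with absolute convergence guaranteed by Lemma~\ref{lem:countable-series-products}.

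Conditional Jensen applied to the strictly convex $\psi(x)=\sqrt{1+x^2}$ gives $h_F(\beta_2)\leq h_F(\beta_1)$. Equality would force the conditional law of $Z$ to be degenerate almost surely. That is impossible: $Q_1>0$, the fragment $R^{(1)}$ has at least two positive atoms almost surely, and two distinct choices of $K_1$ give values differing by $Q_1(X_{1k}-X_{1k'})$, which is nonzero with positive probability since $F$ is non-Dirac.

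\emph{Endpoints.} For $\beta=0$, $\mathcal B_0(F)=F$ is the law of $X_{1,K_1}$ in the same construction with $Q=e_1$, and the comparison above applies unchanged. Alternatively, $\sum_jQ_jX_j$ is a conditional average of a single $F$-mark chosen with probabilities $Q_j$. For $\beta=1$, $\psi(\E X)<\E\psi(\sum_jQ_j^{(\beta)}X_j)$ by strict Jensen, since the latter variable is nondegenerate by Lemma~\ref{lem:measure:marked-nondegenerate}.

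\emph{Continuity, the main obstacle.} I need $\beta\mapsto\PD(\beta,0)$ to be weakly continuous in $\ell^1$ on $(0,1)$, with $\PD(\beta,0)\Rightarrow e_1$ as $\beta\downarrow0$ and convergence to dust as $\beta\uparrow1$. For the interior I would use the GEM$(\beta,0)$ stick-breaking representation, whose Beta$(1-\beta,i\beta)$ factors vary continuously in $\beta$. This gives coordinatewise convergence of the size-biased sequence with total mass one, and Lemma~\ref{lem:measure:discrete-scheffe} together with ranking (nonexpansive in $\ell^1$) transfers it to the ranked sequence. At the endpoints the power sums suffice: $\E\sum_j(Q^{(\beta)}_j)^2=1-\beta$ by Lemma~\ref{lem:iid-PD-power-sums}. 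As $\beta\uparrow1$ this tends to $0$, so the largest weight vanishes and Theorem~\ref{thm:countable-dust} gives convergence to $\int xF(\dd x)$. As $\beta\downarrow0$ it tends to $1$; since $\sum_jq_j^2\leq q_1$, this forces $Q_1\to1$ in probability, hence $\ell^1$ convergence to $e_1$. Proposition~\ref{prop:measure:marking-dust} then gives weak convergence of the marked means. The conditional-Jensen uniform integrability bound \eqref{eq:iid-output-UI} upgrades this to $W_1$ convergence, and $W_1$-continuity of $\mathcal B_\beta(F)$ implies continuity of $h_F$.

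\emph{Variance.} Conditioning on the weights gives $\Var(\sum_jQ_jX_j)=\Var_F(X)\,\E\sum_jQ_j^2$, since the conditional mean is $\E X\sum_jQ_j=\E X$. Lemma~\ref{lem:iid-PD-power-sums} with $p=2$ gives $\E\sum_jQ_j^2=\Gamma(2-\beta)/\{\Gamma(2)\Gamma(1-\beta)\}=1-\beta$. The endpoint values $\Var_F(X)$ at $\beta=0$ and $0$ at $\beta=1$ are immediate.
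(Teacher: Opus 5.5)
Your proposal is correct and follows essentially the same route as the paper. Continuity comes from the GEM coupling together with $\E\sum_j(Q_j^{(\beta)})^2=1-\beta$ at the endpoints, upgraded by conditional-Jensen uniform integrability. Strict monotonicity comes from fragmenting $\PD(a,0)$ by $\PD(b,-a)$ with a selector and strict conditional Jensen, using $P=e_1$ at $\beta=0$ and nondegeneracy at $\beta=1$, and the variance comes from the $p=2$ power sum. The only cosmetic difference is that you get interior $\ell^1$ continuity from a pathwise discrete Scheff\'e argument using total mass one, whereas the paper bounds the GEM residual mass $\E R_K^{(\beta)}$ uniformly on compacts; both arguments work.
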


\begin{proof}
We first prove continuity using the Griffiths--Engen--McCloskey (GEM)
representation of $\PD(\beta,0)$~\cite{PitmanYor}.  Its size-biased
sequence is
\[
 \widetilde Q_j^{(\beta)}
 =V_j^{(\beta)}\prod_{i<j}(1-V_i^{(\beta)}),
 \qquad
 V_j^{(\beta)}\sim\operatorname{Beta}(1-\beta,j\beta),
\]
  with independent factors.  Using a common sequence of uniform random
variables, couple the beta factors by their quantile functions.  Then every
fixed head converges almost surely when $\beta_n\to\beta\in(0,1)$.  The
residual mass after $K$ terms
satisfies
\begin{equation}
 \E R_K^{(\beta)}
 =\prod_{j=1}^K\frac{j\beta}{1+(j-1)\beta}.
 \label{eq:iid-GEM-residual}
\end{equation}
This product tends to zero uniformly for $\beta$ in a compact subinterval
of $(0,1)$: every factor is increasing in $\beta$, so it is bounded above
by the product at the right endpoint, which tends to zero.  Head convergence
and \eqref{eq:iid-GEM-residual} therefore give $\ell^1$ convergence in
probability of the size-biased sequences.  Decreasing rearrangement is
nonexpansive in $\ell^1$, and hence the ranked partitions are locally
continuous in distribution in $\ell^1$.

At $\beta=0$, Lemma~\ref{lem:iid-PD-power-sums} with $p=2$ gives
\[
 \E\sum_j(Q_j^{(\beta)})^2=1-\beta.
\]
Since $\sum_jq_j^2\leq q_1$ for every mass-one ranked sequence,
\begin{equation}
 \E\lVert Q^{(\beta)}-e_1\rVert_1
 =2\E(1-Q_1^{(\beta)})\leq2\beta.
 \label{eq:iid-PD-zero-endpoint}
\end{equation}
As $\beta\uparrow1$,
$\E[(Q_1^{(\beta)})^2]\leq1-\beta$, so
$Q^{(\beta)}\to0$ in probability in the product topology as
$\beta\uparrow1$.  The independent-marking result,
Proposition~\ref{prop:measure:marking-dust}, now gives weak continuity of
$\mathcal B_\beta(F)$ at $\beta=1$.  Local $\ell^1$ coupling gives weak
continuity in the interior and at $\beta=0$.  In all cases, conditional Jensen as in
\eqref{eq:iid-output-UI} gives uniform integrability along the parameter
sequence.  The weak continuity therefore upgrades to $W_1$ continuity.

For strict monotonicity, we use the Poisson--Dirichlet fragmentation
identity \cite[Corollary~13]{PitmanCoalescents1999} and the convex-order
coupling of \cite[Proposition~16]{PitmanWeighted}.  Fix $0<a<b<1$.
Here $\PD(\sigma,\theta)$ denotes the two-parameter Poisson--Dirichlet
law, with $0<\sigma<1$ and $\theta>-\sigma$
\cite[Definition~1]{PitmanYor}; hence $(b,-a)$ is admissible.
Let $P\sim\PD(a,0)$ and, independently, let
$W_i=(W_{ij})_{j\geq1}$ be iid $\PD(b,-a)$ partitions.  The ranked
products $(P_iW_{ij})_{i,j\geq1}$ have law $\PD(b,0)$.
Take iid marks $X_{ij}$ with law $F$, independently of the partitions.
Conditionally on $(P,W)$, choose indices $J_i$ independently of one
another and of the marks, with
$\Pp\{J_i=j\mid P,W\}=W_{ij}$.  Put
\[
 Y=\sum_iP_iX_{iJ_i},\qquad
 Z=\sum_{i,j}P_iW_{ij}X_{ij},\qquad
 \mathcal G=\sigma(P,W,(X_{ij})_{i,j\geq1}).
\]
Tonelli's theorem gives
\[
 \E\sum_iP_i|X_{iJ_i}|
 =\E\sum_{i,j}P_iW_{ij}|X_{ij}|
 =\int_{\R}|x|F(\dd x).
\]
Both series therefore converge absolutely almost surely and in $L^1$.
Conditionally on $(P,W)$, the selected marks $X_{iJ_i}$ are iid with
law $F$, so they are also independent of the partitions.  It follows that
$Y$ has law $\mathcal B_a(F)$, $Z$ has law $\mathcal B_b(F)$, and
$\E[Y\mid\mathcal G]=Z$.

Let $\varphi(x)=\sqrt{1+x^2}$, which is strictly convex and satisfies
$\varphi(x)\leq1+|x|$.  Set
$\mathcal H=\mathcal G\vee\sigma(J_i:i\geq2)$.
Conditionally on $\mathcal H$, only $J_1$ varies in $Y$.
Since $P_1,W_{11},W_{12}>0$ almost surely, this conditional law is
nonconstant on the event $\{X_{11}\ne X_{12}\}$, which has positive
probability.  Strict conditional Jensen, followed by conditional Jensen
with respect to $\mathcal G$, gives
\[
 \E\varphi(Y)
 >\E\varphi\bigl(\E[Y\mid\mathcal H]\bigr)
 \geq\E\varphi\bigl(\E[Y\mid\mathcal G]\bigr)
 =\E\varphi(Z).
\]
Thus $h_F(a)>h_F(b)$.

When $a=0<b<1$, use $P=e_1$ and $W_1\sim\PD(b,0)$ directly in the
same selector argument.  When $b=1$, the law $\mathcal B_a(F)$ is
nondegenerate for $a<1$ by Lemma~\ref{lem:measure:marked-nondegenerate},
and strict Jensen against its mean gives $h_F(a)>h_F(1)$.
Finally, $\varphi$ is $1$-Lipschitz, so the established $W_1$ continuity
implies continuity of $h_F$.  Its strict monotonicity proves injectivity
of $\beta\mapsto\mathcal B_\beta(F)$.

If $F$ has finite variance, conditioning on the weights and using
\eqref{eq:iid-PD-power-sum} at $p=2$ proves
\eqref{eq:iid-PD-output-variance} for $0<\beta<1$.  The two endpoint
identities follow directly from \eqref{eq:iid-limit-operators}.
\end{proof}

\begin{remark}[Why compactness is needed in \eqref{eq:iid-compact-uniform-W1}]
\label{rem:iid-no-global-W1}
Global uniformity over $\cP_1(\R)$ is false.  For equal weights and
$F_n=(1-1/n)\delta_0+n^{-1}\delta_n$, the weighted mean has law
$\operatorname{Bin}(n,1/n)$, whereas the dust limit is $\delta_1$; indeed
\[
 W_1\bigl(\operatorname{Bin}(n,1/n),\delta_1\bigr)
 \longrightarrow\E|N-1|=2e^{-1},
 \qquad N\sim\operatorname{Pois}(1).
\]
\end{remark}

\section{Why integrability of the mark is needed}
\label{sec:iid-cauchy-sharpness}

The conclusion of Theorem~\ref{thm:iid-complete-classification} cannot be
extended to arbitrary nonintegrable marks.  The following example
keeps the law of the weighted mean fixed while the underlying mass partition
has different subsequential limits.

For any mass partition of total mass one, independent of the iid Cauchy marks,
invariance of the Cauchy mean is classical; see
\cite[Proposition~15]{PitmanWeighted}.  The point here is that this
invariance can coexist with weights obtained by normalizing iid observations,
even when their ranked partition has no full-sequence limit.

\begin{proposition}[A Cauchy mark that does not detect the weight regime]
\label{prop:iid-cauchy-sharpness}
There is a probability law $G$ on $(0,\infty)$ for which
$W_n^\downarrow$ does not converge in distribution even in the product
topology, whereas, for standard symmetric Cauchy marks,
\[
 \sum_{i=1}^nW_{n,i}X_i\stackrel{d}{=}X_1
 \qquad\text{for every }n\geq1.
\]
\end{proposition}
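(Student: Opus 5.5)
The Cauchy half is immediate and should be stated first. Conditionally on the weights, which are independent of the marks and sum to one (with the uniform convention on $\{S_n=0\}$, which is empty since $G$ lives on $(0,\infty)$), the characteristic function is
\[
 \prod_{i\le n}e^{-W_{n,i}|u|}=e^{-|u|}.
\]
Hence every weighted mean has the standard Cauchy law, whatever $G$ is. So the real work is to construct a law $G$ on $(0,\infty)$ for which $W_n^\downarrow$ has two distinct subsequential limits in the product topology of $\K$.

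I would use a lacunary law supported on a rapidly increasing sequence of points. Choose $x_k=\exp(2^{k^2})$, or any sequence with $\log x_{k+1}/\log x_k\to\infty$. Put $\Pp\{Y=x_k\}=c\,x_k^{-1/2}$ and adjust the mass at $x_1$ so that $G$ is a probability law. The tail then satisfies $\bar G(x_k-)\asymp x_k^{-1/2}$, but $\bar G$ is constant on $[x_k,x_{k+1})$. Since $\bar G(x_{k+1}-)/\bar G(x_k)\approx(x_k/x_{k+1})^{1/2}\to0$, the tail is not regularly varying.

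The plan is to exhibit two sample-size sequences with different behaviour.

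\emph{Along $n_k=\lfloor x_k^{1/2}\rfloor$.} The number of observations equal to $x_k$ is approximately Poisson with mean $c$. The number equal to $x_j$ with $j>k$ tends to zero in probability. Observations at levels $x_j$ with $j<k$ contribute at most $n_k x_{k-1}=o(x_k)$. So on the event that exactly one observation equals $x_k$, the top weight tends to one. With probability about $e^{-c}$ there is no such observation, and then the sum is dominated by $x_{k-1}$-level observations. Their count is about $n_k x_{k-1}^{-1/2}$, which is huge, so every weight is tiny. Hence $W_{n_k,1}^\downarrow$ converges in law to a $\{0,1\}$-valued variable $B$ with $\Pp\{B=1\}=ce^{-c}$, plus $0$ from ties; the two-or-more-atoms case gives $1/m$ values.

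\emph{Along $n_k'=\lfloor x_k^{1/2}\,\omega_k\rfloor$ with $\omega_k\to\infty$ slowly and $n_k'\ll x_{k+1}^{1/2}$.} Now the count $N$ of observations equal to $x_k$ tends to infinity in probability. Lower levels still contribute $o(N x_k)$, because $n_k' x_{k-1}\ll x_k$. So the weights are essentially uniform over $N\to\infty$ atoms, and $W^\downarrow_{n_k'}\to0$ in the product topology.

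The two subsequential laws of $W_{n,1}^\downarrow$ are different, namely nondegenerate versus $\delta_0$. Since the first coordinate is continuous in the product topology, $W_n^\downarrow$ does not converge in distribution.

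The main obstacle is the bookkeeping showing that the lower levels are negligible. This needs $n\,\E[Y\1_{\{Y\le x_{k-1}\}}]=o(x_k)$ uniformly along both subsequences, which the lacunarity handles: $\E[Y\1_{\{Y\le x_{k-1}\}}]\lesssim x_{k-1}^{1/2}$, and $x_k^{1/2}\omega_k x_{k-1}^{1/2}\ll x_k$. I would also check, via Markov, that the higher levels are absent with probability tending to one, using $n_k'\,\bar G(x_k)\to0$. To avoid computing the Poisson mixture exactly, it suffices to show that $\Pp\{W^\downarrow_{n_k,1}>1/2\}$ stays bounded below, which comes from the event that exactly one observation sits at level $x_k$, while along $n_k'$ this probability tends to zero.
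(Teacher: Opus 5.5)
Your proposal is correct, and it takes a different route from the paper in both the construction of the law and the choice of subsequences. The paper fixes doubly exponentially small masses $q_k=2^{-2^k}$ and chooses the atoms $a_k$ recursively so that $q_ka_k\geq k^2A_{k-1}$. For each $c>0$ it then identifies the full limit of the top weight along $n_k(c)=\lfloor c/q_k\rfloor$ as $f(N_c)$, with $N_c\sim\operatorname{Pois}(c)$, $f(0)=0$ and $f(j)=1/j$. This requires treating the event with no observation at the top level through the diverging count $C_{k-1}$ at the level below. It finally separates $c=1$ from $c=2$ via $\Pp\{R_c=1\}=ce^{-c}$.

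You instead fix sparse atoms $x_k=\exp(2^{k^2})$ with power-law masses $c\,x_k^{-1/2}$, and you compare a Poisson-scale subsequence with a dust subsequence $n_k'=\lfloor\omega_kx_k^{1/2}\rfloor$. This is more economical. Along $n_k'$, the bound $\max_iY_i/S_n\leq1/N$ holds on the event that no observation exceeds $x_k$, where $N$ counts the observations equal to $x_k$; so no control of the lower levels is needed there. Along $n_k$, your shortcut shows that $\Pp\{W^\downarrow_{n_k,1}>1/2\}$ is at least the probability that exactly one observation equals $x_k$, none exceeds it, and the observations below $x_k$ sum to less than $x_k$. That probability tends to $ce^{-c}$, so you never need the full limit law or the empty-top-level case. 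The required negligibility $n_kx_{k-1}=o(x_k)$ follows at once from $\log x_{k+1}/\log x_k\to\infty$. What the paper's route buys in exchange is a whole one-parameter family of distinct subsequential limits.

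Two slips are harmless. First, the limit along $n_k$ is not $\{0,1\}$-valued; it is $f(N_c)$, as you half-correct in the same sentence. Second, in your non-regular-variation aside, $\bar G(x_{k+1}-)=\bar G(x_k)$, so the ratio you wrote equals one. The intended comparison is $\bar G(x_k)/\bar G(x_k-)\asymp(x_k/x_{k+1})^{1/2}\to0$. That aside is not needed for the proposition in any case.
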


\begin{proof}
For $k\geq1$, set
\[
 q_k=2^{-2^k},
 \qquad
 q_0=1-\sum_{k\geq1}q_k>0,
 \qquad a_0=1.
\]
Define recursively
\[
 A_{k-1}=\sum_{j<k}q_ja_j,
 \qquad
 a_k=2a_{k-1}+\frac{k^2A_{k-1}}{q_k},
 \qquad k\geq1,
\]
and let $G$ put mass $q_k$ at $a_k$, $k\geq0$.  The support is strictly
increasing and
\begin{equation}
 q_ka_k\geq k^2A_{k-1}.
 \label{eq:iid-lacunary-dominance}
\end{equation}

Fix $c>0$ and put $n_k(c)=\lfloor c/q_k\rfloor$.  In a sample of this
size, let $N_k$ be the number of observations equal to $a_k$, let $H_k$ be
the event that no observation exceeds $a_k$, and let $L_k$ be the sum of
the observations below $a_k$.  Then
\[
 N_k\Longrightarrow N_c\sim\operatorname{Pois}(c).
\]
Moreover, since $q_{k+1}=q_k^2$ and the remaining terms decrease still
faster,
\begin{equation}
 \Pp(H_k^c)
 \leq n_k(c)\sum_{j>k}q_j=O(q_k)\longrightarrow0.
 \label{eq:iid-lacunary-no-higher-level}
\end{equation}
By \eqref{eq:iid-lacunary-dominance},
\begin{equation}
 \E\frac{L_k}{a_k}
 \leq\frac{n_k(c)A_{k-1}}{a_k}
 \leq\frac{c}{k^2}\longrightarrow0.
 \label{eq:iid-lacunary-lower-sum}
\end{equation}
Set
\[
 Z_k=\frac{Y_{n_k(c):n_k(c)}}{S_{n_k(c)}},
 \qquad
 f(0)=0,
 \qquad
 f(j)=\frac1j\quad(j\geq1).
\]
On $H_k\cap\{N_k\geq1\}$,
\begin{equation}
 Z_k=\frac1{N_k+L_k/a_k}.
 \label{eq:iid-lacunary-max-ratio-positive}
\end{equation}
Consequently, on this event,
\[
 |Z_k-f(N_k)|
 =\frac{L_k/a_k}{N_k(N_k+L_k/a_k)}
 \leq\frac{L_k}{a_k}.
\]

On $H_k\cap\{N_k=0\}$ all observations are at most $a_{k-1}$.  The number
$C_{k-1}$ of observations equal to $a_{k-1}$ satisfies
\[
 \E C_{k-1}=n_k(c)q_{k-1}
 \sim c\frac{q_{k-1}}{q_k}\longrightarrow\infty,
 \qquad
 \frac{\Var(C_{k-1})}{(\E C_{k-1})^2}
 =\frac{1-q_{k-1}}{n_k(c)q_{k-1}}\longrightarrow0,
\]
so $C_{k-1}\to\infty$ in probability.  On
$H_k\cap\{N_k=0\}\cap\{C_{k-1}\geq1\}$,
\[
 0\leq Z_k\leq\frac1{C_{k-1}}.
\]

Let $\varepsilon>0$ and choose an integer $M>1/\varepsilon$.  Then
\begin{align*}
 \Pp\{|Z_k-f(N_k)|>\varepsilon\}
 &\leq \Pp(H_k^c)
      +\Pp\left\{\frac{L_k}{a_k}>\varepsilon\right\}
      +\Pp\{C_{k-1}<M\}\\
 &\longrightarrow0.
\end{align*}
Indeed, outside the three events on the right, the difference bound above
applies when $N_k\geq1$; when $N_k=0$, one has
$C_{k-1}\geq M$ and $|Z_k-f(N_k)|=Z_k\leq1/M<\varepsilon$.
The three terms tend
to zero by \eqref{eq:iid-lacunary-no-higher-level}, Markov's inequality
and \eqref{eq:iid-lacunary-lower-sum}, and
$C_{k-1}\to\infty$ in probability, respectively.

Thus $Z_k-f(N_k)\to0$ in probability.  Since
$N_k\Rightarrow N_c$ and $f$ is continuous on the discrete space
$\mathbb Z_{\geq0}$, Slutsky's theorem gives
\begin{equation}
 Z_k\Longrightarrow f(N_c)=R_c,
 \qquad
 R_c=\begin{cases}
 0,&N_c=0,\\
 1/N_c,&N_c\geq1.
 \end{cases}
 \label{eq:iid-lacunary-subsequence-limit}
\end{equation}
The laws for $c=1$ and $c=2$ differ, since
$\Pp\{R_c=1\}=ce^{-c}$.  Thus the first coordinate of
$W_n^\downarrow$ has different subsequential laws, and the partition
cannot converge in the product topology.

If $X$ is standard symmetric Cauchy, its characteristic function is
$e^{-|u|}$.  Conditional on any nonnegative weights of total mass one,
\[
 \E\left[\exp\left\{iu\sum_iW_{n,i}X_i\right\}
          \mathrel{\Big|}W_n\right]
 =\prod_i e^{-|u|W_{n,i}}=e^{-|u|}.
\]
The weighted mean is therefore standard Cauchy for every $n$, completing
the proof.
\end{proof}

\section{Normalized L\'evy jumps at small time}
\label{sec:smalltime}

We now consider the normalized jumps of a subordinator as $t\downarrow0$.
Nondegenerate convergence of the marked mean forces the fractional expected
power-sum bound required by Theorem~\ref{thm:countable-main}.  No regular
variation assumption is made.

Let $V=(V_t)_{t\geq0}$ be a nonzero, unkilled, driftless subordinator with
L\'evy measure $\nu$.  Thus
\begin{equation}
 \begin{gathered}
 \int_0^\infty(1\wedge x)\,\nu(\dd x)<\infty,
 \qquad \E e^{-qV_t}=e^{-t\Phi(q)},\\
 \Phi(q)=\int_0^\infty(1-e^{-qx})\,\nu(\dd x).
 \end{gathered}
 \label{eq:smalltime-laplace}
\end{equation}
Throughout this section,
\begin{equation}
 \nu(0,\infty)=\infty.
 \label{eq:smalltime-infinite-activity}
\end{equation}
Hence $V_t>0$ almost surely for every $t>0$.  If
$(J_i(t))_{i\geq1}$ enumerates the jumps on $(0,t]$, put
\begin{equation}
 Q_i(t)=\frac{J_i(t)}{V_t},\qquad
 m_t(p)=\E\sum_{i\geq1}Q_i(t)^p,\quad p>0.
 \label{eq:smalltime-power-sums}
\end{equation}
Then $\sum_iQ_i(t)=1$ almost surely.  Let $X,X_1,X_2,\ldots$ be iid,
independent of $V$, with
\begin{equation}
 \E|X|<\infty,\qquad X\ \text{nonconstant},
 \label{eq:smalltime-marks}
\end{equation}
and set
\begin{equation}
 R_t=\frac{U_t}{V_t}=\sum_{i\geq1}Q_i(t)X_i,
 \qquad U_t=\sum_{i\geq1}J_i(t)X_i.
 \label{eq:smalltime-marked-mean}
\end{equation}
Conditionally on the jumps, the expected absolute sums of the series
for $R_t$ and $U_t$ are $\E|X|$ and $V_t\E|X|$, respectively.  Since
$V_t<\infty$ almost surely, both series converge absolutely almost surely.
Write
\begin{equation}
 e(q)=\frac{q\Phi'(q)}{\Phi(q)},\qquad q>0.
 \label{eq:smalltime-log-slope}
\end{equation}
Infinite activity makes $\Phi$ a strictly increasing bijection from
$(0,\infty)$ onto itself.  Indeed, monotone convergence gives
$\Phi(q)\uparrow\nu(0,\infty)=\infty$ as $q\to\infty$.  Moreover,
\begin{equation}
 0<q\Phi'(q)<\Phi(q),\qquad 0<e(q)<1,
 \label{eq:smalltime-log-slope-range}
\end{equation}
because $ze^{-z}<1-e^{-z}$ for $z>0$.  The Campbell formula, quadratic and
fractional identities, and differential inequality are given in
Section~\ref{sec:poisson-calculus}.

\begin{theorem}[A power-sum bound at small time]
\label{thm:smalltime-admissibility}
Suppose that
\begin{equation}
 R_t\Rightarrow R\quad(t\downarrow0),\qquad
 R\ \text{nondegenerate}.
 \label{eq:smalltime-full-limit}
\end{equation}
Then
\begin{equation}
 a_\infty:=\limsup_{q\to\infty}e(q)<1.
 \label{eq:smalltime-slope-gap}
\end{equation}
For every $0<t_0<\infty$ and every
\begin{equation}
 a_\infty<b<r<1,
 \label{eq:smalltime-br-choice}
\end{equation}
there is $q_0>0$ such that, with $v_0=\Phi(q_0)$,
\begin{equation}
 \sup_{0<t\leq t_0}m_t(r)
 \leq
 \frac{e^{t_0v_0}}{\Gamma(r)\Gamma(1-r)}
 \mathrm B(r-b,1-r)<\infty.
 \label{eq:smalltime-subunit-bound}
\end{equation}
Moreover, $m_t(1)=1$ for every $t>0$.
\end{theorem}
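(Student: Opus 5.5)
The proof will follow Propositions~\ref{prop:breiman-slope-gap} and~\ref{prop:breiman-subunit}, with the endpoint $q\downarrow0$ replaced by $q\to\infty$ and the range of $\Phi$ now all of $(0,\infty)$.

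\emph{Mass one.} Since $\nu(0,\infty)=\infty$, Lemma~\ref{lem:poisson-campbell} with $\lambda=\infty$ gives $m_t(1)=1-e^{-t\lambda}=1$.

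\emph{The slope gap.} Suppose $a_\infty=1$. Choose $q_k\to\infty$ with $\delta_k=1-e(q_k)\downarrow0$, and set $x_k=\log\Phi(q_k)\to\infty$. Put $L_k=\tfrac12\log(1/\delta_k)$ and $t_k=e^{L_k}/\Phi(q_k)$. We must check that $t_k\downarrow0$. This can be arranged by choosing $q_k$ large enough relative to $\delta_k$. Alternatively, note that the estimate below only requires $t_k$ to stay bounded along a subsequence: since $\Phi(q_k)\to\infty$, we may pass to a subsequence where $\Phi(q_k)\geq\delta_k^{-1}$, and then $t_k\leq\sqrt{\delta_k}\to0$.

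Lemma~\ref{lem:poisson-slope-dynamics} gives $1-e(\Phi^{-1}(v/t_k))\leq\sqrt{\delta_k}/v$ for $v\leq e^{L_k}$. Insert this into the exact quadratic identity of Lemma~\ref{lem:poisson-quadratic}, where $\lambda=\infty$ removes the boundary term. Splitting the $v$-integral at $\varepsilon$ and $M$ as before yields $m_{t_k}(2)\to0$. Hence $\max_iQ_i(t_k)\to0$ in probability, and Lemma~\ref{lem:countable-wlln}, applied to $X-\E X$, gives $R_{t_k}\to\E X$ in probability. This contradicts~\eqref{eq:smalltime-full-limit}.

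\emph{The fractional bound.} Fix $a_\infty<b<r<1$. Choose $q_0$ with $e(q)\leq b$ for all $q\geq q_0$, and put $v_0=\Phi(q_0)$. We use the representation~\eqref{eq:poisson-fractional-representation} with $\lambda=\infty$ and need a bound on $I_t(h)$ of the form $Ce^{bh}/t$ that is uniform in $t\leq t_0$. For $v\geq v_0$, write $q=\Phi^{-1}(v)$.
\begin{itemize}
\item If $e^{-h}q\geq q_0$, integrating $e\leq b$ on the logarithmic scale gives $\Phi(e^{-h}q)\geq e^{-bh}v$.
\item If $e^{-h}q<q_0$, the same integration on $[q_0,q]$ gives $\Phi(e^{-h}q)\geq\Phi(q_0e^{-h'})$ for some $h'\leq h$. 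This branch is the delicate one, since $\Phi(e^{-h}q)$ can fall below $v_0$. It would be bounded separately using only $e(q)<1$ from~\eqref{eq:smalltime-log-slope-range}, which gives $\Phi(e^{-h}s)\geq e^{-h}\Phi(s)$ for every $s$.
\end{itemize}
For the range $v\leq v_0$, bound the integrand by $1$. Its contribution is at most $v_0\leq e^{t_0v_0}/t$ for $t\leq t_0$, after absorbing constants via $v_0\leq e^{tv_0}/t$. I expect the intended organisation is the substitution $w=v-v_0$, where the factor $e^{tv_0}\leq e^{t_0v_0}$ arises as $\exp\{tv_0e^{-bh}\}$. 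Altogether this gives
\[
I_t(h)\leq\frac{e^{t_0v_0}\,e^{bh}}{t}.
\]
Multiplying by $t/(\Gamma(r)\Gamma(1-r))$ and integrating against $(e^h-1)^{-r}$ then gives $\mathrm B(r-b,1-r)$, as in Proposition~\ref{prop:breiman-subunit}. This yields~\eqref{eq:smalltime-subunit-bound}.

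\emph{Main obstacle.} I expect the hardest part to be the bookkeeping for small $v$ and for $e^{-h}q<q_0$. The difficulty is obtaining the exact constant $e^{t_0v_0}$ uniformly in $t\leq t_0$. The first thing I would try is to lower-bound $\Phi(e^{-h}\Phi^{-1}(v))$ by $e^{-bh}(v-v_0)_+$ throughout. This bound holds by the log-slope integration when $e^{-h}q\geq q_0$, and is trivial otherwise provided the mismatch is absorbed. It then gives
\[
I_t(h)\leq\int_0^\infty e^{-te^{-bh}(v-v_0)_+}\dd v\leq v_0+\frac{e^{bh}}{t}\leq\frac{e^{tv_0}e^{bh}}{t}.
\]
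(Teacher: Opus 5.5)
You have the mass-one statement right, and your slope-gap argument follows the paper. The proof breaks in the fractional bound.

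\textbf{The fractional bound.} Your key inequality
\[
 \Phi\bigl(e^{-h}\Phi^{-1}(v)\bigr)\geq e^{-bh}(v-v_0)_+
\]
is false in general. So is the variant $\geq e^{-bh}(v-v_0)$ behind your substitution $w=v-v_0$. To see this, fix any $q>q_0$, so that $v=\Phi(q)>v_0$, and let $h\to\infty$. If $\int x\,\nu(\dd x)<\infty$, for instance for the gamma subordinator $\Phi(q)=\log(1+q)$, then $\Phi(e^{-h}q)\sim e^{-h}q\int x\,\nu(\dd x)$. The right side, however, is a fixed positive multiple of $e^{-bh}$, and $b<1$, so the inequality fails for large $h$.

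Your fallback for the branch $e^{-h}q<q_0$ does not help either. The bound $\Phi(e^{-h}s)\geq e^{-h}\Phi(s)$ yields $I_t(h)\lesssim e^{h}/t$ rather than $e^{bh}/t$. Then $\int_0^\infty(e^h-1)^{-r}e^h\,\dd h=\infty$, because $r<1$. So the set $\{v>v_0:\ e^{-h}\Phi^{-1}(v)<q_0\}$ is never controlled, and that set is exactly where the proof has to work.

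\textbf{The missing observation.} This set is short. Suppose $e^{-h}q<q_0\leq q$. Integrating $e\leq b$ over $[q_0,q]$ gives
\[
 v=\Phi(q)\leq v_0(q/q_0)^b<v_0e^{bh},
\]
that is, $e^{-bh}v<v_0$. The correct comparison therefore subtracts $v_0$ \emph{after} multiplying by $e^{-bh}$:
\[
 \Phi(e^{-h}s)\geq e^{-bh}\Phi(s)-v_0\qquad(s,h>0).
\]
This is Lemma~\ref{lem:smalltime-global-comparison}. In the problematic branch its right side is negative, so the inequality is trivial there. It gives
\[
 \exp\{-t\Phi(e^{-h}\Phi^{-1}(v))\}\leq e^{tv_0}e^{-te^{-bh}v}\quad\text{for all }v>0.
\]
Integrating over $(0,\infty)$ yields $I_t(h)\leq e^{tv_0}e^{bh}/t\leq e^{t_0v_0}e^{bh}/t$, with exactly the stated constant.

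An equivalent route is to bound the integrand by one on $\{v<\Phi(q_0e^h)\}$. That interval has length at most $v_0e^{bh}$. Then use
\[
 v_0e^{bh}+\frac{e^{bh}}{t}\leq\frac{e^{tv_0}e^{bh}}{t}.
\]
Your remaining Beta-integral step is correct once this bound on $I_t(h)$ is in place.

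\textbf{A smaller point in the slope-gap part.} You cannot obtain $\Phi(q_k)\geq\delta_k^{-1}$ just by extracting a subsequence. The inequality is true, but it needs a reason. By Lemma~\ref{lem:poisson-slope-dynamics}, $x\mapsto e^xu(x)$ is positive and nondecreasing. Hence $\Phi(q_k)\delta_k\geq c>0$ for large $k$, and
\[
 t_k=\frac{\delta_k^{-1/2}}{\Phi(q_k)}\leq\frac{\sqrt{\delta_k}}{c}\to0.
\]
The paper sidesteps this by capping the window at $M_k=\min\{\delta_k^{-1/2},\Phi(q_k)^{1/2}\}$, which gives $t_k\leq\Phi(q_k)^{-1/2}\to 0$ directly.
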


\subsection{The Laplace-exponent estimate at infinity}
\label{sec:smalltime-slope}

No monotonicity of $q\mapsto e(q)$ is needed.  We use instead the
one-sided estimate in logarithmic $\Phi$-coordinates from
Lemma~\ref{lem:poisson-slope-dynamics}.

\begin{lemma}
\label{lem:smalltime-slope-gap}
Under \eqref{eq:smalltime-full-limit},
\[
 \limsup_{q\to\infty}e(q)<1.
\]
\end{lemma}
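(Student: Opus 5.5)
We argue by contradiction, exactly as in Proposition~\ref{prop:breiman-slope-gap}, now at the opposite endpoint: as $t\downarrow0$ the relevant Laplace arguments satisfy $q=\Phi^{-1}(v/t)\to\infty$. Suppose $\limsup_{q\to\infty}e(q)=1$. We choose $q_k\to\infty$ with
\[
 \delta_k=1-e(q_k)\downarrow0,\qquad x_k=\log\Phi(q_k)\to\infty .
\]
With $g(x)=e(\Phi^{-1}(e^x))$ and $u=1-g$, defined on all of $\R$ under \eqref{eq:smalltime-infinite-activity}, Lemma~\ref{lem:poisson-slope-dynamics} gives $u(y)\leq e^{x_k-y}\delta_k$ for $y\leq x_k$. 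This bound is one-sided: it only controls $u$ to the left of $x_k$, that is, for smaller values of $\Phi$. Set $L_k=\tfrac12\log(1/\delta_k)$ and $t_k=e^{L_k}/\Phi(q_k)$. Then $t_k\to0$, because $e^{L_k}=\delta_k^{-1/2}$ while $\Phi(q_k)\to\infty$. We pass to a subsequence to ensure this, for instance by choosing $q_k$ large enough that $\Phi(q_k)\geq\delta_k^{-1}$; this is possible because along any sequence realizing the limsup we may thin it out.

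For $v\leq e^{L_k}$, put $y=\log(v/t_k)=x_k-L_k+\log v\leq x_k$. This yields
\[
 1-e\bigl(\Phi^{-1}(v/t_k)\bigr)\leq\sqrt{\delta_k}/v .
\]
We insert this into the quadratic identity of Lemma~\ref{lem:poisson-quadratic}. Since $\lambda=\infty$, the atom term $t\lambda e^{-t\lambda}$ vanishes by the stated conventions, and
\[
 m_{t_k}(2)=\int_0^\infty ve^{-v}\bigl[1-e(\Phi^{-1}(v/t_k))\bigr]\dd v .
\]
We split the integral at $0<\varepsilon<M$. On $(0,\varepsilon)$ the bracket lies in $(0,1)$ by \eqref{eq:smalltime-log-slope-range}, so this piece is at most $\varepsilon^2/2$. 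On $(\varepsilon,M)$ the bound above gives at most $\sqrt{\delta_k}/\varepsilon$. On $(M,\infty)$ the piece is at most $\int_M^\infty ve^{-v}\dd v$. Letting $k\to\infty$, then $\varepsilon\downarrow0$, then $M\to\infty$, we get $m_{t_k}(2)\to0$.

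To conclude, note that $\max_iQ_i(t_k)^2\leq\sum_iQ_i(t_k)^2$, so $\max_iQ_i(t_k)\to0$ in probability. We then apply Lemma~\ref{lem:countable-wlln} to the centered marks $X_i-\E X$, using $\sum_iQ_i=1$. This gives $R_{t_k}\to\E X$ in probability, which contradicts the nondegeneracy of $R$ in \eqref{eq:smalltime-full-limit}.

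The step needing the most care is the orientation of the window. The differential inequality controls $u$ only on the side where $\Phi$ is smaller, and the construction must land all relevant arguments $v/t_k$ with $v\leq e^{L_k}$ on that side while still having $t_k\downarrow0$. The choice $t_k=e^{L_k}/\Phi(q_k)$ handles the first requirement, and thinning the sequence $q_k$ so that $\Phi(q_k)$ outgrows $\delta_k^{-1/2}$ handles the second. Apart from this, the argument transfers verbatim from the iid proof.
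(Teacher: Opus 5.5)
Your overall route is the paper's: argue by contradiction, use the left window from Lemma~\ref{lem:poisson-slope-dynamics}, bound the quadratic identity (with its atom term absent), and conclude with the weighted weak law. The gap is in the one step you flag yourself, namely $t_k\downarrow0$.

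With $t_k=\delta_k^{-1/2}/\Phi(q_k)$, the numerator and the denominator both diverge, so ``$\Phi(q_k)\to\infty$'' proves nothing. The proposed repair by thinning does not work either. The quantities $\delta_k=1-e(q_k)$ and $\Phi(q_k)$ are functions of the same point $q_k$. So the condition $\Phi(q_k)\geq\delta_k^{-1}$, i.e.\ $\Phi(q_k)-q_k\Phi'(q_k)\geq1$, is a property of the point $q_k$. Passing to a subsequence cannot create it if it fails eventually. Nor can you just take $q_k$ larger, because a larger point may carry a much smaller $\delta$. Your argument uses no property of $\Phi$ here; it would read the same if $1-e(q)$ decayed faster than $\Phi(q)^{-2}$. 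The claim is true only because of a structural fact you never invoke.

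There are two short repairs. The paper caps the window: it sets
\[
 M_k=\min\{\delta_k^{-1/2},\Phi(q_k)^{1/2}\},\qquad t_k=\frac{M_k}{\Phi(q_k)}\leq\Phi(q_k)^{-1/2}\to0 .
\]
Shortening the window only helps. For $v\leq M_k$ one gets $1-e(\Phi^{-1}(v/t_k))\leq\delta_kM_k/v$, and hence $m_{t_k}(2)\leq\delta_kM_k+(M_k+1)e^{-M_k}\leq\sqrt{\delta_k}+(M_k+1)e^{-M_k}$, with no $\varepsilon,M$ limits needed.

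Alternatively, keep your $t_k$ and supply the missing fact. The function $\Phi(q)(1-e(q))=\Phi(q)-q\Phi'(q)$ is positive and has derivative $-q\Phi''(q)>0$; in fact it increases to $\nu(0,\infty)=\infty$. Therefore
\[
 t_k^2=\frac{1}{\Phi(q_k)\,\{\Phi(q_k)-q_k\Phi'(q_k)\}}\longrightarrow0,
\]
with no thinning at all. With either fix, the rest of your proof goes through as written.
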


\begin{proof}
Suppose that the limsup is one.  Choose $q_k\to\infty$ such that
\[
 \delta_k=1-e(q_k)\downarrow0,
 \qquad x_k=\log\Phi(q_k)\to\infty,
\]
and put
\begin{equation}
 M_k=\min\{\delta_k^{-1/2},\Phi(q_k)^{1/2}\},
 \qquad t_k=\frac{M_k}{\Phi(q_k)}.
 \label{eq:smalltime-mk-tk}
\end{equation}
Then $M_k\to\infty$ and
$0<t_k\leq\Phi(q_k)^{-1/2}\to0$.  For $0<v\leq M_k$, set
\[
 y=\log(v/t_k)=x_k+\log(v/M_k)\leq x_k.
\]
Equation~\eqref{eq:poisson-left-window} gives
\begin{equation}
 1-e\!\left(\Phi^{-1}(v/t_k)\right)
 \leq\frac{\delta_kM_k}{v}.
 \label{eq:smalltime-window-bound}
\end{equation}
On $(M_k,\infty)$, use only \eqref{eq:smalltime-log-slope-range}.
Since $\lambda=\nu(0,\infty)=\infty$, the first term
$t_k\lambda e^{-t_k\lambda}$ in \eqref{eq:poisson-quadratic} is zero under
the stated convention, and the upper integration limit $t_k\lambda$ is
infinite.  Therefore
\begin{align*}
 m_{t_k}(2)
 &\leq\delta_kM_k\int_0^{M_k}e^{-v}\dd v
     +\int_{M_k}^\infty ve^{-v}\dd v\\
 &\leq\sqrt{\delta_k}+(M_k+1)e^{-M_k}
 \longrightarrow0.
\end{align*}
With $Q_*(t)=\sup_iQ_i(t)$,
\[
 \Pp\{Q_*(t_k)>\varepsilon\}
 \leq\varepsilon^{-2}m_{t_k}(2)\longrightarrow0.
\]
Lemma~\ref{lem:countable-wlln}, applied to $X_i-\E X$, now yields
$R_{t_k}\to\E X$ in probability.  This contradicts the nondegenerate
limit in \eqref{eq:smalltime-full-limit}.
\end{proof}

The use of convergence as $t\downarrow0$, rather than convergence along a
fixed subsequence, is essential: the times in
\eqref{eq:smalltime-mk-tk} are selected from the Laplace exponent.

\subsection{A fractional power-sum bound}
\label{sec:smalltime-subunit}

The representation \eqref{eq:poisson-fractional-representation} now reduces
the problem to an endpoint-specific comparison for its inner integral.

\begin{lemma}[Global comparison]
\label{lem:smalltime-global-comparison}
Suppose that $e(q)\leq b$ for $q\geq q_0$, where $0<b<1$, and put
$v_0=\Phi(q_0)$.  Then, for all $s,h>0$,
\begin{equation}
 \Phi(e^{-h}s)\geq e^{-bh}\Phi(s)-v_0.
 \label{eq:smalltime-global-comparison}
\end{equation}
\end{lemma}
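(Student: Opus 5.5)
The plan is to integrate the slope bound $e\le b$ along a logarithmic scale, exactly as in the proof of Proposition~\ref{prop:breiman-subunit}, and to let the additive constant $v_0$ absorb the region where that bound is not available.

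Fix $s,h>0$ and put $s'=e^{-h}s$. I will split into three cases according to where $s'$ and $s$ lie relative to $q_0$.

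\emph{Case $s'\ge q_0$.} Here the whole interval $[s',s]$ lies in $[q_0,\infty)$. On that interval $\frac{\dd}{\dd\log q}\log\Phi(q)=e(q)\le b$. Integrating from $\log s'$ to $\log s$ gives
\[
 \log\Phi(s)-\log\Phi(s')\le bh,
\]
hence $\Phi(s')\ge e^{-bh}\Phi(s)$. This is stronger than \eqref{eq:smalltime-global-comparison}.

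\emph{Case $s\le q_0$.} Monotonicity of $\Phi$ gives $\Phi(s)\le v_0$. Since $e^{-bh}\le1$, the right-hand side of \eqref{eq:smalltime-global-comparison} is at most $v_0-v_0=0$. The left-hand side is nonnegative, so the inequality holds trivially.

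\emph{Case $s'<q_0<s$.} Write $h_1=\log(s/q_0)\in(0,h)$ and apply the first case on $[q_0,s]$. This gives $\Phi(q_0)\ge e^{-bh_1}\Phi(s)\ge e^{-bh}\Phi(s)$, that is, $e^{-bh}\Phi(s)\le v_0$. Hence $e^{-bh}\Phi(s)-v_0\le0\le\Phi(s')$, and the inequality again holds.

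The only point needing care is justifying the logarithmic integration. The function $\Phi$ is smooth and positive on $(0,\infty)$, so $\log\Phi(e^x)$ is $C^1$ with derivative $e(e^x)$, and the fundamental theorem of calculus applies directly. I do not expect any genuine obstacle here: the lemma is a bookkeeping device whose additive $-v_0$ is designed precisely to cover the regime below $q_0$.
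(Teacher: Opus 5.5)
Your proof is correct and follows the paper's argument: you integrate $e\le b$ on a logarithmic scale when $e^{-h}s\geq q_0$, and otherwise observe that the right-hand side is nonpositive. In the mixed case, your bound $\Phi(q_0)\geq e^{-b\log(s/q_0)}\Phi(s)$ is exactly the paper's estimate $\Phi(s)/\Phi(q_0)\leq(s/q_0)^b<e^{bh}$.
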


\begin{proof}
If $e^{-h}s\geq q_0$, then
\[
 \log\frac{\Phi(s)}{\Phi(e^{-h}s)}
 =\int_{-h}^0e(se^w)\dd w\leq bh,
\]
which gives the stronger inequality
$\Phi(e^{-h}s)\geq e^{-bh}\Phi(s)$.  If
$e^{-h}s<q_0\leq s$, then
\[
 \frac{\Phi(s)}{\Phi(q_0)}
 \leq\left(\frac{s}{q_0}\right)^b<e^{bh},
\]
so the right-hand side of
\eqref{eq:smalltime-global-comparison} is negative.  The same is immediate
when $s<q_0$.
\end{proof}

\begin{lemma}[Fractional bound]
\label{lem:smalltime-subunit-bound}
Suppose that $\limsup_{q\to\infty}e(q)<1$.  Fix $0<t_0<\infty$ and
choose $b,r$ as in \eqref{eq:smalltime-br-choice}.  Then
\eqref{eq:smalltime-subunit-bound} holds.
\end{lemma}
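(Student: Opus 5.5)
We start from the fractional representation \eqref{eq:poisson-fractional-representation} of Lemma~\ref{lem:poisson-fractional}, with $\lambda=\infty$:
\[
 m_t(r)=\frac{t}{\Gamma(r)\Gamma(1-r)}\int_0^\infty(e^h-1)^{-r}I_t(h)\dd h,
 \qquad
 I_t(h)=\int_0^\infty\exp\{-t\Phi(e^{-h}\Phi^{-1}(v))\}\dd v .
\]
Choose $q_0$ so that $e(q)\leq b$ for $q\geq q_0$; this is possible since $b>a_\infty$. Put $v_0=\Phi(q_0)$. Then Lemma~\ref{lem:smalltime-global-comparison}, applied with $s=\Phi^{-1}(v)$, gives
\[
 \Phi\bigl(e^{-h}\Phi^{-1}(v)\bigr)\geq e^{-bh}v-v_0
 \qquad\text{for all }v,h>0.
\]
Here the additive form of the comparison is what lets us avoid splitting the $v$-integral. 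The price is a bounded multiplicative factor $e^{tv_0}$.

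Inserting this bound into $I_t(h)$ yields
\[
 I_t(h)\leq e^{tv_0}\int_0^\infty e^{-te^{-bh}v}\dd v=\frac{e^{tv_0}e^{bh}}{t}.
\]
The factor $t$ cancels against the prefactor in the representation, and we obtain
\[
 m_t(r)\leq\frac{e^{tv_0}}{\Gamma(r)\Gamma(1-r)}\int_0^\infty(e^h-1)^{-r}e^{bh}\dd h .
\]
The remaining integral equals $\mathrm B(r-b,1-r)$. To see this, substitute $w=e^{-h}$: the integrand becomes $(1-w)^{-r}w^{r-b-1}$ on $(0,1)$. The integral is finite because $b<r<1$.

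For $0<t\leq t_0$ we bound $e^{tv_0}\leq e^{t_0v_0}$, which gives \eqref{eq:smalltime-subunit-bound}. The statement $m_t(1)=1$ follows from Lemma~\ref{lem:poisson-campbell} with $\lambda=\infty$.

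The only point needing care is the application of Tonelli to the possibly infinite integrals, which is harmless since everything is nonnegative. Beyond that, the argument is a direct computation. The real content was already isolated in the additive comparison lemma, where the region $e^{-h}s<q_0$ is absorbed by the $-v_0$ term.
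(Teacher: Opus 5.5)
Your proposal is correct and follows essentially the same route as the paper. You combine the fractional representation \eqref{eq:poisson-fractional-representation} with the additive comparison of Lemma~\ref{lem:smalltime-global-comparison} to get $I_t(h)\leq e^{tv_0}e^{bh}/t$, and then evaluate the remaining integral as $\mathrm B(r-b,1-r)$ via $w=e^{-h}$, exactly as the paper does.
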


\begin{proof}
Choose $q_0$ so that $e(q)\leq b$ for $q\geq q_0$, and put
$v_0=\Phi(q_0)$.  Lemma~\ref{lem:smalltime-global-comparison} and
\eqref{eq:poisson-I-def} imply, for $0<t\leq t_0$,
\[
 I_t(h)\leq e^{tv_0}\int_0^\infty e^{-te^{-bh}v}\dd v
 =\frac{e^{tv_0}e^{bh}}{t}
 \leq\frac{e^{t_0v_0}e^{bh}}{t}.
\]
Furthermore,
\begin{align*}
 \int_0^\infty(e^h-1)^{-r}e^{bh}\dd h
 &=\int_0^1x^{r-b-1}(1-x)^{-r}\dd x\\
 &=\mathrm B(r-b,1-r)<\infty.
\end{align*}
Here $r>b$ controls the integral at infinity, while $r<1$ controls it at
zero.  Combining this identity with
\eqref{eq:poisson-fractional-representation} proves the claim.
\end{proof}

\begin{proof}[Proof of Theorem~\ref{thm:smalltime-admissibility}]
Lemma~\ref{lem:smalltime-slope-gap} proves
\eqref{eq:smalltime-slope-gap}.  Choose $b,r$ as in
\eqref{eq:smalltime-br-choice}; if $a_\infty=0$, take $b>0$.
Lemma~\ref{lem:smalltime-subunit-bound} gives
\eqref{eq:smalltime-subunit-bound}, and
Lemma~\ref{lem:poisson-campbell} gives $m_t(1)=1$ because
$\nu(0,\infty)=\infty$.
\end{proof}

\begin{corollary}[Positive small-time limit of an expected power sum]
\label{cor:smalltime-positive-power}
Under the hypotheses of Theorem~\ref{thm:smalltime-admissibility}, there
exist $p\in(1,2]$ and $c\in(0,1]$ such that
\[
 m_t(p)\longrightarrow c\qquad(t\downarrow0).
\]
\end{corollary}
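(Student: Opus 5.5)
The corollary follows by applying Theorem~\ref{thm:countable-main} at the endpoint $\star=0$, with the Poisson jump weights $P(t)=Q(t)$ defined in \eqref{eq:smalltime-power-sums}. Theorem~\ref{thm:countable-main} needs centred marks, so the first step is to reduce to that case. Put $\mu=\E X$ and $\widehat X_i=X_i-\mu$. Infinite activity gives $\sum_iQ_i(t)=1$ almost surely, so
\[
 \widehat R_t=\sum_iQ_i(t)\widehat X_i=R_t-\mu .
\]
Hence $\widehat R_t\Rightarrow R-\mu$, and this limit is still nondegenerate. The centred mark satisfies $\E\widehat X=0$, and $0<\E|\widehat X|<\infty$ because $X$ is integrable and nonconstant. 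The weights $Q(t)$ are a random subprobability (indeed probability) sequence independent of the marks, as the countable setting of Section~\ref{sec:countable} requires.

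The second step checks the power-sum bound \eqref{eq:countable-r-bound}. Theorem~\ref{thm:smalltime-admissibility} applies under the present hypotheses. Fix any $t_0$, say $t_0=1$, and choose $a_\infty<b<r<1$; take $b>0$ if $a_\infty=0$. Then \eqref{eq:smalltime-subunit-bound} gives $\sup_{0<t\leq1}m_t(r)<\infty$, so in particular $\limsup_{t\downarrow0}m_t(r)<\infty$. The time variable ranges over $(0,\infty)$ and all bounds are local at $\star=0$, so every hypothesis of Theorem~\ref{thm:countable-main} holds for $\widehat R_t$.

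Theorem~\ref{thm:countable-main} then produces $p\in(1,2]$ and $c\in(0,1]$ with $m_t(p)\to c$ as $t\downarrow0$. In the case $a_X=1$, any fixed $p\in(1,2)$ works, with its own constant $c_p$. The power sums $m_t(p)$ depend only on the weights and not on the marks, so centring does not change them, and the conclusion holds as stated.

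I expect no real obstacle here. The one point that needs care is the centring step: it relies on the weights having total mass exactly one, and infinite activity provides that.
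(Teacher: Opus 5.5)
Your proposal is correct and follows the paper's own proof essentially step for step. You center the marks using the mass-one property of the weights, verify \eqref{eq:countable-r-bound} through the fractional bound \eqref{eq:smalltime-subunit-bound}, and then invoke Theorem~\ref{thm:countable-main} at $\star=0$.
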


\begin{proof}
Replace the marks by $\widehat X=X-\E X$.  Since the weights have total
mass one, the corresponding weighted mean is $R_t-\E X$ and has a
nondegenerate limit.  Moreover,
$0<\E|\widehat X|<\infty$.  The fractional bound in
Theorem~\ref{thm:smalltime-admissibility} verifies
\eqref{eq:countable-r-bound}; Theorem~\ref{thm:countable-main} gives the
conclusion.
\end{proof}

The bound in Theorem~\ref{thm:smalltime-admissibility} is local to
$t\downarrow0$.  This cannot be replaced by a bound uniform over all
$t>0$, even for the gamma subordinator.  If
$\nu(\dd x)=x^{-1}e^{-x}\dd x$, then
$\Phi(q)=\log(1+q)$ and, for $0<r<1$,
\begin{equation}
 m_t(r)=t\mathrm B(r,t)
 =\frac{\Gamma(t+1)\Gamma(r)}{\Gamma(t+r)}.
 \label{eq:smalltime-gamma}
\end{equation}
Thus $m_t(r)\to1$ as $t\downarrow0$, whereas
$m_t(r)\sim\Gamma(r)t^{1-r}$ as $t\to\infty$.

\section{A small-time ratio--Tauberian theorem}
\label{sec:tauberian}

Corollary~\ref{cor:smalltime-positive-power} combines the fractional
power-sum bound with Theorem~\ref{thm:countable-main} to produce a
positive expected power-sum limit from a nondegenerate marked-mean limit.
We now determine exactly which L\'evy tails can produce such a power-sum
limit.  The result is stated for an arbitrary fixed $p>1$ and does not
assume any asymptotic behavior of $q\Phi'(q)/\Phi(q)$.
Its ingredients are an exact push-forward of Campbell's
formula, a Weyl fractional-integral identity, and a Drasin--Shea theorem on
the maximal Mellin interval of the resulting kernel.

This is the small-time counterpart of
Proposition~\ref{prop:breiman-tauberian}: the maximal Mellin interval now has its
finite boundary on the opposite side, and the monotone quantity is built
from the L\'evy tail at zero rather than an iid tail at infinity.  The
large-time expected-power-sum criterion and its antecedents are recalled in
Section~\ref{sec:breiman-tauberian}.

Throughout this section, $V$ is an unkilled, driftless subordinator with
infinite L\'evy measure $\nu$.  We write
\begin{equation}
 \Phi(q)=\int_0^\infty(1-e^{-qx})\,\nu(\dd x),
 \qquad
 \bar\nu(x)=\nu((x,\infty)).
 \label{eq:tauberian-laplace}
\end{equation}
Infinite activity makes $\Phi$ a continuous strictly increasing bijection
of $[0,\infty)$ onto itself.  If $(J_i(t))_{i\geq1}$ are the jumps on
$(0,t]$, then
\[
 Q_i(t)=\frac{J_i(t)}{V_t},
 \qquad
 m_t(p)=\E\sum_{i\geq1}Q_i(t)^p,
 \qquad p>1.
\]
Since there is no drift, $(Q_i(t))$ has total mass one almost surely.

\begin{theorem}[Small-time ratio--Tauberian theorem]
\label{thm:tauberian-main}
Fix $p>1$.  The following assertions are equivalent.
\begin{enumerate}
\item There is a constant $c>0$ such that
\begin{equation}
 m_t(p)\longrightarrow c \qquad (t\downarrow0).
 \label{eq:tauberian-positive-limit}
\end{equation}
\item For a necessarily unique $\alpha\in[0,1)$,
\begin{equation}
 \bar\nu\in\RV_0(-\alpha).
 \label{eq:tauberian-tail-rv}
\end{equation}
\end{enumerate}
In that case,
\begin{equation}
 c=\frac{\Gamma(p-\alpha)}
         {\Gamma(p)\Gamma(1-\alpha)}.
 \label{eq:tauberian-pd-constant}
\end{equation}
In particular, $0<c\leq1$, and $c=1$ if and only if $\alpha=0$.
\end{theorem}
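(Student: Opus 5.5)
The plan follows the proof of Proposition~\ref{prop:breiman-tauberian}, with the endpoint $q\downarrow0$ replaced by $q\to\infty$. First push Campbell's formula forward. Set
\[
 H_p(q)=\int_0^q s^{p-1}A_p(s)\dd s,
 \qquad A_p(s)=\int_0^\infty x^pe^{-sx}\,\nu(\dd x),
\]
and let $U_p(v)=H_p(\Phi^{-1}(v))$. Since $\Phi$ is a bijection of $[0,\infty)$ onto itself, Lemma~\ref{lem:poisson-campbell} gives
\[
 \Gamma(p)m_t(p)=t\int_{[0,\infty)}e^{-tv}\dd U_p(v).
\]
Here $U_p$ is locally finite: indeed $H_p(q)\le\Gamma(p)\Phi(q)$ on the same representation, from $m_t\le1$ by an Abelian bound. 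Karamata's Laplace--Stieltjes theorem, now at $t\downarrow0$ and hence $v\to\infty$, then gives
\[
 m_t(p)\to c>0
 \quad\Longleftrightarrow\quad
 \frac{H_p(q)}{\Phi(q)}\to c\,\Gamma(p)
 \qquad (q\to\infty).
\]

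Next rewrite both sides through the tail. Define
\[
 F_p(q)=\int_0^\infty\bar\nu(y)y^{p-1}e^{-qy}\dd y,
 \qquad F_1(q)=\int_0^\infty\bar\nu(y)e^{-qy}\dd y .
\]
Tonelli gives $\Phi(q)=qF_1(q)$, and integration by parts gives $H_p(q)=q^pF_p(q)$, provided $q^pF_p(q)\to0$ as $q\downarrow0$. This boundary term needs checking since $\nu$ may be infinite near zero but has $\int(1\wedge x)\dd\nu<\infty$; it should follow from dominated convergence after the substitution $v=qy$. The Weyl identity
\[
 F_1(q)=\frac{1}{\Gamma(p-1)}\int_0^\infty v^{p-2}F_p(q+v)\dd v
\]
turns the ratio condition into
\[
 \int_1^\infty(u-1)^{p-2}\frac{F_p(qu)}{F_p(q)}\dd u\to\frac{1}{c(p-1)}
 \qquad (q\to\infty).
\]

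To apply Proposition~\ref{prop:poisson-mellin-ratio} at $x\to\infty$, put $h(x)=x^pF_p(x)$, which is nondecreasing since $x^pF_p(x)=\int_0^\infty\bar\nu(v/x)v^{p-1}e^{-v}\dd v$. Then $1\le h(\theta x)/h(x)\le\theta^p$ for $\theta\ge1$, which gives the one-sided condition with $\zeta=1$. The displayed limit becomes a Mellin convolution $(h*_Mk)(x)/h(x)\to1/(c(p-1))$ with kernel
\[
 k(u)=(1/u-1)^{p-2}u^{\,p-1}\1_{\{u<1\}}
\]
after inverting $u\mapsto1/u$. Its transform is $\mathrm B(p-1,1-z)$ on the maximal interval $(-\infty,1)$, which diverges at the finite endpoint $z\uparrow1$. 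This is the ``opposite side'' mentioned in the text.

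One technical point is local boundedness of $h$ near $0$ and finiteness of the convolution. Local boundedness holds because $h(x)\to0$ as $x\downarrow0$. If it is more convenient, I will instead set $h(x)=x^{-p}F_p(1/x)$ and work at $x\downarrow0$ after reparametrization; I will choose whichever orientation makes the kernel's strip contain $0$ as required. The proposition then yields $h\in\RV(\rho)$ with $\mathrm B(p-1,1-\rho)=1/(c(p-1))$. Since $c\le1$ and $\mathrm B(p-1,1)=1/(p-1)$, monotonicity of the beta function forces $0\le\rho<1$. Set $\alpha=\rho$, so that $c=\Gamma(p-\alpha)/(\Gamma(p)\Gamma(1-\alpha))$, and $c=1$ exactly when $\alpha=0$. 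Uniqueness of $\alpha$ follows from strict monotonicity.

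To recover the tail, $F_p\in\RV_\infty(\alpha-p)$. Karamata's Tauberian theorem at $q\to\infty$, which corresponds to small $x$, gives
\[
 V(x)=\int_0^xy^{p-1}\bar\nu(y)\dd y\in\RV_0(p-\alpha),
\]
with positive index. The change of variables $w=y^p$ produces a function with monotone density $\bar\nu(w^{1/p})$, and the monotone density theorem at $0$ gives $\bar\nu\in\RV_0(-\alpha)$. The converse direction is an Abelian computation: Karamata asymptotics for $F_1$ and $F_p$ as $q\to\infty$ give the ratio $c\,\Gamma(p)$, and hence $m_t(p)\to c$.

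I expect the main obstacle to be verifying the hypotheses of the Drasin--Shea proposition in the correct orientation: the kernel strip, the divergent endpoint, the one-sided growth condition, and finiteness of the convolution. I also need care at the boundary terms at $q=0$ that arise from infinite activity.
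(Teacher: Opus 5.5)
Your proposal is correct and follows essentially the paper's proof. It uses the push-forward of Campbell's formula with Karamata at $v\to\infty$, the tail and Weyl identities, and Drasin--Shea for $a=C_p=q^pF_p$ with kernel $w(1-w)^{p-2}\1_{(0,1)}(w)$ on the strip $(-\infty,1)$, so your first orientation is the right one and the hedge is unnecessary. It ends with monotone-density tail recovery and the Abelian converse. The deviations are minor and all valid: you exclude $\rho<0$ via $c\leq1$ and monotonicity of the beta function rather than via monotonicity of $C_p$; you recover $\bar\nu$ through $F_p$ and $V$, as in the iid proof, rather than through $\Phi\in\RV_\infty(\alpha)$ and $I$; and your "Abelian bound" yields $H_p\leq e\,\Gamma(p)\Phi$ rather than constant $\Gamma(p)$, which still suffices for local finiteness.
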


The restriction $p>1$ is intrinsic to the transform argument below.  No
upper bound on $p$ is needed here; the range $p\leq2$ enters only when $p$
is selected by Theorem~\ref{thm:countable-main}.  The zero limit is not
obtained by putting $\alpha=1$ in Theorem~\ref{thm:tauberian-main}.  It is
a distinct boundary regime, treated in
Proposition~\ref{prop:tauberian-dust} below.

\subsection{Campbell's formula and a push-forward Laplace transform}
\label{sec:tauberian-pushforward}

For fixed $p>1$, put
\begin{equation}
 A_p(q)=\int_0^\infty x^p e^{-qx}\,\nu(\dd x),
 \qquad q>0,
 \label{eq:tauberian-Ap}
\end{equation}
and set
\begin{equation}
 C_p(0)=0,
 \qquad
 C_p(q)=\int_0^q s^{p-1}A_p(s)\dd s,
 \quad q>0.
 \label{eq:tauberian-Cp}
\end{equation}
Here $A_p(q)<\infty$ for $q>0$, and $C_p(q)<\infty$ for $q\geq0$.
Indeed, Tonelli's
theorem gives the useful representation
\begin{equation}
 C_p(q)=\int_0^\infty
 \left(\int_0^{qx}z^{p-1}e^{-z}\dd z\right)\nu(\dd x).
 \label{eq:tauberian-Cp-incomplete-gamma}
\end{equation}
The inner integral is $O_q(x^p)$ on $(0,1]$ and is bounded by
$\Gamma(p)$ on $(1,\infty)$.  Thus $C_p$ is continuous,
nondecreasing, and locally bounded, with $C_p(0)=0$.

Lemma~\ref{lem:poisson-campbell} gives, for $p>1$ and $t>0$,
\begin{equation}
 m_t(p)=\frac{t}{\Gamma(p)}
 \int_0^\infty q^{p-1}A_p(q)e^{-t\Phi(q)}\dd q.
 \label{eq:tauberian-campbell}
\end{equation}

Push the locally finite measure $\dd C_p$ forward under $q\mapsto\Phi(q)$,
and denote its cumulative function by
\begin{equation}
 U_p(v)=C_p(\Phi^{-1}(v)),\qquad v\geq0.
 \label{eq:tauberian-Up}
\end{equation}
Equation~\eqref{eq:tauberian-campbell} becomes
\begin{equation}
 \Gamma(p)m_t(p)
 =t\int_{[0,\infty)}e^{-tv}\dd U_p(v).
 \label{eq:tauberian-pushforward}
\end{equation}

\begin{lemma}[Push-forward equivalence]
\label{lem:tauberian-pushforward}
For $L\in[0,\infty)$,
\begin{equation}
 m_t(p)\longrightarrow \frac{L}{\Gamma(p)}
 \quad(t\downarrow0)
 \quad\Longleftrightarrow\quad
 \frac{C_p(q)}{\Phi(q)}\longrightarrow L
 \quad(q\to\infty).
 \label{eq:tauberian-pushforward-equivalence}
\end{equation}
\end{lemma}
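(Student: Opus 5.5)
The plan is a Karamata Tauberian theorem for Laplace--Stieltjes transforms, applied at the origin to the push-forward representation \eqref{eq:tauberian-pushforward}. It is the same step as \eqref{eq:breiman-pushforward-equivalence} in the large-time case, now with $v\to\infty$ and $t\downarrow0$. First record that $U_p$ is nondecreasing and right-continuous on $[0,\infty)$, with $U_p(0)=0$. It is also finite: $C_p$ is locally bounded, and infinite activity makes $\Phi^{-1}$ a bijection of $[0,\infty)$. Since $\Phi(\Phi^{-1}(v))=v$, the condition $C_p(q)/\Phi(q)\to L$ as $q\to\infty$ is the same as $U_p(v)/v\to L$ as $v\to\infty$. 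Likewise, by \eqref{eq:tauberian-pushforward}, $m_t(p)\to L/\Gamma(p)$ is the same as $\widehat U_p(t):=\int e^{-tv}\dd U_p(v)\sim L/t$ as $t\downarrow0$ when $L>0$, and as $t\widehat U_p(t)\to0$ when $L=0$.

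\emph{Direction $\Leftarrow$.} This is the Abelian half. Integrate by parts to get $\widehat U_p(t)=t\int_0^\infty e^{-tv}U_p(v)\dd v$, so that
\[
 t\widehat U_p(t)=\int_0^\infty e^{-w}\,tU_p(w/t)\dd w .
\]
The integrand $tU_p(w/t)=w\cdot U_p(w/t)/(w/t)\to Lw$ pointwise. For domination, use $U_p(v)\leq C(1+v)$, which follows from the assumed limit together with local boundedness. This gives $tU_p(w/t)\leq C(t+w)$. Dominated convergence then yields the limit $L\int_0^\infty we^{-w}\dd w=L$. The case $L=0$ is covered by the same computation.

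\emph{Direction $\Rightarrow$.} Here I would invoke Karamata's Tauberian theorem for positive measures, with index $\rho=1$ and a constant slowly varying function, in the limit $t\downarrow0$ and $v\to\infty$ \cite[Theorem~1.7.1]{BGT}. It gives $U_p(v)\sim Lv/\Gamma(2)=Lv$, using only the monotonicity of $U_p$. The case $L=0$ needs a separate direct argument. For every $v>0$, take $t=1/v$ and note
\[
 e^{-1}U_p(v)\leq\int_{[0,v]}e^{-tv'}\dd U_p(v')\leq\widehat U_p(1/v)=o(v).
\]
Hence $U_p(v)/v\to0$. The only point needing care is the finiteness of $\widehat U_p(t)$ for each $t>0$. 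This holds because $m_t(p)\leq m_t(1)=1$, so the transform is finite and the Tauberian theorem applies. I see no real obstacle beyond checking that the push-forward identity holds on the whole half-line $[0,\infty)$, which uses the continuity and bijectivity of $\Phi$ under infinite activity.
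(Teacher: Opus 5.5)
Your proposal is correct and follows the paper's proof: for $L>0$ you use Karamata's index-one Laplace--Stieltjes theorem applied to $U_p$, and for the Tauberian direction with $L=0$ you use the same elementary bound $e^{-1}U_p(v)\le\widehat U_p(1/v)$. The one difference is cosmetic: you prove the Abelian direction for all $L\ge0$ at once by dominated convergence after substituting $v=w/t$, whereas the paper cites Karamata for $L>0$ and handles $L=0$ by integrating by parts and splitting the integral at a level $V$.
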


\begin{proof}
For $L>0$, this is the index-one case of Karamata's
Laplace--Stieltjes theorem~\cite{BGT} applied to the
nondecreasing function $U_p$:
\[
 t\int_{[0,\infty)}e^{-tv}\dd U_p(v)\longrightarrow L
 \quad\Longleftrightarrow\quad
 U_p(v)\sim Lv.
\]
Together with \eqref{eq:tauberian-Up}, this is precisely
\eqref{eq:tauberian-pushforward-equivalence}.

The case $L=0$ is elementary and will be needed at the dust boundary.
If the transform tends to zero, then
\[
 e^{-1}tU_p(1/t)
 \leq t\int_{[0,1/t]}e^{-tv}\dd U_p(v)\longrightarrow0,
\]
so $U_p(v)=o(v)$.  Conversely, first integrate by parts over
$[\varepsilon,M]$:
\begin{align*}
 \int_{(\varepsilon,M]}e^{-tv}\dd U_p(v)
 &=e^{-tM}U_p(M)-e^{-t\varepsilon}U_p(\varepsilon)\\
 &\quad+t\int_\varepsilon^M e^{-tv}U_p(v)\dd v.
\end{align*}
Here $U_p(\varepsilon)\to U_p(0)=0$.  Moreover,
\eqref{eq:tauberian-pushforward} gives
\[
 \int_{[0,\infty)}e^{-tv/2}\dd U_p(v)
 =\frac{2\Gamma(p)}{t}m_{t/2}(p)<\infty.
\]
Here $m_{t/2}(p)\leq1$.
Consequently,
\[
 e^{-tM}U_p(M)
 \leq e^{-tM/2}\int_{[0,M]}e^{-tv/2}\dd U_p(v)
 \longrightarrow0.
\]
Letting $\varepsilon\downarrow0$ and $M\to\infty$ therefore gives
\begin{equation}
 t\int_{[0,\infty)}e^{-tv}\dd U_p(v)
 =t^2\int_0^\infty e^{-tv}U_p(v)\dd v.
 \label{eq:tauberian-zero-integration-parts}
\end{equation}
If $U_p(v)=o(v)$, fix $V$ so that $U_p(v)\leq\eta v$ for $v\geq V$.
The part over $[0,V]$ is $O(t^2)$, whereas the remaining part is at most
\[
 \eta t^2\int_V^\infty ve^{-tv}\dd v\leq\eta.
\]
Letting first $t\downarrow0$ and then $\eta\downarrow0$ proves the
assertion also for $L=0$.
\end{proof}

In particular, the positive limit in
\eqref{eq:tauberian-positive-limit} is equivalent to
\begin{equation}
 \frac{C_p(q)}{\Phi(q)}\longrightarrow c\Gamma(p).
 \label{eq:tauberian-Cp-Phi-ratio}
\end{equation}

\subsection{The fractional identity and its Mellin kernel}
\label{sec:tauberian-mellin}

The two ratio--Tauberian arguments in the paper use the same Mellin
convolution convention but different endpoint data:
\[
\begin{array}{c|c|c|c}
 &\text{Laplace limit}&\text{kernel support}&
   \text{maximal Mellin strip}\\ \hline
\text{iid weights}&q\downarrow0&(1,\infty)&(-1,\infty)\\
\text{small-time jumps}&q\to\infty&(0,1)&(-\infty,1)
\end{array}
\]
Thus only the left boundary is finite in the first case, while only the
right boundary is finite in the second.

For $s\geq1$ and $q>0$, define
\begin{equation}
 F_s(q)=\int_0^\infty
        \bar\nu(x)x^{s-1}e^{-qx}\dd x.
 \label{eq:tauberian-Fs}
\end{equation}
Tonelli's theorem, used without any assumption that $\nu$ has a
density, yields the exact identities
\begin{equation}
 \Phi(q)=qF_1(q),
 \qquad
 C_p(q)=q^pF_p(q).
 \label{eq:tauberian-tail-identities}
\end{equation}
For example,
\[
 q^pF_p(q)
 =\int_0^\infty\nu(\dd y)
   \int_0^y q^p x^{p-1}e^{-qx}\dd x,
\]
which agrees with \eqref{eq:tauberian-Cp-incomplete-gamma}.

The same method gives the Weyl fractional-integral identity
\begin{equation}
 F_1(q)=\frac1{\Gamma(p-1)}
 \int_0^\infty v^{p-2}F_p(q+v)\dd v.
 \label{eq:tauberian-weyl}
\end{equation}
Indeed,
$
 \Gamma(p-1)^{-1}\int_0^\infty v^{p-2}e^{-vx}\dd v=x^{1-p}
$,
and Tonelli applies throughout.

Suppose now that \eqref{eq:tauberian-Cp-Phi-ratio} holds.  Combining it
with \eqref{eq:tauberian-tail-identities} and
\eqref{eq:tauberian-weyl}, and then setting $v=q(u-1)$, gives
\begin{equation}
 \int_1^\infty (u-1)^{p-2}
 \frac{F_p(qu)}{F_p(q)}\dd u
 \longrightarrow\frac1{c(p-1)}.
 \label{eq:tauberian-weyl-ratio}
\end{equation}
Let
\begin{equation}
 a(q)=q^pF_p(q)=C_p(q)
 \label{eq:tauberian-a}
\end{equation}
and use the Mellin-convolution convention
\begin{equation}
 (f*_{M}K)(q)=\int_0^\infty f(q/w)K(w)\,\frac{\dd w}{w}.
 \label{eq:tauberian-mellin-convention}
\end{equation}
The change of variables $w=1/u$ turns
\eqref{eq:tauberian-weyl-ratio} into
\begin{equation}
 \frac{(a*_{M}K)(q)}{a(q)}
 \longrightarrow\frac1{c(p-1)},
 \qquad
 K(w)=w(1-w)^{p-2}\1_{(0,1)}(w).
 \label{eq:tauberian-mellin-ratio}
\end{equation}
The convolution is finite for every $q>0$: its ratio to $a(q)$ is exactly
the left-hand side of \eqref{eq:tauberian-weyl-ratio}.

\begin{lemma}[The Drasin--Shea step]
\label{lem:tauberian-drasin-shea}
If \eqref{eq:tauberian-mellin-ratio} holds for some $c>0$, then there is
a unique $\alpha\in[0,1)$ such that
\begin{equation}
 C_p\in\RV_\infty(\alpha)
 \quad\text{and}\quad
 \mathrm B(1-\alpha,p-1)=\frac1{c(p-1)}.
 \label{eq:tauberian-drasin-shea-output}
\end{equation}
Consequently,
\begin{equation}
 c=\frac{\Gamma(p-\alpha)}
         {\Gamma(p)\Gamma(1-\alpha)}.
 \label{eq:tauberian-constant-derived}
\end{equation}
\end{lemma}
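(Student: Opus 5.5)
We apply Proposition~\ref{prop:poisson-mellin-ratio} to $h=a=C_p$ with the kernel $K(w)=w(1-w)^{p-2}\1_{(0,1)}(w)$, in the same way as in the proof of Proposition~\ref{prop:breiman-tauberian}. The first step is to compute the Mellin transform:
\[
 \mathcal MK(z)=\int_0^1 w^{-z}(1-w)^{p-2}\dd w=\mathrm B(1-z,p-1).
\]
This integral converges exactly for $z<1$, because $p>1$ makes the factor at $w=1$ integrable. So the maximal interval is $(-\infty,1)$, with $c=-\infty$ and $d=1$. As $z\uparrow1$ the beta function diverges, so the divergence hypothesis at the only finite endpoint holds. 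The kernel is nonnegative, measurable and nonzero.

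Next come the hypotheses on $h=C_p$. By \eqref{eq:tauberian-Cp-incomplete-gamma}, $C_p$ is continuous and nondecreasing with $C_p(0)=0$, so it is nonnegative and locally bounded on $[0,\infty)$. Being nondecreasing gives $C_p(\lambda q)\geq C_p(q)$ for $\lambda\geq1$, so the one-sided condition holds with $\zeta=1$. The convolution is finite, as already noted after \eqref{eq:tauberian-mellin-ratio}. The ratio converges to $\gamma=1/(c(p-1))\in(0,\infty)$. Proposition~\ref{prop:poisson-mellin-ratio} then yields some $\rho<1$ with $C_p\in\RV_\infty(\rho)$ and $\mathrm B(1-\rho,p-1)=1/(c(p-1))$.

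It remains to show $\rho\geq0$. This is immediate here: $C_p$ is nondecreasing and $C_p(q)>0$ for $q>0$ (since $\nu\neq0$), and a positive nondecreasing regularly varying function cannot have negative index. Setting $\alpha=\rho\in[0,1)$ gives \eqref{eq:tauberian-drasin-shea-output}. Alternatively, one can argue from $c\leq1$ as in the iid case. The map $\rho\mapsto\mathrm B(1-\rho,p-1)$ is strictly increasing on $(-\infty,1)$, and at $\rho=0$ it equals $\mathrm B(1,p-1)=1/(p-1)$, so the identity with $c\leq1$ forces $\rho\geq0$. The same strict monotonicity gives uniqueness of $\alpha$; uniqueness also follows because a regular variation index is unique.

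Finally, we derive \eqref{eq:tauberian-constant-derived} by rewriting the beta identity:
\[
 \mathrm B(1-\alpha,p-1)=\frac{\Gamma(1-\alpha)\Gamma(p-1)}{\Gamma(p-\alpha)},
 \qquad
 c=\frac{\Gamma(p-\alpha)}{(p-1)\Gamma(p-1)\Gamma(1-\alpha)}=\frac{\Gamma(p-\alpha)}{\Gamma(p)\Gamma(1-\alpha)}.
\]
The only delicate point is checking that the cited maximal-interval Drasin--Shea form applies with $c=-\infty$. This requires the divergence condition only at the finite endpoint $d=1$, and we have verified it there.
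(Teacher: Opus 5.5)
Your proposal is correct and follows essentially the same route as the paper. You compute $\mathcal MK(z)=\mathrm B(1-z,p-1)$ on the maximal interval $(-\infty,1)$, with divergence as $z\uparrow1$, use monotonicity of $C_p$ for the one-sided condition with constant one, and apply Proposition~\ref{prop:poisson-mellin-ratio}. You then rule out $\rho<0$ by monotonicity and get uniqueness from the strict increase of the beta function in $\rho$, exactly as the paper does.

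Your alternative argument via $c\leq1$ is not needed. It is nonetheless legitimate under the lemma's own hypotheses: $a(q/w)\geq a(q)$ for $w\in(0,1)$ gives $(a*_{M}K)(q)/a(q)\geq1/(p-1)$, and hence $c\leq1$ directly, without appealing to the iid case.
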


\begin{proof}
For the convention \eqref{eq:tauberian-mellin-convention}, the Mellin
transform of the kernel is
\begin{align}
 \mathcal M K(z)
 &=\int_0^\infty w^{-z}K(w)\,\frac{\dd w}{w} \notag\\
 &=\int_0^1w^{-z}(1-w)^{p-2}\dd w
 =\mathrm B(1-z,p-1).
 \label{eq:tauberian-kernel-transform}
\end{align}
Its maximal Mellin strip is
\[
 -\infty<\Re z<1.
\]
The only finite endpoint is $1$, and
\begin{equation}
 \mathcal M K(z)\longrightarrow\infty
 \qquad(z\uparrow1).
 \label{eq:tauberian-kernel-divergence}
\end{equation}

The kernel \(K\) is nonnegative, measurable, and nonzero.  Its maximal
Mellin interval is the one just computed, with the required divergence at
its only finite endpoint.  The function \(a=C_p\) is nonnegative and
locally bounded by
\eqref{eq:tauberian-Cp-incomplete-gamma}.  It is also nondecreasing, so
for every $q>0$ and $1\leq\lambda\leq2$,
\begin{equation}
 a(\lambda q)\geq a(q).
 \label{eq:tauberian-one-sided-condition}
\end{equation}
This is the one-sided condition in
Proposition~\ref{prop:poisson-mellin-ratio}, with constant one.  The Mellin
convolution is finite and its ratio has the positive limit in
\eqref{eq:tauberian-mellin-ratio}.  The proposition therefore gives, for
some
$\rho\in(-\infty,1)$,
\[
 a\in\RV_\infty(\rho),
 \qquad
 \mathrm B(1-\rho,p-1)=\frac1{c(p-1)}.
\]

Monotonicity of $a$ rules out $\rho<0$: for $\lambda>1$,
$a(\lambda q)/a(q)\geq1$, while regular variation would make this ratio
tend to $\lambda^\rho<1$.  Hence $0\leq\rho<1$.  Set
$\alpha=\rho$.  The beta identity gives
\[
 \frac{\Gamma(1-\alpha)\Gamma(p-1)}
      {\Gamma(p-\alpha)}
 =\frac1{c(p-1)},
\]
which is \eqref{eq:tauberian-constant-derived}.  For real $z<1$,
$\mathcal M K(z)$ is strictly increasing, since differentiation under the
integral in \eqref{eq:tauberian-kernel-transform} gives an integral with
the strictly positive factor $-\log w$.  The index $\alpha$ is therefore
unique.
\end{proof}

By \eqref{eq:tauberian-Cp-Phi-ratio},
Lemma~\ref{lem:tauberian-drasin-shea} also yields
\begin{equation}
 \Phi\in\RV_\infty(\alpha),
 \qquad 0\leq\alpha<1.
 \label{eq:tauberian-Phi-rv}
\end{equation}

\subsection{Recovering the L\'evy tail}
\label{sec:tauberian-tail-recovery}

Define the integrated tail
\begin{equation}
 I(x)=\int_0^x\bar\nu(y)\dd y.
 \label{eq:tauberian-I}
\end{equation}
It is finite by the L\'evy condition, and $F_1$ is its
Laplace--Stieltjes transform.  From
\eqref{eq:tauberian-tail-identities} and
\eqref{eq:tauberian-Phi-rv},
\[
 F_1(q)=\frac{\Phi(q)}q
 \in\RV_\infty(\alpha-1).
\]
Karamata's Laplace--Stieltjes theorem at zero now gives
\begin{equation}
 I(x)\sim\frac{F_1(1/x)}{\Gamma(2-\alpha)}
 =\frac{x\Phi(1/x)}{\Gamma(2-\alpha)},
 \qquad x\downarrow0.
 \label{eq:tauberian-I-asymptotic}
\end{equation}
Thus $I\in\RV_0(1-\alpha)$.  Since $1-\alpha$ is
strictly positive and $\bar\nu$ is nonincreasing, the
monotone-density theorem applies to
$I(x)=\int_0^x\bar\nu(y)\dd y$ and yields
\begin{align}
 \bar\nu(x)
 &\sim(1-\alpha)\frac{I(x)}x \notag\\
 &\sim\frac{\Phi(1/x)}{\Gamma(1-\alpha)}
 \in\RV_0(-\alpha).
 \label{eq:tauberian-tail-recovery}
\end{align}
This proves the Tauberian direction of
Theorem~\ref{thm:tauberian-main}.  Notice that $\alpha=0$ is included
without an endpoint argument: in that case $I$ has index one, so the
monotone-density theorem is still being used at a positive index.

For the converse, suppose that
$\bar\nu\in\RV_0(-\alpha)$ for some
$0\leq\alpha<1$.  Karamata's theorem for Laplace transforms gives,
for $s=1$ and $s=p$,
\begin{equation}
 F_s(q)\sim
 \Gamma(s-\alpha)q^{-s}\bar\nu(1/q),
 \qquad q\to\infty.
 \label{eq:tauberian-Fs-abelian}
\end{equation}
Using \eqref{eq:tauberian-tail-identities}, we obtain
\begin{equation}
 \frac{C_p(q)}{\Phi(q)}
 =q^{p-1}\frac{F_p(q)}{F_1(q)}
 \longrightarrow
 \frac{\Gamma(p-\alpha)}{\Gamma(1-\alpha)}.
 \label{eq:tauberian-abelian-ratio}
\end{equation}
Lemma~\ref{lem:tauberian-pushforward} completes the Abelian direction
and proves Theorem~\ref{thm:tauberian-main}.

\subsection{The zero limit and the dust boundary}
\label{sec:tauberian-dust}

Put
\[
 Q_*(t)=\sup_iQ_i(t),
 \qquad
 A(x)=\int_{(0,x]}y\,\nu(\dd y).
\]
We now treat the case in which the expected power sum tends to zero.

\begin{proposition}[Vanishing expected power sums and pure dust]
\label{prop:tauberian-dust}
Fix $p>1$.  As $t\downarrow0$, the following are equivalent:
\begin{enumerate}
\item $m_t(p)\to0$;
\item $m_t(s)\to0$ for every $s>1$;
\item $Q_*(t)\to0$ in probability;
\item
\begin{equation}
 \frac{x\bar\nu(x)}{A(x)}\longrightarrow0
 \qquad(x\downarrow0);
 \label{eq:tauberian-maximal-jump-criterion}
\end{equation}
\item $I\in\RV_0(0)$.
\end{enumerate}
Under these conditions,
\begin{equation}
 \frac{C_p(q)}{\Phi(q)}\longrightarrow0,
 \qquad
 \Phi(q)\sim qI(1/q)\in\RV_\infty(1),
 \qquad
 \frac{q\Phi'(q)}{\Phi(q)}\longrightarrow1.
 \label{eq:tauberian-dust-phi}
\end{equation}
No regular variation of $\bar\nu$ with index $-1$ follows.
\end{proposition}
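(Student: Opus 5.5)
The plan is to prove the cycle (1)$\Leftrightarrow$(2)$\Leftrightarrow$(3) abstractly, then (3)$\Leftrightarrow$(4)$\Leftrightarrow$(5) analytically, and finally read off \eqref{eq:tauberian-dust-phi}.

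The first equivalences come from Theorem~\ref{thm:countable-dust}. The normalized jumps $Q_i(t)$ have total mass one almost surely under infinite activity, so the inequalities \eqref{eq:countable-max-power} give the following. If $m_t(p)\to0$ for one $p>1$, then $Q_*(t)\to0$ in probability. Conversely, $Q_*(t)\to0$ forces $m_t(s)\to0$ for every $s>1$ by bounded convergence. This gives (1)$\Rightarrow$(3)$\Rightarrow$(2)$\Rightarrow$(1) directly. The equivalence (4)$\Leftrightarrow$(5) is Lemma~\ref{lem:poisson-integrated-tail} at the zero endpoint under infinite activity.

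The remaining link is (1)$\Leftrightarrow$(5). I would route it through Lemma~\ref{lem:tauberian-pushforward} with $L=0$, so that (1) becomes $C_p(q)/\Phi(q)\to0$ as $q\to\infty$. By \eqref{eq:tauberian-tail-identities} this is $q^{p-1}F_p(q)/F_1(q)\to0$. Two Abelian estimates are needed.

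\emph{(5)$\Rightarrow$(1).} If $I\in\RV_0(0)$, Karamata's Laplace--Stieltjes theorem gives $F_1(q)\sim q^{-1}I(1/q)$, that is, $\Phi(q)\sim qI(1/q)$. For the numerator, write $F_p(q)=\int x^{p-1}e^{-qx}\,\dd I(x)$. Bounding $x^{p-1}\leq\varepsilon^{p-1}q^{1-p}$ on $x\leq\varepsilon/q$, and using slow variation with Potter bounds on the rest, I expect to obtain $q^{p-1}F_p(q)=o(F_1(q))$. Equivalently, the Laplace transform of $x^{p-1}\dd I$ is of order $q^{1-p}$ times the increment of $I$ over scales comparable to $1/q$, and that increment is $o(I(1/q))$.

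\emph{(1)$\Rightarrow$(5).} Assume $U_p(v)=o(v)$, i.e.\ $q^{p-1}F_p(q)=o(F_1(q))$. The Weyl identity \eqref{eq:tauberian-weyl} gives
\[
 F_1(q)\geq\frac1{\Gamma(p-1)}\int_0^{q}v^{p-2}F_p(q+v)\,\dd v
 \gtrsim q^{p-1}F_p(2q).
\]
This alone is not contradictory, so instead I would use the differential form. The quantity $1-e(q)$ is controlled by $C_p/\Phi$; concretely, for $p=2$ one has $q^2A_2(q)=-q^2\Phi''(q)$, which compares to $\Phi(q)-q\Phi'(q)$. So I would show $e(q)\to1$, hence $\Phi\in\RV_\infty(1)$, hence $F_1\in\RV_\infty(0)$, and then $I\in\RV_0(0)$ by Karamata. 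The step I expect to be hardest is passing from general $p$ to $e(q)\to1$. I would first reduce to $p=2$ using (1)$\Leftrightarrow$(2), already established. Then I would show that $1-e(q)=o(1)$ follows from $C_2/\Phi\to0$: by integration by parts, $\Phi(q)-q\Phi'(q)=\int_0^q s\,(-\Phi''(s))\,\dd s\leq C_2(q)/q\cdot$(constant)$\cdot$. This gives
\[
 1-e(q)\leq \frac{C_2(q)}{\Phi(q)}\to0,
\]
up to checking the exact form of $C_2$. Alternatively, the quadratic identity of Lemma~\ref{lem:poisson-quadratic} combined with Lemma~\ref{lem:poisson-slope-dynamics} gives this relation in the averaged sense.

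Finally, \eqref{eq:tauberian-dust-phi} collects these facts: $C_p/\Phi\to0$ is the push-forward form of (1), $\Phi(q)\sim qI(1/q)$ is the Karamata step, and $e(q)\to1$ comes from the bound above. That regular variation of $\bar\nu$ with index $-1$ does not follow is only asserted here and would be deferred to the example constructed later, with a log-oscillating monotone tail.
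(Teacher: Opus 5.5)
Your proof is correct, but it bridges the two blocks of conditions differently from the paper.

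\textbf{The paper's route.} It closes (1)$\Leftrightarrow$(2)$\Leftrightarrow$(3) by the same max-power bounds you use, and (4)$\Leftrightarrow$(5) by Lemma~\ref{lem:poisson-integrated-tail}. It then links the probabilistic and analytic blocks by quoting the Kevei--Mason maximal-jump theorem (case $k=0$) for (3)$\Leftrightarrow$(4). Only afterwards does it derive \eqref{eq:tauberian-dust-phi}, obtaining $e(q)\to1$ from the monotone density theorem applied to the concave function $\Phi\in\RV_\infty(1)$.

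\textbf{Your route.} You prove (2)$\Leftrightarrow$(5) internally, through Lemma~\ref{lem:tauberian-pushforward} at $L=0$ and Karamata's theorem. The step you hedge on is in fact an exact identity. Integrate by parts; the boundary term at zero vanishes because $0<s\Phi'(s)<\Phi(s)\to0$. This gives
\[
 C_2(q)=\int_0^q sA_2(s)\,\dd s=-\int_0^q s\Phi''(s)\,\dd s=\Phi(q)-q\Phi'(q),
\]
so $1-e(q)=C_2(q)/\Phi(q)$ for every $q>0$. Your intermediate bound involving $C_2(q)/q$ should be replaced by this identity.

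Consequently the following are all equivalent: $m_t(2)\to0$, $C_2/\Phi\to0$, $e(q)\to1$, $\Phi\in\RV_\infty(1)$, and $I\in\RV_0(0)$. The forward implications are yours; the Tauberian step to $I$ is valid at index zero because $I$ is nondecreasing. The reverse implications follow from three facts: Karamata's Abelian theorem gives $\Phi(q)\sim qI(1/q)$; concavity of $\Phi$ with the monotone density theorem gives $e\to1$; and the identity above gives $C_2/\Phi\to0$.

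This makes your Potter-bound estimate for (5)$\Rightarrow$(1) unnecessary. The estimate is valid, but the chain through $p=2$ and (3) already yields (1) for every $p>1$.

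\textbf{Comparison.} Your route buys self-containment. It reproves the small-time, $k=0$ maximal-jump criterion from the paper's own Poisson calculus. It also shows that $q\Phi'(q)/\Phi(q)\to1$ is itself equivalent to the dust conditions, rather than merely a consequence of them. The paper's route is shorter, at the price of importing that probabilistic criterion from outside the paper.
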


\begin{proof}
For every probability mass partition and every $s>1$,
\begin{equation}
 Q_*^s\leq\sum_iQ_i^s\leq Q_*^{s-1}.
 \label{eq:tauberian-max-power-bounds}
\end{equation}
If the expected $p$-power sum in assertion \textup{(1)} tends to zero, its
lower bound and Markov's inequality give $Q_*(t)\to0$ in probability.
Conversely, the upper bound then tends to zero in probability for every
$s>1$ and is
bounded by one, so it tends to zero in mean.  This proves the equivalence
of the first three assertions.  Moreover, on $\{V_t>0\}$,
\[
 \frac{V_t}{J_{(1)}(t)}=\frac1{Q_*(t)}.
\]
Thus the equivalence of the third and fourth is the $k=0$ case of the
maximal-jump criterion in \cite[Theorem~1.1(iii)]{KM14}.

The fourth and fifth assertions are equivalent by
Lemma~\ref{lem:poisson-integrated-tail}, whose proof treats scale factors
both above and below one and uses the identity
$I(x)=A(x)+x\bar\nu(x)$.

Lemma~\ref{lem:tauberian-pushforward}, in its zero-limit form, gives the
first conclusion in \eqref{eq:tauberian-dust-phi}.  Since $F_1$ is the
Laplace--Stieltjes transform of the slowly varying function $I$,
Karamata's theorem and \eqref{eq:tauberian-tail-identities} yield
\[
 \Phi(q)=qF_1(q)\sim qI(1/q).
\]
Finally, $\Phi$ is concave and regularly varying with the positive index
one.  The monotone-density theorem applied to $\Phi$ therefore gives
$q\Phi'(q)/\Phi(q)\to1$.  This last use of monotone density is at index
one; no density conclusion is drawn from the index-zero behavior of
$I$.
\end{proof}

\begin{remark}[Why subsequences are excluded]
\label{rem:tauberian-endpoint-dichotomy}
The push-forward equivalence uses ordinary Karamata theory and therefore
requires convergence as $t\to\star$, not merely along a subsequence.
\end{remark}

\section{The large-time regime}
\label{sec:largetime}

At infinity, the finite-mean case gives a constant limit.  When the mean
is infinite, bounded jumps are negligible and the remaining
compound-Poisson ratio reduces to the iid problem by depoissonization.

Recall that $V$ is a nonzero, unkilled, driftless subordinator with L\'evy
measure $\nu$, and that
\[
 U_t=\sum_{0<s\leq t}X_s\Delta V_s,
 \qquad
 V_t=\sum_{0<s\leq t}\Delta V_s,
 \qquad
 R_t=\frac{U_t}{V_t}
\]
on $\{V_t>0\}$.  The marks are iid, independent of $V$, nonconstant, and
satisfy $\E|X|<\infty$; we write $\mu=\E X$.  Any fixed convention for
$R_t$ on $\{V_t=0\}$ is immaterial as $t\to\infty$, since this event has
probability tending to zero.

Split the marked and unmarked processes at jump size one:
\[
 V_t=V_t^{\leq1}+V_t^{>1},
 \qquad
 U_t=U_t^{\leq1}+U_t^{>1}.
\]

\begin{lemma}[The finite-mean case]
\label{lem:largetime-finite-mean}
If
\[
 \int_1^\infty x\,\nu(\dd x)<\infty,
\]
then
\[
 R_t\longrightarrow\mu
 \qquad\text{almost surely as }t\to\infty.
\]
\end{lemma}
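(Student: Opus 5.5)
The plan is to apply the strong law of large numbers separately to the numerator and the denominator, using the Lévy--Itô structure of the marked jump process. Under the finite-mean hypothesis, together with the Lévy integrability condition $\int_0^1x\,\nu(\dd x)<\infty$ (driftless subordinator), we have $m:=\int_0^\infty x\,\nu(\dd x)<\infty$. Since $V$ is nonzero, $m>0$. The pair $(V_t,U_t)$ is a two-dimensional Lévy process: it is the Poisson integral of $(x,xz)$ against the marked Poisson random measure with intensity $\nu(\dd x)\,F(\dd z)$, where $F$ is the law of $X$. Its first coordinate has mean $tm$. Its second coordinate has mean $t\mu m$, and it is absolutely integrable because $\int\!\!\int |xz|\,\nu(\dd x)F(\dd z)=m\,\E|X|<\infty$. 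This integrability also ensures that the Poisson integral defining $U_t$ converges absolutely, consistent with the series representation used earlier.

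First, along integer times, the increments $(V_n-V_{n-1},U_n-U_{n-1})$ are iid and integrable. Kolmogorov's strong law then gives $V_n/n\to m$ and $U_n/n\to\mu m$ almost surely.

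Second, to pass to continuous $t$, I will control the fluctuations on $[n,n+1]$. For $V$ this is immediate from monotonicity: $V_{\lfloor t\rfloor}\le V_t\le V_{\lceil t\rceil}$. For $U$, which need not be monotone, I will use the dominating nondecreasing process $U^{|\cdot|}_t=\sum_{s\le t}|X_s|\Delta V_s$. This process is also a Lévy process with finite mean $t\,m\E|X|$, so by the same strong law $U^{|\cdot|}_n/n\to m\E|X|$ almost surely. Consequently
\[
 \sup_{n\le t\le n+1}|U_t-U_n|\le U^{|\cdot|}_{n+1}-U^{|\cdot|}_n=o(n)
\]
almost surely, since the difference of two sequences with the same linear asymptotics is $o(n)$.

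Combining these bounds gives $V_t/t\to m>0$ and $U_t/t\to\mu m$ almost surely. In particular $V_t>0$ for all large $t$, so the convention on $\{V_t=0\}$ is eventually irrelevant, and $R_t=U_t/V_t\to\mu$ almost surely.

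The only mildly delicate step is the interpolation between integer times for the non-monotone process $U$. The absolute-value majorant process resolves it cleanly, so I expect no real obstacle. The split at jump size one, $V=V^{\le1}+V^{>1}$, is not needed for this lemma.
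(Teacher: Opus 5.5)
Your proof is correct and follows the paper's route: the paper also applies the strong law for integrable L\'evy processes to get $V_t/t\to m_V$ and $U_t/t\to\mu m_V$ almost surely and then takes the ratio. Your integer-time strong law, combined with monotonicity of $V$ and the nondecreasing majorant $\sum_{s\le t}|X_s|\Delta V_s$ for the interpolation of $U$, just writes out the continuous-time strong law that the paper cites.
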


\begin{proof}
The L\'evy condition already gives
$\int_{(0,1]}x\,\nu(\dd x)<\infty$.  Thus
\[
 m_V:=\int_0^\infty x\,\nu(\dd x)\in(0,\infty).
\]
Moreover, the marked process has integrable total variation on bounded time
intervals because
\[
 \E|X|\int_0^\infty x\,\nu(\dd x)<\infty.
\]
The strong law for integrable L\'evy processes therefore gives
\[
 \frac{V_t}{t}\longrightarrow m_V,
 \qquad
 \frac{U_t}{t}\longrightarrow\mu m_V
 \qquad\text{almost surely},
\]
and the assertion follows by taking the ratio.
\end{proof}

Suppose henceforth that
\begin{equation}
 \int_1^\infty x\,\nu(\dd x)=\infty.
 \label{eq:largetime-infinite-mean}
\end{equation}
Set $\lambda=\bar\nu(1)$.  Then $0<\lambda<\infty$, and the jumps
larger than one form a compound Poisson process.  If $N=(N_t)$ is a
rate-$\lambda$ Poisson process and $Y_1,Y_2,\ldots$ are iid with law
\begin{equation}
 \Pp(Y\in\dd y)
 =\lambda^{-1}\1_{(1,\infty)}(y)\nu(\dd y),
 \label{eq:largetime-jump-law}
\end{equation}
with $N$, $(Y_j)$ and the mark sequence $(X_j)$ mutually independent, then
\begin{equation}
 (U_t^{>1},V_t^{>1})
 \overset{d}{=}
 \left(\sum_{j\leq N_t}X_jY_j,
       \sum_{j\leq N_t}Y_j\right).
 \label{eq:largetime-compound-representation}
\end{equation}

\begin{lemma}[Removal of bounded jumps]
\label{lem:largetime-bounded-removal}
Under \eqref{eq:largetime-infinite-mean},
\[
 \frac{V_t^{\leq1}}{V_t^{>1}}\longrightarrow0,
 \qquad
 \frac{U_t^{\leq1}}{V_t^{>1}}\longrightarrow0
 \qquad\text{in probability}.
\]
Consequently,
\begin{equation}
 \frac{U_t}{V_t}-\frac{U_t^{>1}}{V_t^{>1}}
 \longrightarrow0
 \qquad\text{in probability},
 \label{eq:largetime-ratio-reduction}
\end{equation}
where the second ratio may be defined arbitrarily on
$\{V_t^{>1}=0\}$.
\end{lemma}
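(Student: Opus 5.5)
We first prove $V_t^{\leq1}/V_t^{>1}\to0$ in probability, then deduce the bound for $U_t^{\leq1}$, and finally obtain \eqref{eq:largetime-ratio-reduction} from an algebraic identity.

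For the numerator, the small-jump part has finite mean $\E V_t^{\leq1}=tA(1)$, where $A(1)=\int_{(0,1]}x\,\nu(\dd x)<\infty$. Markov's inequality therefore gives $V_t^{\leq1}=O_{\Pp}(t)$. The same bound holds for $U_t^{\leq1}$, since $\E|U_t^{\leq1}|\leq\E|X|\,tA(1)$ by conditioning on the jumps.

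It remains to show $V_t^{>1}/t\to\infty$ in probability. By \eqref{eq:largetime-compound-representation}, $V_t^{>1}$ is equal in law to $\sum_{j\leq N_t}Y_j$ with $\E Y=\infty$, by \eqref{eq:largetime-infinite-mean}. Fix $K>0$ and truncate to $Y_j\wedge K$. The truncated compound-Poisson sum $\sum_{j\leq N_t}(Y_j\wedge K)$ is a compound-Poisson process with finite mean $\lambda\E(Y\wedge K)$ per unit time. The strong law, or Chebyshev's inequality since the truncated jumps are bounded, gives $t^{-1}\sum_{j\leq N_t}(Y_j\wedge K)\to\lambda\E(Y\wedge K)$ almost surely. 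Hence $\liminf_t V_t^{>1}/t\geq\lambda\E(Y\wedge K)$ almost surely. Letting $K\to\infty$ and using monotone convergence, $\E(Y\wedge K)\uparrow\E Y=\infty$, so $V_t^{>1}/t\to\infty$ almost surely for the representing process. The convergence in probability we need depends only on one-dimensional laws, so it transfers. Combining with the numerator bounds gives both displayed limits. The event $\{V_t^{>1}=0\}$ has probability $e^{-\lambda t}\to0$ and can be discarded.

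For the ratio, work on $\{V_t^{>1}>0\}$. Write $\varepsilon_t=V_t^{\leq1}/V_t^{>1}$, $\eta_t=U_t^{\leq1}/V_t^{>1}$ and $r_t=U_t^{>1}/V_t^{>1}$. Then
\[
 \frac{U_t}{V_t}-r_t=\frac{\eta_t+r_t}{1+\varepsilon_t}-r_t
 =\frac{\eta_t-\varepsilon_t r_t}{1+\varepsilon_t}.
\]
It suffices that $r_t$ is tight. Conditionally on the large jumps, $r_t$ is a weighted mean of iid marks with subprobability weights, so $\E|r_t|\leq\E|X|$ and Markov's inequality gives tightness. Then $\varepsilon_tr_t\to0$ and $\eta_t\to0$ in probability, which proves \eqref{eq:largetime-ratio-reduction}.

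The only genuinely asymptotic step is the truncation argument showing that infinite mean forces superlinear growth of $V_t^{>1}$. Everything else is first-moment bookkeeping.
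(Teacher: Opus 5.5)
Your proof is correct and follows the paper's argument essentially step for step. Both use $O_{\Pp}(t)$ bounds for the small-jump parts, superlinear growth of $V_t^{>1}$ obtained by truncating at $Y\wedge K$ and applying the strong law, tightness of $U_t^{>1}/V_t^{>1}$ from the conditional bound $\E|X|$, and the same algebraic estimate $|\eta_t|+\varepsilon_t|r_t|$. Truncating the compound-Poisson sum directly, instead of combining the iid strong law with $N_t/t\to\lambda$, is only cosmetic. The Chebyshev alternative you mention gives convergence in probability rather than almost sure convergence, but that is still enough.
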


\begin{proof}
The components generated by jumps of $V$ of size at most one have finite absolute first moments, so
\[
 V_t^{\leq1}=O_{\Pp}(t),
 \qquad
 U_t^{\leq1}=O_{\Pp}(t).
\]
On the other hand, the variable $Y$ in
\eqref{eq:largetime-jump-law} has infinite mean.  For nonnegative iid
variables of infinite mean,
\[
 \frac1n\sum_{j=1}^nY_j\longrightarrow\infty
 \qquad\text{almost surely};
\]
this follows by applying the strong law to $Y\wedge K$ and then letting
$K\to\infty$.  Since $N_t/t\to\lambda$ almost surely, it follows that
\[
 \frac{V_t^{>1}}{t}\longrightarrow\infty
 \qquad\text{almost surely}.
\]
This proves the first two assertions.

The compound-Poisson marked ratio is tight.  Indeed, conditionally on the
positive weights induced by the $Y_j$'s,
\[
 \E\left(
   \left|\frac{U_t^{>1}}{V_t^{>1}}\right|
   \mathrel{\Big|}(Y_j),N_t
 \right)\leq \E|X|
\]
on $\{N_t>0\}$.  Writing $A=U_t^{>1}$, $B=V_t^{>1}$,
$a=U_t^{\leq1}$, and $b=V_t^{\leq1}$, we have on $\{B>0\}$
\[
 \left|\frac{A+a}{B+b}-\frac AB\right|
 \leq \frac{|a|}{B}+\left|\frac AB\right|\frac bB.
\]
Both terms tend to zero in probability, while
$\Pp\{B=0\}=e^{-\lambda t}\to0$.
\end{proof}

The transfer between deterministic and Poisson sample sizes, requiring no
moment of the jump size $Y$, is Lemma~\ref{lem:breiman-depoissonization}.
We record its estimate explicitly when it is used below.

\begin{proposition}[The nondegenerate large-time branch]
\label{prop:largetime-nondegenerate}
The ratio $R_t$ converges in distribution to a nondegenerate limit as
$t\to\infty$ if and only if, for a unique $\alpha\in[0,1)$,
\begin{equation}
 \bar\nu\in\RV_\infty(-\alpha).
 \label{eq:largetime-regular-tail}
\end{equation}
In that case the limit has the law
\begin{equation}
 W_\alpha\overset{d}{=}
 \begin{cases}
  \displaystyle\sum_{i\geq1}Q_iX_i,
  &(Q_i)^\downarrow\sim\PD(\alpha,0),\quad 0<\alpha<1,\\[2mm]
  X,&\alpha=0,
 \end{cases}
 \label{eq:largetime-limit-law}
\end{equation}
where the weights and marks on the right are independent.
\end{proposition}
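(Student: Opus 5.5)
The plan is to reduce to the iid setting of Theorem~\ref{thm:breiman} in both directions, using the compound-Poisson representation of the large jumps.

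First, the finite-mean case is excluded on both sides. If $\int_1^\infty x\,\nu(\dd x)<\infty$, Lemma~\ref{lem:largetime-finite-mean} gives $R_t\to\mu$, so the limit is degenerate. Conversely, regular variation of $\bar\nu$ with index $-\alpha$, $\alpha\in[0,1)$, forces $\int_1^\infty x\,\nu(\dd x)=\infty$, because $x\bar\nu(x)\to\infty$. Hence we may assume \eqref{eq:largetime-infinite-mean} throughout.

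Lemma~\ref{lem:largetime-bounded-removal} then shows that $R_t$ and the compound-Poisson ratio $U_t^{>1}/V_t^{>1}$ have the same weak limits, if any; nondegeneracy is preserved. By \eqref{eq:largetime-compound-representation}, the latter ratio equals in law $T_{N_t}$, where
\[
 T_n=\frac{\sum_{j\leq n}X_jY_j}{\sum_{j\leq n}Y_j},
\]
$N_t$ is Poisson with mean $\lambda t$, and $Y$ has law \eqref{eq:largetime-jump-law}. Since $Y>1$, no zero-sum issue arises. Lemma~\ref{lem:breiman-depoissonization}, applied after the time change $s=\lambda t$, shows that $T_{N_t}$ converges weakly as $t\to\infty$ if and only if $T_n$ converges along the full integer sequence, with the same limit. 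Theorem~\ref{thm:breiman} now says this limit exists and is nondegenerate if and only if $\bar G\in\RV_\infty(-\alpha)$ for a unique $\alpha\in[0,1)$, where $\bar G$ is the tail of $Y$. In that case the limit is $W_\alpha$ as in \eqref{eq:breiman-limit-law}, which matches \eqref{eq:largetime-limit-law}. The marks are nonconstant by standing assumption.

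It remains to identify the tails. For $x\geq1$ we have $\bar G(x)=\bar\nu(x)/\lambda$, so regular variation at infinity of $\bar G$ and of $\bar\nu$ are equivalent, with the same index. This also transfers uniqueness of $\alpha$.

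The main obstacle is not conceptual but bookkeeping. First, the depoissonization must hold along continuous $t$ with Poisson mean $\lambda t$. Second, the convention on $\{V_t^{>1}=0\}$, an event of probability $e^{-\lambda t}$, must be shown harmless. Both follow from the $L^1$ bound \eqref{eq:breiman-depoisson-bound} together with the vanishing probability of the empty event.
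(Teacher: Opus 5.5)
Your proposal is correct and follows the paper's proof essentially step for step. You exclude the finite-mean case via Lemma~\ref{lem:largetime-finite-mean}, strip the bounded jumps with Lemma~\ref{lem:largetime-bounded-removal}, pass between the compound-Poisson ratio and $T_n$ by Lemma~\ref{lem:breiman-depoissonization}, apply Theorem~\ref{thm:breiman}, and identify the tails through $\bar G(x)=\bar\nu(x)/\lambda$ for $x\geq1$. The only cosmetic difference is that you use the depoissonization lemma, after the time change $s=\lambda t$, in both directions, whereas the paper samples at $t_n=n/\lambda$ for necessity and uses a random-index argument for sufficiency.
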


\begin{proof}
Lemma~\ref{lem:largetime-finite-mean} excludes a nondegenerate limit when
$\int_1^\infty x\,\nu(\dd x)<\infty$.  Under
\eqref{eq:largetime-infinite-mean}, Lemma
\ref{lem:largetime-bounded-removal} reduces the question to the ratio in
\eqref{eq:largetime-compound-representation}.  For the remainder of the
proof, set $U_t^{>1}/V_t^{>1}=0$ on $\{N_t=0\}$, in agreement with the
convention $T_0=0$.  At the times $t_n=n/\lambda$, this ratio then has
exactly the law of $T_{\Pi_n}$, where $\Pi_n$ is Poisson with mean $n$ and
is independent of the iid sequences defining $T_m$.  In the coupling of
Lemma~\ref{lem:breiman-depoissonization},
\begin{equation}
 \E|T_{\Pi_n}-T_n|
 \leq \frac{2\E|X|}{n}\E|\Pi_n-n|
 =O(n^{-1/2}).
 \label{eq:largetime-depoissonization-rate}
\end{equation}

If the compound-Poisson ratio converges as $t\to\infty$,
Lemma~\ref{lem:breiman-depoissonization} transfers its convergence to
$T_n$.  The iid converse in Theorem~\ref{thm:breiman} therefore gives
\[
 \Pp(Y>x)\in\RV_\infty(-\alpha)
 \quad\text{for a unique }\alpha\in[0,1).
\]
Since $\Pp(Y>x)=\bar\nu(x)/\lambda$ for $x>1$, this is
\eqref{eq:largetime-regular-tail}.

Conversely, suppose that \eqref{eq:largetime-regular-tail} holds.  Since
$-\alpha>-1$, Karamata's theorem gives
\[
 \int_1^x\bar\nu(y)\dd y
 \sim \frac{x\bar\nu(x)}{1-\alpha}
 \longrightarrow\infty.
\]
Tonelli's theorem therefore implies
\[
 \int_{(1,\infty)}y\,\nu(\dd y)
 =\bar\nu(1)+\int_1^\infty\bar\nu(y)\dd y
 =\infty,
\]
so \eqref{eq:largetime-infinite-mean} holds.  Theorem~\ref{thm:breiman}
gives $T_n\Rightarrow W_\alpha$.  The standard random-index argument yields
$T_{N_t}\Rightarrow W_\alpha$, since $N_t\to\infty$ in probability, and
Lemma~\ref{lem:largetime-bounded-removal} restores the bounded jumps.  The
limit law is the one in \eqref{eq:largetime-limit-law}.
\end{proof}

\section{Classification of convergent endpoint limits}
\label{sec:classification}

If a limit exists at either endpoint, the nondegenerate branch forces regular
variation, whereas the constant branch is exactly disappearance of the largest normalized
jump.  We begin with the latter equivalence under the first moment
alone.

List the jumps on $(0,t]$ in nonincreasing order as
$J_{(1)}(t),J_{(2)}(t),\ldots$ and, on $\{V_t>0\}$, put
\[
 Q_i(t)=\frac{J_{(i)}(t)}{V_t},
 \qquad Q_*(t)=Q_1(t).
\]
On $\{V_t=0\}$, retain the global convention $Q^\downarrow(t)=0$ and put
$Q_*(t)=0$.  For the sole purpose of applying the mass-one constant-limit
theorem at infinity, define an auxiliary partition
$\widetilde Q^\downarrow(t)$ by replacing the empty configuration with
$e_1$, and attach a fresh independent mark there.  The resulting sum
$\widetilde R_t$ and maximum $\widetilde Q_*(t)$ differ from $R_t$ and
$Q_*(t)$ only on $\{V_t=0\}$, whose probability tends to zero.  Thus they
have the same endpoint convergence statements.  At zero the empty event is
null under infinite activity.

\subsection{Constant limits and the largest weight}

Theorem~\ref{thm:countable-dust} applies to the marked ranked jumps at zero
and to the auxiliary partition at infinity.  The preceding asymptotic
equivalence transfers its conclusion to the original variables.  In
particular,
\begin{equation}
 R_t\Rightarrow\text{a constant}
 \quad\Longleftrightarrow\quad
 Q_*(t)\longrightarrow0\quad\text{in probability},
 \label{eq:classification-constant-iff-max}
\end{equation}
and the constant is $\mu=\E X$.  The same theorem gives, for any $p>1$,
\begin{equation}
 Q_*(t)\to0\text{ in probability}
 \quad\Longleftrightarrow\quad
 \E\sum_iQ_i(t)^p\to0.
 \label{eq:classification-max-power-equivalence}
\end{equation}
At small time, Corollary~\ref{cor:smalltime-positive-power} and
Theorem~\ref{thm:tauberian-main} turn a positive power-sum limit into the
regular-variation branch;
a zero limit detects dust at either endpoint.

\subsection{The analytic form of the dust condition}

Define
\begin{equation}
 A(x)=\int_{(0,x]}y\,\nu(\dd y),
 \qquad
 I(x)=\int_0^x\bar\nu(y)\dd y.
 \label{eq:classification-A-I}
\end{equation}
Kevei and Mason's maximal-jump theorem
\cite[Theorem~1.1(iii), $k=0$]{KM14} gives, at zero and at infinity,
\begin{equation}
 Q_*(t)\longrightarrow0\text{ in probability}
 \quad\Longleftrightarrow\quad
 \frac{x\bar\nu(x)}{A(x)}\longrightarrow0,
 \label{eq:classification-maximal-jump-criterion}
\end{equation}
with infinite activity imposed at zero.

By Lemma~\ref{lem:poisson-integrated-tail}, the condition in
\eqref{eq:classification-maximal-jump-criterion} is equivalent to
\begin{equation}
 I\in\RV_\star(0).
 \label{eq:classification-integrated-tail-equivalence}
\end{equation}
Here it is the integrated tail $I$, not $\bar\nu$, that is slowly varying.
This does not imply
$\bar\nu\in\RV_\star(-1)$: the monotone-density theorem cannot be
used in that way at an index-zero primitive.

\subsection{The classification theorem}

Write $t\to\star$ for either $t\downarrow0$ or $t\to\infty$, and let
$x\to\star$ denote the corresponding endpoint for jump sizes.  At zero we
assume $\bar\nu(0+)=\infty$, so that $V_t>0$ almost surely for every
$t>0$.  Convergence below is along the specified endpoint, rather than only
along a subsequence.

\begin{theorem}[Classification of convergent endpoint limits]
\label{thm:classification-endpoints}
Let $V$ be a nonzero, unkilled, driftless subordinator, and let the iid marks
be independent of $V$, nonconstant, and integrable.  Fix either endpoint,
with infinite activity at zero.  Then $R_t$ converges in distribution at
that endpoint if and only if exactly one of the following alternatives
holds.
\begin{enumerate}
\item[\textup{(ND)}] For a unique $\alpha\in[0,1)$,
\begin{equation}
 \bar\nu\in\RV_\star(-\alpha).
 \label{eq:classification-regular-branch}
\end{equation}
The limit is nondegenerate and has the law
\begin{equation}
 R_t\Rightarrow
 \begin{cases}
 \displaystyle\sum_{i\geq1}Q_iX_i,
 &(Q_i)^\downarrow\sim\PD(\alpha,0),\quad0<\alpha<1,\\[2mm]
 X,&\alpha=0,
 \end{cases}
 \label{eq:classification-regular-limit}
\end{equation}
where the weights and marks are independent.
\item[\textup{(D)}] The maximal-jump condition holds:
\begin{equation}
 \frac{x\bar\nu(x)}{A(x)}\longrightarrow0.
 \label{eq:classification-dust-condition}
\end{equation}
Then
\[
 R_t\longrightarrow\mu
 \qquad\text{in probability}.
\]
\end{enumerate}
In alternative \textup{(D)}, \eqref{eq:classification-dust-condition} is
equivalent to each
of
\begin{align}
 Q_*(t)&\longrightarrow0&&\text{in probability},
 \label{eq:classification-dust-max}\\
 \E\sum_iQ_i(t)^p&\longrightarrow0
 &&\text{for one, equivalently every, }p>1,
 \label{eq:classification-dust-powers}\\
 I&\in\RV_\star(0).
 \label{eq:classification-dust-I}
\end{align}
\end{theorem}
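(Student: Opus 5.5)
The plan is to split according to whether the limit of $R_t$ is nondegenerate or constant, and to handle the two endpoints in parallel using the results already established. For sufficiency, assume first (ND). At infinity, Proposition~\ref{prop:largetime-nondegenerate} gives directly the nondegenerate limit \eqref{eq:classification-regular-limit}. At zero, Proposition~\ref{prop:measure:ranked-jumps} gives $Q^\downarrow(t)\Rightarrow Q^{(\alpha)}$ in $\ell^1$. The ranked jumps are independent of the iid marks after tie-breaking by auxiliary continuous variables, exactly as in the proof of Theorem~\ref{thm:breiman}, so the permuted marks remain iid. Proposition~\ref{prop:measure:marking-dust}, with base law $\Law(X)$ and the identity test function, then yields the stated limit law, and Lemma~\ref{lem:measure:marked-nondegenerate} shows that this limit is nondegenerate. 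Now assume (D). Then \eqref{eq:classification-maximal-jump-criterion} gives $Q_*(t)\to0$ in probability, and \eqref{eq:classification-constant-iff-max}, which comes from Theorem~\ref{thm:countable-dust} applied to the ranked jumps at zero or to the auxiliary partition $\widetilde Q^\downarrow$ at infinity, gives $R_t\to\mu$ in probability.

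For necessity, suppose $R_t$ converges in distribution. If the limit is constant, \eqref{eq:classification-constant-iff-max} gives $Q_*(t)\to0$, and \eqref{eq:classification-maximal-jump-criterion} converts this into \eqref{eq:classification-dust-condition}. If the limit is nondegenerate and the endpoint is infinity, Proposition~\ref{prop:largetime-nondegenerate} gives \eqref{eq:classification-regular-branch}. At zero, Corollary~\ref{cor:smalltime-positive-power} supplies $p\in(1,2]$ and $c>0$ with $m_t(p)\to c$. Theorem~\ref{thm:tauberian-main} then gives $\bar\nu\in\RV_0(-\alpha)$ for a unique $\alpha\in[0,1)$. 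This ordering matters: the fractional bound of Theorem~\ref{thm:smalltime-admissibility} feeds the countable power-sum principle, which then feeds the Tauberian step.

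It remains to show that the alternatives are mutually exclusive, that $\alpha$ is unique, and that the (D)-equivalences hold. Exclusivity follows because (ND) forces a nondegenerate limit while (D) forces a constant one, so both cannot hold when the limit exists. Uniqueness of $\alpha$ follows because regular variation indices of the same function are unique. The equivalences are \eqref{eq:classification-maximal-jump-criterion}, \eqref{eq:classification-max-power-equivalence} (from Theorem~\ref{thm:countable-dust}) and Lemma~\ref{lem:poisson-integrated-tail}.

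The step needing the most care is the bookkeeping at infinity, including the empty-configuration convention. The event $\{V_t=0\}$ has vanishing probability, so the statements for $R_t$ and for $\widetilde R_t$ agree in the limit. One must also check that the finite-mean case falls correctly into (D). It does: Lemma~\ref{lem:largetime-finite-mean} gives a constant limit, and the necessity argument above then places it in (D), with no tail condition on $\bar\nu$ required. All the deep inputs have been established earlier, so the proof is assembly.
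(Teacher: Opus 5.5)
Your proposal is correct and uses the same ingredients as the paper. These are Corollary~\ref{cor:smalltime-positive-power} with Theorem~\ref{thm:tauberian-main} at zero, Proposition~\ref{prop:largetime-nondegenerate} at infinity, and Theorem~\ref{thm:countable-dust} with the Kevei--Mason maximal-jump criterion and Lemma~\ref{lem:poisson-integrated-tail} for the constant branch. The differences are minor and both variants are valid: you prove disjointness of (ND) and (D) by uniqueness of weak limits (nondegenerate versus constant), whereas the paper compares the indices $1-\alpha>0$ and $0$ of $I$ via Karamata, and at infinity you read the (ND) limit law off Proposition~\ref{prop:largetime-nondegenerate} rather than from ranked jumps plus independent marking.
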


The case $\alpha=0$ is a one-big-jump regime governed by slow variation of
$\bar\nu$; alternative~\textup{(D)} is an asymptotic-dust regime governed by
slow variation of the integrated tail $I$.

\begin{proof}
Suppose first that $R_t$ has a limit.  The limit is either nondegenerate or
constant.  In the nondegenerate case,
Corollary~\ref{cor:smalltime-positive-power} and
Theorem~\ref{thm:tauberian-main} give
$\bar\nu\in\RV_0(-\alpha)$ for a unique
$\alpha\in[0,1)$ at zero.  At infinity the same conclusion is
Proposition~\ref{prop:largetime-nondegenerate}.  Proposition
\ref{prop:measure:ranked-jumps} and the independent-marking argument in
Proposition~\ref{prop:measure:marking-dust} identify the limit as
$\PD(\alpha,0)$-weighted when $0<\alpha<1$ and as $X$ when $\alpha=0$.

If the limit is constant, apply
Theorem~\ref{thm:countable-dust} to the marks attached to
the augmented ranked partition described above, breaking ties without using
the marks.  We obtain
$Q_*(t)\to0$ and the constant is $\mu$.  The equivalences
\eqref{eq:classification-dust-condition}--\eqref{eq:classification-dust-I}
follow from \eqref{eq:classification-maximal-jump-criterion},
\eqref{eq:classification-max-power-equivalence}, and
Lemma~\ref{lem:poisson-integrated-tail}.

Conversely, Proposition~\ref{prop:measure:ranked-jumps} followed by
Proposition~\ref{prop:measure:marking-dust} proves the stated limit under
\eqref{eq:classification-regular-branch}.  Under
\eqref{eq:classification-dust-condition}, the maximal-jump theorem and
Theorem~\ref{thm:countable-dust} give
$R_t\to\mu$ in probability.
Lemma~\ref{lem:measure:marked-nondegenerate} shows that the limit in the
regular branch is nondegenerate.

The alternatives are disjoint.  Karamata's theorem gives
$I\in\RV_\star(1-\alpha)$ in \textup{(ND)}, with strictly positive
index, whereas Lemma~\ref{lem:poisson-integrated-tail} gives
$I\in\RV_\star(0)$ in \textup{(D)}.
\end{proof}

\begin{remark}[Drift, killing, and finite activity at zero]
\label{rem:classification-scope}
Theorem~\ref{thm:classification-endpoints} concerns unkilled driftless
subordinators.  If the drift is $\delta>0$, the
normalized jump weights have total mass
\[
 1-\frac{\delta t}{\delta t+\sum_{0<s\leq t}\Delta V_s},
\]
and the missing mass is genuine dust; at small time the drift may dominate
the jumps.  A killed subordinator requires either conditioning on survival
or adjoining a cemetery state.  Finally, when $\nu(0,\infty)<\infty$, the
small-time empty configuration has probability
$\exp\{-t\nu(0,\infty)\}\to1$, so any unconditional normalized limit depends
on the convention imposed on that event.  These cases lie outside the
classification.
\end{remark}

\subsection{The constant boundary and the integrated tail}

Theorem~\ref{thm:classification-endpoints} answers the first-moment
question raised by Kevei and Mason \cite[Remark~1]{KM13}.  In the nondegenerate branch,
convergence for one nonconstant integrable mark forces regular variation
of the L\'evy tail.  At the constant boundary, their maximal-jump theorem
\cite[Theorem~1.1(iii), \(k=0\)]{KM14} gives the necessary and sufficient
condition \eqref{eq:classification-maximal-jump-criterion}.  In our
formulation, this is expressed as slow variation of the integrated tail \(I\).

References to \cite{KM13} below use the numbering of arXiv:1210.2411v1.
To relate our formulation to the second-moment condition
in Theorem~1(iii) there, suppose \(\E|X|^p<\infty\) for some \(p>2\),
and write \(T\) for the limiting variable.  Conditional Jensen's inequality
makes the squares of the marked ratios uniformly integrable under this hypothesis.
Moreover, taking \(R_t=0\) on the empty configuration,
\[
 \E R_t=(\E X)\Pp\{V_t>0\},
\]
and the probability on the right is one at zero under infinite activity
and tends to one at infinity.  Thus the identity
\(\E T^2=(\E X)^2\) is precisely the constant-limit case.

In Proposition~6, Case~2, following equation~(57) of that preprint,
the argument establishes slow variation of \(\Phi(q)/q\) as \(q\to\infty\)
(respectively \(q\downarrow0\)), equivalently of \(I(x)\) as \(x\downarrow0\)
(respectively \(x\to\infty\)); Karamata's theorem gives
\(\Phi(q)/q\sim I(1/q)\) in this regime.  At index zero,
slow variation of a primitive does not in general imply regular variation
with index \(-1\) of its monotone density; see
\cite[Theorem~1.1 and Remark~1.4]{Kevei2021}.  Thus regular variation of
\(\bar\nu\) with index \(-1\) is sufficient for the constant limit,
whereas the necessary and sufficient condition is supplied by the
maximal-jump criterion above.  The next two examples illustrate this
distinction at infinity and at zero.

\begin{example}[The gamma subordinator at infinity]
\label{ex:classification-gamma}
Let
\[
 \nu(\dd x)=a x^{-1}e^{-bx}\dd x,
 \qquad a,b>0.
\]
The L\'evy tail decreases exponentially at infinity and therefore is not in
$\RV_\infty(-1)$.  Nevertheless,
$\int_0^\infty x\,\nu(\dd x)=a/b<\infty$, and
Lemma~\ref{lem:largetime-finite-mean} gives $R_t\to\mu$ almost surely.

For Bernoulli$(\theta)$ marks, $0<\theta<1$, thinning makes $U_t$ and
$V_t-U_t$ independent gamma variables with common rate $b$ and respective
shapes $a\theta t$ and $a(1-\theta)t$.  Consequently,
\[
 \frac{U_t}{V_t}\sim
 \operatorname{Beta}(a\theta t,a(1-\theta)t)
 \longrightarrow\theta
\]
in probability.  Since the ratio is bounded by one,
$\E(U_t/V_t)^2\to\theta^2=(\E X)^2$, while
$\bar\nu\notin\RV_\infty(-1)$.
\end{example}

\begin{example}[A monotone oscillatory tail at zero]
\label{ex:classification-oscillatory}
Choose $L_0=6$, put $x_0=e^{-L_0}$, and set
\[
 c_0=\frac{2+\sin L_0}{x_0L_0^2}.
\]
Define the global tail
\begin{equation}
 \bar\nu(x)=
 \begin{cases}
 \displaystyle
 \dfrac{2+\sin\log(1/x)}{x\log^2(1/x)},&0<x<x_0,\\[2mm]
 c_0,&x_0\leq x<2x_0,\\
 0,&x\geq2x_0.
 \end{cases}
 \label{eq:classification-oscillatory-tail}
\end{equation}
This function is right-continuous, has bounded support, and has only a
downward jump, at $2x_0$.  To verify monotonicity on its first branch,
write $L=\log(1/x)$ and
\[
 g(L)=\frac{e^L(2+\sin L)}{L^2}.
\]
Then
\[
 \frac{g'(L)}{g(L)}
 =1+\frac{\cos L}{2+\sin L}-\frac2L
 \geq1-\frac1{\sqrt3}-\frac2L>0
 \qquad(L\geq L_0).
\]
The value $c_0$ makes the first join continuous.  Hence
\eqref{eq:classification-oscillatory-tail} is a nonincreasing,
right-continuous tail and defines a L\'evy measure; indeed,
\[
 \int_0^1\bar\nu(y)\dd y
 =\int_{L_0}^\infty\frac{2+\sin u}{u^2}\dd u+c_0x_0<\infty,
 \qquad \bar\nu(0+)=\infty.
\]
Moreover,
\[
 \int_0^x\bar\nu(y)\dd y
 =\int_L^\infty\frac{2+\sin u}{u^2}\dd u
 =\frac2L+\frac{\cos L}{L^2}+O(L^{-3}),
\]
whereas
\[
 x\bar\nu(x)=\frac{2+\sin L}{L^2}.
\]
Thus the L\'evy condition holds, the activity is infinite, and
\[
 \frac{x\bar\nu(x)}{A(x)}
 \sim\frac{2+\sin L}{2L}\longrightarrow0.
\]
The tail is not regularly varying: multiplication of $x$ by $e^\pi$
reverses the sine phase in \eqref{eq:classification-oscillatory-tail}; in
fact,
\[
 \frac{\bar\nu(e^\pi x)}{\bar\nu(x)}
 \sim e^{-\pi}\frac{2-\sin L}{2+\sin L},
\]
which has no limit.  Hence $R_t\to\mu$ in
probability as $t\downarrow0$, although
$\bar\nu\notin\RV_0(-1)$.  For Bernoulli$(\theta)$ marks the ratios are
bounded, so the convergence also gives
$\E R_t^2\to\theta^2=(\E X)^2$, although
$\bar\nu\notin\RV_0(-1)$.
\end{example}

Both examples satisfy the maximal-jump criterion, or equivalently
\(I\in\RV_\star(0)\), without satisfying
\(\bar\nu\in\RV_\star(-1)\).

\section{Consequences for normalized L\'evy random measures}
\label{sec:measure}

The ranked-jump classification and the independent-marking result give
the corresponding limits for normalized random measures.
Let $E$ be a Polish space and let $H\in\cP(E)$.  Independently mark every
jump of the subordinator $V$ from Section~\ref{sec:poisson-limits} by a
location with law $H$, and set
\begin{equation}
 \mu_t=\sum_{0<s\leq t}\Delta V_s\delta_{Z_s},
 \qquad
 \mathsf P_t=\frac{\mu_t}{V_t}\quad\text{on }\{V_t>0\}.
 \label{eq:measure:crm}
\end{equation}
On $\{V_t=0\}$, put $\mathsf P_t=H$.  This event is absent at zero
under infinite activity and has probability tending to zero at infinity.
After ranking the jumps with the independent tie-breakers fixed in
Section~\ref{sec:poisson-limits}, write $Z_i(t)$ for the location attached
to $J_{(i)}(t)$, padding zero coordinates with fresh iid $H$-locations.
Then
\begin{equation}
 \mathsf P_t
 =\mathcal M_H\bigl(Q^\downarrow(t),Z(t)\bigr),
 \label{eq:measure:marked-representation}
\end{equation}
including on the zero-mass event, where the convention
$Q^\downarrow(t)=0$ makes the missing-mass component equal to $H$.

For $g\in L^1(H)$, write
\[
 \mathsf P_tg=\int_E g\dd\mathsf P_t.
\]
This integral is finite almost surely.  Indeed, conditionally on
the ranked weights,
\[
 \E\left[\sum_iQ_i(t)|g(Z_i(t))|\mathrel{\Big|}Q^\downarrow(t)\right]
 =\left(\sum_iQ_i(t)\right)H|g|\leq H|g|,
\]
and the dust term in \eqref{eq:measure:marking-map} is finite as well.

\begin{corollary}[Endpoint limits of normalized L\'evy random measures]
\label{thm:measure:endpoint-limits}
Let $V$ be nonzero, unkilled, and driftless.
Fix $\star\in\{0,\infty\}$, with infinite activity when $\star=0$, and
let $g_1,\ldots,g_m\in L^1(H)$.

Suppose first that, for some $0\leq\alpha<1$,
\begin{equation}
 \bar\nu\in\RV_\star(-\alpha).
 \label{eq:measure:regular-branch}
\end{equation}
Let $Q^{(\alpha)}\sim\PD(\alpha,0)$ when $0<\alpha<1$, let
$Q^{(0)}=e_1=(1,0,\ldots)$, and let $Z_1,Z_2,\ldots$ be iid with law
$H$, independently of $Q^{(\alpha)}$.  Then, in
$\ell^1\times\cP(E)\times\R^m$,
\begin{equation}
 \bigl(Q^\downarrow(t),\mathsf P_t,\mathsf P_tg_1,\ldots,\mathsf P_tg_m\bigr)
 \Longrightarrow
 \bigl(Q^{(\alpha)},\mathsf P_{\alpha,H},
       \mathsf P_{\alpha,H}g_1,\ldots,\mathsf P_{\alpha,H}g_m\bigr),
 \label{eq:measure:joint-regular}
\end{equation}
where $\cP(E)$ carries the weak topology and
\begin{equation}
 \mathsf P_{\alpha,H}
 :=\mathcal M_H(Q^{(\alpha)},Z)
 \stackrel{d}{=}
 \begin{cases}
  \displaystyle\sum_{i\geq1}Q_i^{(\alpha)}\delta_{Z_i},
       &0<\alpha<1,\\[2mm]
  \delta_{Z_1},&\alpha=0.
 \end{cases}
 \label{eq:measure:random-measure-limit}
\end{equation}
In particular,
\begin{equation}
 Q^\downarrow(t)\Longrightarrow Q^{(\alpha)}
 \quad\text{in }\ell^1,
 \label{eq:measure:l1-limit}
\end{equation}
and $\mathsf P_t\Rightarrow\mathsf P_{\alpha,H}$ in distribution as random
elements of $\cP(E)$ equipped with its weak topology.

Suppose instead that
\begin{equation}
 \frac{x\bar\nu(x)}{A(x)}\longrightarrow0
 \qquad(x\to\star).
 \label{eq:measure:dust-condition}
\end{equation}
Then
\begin{equation}
 Q^\downarrow(t)\xrightarrow{\Pp}0
 \quad\text{in the product topology},
 \qquad
 \mathsf P_t\xrightarrow{\Pp}H
 \quad\text{weakly in }\cP(E),
 \label{eq:measure:dust-limit}
\end{equation}
and
\begin{equation}
 \bigl(\mathsf P_tg_1,\ldots,\mathsf P_tg_m\bigr)
 \longrightarrow(Hg_1,\ldots,Hg_m)
 \quad\text{in }L^1(\Omega;\R^m).
 \label{eq:measure:dust-probes}
\end{equation}
Here $L^1(\Omega;\R^m)$ is formed using the $\ell^1$-norm on
$\R^m$.
The convergences in \eqref{eq:measure:dust-limit} and
\eqref{eq:measure:dust-probes} hold jointly.
\end{corollary}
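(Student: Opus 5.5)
The plan is to reduce everything to the representation \eqref{eq:measure:marked-representation} and then invoke the ranked-jump limits together with Proposition~\ref{prop:measure:marking-dust}. The first step is to check that, after ranking the jumps with the auxiliary tie-breakers, the location sequence $Z(t)$ is iid with law $H$ and independent of $Q^\downarrow(t)$. The ranking permutation is measurable with respect to the jump point process and the tie-breakers, and both are independent of the locations. Padding by fresh iid locations keeps the sequence iid and independent of the weights. Consequently the joint law of $(Q^\downarrow(t),\mathsf P_t,(\mathsf P_tg_k)_{k\le m})$ coincides with that of $(Q^\downarrow(t),\mathcal M_H(Q^\downarrow(t),Z),(\mathcal M_H(Q^\downarrow(t),Z)g_k)_k)$, with a fixed iid sequence $Z$ independent of the weights. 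On the zero-mass event the convention $Q^\downarrow(t)=0$ gives $H$, so the identity is exact.

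In the regular branch \eqref{eq:measure:regular-branch}, Proposition~\ref{prop:measure:ranked-jumps} gives $Q^\downarrow(t)\Rightarrow Q^{(\alpha)}$ in $\ell^1$; this is \eqref{eq:measure:l1-limit}. I would fix an arbitrary sequence $t_n\to\star$ and apply the $\ell^1$ part of Proposition~\ref{prop:measure:marking-dust} with the adjoined test functions $g_1,\dots,g_m\in L^1(H)$. This yields \eqref{eq:measure:joint-regular} along $t_n$, and since the sequence was arbitrary and the target space is metrizable, it holds as $t\to\star$. For $0<\alpha<1$, the identification \eqref{eq:measure:random-measure-limit} uses $\sum_iQ_i^{(\alpha)}=1$ almost surely, which kills the $H$-term in the marking map. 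For $\alpha=0$, $\mathcal M_H(e_1,Z)=\delta_{Z_1}$.

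In the dust branch \eqref{eq:measure:dust-condition}, Theorem~\ref{thm:classification-endpoints} (equivalently the maximal-jump criterion \eqref{eq:classification-maximal-jump-criterion}) gives $Q_*(t)\to0$ in probability. Since each ranked coordinate is at most $Q_*(t)$, we get $Q^\downarrow(t)\to0$ in probability in the product topology. Proposition~\ref{prop:measure:marking-dust} with $Q=0$ then gives $\mathcal M_H(Q^\downarrow(t),Z)\Rightarrow\mathcal M_H(0,Z)=H$; the limit is deterministic, so this is convergence in probability in $d_w$. The $L^1$ statement \eqref{eq:measure:dust-probes} is exactly \eqref{eq:measure:marking-dust-L1} along every sequence, and it transfers to the original $\mathsf P_t$ because the marked representation is an equality in joint law. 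Joint convergence holds because all components converge in probability to constants.

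The main obstacle is bookkeeping rather than analysis. We must make sure the representation transfers the $L^1(\Omega)$ statement, which depends only on joint laws, so it does. We must also make sure the zero-mass convention at infinity matches the marking-map convention $H$. One point needs care: Proposition~\ref{prop:measure:marking-dust} is stated for sequences. I would pass to endpoint statements by the subsequence criterion, noting that every sequence $t_n\to\star$ satisfies the sequential hypotheses.
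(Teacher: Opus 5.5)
Your proposal is correct and follows essentially the paper's own argument. It uses the marked representation with $\Law(Q^\downarrow(t),Z(t))=\Law(Q^\downarrow(t))\otimes H^{\otimes\mathbb N}$, then Proposition~\ref{prop:measure:ranked-jumps} and the $\ell^1$ form of Proposition~\ref{prop:measure:marking-dust} in the regular branch, and the maximal-jump criterion followed by the $Q=0$ case of that proposition (including \eqref{eq:measure:marking-dust-L1}) in the dust branch. Your remarks on passing from sequences to $t\to\star$ and on the zero-mass convention simply spell out steps the paper leaves implicit.
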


\begin{proof}
Under \eqref{eq:measure:regular-branch},
Proposition~\ref{prop:measure:ranked-jumps} gives
\eqref{eq:measure:l1-limit}.  Conditionally on $Q^\downarrow(t)$, the ordered
locations $Z_i(t)$ are iid with law $H$; the tie-breakers were chosen
independently of the marks.  Consequently,
\[
 \Law\bigl(Q^\downarrow(t),Z(t)\bigr)
 =\Law(Q^\downarrow(t))\otimes H^{\otimes\mathbb N}.
\]
Proposition~\ref{prop:measure:marking-dust}, applied with the
$\ell^1$ topology on the first coordinate, proves
\eqref{eq:measure:joint-regular}.

Under \eqref{eq:measure:dust-condition}, the maximal-jump criterion
\cite[Theorem~1.1(iii), $k=0$]{KM14} gives
\[
 Q_1(t)=\frac{J_{(1)}(t)}{V_t}\xrightarrow{\Pp}0,
\]
with the ratio set to zero on $\{V_t=0\}$.  Since
$Q_i(t)\leq Q_1(t)$, the weights converge to zero in the product
topology.  Proposition~\ref{prop:measure:marking-dust} and
$\mathcal M_H(0,Z)=H$ give weak convergence of $\mathsf P_t$ to
$H$.  Because the limit is deterministic, this is convergence in
probability.

The $L^1$ conclusion \eqref{eq:measure:marking-dust-L1} of the same
proposition gives \eqref{eq:measure:dust-probes} for the entire finite
family.
\end{proof}

\begin{corollary}[Classification from one mean functional]
\label{cor:measure:one-projection}
Fix $\star\in\{0,\infty\}$, with infinite activity when $\star=0$, and
let $f\in L^1(H)$ be a function that is not constant $H$-almost surely.
Then
$\mathsf P_tf$ converges in distribution as $t\to\star$, without
passing to a subsequence, if and only if exactly one of the following holds:
\begin{enumerate}
\item[(i)] for a unique $\alpha\in[0,1)$,
\[
 \bar\nu\in\RV_\star(-\alpha);
\]
\item[(ii)]
\[
 \frac{x\bar\nu(x)}{A(x)}\longrightarrow0
 \qquad(x\to\star).
\]
\end{enumerate}
In case \textup{(i)}, the limit is
\[
 \mathsf P_{\alpha,H}f\stackrel{d}{=}
 \begin{cases}
  \displaystyle\sum_{i\geq1}Q_i^{(\alpha)}f(Z_i),
       &0<\alpha<1,\\[2mm]
  f(Z_1),&\alpha=0,
 \end{cases}
\]
and is nondegenerate.  In case \textup{(ii)}, the limit is the constant
$Hf$.  In both cases the entire normalized random measure has the
corresponding limit in Corollary~\ref{thm:measure:endpoint-limits}.
\end{corollary}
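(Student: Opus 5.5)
The plan is to reduce Corollary~\ref{cor:measure:one-projection} to Theorem~\ref{thm:classification-endpoints} by regarding $f(Z_s)$ as the integrable mark attached to each jump. Put $X_s=f(Z_s)$. Since the locations are iid with law $H$ and independent of $V$, the marks $X_s$ are iid, independent of $V$, integrable because $f\in L^1(H)$, and nonconstant because $f$ is not $H$-almost surely constant.

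On $\{V_t>0\}$ we then have
\[
 \mathsf P_tf=\sum_{0<s\leq t}\frac{\Delta V_s}{V_t}f(Z_s)=\frac{U_t}{V_t}=R_t.
\]
On the empty event, $\mathsf P_tf=Hf$. At zero this event is null under infinite activity, and at infinity its probability tends to zero. Hence $\mathsf P_tf$ and $R_t$ have the same endpoint convergence behaviour, with the same limits. This convention only matters at infinity, and it is immaterial there by the remark preceding Theorem~\ref{thm:classification-endpoints}.

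With this identification, Theorem~\ref{thm:classification-endpoints} gives the following. Full endpoint convergence of $\mathsf P_tf$ holds if and only if exactly one of two alternatives holds.

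\textbf{Alternative (ND).} This is $\bar\nu\in\RV_\star(-\alpha)$ for a unique $\alpha\in[0,1)$, and the limit is $\sum_iQ_i^{(\alpha)}X_i$ (or $X$ when $\alpha=0$). Since $\Law(X)$ is the pushforward of $H$ under $f$, this law is exactly that of $\mathsf P_{\alpha,H}f=\sum_iQ_i^{(\alpha)}f(Z_i)$ (respectively $f(Z_1)$). Nondegeneracy follows from Lemma~\ref{lem:measure:marked-nondegenerate}, because $Q_1^{(\alpha)}>0$ almost surely.

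\textbf{Alternative (D).} This is the maximal-jump condition, and the limit is the constant $\E X=Hf$.

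Exclusivity of the two alternatives is inherited from the theorem: in (ND) the integrated tail $I$ has positive index, while in (D) it is slowly varying.

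The final sentence of the corollary, that the whole normalized random measure has the corresponding limit, follows directly from Corollary~\ref{thm:measure:endpoint-limits}. Its hypotheses are exactly alternatives (i) and (ii). In case (i), the joint limit statement also yields the law of $\mathsf P_{\alpha,H}f$ by taking $g_1=f$, which gives a second derivation of the identification of the limit.

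The only point requiring care is the identification of the marked ratio with $\mathsf P_tf$ inside the theorem's framework. This needs the independence of the mark sequence from the ranked jumps after tie-breaking, which holds because the tie-breakers are independent of the locations. It also needs the separate handling of the zero-mass convention, which I have dealt with above.
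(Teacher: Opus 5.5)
Your proposal is correct and follows essentially the same route as the paper. Like the paper, you treat $f(Z_s)$ as iid integrable nonconstant marks independent of $V$ and dispose of the zero-mass convention. You then invoke Theorem~\ref{thm:classification-endpoints} for the dichotomy, disjointness and uniqueness of $\alpha$, and use Corollary~\ref{thm:measure:endpoint-limits} and Lemma~\ref{lem:measure:marked-nondegenerate} for the measure limits and nondegeneracy.
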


\begin{proof}
On $\{V_t>0\}$,
\[
 \mathsf P_tf
 =\frac{\sum_{0<s\leq t}f(Z_s)\Delta V_s}{V_t}.
\]
The marks $f(Z_s)$ are iid, integrable, nonconstant, and independent of
$V$.  At zero the denominator is positive almost surely; at infinity the
zero-mass convention changes the ratio only on an event whose probability
tends to zero.  The scalar classification,
Theorem~\ref{thm:classification-endpoints}, therefore gives the two
alternatives, their disjointness, and the uniqueness of $\alpha$.
Corollary~\ref{thm:measure:endpoint-limits} gives the stated measure and
functional limits.  In the regular branch, $Q_1^{(\alpha)}>0$ almost
surely, so Lemma~\ref{lem:measure:marked-nondegenerate} shows that
$\mathsf P_{\alpha,H}f$ is nondegenerate.
\end{proof}

\begin{remark}[Atomic base measures]
\label{rem:measure:atomic-base}
For atomic $H$, the Poisson--Dirichlet law in
\eqref{eq:measure:random-measure-limit} describes the weights before
aggregation at repeated locations.  The ranked masses of the distinct
atoms of $\mathsf P_{\alpha,H}$ need not have that law.
\end{remark}

\section*{Acknowledgments}
The author is grateful to Marinella Mellini and Mauro Lenzi for their
unwavering encouragement and support. He is also grateful to P\'eter Kevei
and David M. Mason, whose work first drew his attention to Breiman's
conjecture and inspired the line of research leading to this paper.

OpenAI's ChatGPT was used for language editing and \LaTeX{} formatting,
and for assistance in devising and formalizing
Examples~\ref{ex:classification-gamma} and~\ref{ex:classification-oscillatory};
responsibility for the mathematical content rests with the author.

\makeatletter
\def\@biblabel#1{\@defaultbiblabelstyle{#1}}
\def\bibsetup{}
\makeatother
\providecommand{\bysame}{\leavevmode\hbox to3em{\hrulefill}\thinspace}

\end{document}